\newtheorem{thm}{Theorem}[section]
\newtheorem{cor}[thm]{Corollary}
\newtheorem{lem}[thm]{Lemma}
\newtheorem{prop}[thm]{Proposition}
\newtheorem{exm}[thm]{Example}
\newtheorem{defn}[thm]{Definition}
\newtheorem{conj}[thm]{Conjecture}
\newtheorem{rem}[thm]{Remark}
\newtheorem{qu}[thm]{Question}
\numberwithin{equation}{section}
\begin{document}

\oddsidemargin 0mm
\evensidemargin 0mm

\thispagestyle{plain}

\vspace{5cc}
\begin{center}

% Title and Names ----------------------------------------------------------------------------------------------------------

{\large\bf Zero divisors and units with small supports in group algebras \\of torsion-free groups}
\rule{0mm}{6mm}\renewcommand{\thefootnote}{}
\footnotetext{{\scriptsize 2010 Mathematics Subject Classification. 20C07; 16S34. }\\
{\rule{2.4mm}{0mm}Keywords and Phrases.  Kaplansky's zero divisor conjecture, Kaplansky's unit conjecture, group ring, torsion-free group, zero divisor.}}

\vspace{1cc}
{\large\it Alireza Abdollahi and Zahra Taheri}

% Abstract --------------------------------------------------------------------------------------------------------------------

\vspace{1cc}
\parbox{27cc}{{\small

\textbf{Abstract.} Kaplansky's zero divisor conjecture (unit conjecture, respectively)  states that for a torsion-free group $G$ and a field $\mathbb{F}$, the group ring $\mathbb{F}[G]$ has no zero divisors (has no units with supports of size greater than $1$).  In this paper, we study  possible zero divisors and units in $\mathbb{F}[G]$ whose supports have size $3$.
For any field $\mathbb{F}$ and all torsion-free groups $G$, we prove that if $\alpha \beta=0$ for some non-zero $\alpha, \beta \in \mathbb{F}[G]$ such that $|supp(\alpha)|=3$, then $|supp(\beta)|\geq 10$. If $\mathbb{F}=\mathbb{F}_2$ is the field with 2 elements, the latter result can be improved so that $|supp(\beta)|\geq 20$. This improves a result in [J. Group Theory, 16 (2013), no. 5, 667-693]. 
Concerning the unit conjecture, we  prove that if $\alpha \beta=1$ for some $\alpha, \beta \in \mathbb{F}[G]$ such that $|supp(\alpha)|=3$, then $|supp(\beta)|\geq 9$. The latter improves a part of a result in  [Exp. Math., 24 (2015), 326-338] to arbitrary fields. }}

\end{center}

\vspace{1cc}

% Introduction ---------------------------------------------------------------------------------------------------------------

\section{\bf Introduction and Results}\label{Int}
Let $R$ be a ring. A non-zero element $\alpha$ of  $R$ is called a zero divisor if $\alpha\beta= 0$ or $\beta\alpha = 0$ for
some non-zero element $\beta \in R$. Let $G$ be a group. Denote by $R[G]$ the group ring of $G$ over $R$. If $R$ contains a zero divisor, then clearly so does $R[G]$. Also, if $G$ contains a non-identity torsion element $x$ of finite order $n$, then $R[G]$ contains zero divisors $\alpha=1-x$ and $\beta=1+x+\cdots+x^{n-1}$, since $\alpha \beta=0$.  Around $1950$, Irving Kaplansky conjectured that existence of a zero divisor in a group ring depends only on the existence of such elements in the ring or non-trivial torsions in the group by stating one of the most challenging problems in the field of group rings \cite{kap}.
\begin{conj}[Kaplansky's zero divisor conjecture] \label{conj-zero} Let $\mathbb{F}$ be a field and $G$ be a torsion-free group. Then $\mathbb{F}[G]$ does not contain a zero divisor.
\end{conj}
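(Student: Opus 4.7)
The plan is to argue by contradiction via a minimal counterexample. Suppose $\alpha\beta=0$ with $\alpha,\beta\in\mathbb{F}[G]\setminus\{0\}$, and among all such pairs choose one with $|\mathrm{supp}(\alpha)|+|\mathrm{supp}(\beta)|$ as small as possible. Translating by group elements and rescaling, I may normalize so that $1\in\mathrm{supp}(\alpha)\cap\mathrm{supp}(\beta)$ and the coefficient of the identity in each is $1$. Since the subgroup $H=\langle\mathrm{supp}(\alpha)\cup\mathrm{supp}(\beta)\rangle$ is finitely generated and torsion-free, and $\alpha\beta$ lives inside $\mathbb{F}[H]$, I may also assume $G$ itself is finitely generated.

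The heart of the plan is to exploit the combinatorial structure of the cancellation forced by $\alpha\beta=0$. Writing $\alpha=\sum_{i=1}^m a_i x_i$ and $\beta=\sum_{j=1}^n b_j y_j$, each element $g$ of the product set $\mathrm{supp}(\alpha)\cdot\mathrm{supp}(\beta)$ must be represented by at least two pairs $(i,j)$ with $x_i y_j=g$. This encodes a bipartite multigraph on the support elements carrying a list of word equations $x_i y_j=x_{i'}y_{j'}$, equivalently $x_{i'}^{-1}x_i=y_{j'}y_j^{-1}$, in $G$. The strategy then has two prongs. First, strengthen the quantitative bounds of the paper to produce an unbounded function $f$ with $|\mathrm{supp}(\alpha)|=m\Rightarrow|\mathrm{supp}(\beta)|\geq f(m)$; minimality together with the symmetric bound $|\mathrm{supp}(\beta)|=n\Rightarrow|\mathrm{supp}(\alpha)|\geq f(n)$ then forces both supports to be infinite, a contradiction. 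Second, manipulate the word equations group-theoretically to extract a non-identity element of finite order in $G$, which directly violates torsion-freeness.

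The main obstacle I expect is the second, group-theoretic, extraction step: translating combinatorial coincidences into a genuine torsion element inside an arbitrary finitely generated torsion-free group. When $G$ carries extra structure---bi-orderability, the unique product property, residual amenability, membership in Linnell's class---a leading-term or $L^2$-analytic argument finishes the job, but in the absence of any such hypothesis no uniform mechanism is presently available. I would therefore pursue the quantitative route first, iterating the combinatorial analysis that yields $|\mathrm{supp}(\alpha)|=3\Rightarrow|\mathrm{supp}(\beta)|\geq 10$ (or $\geq 20$ over $\mathbb{F}_2$) to progressively larger values of $m$, and only then turn to the group-theoretic extraction. The delicate point is controlling the ways in which the bipartite coincidence multigraph is allowed to degenerate as $m$ grows, since every new degeneration class produces a group-theoretic identity that must be shown inconsistent with torsion-freeness; closing this combinatorial-to-algebraic gap uniformly in $m$ is where the decisive work lies.
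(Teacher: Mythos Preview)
The statement you are attempting to prove is not a theorem in the paper; it is Conjecture~\ref{conj-zero}, Kaplansky's zero divisor conjecture, which remains open. The paper does not claim or supply a proof of it. What the paper actually establishes are partial results constraining potential counterexamples: if $|\mathrm{supp}(\alpha)|=3$ and $\alpha\beta=0$ over a torsion-free group, then $|\mathrm{supp}(\beta)|\geq 10$ in general and $|\mathrm{supp}(\beta)|\geq 20$ over $\mathbb{F}_2$. There is therefore no ``paper's own proof'' against which to compare your proposal.

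Your text is not a proof but an outline of a strategy, and you correctly identify where it breaks down. The first prong, producing an unbounded function $f$ with $|\mathrm{supp}(\beta)|\geq f(|\mathrm{supp}(\alpha)|)$, would not by itself yield a contradiction: minimality of $|\mathrm{supp}(\alpha)|+|\mathrm{supp}(\beta)|$ gives no symmetric lower bound on $|\mathrm{supp}(\alpha)|$ in terms of $|\mathrm{supp}(\beta)|$, because the roles of $\alpha$ and $\beta$ are not interchangeable (one has $\alpha\beta=0$, not $\beta\alpha=0$), and even if they were, a single finite pair $(m,n)$ with $n\geq f(m)$ and $m\geq f(n)$ is not precluded by $f$ being unbounded. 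The second prong, extracting a torsion element from the coincidence equations, is exactly the mechanism the paper exploits in the special case $|\mathrm{supp}(\alpha)|=3$, where each cycle in the zero-divisor graph yields a word in two generators that can sometimes be shown to force the ambient group to be abelian, a Baumslag--Solitar quotient, or to contain torsion. But as you note, no method is known to make this work uniformly for arbitrary support sizes or arbitrary torsion-free groups, and that is precisely why the conjecture is open.
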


Another famous problem, namely the unit conjecture, also proposed by Kaplansky \cite{kap}, states that:
\begin{conj}[Kaplansky's unit conjecture] \label{conj-unit} Let $\mathbb{F}$ be a field and $G$ be a torsion-free group. Then $\mathbb{F}[G]$ has
no non-trivial units (i.e., units which are not non-zero scalar multiples of group elements).
\end{conj}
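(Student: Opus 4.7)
The plan is to attack the conjecture via a reduction-to-combinatorics strategy. \textbf{First}, I would reduce to the finitely generated case: if $\alpha\beta = 1$ in $\mathbb{F}[G]$, then $H = \langle supp(\alpha) \cup supp(\beta)\rangle$ is a finitely generated torsion-free subgroup of $G$, and the equation $\alpha\beta = 1$ already holds inside $\mathbb{F}[H]$. So it suffices to prove the conjecture for finitely generated torsion-free $G$.

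\textbf{Second}, I would aim to show that every such $G$ satisfies the \emph{Unique Product Property} (u.p.): for every pair of finite nonempty subsets $A, B \subseteq G$, some element of the product set $AB$ admits exactly one factorization $ab$ with $a \in A$, $b \in B$. A theorem of Strojnowski shows that u.p. is equivalent to the two-unique-products property. Given this, the unit conjecture follows cleanly: if $\alpha\beta = 1$ with $|supp(\alpha)|, |supp(\beta)| \geq 2$, take $A = supp(\alpha)$ and $B = supp(\beta)$, and apply the two-unique-products property to produce two distinct elements of $AB$ with unique factorizations; the corresponding coefficients of $\alpha\beta$ are then nonzero, but at most one of those products can equal $1_G$, contradicting $\alpha\beta = 1$.

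\textbf{Third}, to establish u.p. I would try to prove that every finitely generated torsion-free group is left-orderable, or at least diffuse in the sense of Bowditch, since either property implies u.p. The classical route is to construct a left-invariant total order on $G$ by Zorn's lemma, using torsion-freeness to verify that a partial positive cone can be extended without contradiction at each stage.

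The \textbf{main obstacle}, and the point at which I expect this plan to collapse, is precisely this last step: torsion-freeness does \emph{not} imply the u.p. property. Promislow's group and the Rips--Segev constructions provide explicit finitely generated torsion-free groups in which u.p. fails, so no purely set-theoretic analysis of finite subsets of $G$ can succeed in full generality. A genuine proof must therefore exploit ring-theoretic or analytic structure of $\mathbb{F}[G]$ that is invisible at the support level, perhaps through $L^2$-methods and Linnell's work in characteristic zero, operator-algebraic techniques, or an as-yet-unknown structural input in positive characteristic. Accordingly, the realistic output of a paper starting from this plan is not the conjecture itself but a quantitative partial theorem of the form $|supp(\beta)| \geq f(|supp(\alpha)|)$, forcing at least one factor to have large support whenever the other is small; the conjecture as stated appears to lie beyond currently available techniques, and the present plan terminates at the u.p. barrier.
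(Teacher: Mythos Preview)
Your self-assessment is accurate: the paper does \emph{not} prove Conjecture~\ref{conj-unit}. It is stated as an open conjecture, and the paper's contribution is exactly the kind of quantitative partial result you anticipate in your final paragraph. Specifically, the paper proves that if $|supp(\gamma)|=3$ and $\gamma\delta=1$ in $\mathbb{F}[G]$ for a torsion-free group $G$, then $|supp(\delta)|\geq 9$ (Theorem~\ref{main-unit2}). The method is not via unique products at all but via a graph-theoretic encoding: to the pair $(\gamma,\delta)$ one associates a ``unit graph'' on $supp(\delta)$, shows it must be simple and avoid certain small subgraphs ($C_3{-}C_3$, $K_{2,3}$), and then combines this with the Hamidoune--Llad\'o--Serra product-set estimate and case analysis to rule out $|supp(\delta)|\leq 8$.

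Your identification of the unique-product barrier is correct and important, but your closing sentence understates the situation: the conjecture is not merely ``beyond currently available techniques'' --- it is now known to be \emph{false}. Gardam (2021) exhibited a nontrivial unit in $\mathbb{F}_2[P]$ where $P$ is the Hantzsche--Wendt (Promislow) group, precisely one of the torsion-free non-u.p.\ groups you mention. So the u.p.\ obstruction you flagged is not just a gap in the strategy; it points directly at the eventual counterexample.
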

It can be shown that the zero divisor conjecture is true if the unit conjecture has an affirmative solution (see \cite[Lemma 13.1.2]{pass1}).

Over the years, some partial results have been obtained on Conjecture \ref{conj-zero} and it has been confirmed for special classes of groups which are torsion-free. One of the first known special families which satisfy Conjecture \ref{conj-zero} are unique product groups \cite[Chapter 13]{pass1}, in particular ordered groups. Furthermore, by the fact that Conjecture \ref{conj-zero} is known to hold valid for amalgamated free products when the group ring of the subgroup over which the amalgam is formed satisfies the Ore condition \cite{lewin}, it is proved by Formanek \cite{form} that supersolvable groups are another families which satisfy Conjecture \ref{conj-zero}. Another result, concerning large major sorts of groups for which Conjecture \ref{conj-zero} holds in the affirmative, is obtained for elementary amenable groups \cite{krop}. The latter result covers the cases in which the group is polycyclic-by-finite, which was firstly studied in \cite{brown} and \cite{farkas}, and then extended in \cite{sni}. Some other affirmative results are obtained on congruence subgroups in \cite{laz} and \cite{farkas2}, and certain hyperbolic groups \cite{del}. 
%one-relator groups \cite{lewin2}; 
Nevertheless, Conjecture \ref{conj-zero} has not been confirmed for any fixed field and it seems that confirming the conjecture even for the smallest finite field $\mathbb{F}_2$  with two elements is still out of reach.

The support of an element $\alpha=\sum_{x\in G}{a_x x}$ of $R[G]$, denoted by $supp(\alpha)$, is the set $\{x \in G \mid a_x\neq 0\}$. For any field $\mathbb{F}$ and any torsion-free group $G$, it is known that $\mathbb{F}[G]$ does not contain a zero divisor whose support is of size at most $2$ (see \cite[Theorem 2.1]{pascal}), but it is not  known a similar result for group algebra elements with the support of size $3$. By describing a combinatorial structure, named matched rectangles, Schweitzer \cite{pascal} showed that if $\alpha\beta=0$ for $\alpha,\beta\in \mathbb{F}_2[G]\setminus \{0\}$ when $\vert supp(\alpha)\vert=3$, then $\vert supp(\beta)\vert>6$. Also, with a computer-assisted approach, he showed that if $\vert supp(\alpha)\vert=3$, then $\vert supp(\beta)\vert>16$. In the following we provide a little more details of graph theory which are used in this paper. 

A graph $\mathcal{G}=(\mathcal{V_G},\mathcal{E_G}, \psi_{\mathcal{G}})$ consists of a non-empty set $\mathcal{V_G}$, a possibly empty set $\mathcal{E_G}$ and if $\mathcal{E_G}\neq \varnothing$ a function $\psi_{\mathcal{G}}: \mathcal{E_G}\rightarrow \mathcal{V_G}^{(1)} \cup \mathcal{V_G}^{(2)} \cup \mathcal{V_G} \times \mathcal{V_G}$, where $\mathcal{V_G}^{(i)}$ denotes the set of all $i$-element subsets of $\mathcal{V_G}$ for $i=1,2$. The elements of $\mathcal{V_G}$ and $\mathcal{E_G}$ are called vertices and edges of the graph $\mathcal{G}$, respectively. An edge $e\in \mathcal{E}_G$ is called an undirected loop if $\psi_{\mathcal{G}}(e)\in \mathcal{V_G}^{(1)}$. The edge $e$ is called directed if $\psi_{\mathcal{G}}(e)\in \mathcal{V_G} \times \mathcal{V_G}$ and it is called a directed loop if $ \psi_{\mathcal{G}}(e)=(v,v)$ for some $v\in \mathcal{V_G}$. The graph $\mathcal{G}$ is called undirected if $ \psi_{\mathcal{G}}(\mathcal{E_G}) \cap \left( \mathcal{V_G} \times \mathcal{V_G} \right) =\varnothing$.
  We say a vertex $u$ is adjacent to a vertex $v$, denoted by $u\sim v$, if $\psi_{\mathcal{G}}(e)=\{u,v\}$, $(u,v)$ or $(v,u)$ for some $e\in \mathcal{E_G}$; otherwise, we say $u$ is not adjacent to $v$, denoted by $u\not\sim v$. In the latter case the vertices $u$ and $v$ are called the endpoints of the edge $e$ and we say $e$ joins its endpoints. If a vertex $v$ is an endpoint of an edge $e$, we say $v$ is adjacent to $e$ or also $e$ is adjacent to $v$. Two edges $e_1$ and $e_2$ are called adjacent , denoted by $e_1 \sim e_2$, if the sets of their endpoints have non-empty intersection.  The graph $\mathcal{G}$ is called loopless whenever  $\psi_{\mathcal{G}}(\mathcal{E_G}) \cap \mathcal{V_G}^{(1)}= \varnothing$ and $\psi_{\mathcal{G}}(\mathcal{E_G}) \cap \{(v,v)|v\in\mathcal{V_G}\}= \varnothing$. We say that the graph $\mathcal{G}$ has multi-edge if $\psi_{\mathcal{G}}$ is {\bf not} injective. A simple graph is an undirected loopless graph having no multi-edge. So, the graph $\mathcal{G}$ is simple if $\mathcal{E_G}$ is empty or $\psi_{\mathcal{G}}$ is an injective function from $\mathcal{V_G}$ to $\mathcal{V_G}^{(2)}$.  The degree of a vertex $v$ of $\mathcal{G}$, denoted by ${\rm deg}(v)$, is the number of edges adjacent to $v$. If all vertices of a graph have the same degree $k$, we say that the graph is $k$-regular and as a special case, a cubic graph is the one which is $3$-regular. A path graph $\mathcal{P}$ is a simple graph with the vertex set $\{v_0, v_1,\ldots , v_n\}$ such that $v_{j-1}\sim v_{j}$ for $j = 1,\ldots , n$. The length of a path is the number of its edges. A cycle of length $n$, denoted by $C_n$, is a path of length $n$ in which its initial vertices ($v_0$ and $v_n$ by the above notation) are identified. The cycles $C_3$ and $C_4$ are called triangle and square, respectively. We say an undirected graph is connected if there is a path between every pair of distinct vertices. A subgraph of a graph $\mathcal{G}$ is a graph $\mathcal{H}$ such that $\mathcal{V_H}\subseteq \mathcal{V_G}$, $\mathcal{E_H}\subseteq \mathcal{E_G}$ and $\psi_{\mathcal{H}}$ is the restriction of $\psi_{\mathcal{G}}$ to $\mathcal{E_H}$. In a graph $\mathcal{G}$, an induced subgraph $\mathcal{I}$ on a set of vertices $W\subseteq \mathcal{V_G}$ is a subgraph in which $\mathcal{V_I}=W$ and $\mathcal{E_I}=\{e \in \mathcal{E_G} | \text{ the endpoints of } e \text{ are in } W \}$.  An isomorphism between two undirected and loopless graphs $\mathcal{G}$ and $\mathcal{H}$ is a pair of bijections $\phi_V: \mathcal{V_G} \rightarrow\mathcal{V_H}$ and $\phi_E: \mathcal{E_G} \rightarrow\mathcal{E_H}$ preserving adjacency and non-adjacency i.e.,  for any pair of vertices $u,v\in \mathcal{V_G}$, $\phi_V(u) \sim \phi_V(v) \Leftrightarrow u \sim v$ and  for any pair of edges $e_1,e_2 \in \mathcal{E_G}$, $\phi_E(e_1) \sim \phi_E(e_2) \Leftrightarrow e_1 \sim e_2$.  Two undirected and loopless graphs $\mathcal{G}$ and $\mathcal{H}$ are called isomorphic (denoted by $\mathcal{G}\cong\mathcal{H}$) if there is an isomorphism between them. We say that an undirected and loopless graph $\mathcal{H}$ is a forbidden subgraph of a graph $\mathcal{G}$ if there is no subgraph isomorphic to $\mathcal{H}$ in $\mathcal{G}$. An undirected graph is bipartite if its vertices can be partitioned into two sets (called partite sets) in such a way that no edge joins two vertices in the same set. A complete bipartite graph is a simple bipartite graph in which each vertex in one partite set is adjacent to all the vertices in the other one. A complete bipartite graph whose  partite sets have cardinalities $r$ and $s$ is denoted by $K_{r,s}$. A Cayley graph $Cay(G,S)$ for a group $G$ and a subset $S$ of $G$ with $1\not\in S=S^{-1}$, is the graph whose vertex set is $G$ and two vertices $g$ and $h$ are adjacent if $gh^{-1} \in S$.

Let $G$ be an arbitrary torsion-free group and let $\alpha \in \mathbb{F}[G]$ be a possible zero divisor such that  $\vert supp(\alpha)\vert=3$ and $\alpha \beta=0$ for some non-zero $\beta \in \mathbb{F}[G]$.  In this paper, we study the minimum possible size of the support of such an element  $\beta$. Suppose that $\mathbb{F}=\mathbb{F}_2$. In \cite[Definition 4.1]{pascal} a graph $K(M)$ is associated to the non-degenerate $3\times |supp(\beta)|$ matched rectangle $M$ corresponding to $\alpha$ and $\beta$ and it is proved in \cite[Theorem 4.2]{pascal} that the graph $K(M)$ is a simple cubic one without triangles. Here we define a graph $Z(a,b)$ associated to any pair $(a,b)$ of non-zero elements of a group algebra (not necessarily over a field with $2$ elements and not necessarily on a torsion-free group) such that $ab=0$ (see Definition \ref{def-Zgraph}). We call  $Z(a,b)$ the zero-divisor graph of $(a,b)$. The zero divisor graph $Z(\alpha,\beta)$ is isomorphic  to $K(M)$. Here we study subgraphs of some zero-divisor graphs. Our main results on Conjecture \ref{conj-zero} are the followings.

\begin{thm}\label{thm-F}
None of the graphs in Figure \ref{forbiddens-F} can be isomorphic to a subgraph of any zero-divisor graph of length $3$ over any field and on any torsion-free group.
\end{thm}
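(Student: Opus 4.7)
The plan is to argue by contradiction, handling each graph in Figure \ref{forbiddens-F} separately. Write $\alpha = a_1 x_1 + a_2 x_2 + a_3 x_3$ with distinct $x_i \in G$ and non-zero $a_i \in \mathbb{F}$, and $\beta = \sum_y b_y\, y$ with $y$ ranging over $supp(\beta)$ and $b_y \neq 0$. By the definition of the zero-divisor graph $Z(\alpha,\beta)$ (Definition \ref{def-Zgraph}), each edge between vertices $y$ and $y'$ is naturally labeled by a pair $\{i,j\} \subseteq \{1,2,3\}$ recording a collision $x_i y = x_j y'$, equivalently $y y'^{-1} = x_i^{-1} x_j$. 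Moreover, the identity $\alpha\beta = 0$ implies that for every group element $g$ appearing in the product, the linear sum $\sum_{x_i y = g} a_i b_y$ vanishes in $\mathbb{F}$, giving a coefficient relation at every collision point of the underlying matched rectangle.

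Assume that some graph $H$ from Figure \ref{forbiddens-F} embeds as a subgraph of $Z(\alpha,\beta)$. My first step is to enumerate, up to the joint action of $\mathrm{Aut}(H)$ on the edges and the permutation action of $S_3$ on $\{x_1,x_2,x_3\}$, all edge-labelings by index pairs that are locally consistent (two edges meeting at a vertex cannot reuse the same index on the incident side, since $b_y \neq 0$). The second step is to read off, from any closed walk $e_1 e_2 \cdots e_k$ in $H$, the resulting group identity $\prod_s x_{i_s}^{-1} x_{j_s} = 1$, a word in the three letters $x_1,x_2,x_3$. If this word is non-trivial as a free-group element yet torsion, or if it collapses to $x_i = x_j$ for some $i\neq j$, then torsion-freeness of $G$ together with the distinctness of $x_1,x_2,x_3$ yields a contradiction. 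For labelings where no closed-walk identity is strong enough, the final step is to invoke the coefficient equations from the first paragraph: the linear system attached to the edges and collision points of $H$ must admit a solution with all $b_y \neq 0$ and all $a_i \neq 0$, and one shows this fails, which again contradicts the embedding.

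The hard part will be the case analysis itself. No single uniform argument covers every graph in Figure \ref{forbiddens-F}; the labelings must be grouped according to the cycle structure of $H$, and when $H$ contains few short cycles the group-theoretic reduction is weak and the contradiction must instead be extracted from the coefficient system. Arranging the enumeration so that the symmetries of $H$ and of $S_3$ cut the case count to a tractable list, and then exhibiting for each surviving labeling either a torsion-forcing identity in $G$ or a forced vanishing among the $b_y$, is where the bulk of the work lies. A secondary subtlety is that over an arbitrary field (rather than $\mathbb{F}_2$) the coefficient equations are genuine linear constraints rather than parity conditions, so one cannot rely on the cubic-and-triangle-free structure used by Schweitzer for $\mathbb{F}_2$ and must handle the linear algebra over $\mathbb{F}$ directly.
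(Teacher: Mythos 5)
Your overall shape --- label the edges of an embedded copy of $H$ by pairs of elements of $supp(\alpha)$, read off a group relation from each cycle, and derive a contradiction --- is the same as the paper's, but the step you rely on to extract the contradiction is too weak, and that step is where the proof actually lives. For the two graphs in Figure \ref{forbiddens-F} the cycle relations almost never produce a word that is torsion in the free group or that collapses to $x_i=x_j$: the relations arising from triangles and squares are typically of the form $h_2h_3h_2^{-1}h_3=1$ or $h_2^2h_3^{-1}h_2^{-1}h_3=1$, which present infinite torsion-free groups (abelian groups or torsion-free quotients of Baumslag--Solitar groups $BS(1,k)$). Torsion-freeness of $G$ alone gives no contradiction there. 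The paper's contradiction is of a different nature: after reducing, via minimality of $|supp(\beta)|$ and Lemma \ref{supp}, to the case $G=\langle supp(\alpha)\rangle$, each such relation forces $G$ to be abelian or a torsion-free quotient of some $BS(1,k)$, and these groups are already known to satisfy the zero-divisor conjecture (they are locally indicable, hence unique-product; see Remark \ref{r-baum}), contradicting $\alpha\beta=0$. Your fallback --- the linear system on the coefficients --- cannot substitute for this: deciding whether that system admits a solution with all $a_i, b_y\neq 0$ over, say, the Klein bottle group $BS(1,-1)$ \emph{is} the zero-divisor problem for that group, not an elementary linear-algebra check.

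There is also a local error that matters for precisely these two graphs. Over a general field it is false that two edges at a vertex cannot reuse the same index on the incident side: if $x_iy=x_jy'$ and $x_iy=x_{j'}y''$, then all three products coincide, giving a triangle whose three coefficient terms must sum to zero --- impossible over $\mathbb{F}_2$ but entirely possible over other fields (these are the paper's type (II) triangles; note that a single triangle is \emph{not} forbidden over an arbitrary field). The exclusion of $C_3-C_3$ hinges on exactly this configuration: one first shows (by the Baumslag--Solitar argument above) that both triangles must be of type (II), and then the shared edge forces the two remaining vertices to carry the same element of $supp(\alpha)$ and hence to coincide. Likewise, the $K_{2,3}$ case requires classifying the nine square relations that survive the abelian/$BS(1,k)$/torsion sieve and showing (the paper does this with GAP) that any two inequivalent ones together force $G$ to be finite solvable, before a purely combinatorial tuple argument finishes. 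None of this is recoverable from torsion-or-collapse tests plus coefficient linear algebra, so the proposal as written does not close either case.
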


\begin{thm}
Let $\alpha$ and $\beta$ be non-zero elements of the group algebra of any torsion-free group over an arbitrary field. If $|supp(\alpha)|=3$ and $\alpha \beta=0$ then $|supp(\beta)|\geq 10$. 
\end{thm}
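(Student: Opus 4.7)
The proof will proceed by contradiction: suppose $n := |supp(\beta)| \leq 9$, and derive a structural impossibility from Theorem \ref{thm-F}. The main object of study is the zero-divisor graph $Z(\alpha,\beta)$ of Definition \ref{def-Zgraph}. Since $|supp(\alpha)|=3$, for every $y \in supp(\beta)$ the three products $ay$ (with $a \in supp(\alpha)$) must each be cancelled by a corresponding product $a'y'$ coming from a different $y' \in supp(\beta)$; this forces a degree-$3$ condition at every vertex of $Z(\alpha,\beta)$, and torsion-freeness of $G$ rules out loops. Combining with an argument in the spirit of \cite[Theorem 4.2]{pascal}, the graph is additionally triangle-free.

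Given this structural information, the plan is to enumerate, up to isomorphism, all simple cubic triangle-free graphs with at most $9$ vertices and test each against the forbidden list of Figure \ref{forbiddens-F}. The handshake lemma forces an even vertex count for a $3$-regular graph, reducing the analysis to $n \in \{4,6,8\}$: the cases $n \leq 2$ are classical, and $n \in \{3,5,7,9\}$ are excluded by parity. Disconnected candidates reduce to connected components on smaller vertex sets. The resulting list is very short: for $n=4$ there is no triangle-free cubic graph at all (the only cubic graph on $4$ vertices is $K_4$); for $n=6$ only $K_{3,3}$; and for $n=8$ the number of cubic triangle-free simple graphs is small enough to be handled by hand (including, for instance, the $3$-cube $Q_3$).

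With the enumeration in place, the heart of the argument is to exhibit, for each candidate graph on the list, a subgraph isomorphic to one of the forbidden configurations in Figure \ref{forbiddens-F}. Theorem \ref{thm-F} then prohibits that graph from arising as $Z(\alpha,\beta)$, yielding the required contradiction and therefore $|supp(\beta)| \geq 10$. The principal obstacle will be the $n=8$ analysis, whose case-by-case subgraph matching is delicate and demands care to confirm that each forbidden configuration is genuinely embedded (rather than only present up to a weaker morphism). A secondary subtlety, specific to the step up from $\mathbb{F}_2$ to an arbitrary field, is verifying that the richer coefficient structure does not weaken the $3$-regularity or triangle-freeness of $Z(\alpha,\beta)$; this should be absorbed into the graph's definition by tracking the cancellations forced by $\alpha\beta=0$ coefficient by coefficient within $\mathbb{F}$.
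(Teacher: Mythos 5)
Your proposal rests on two structural claims about $Z(\alpha,\beta)$ over an arbitrary field --- that it is $3$-regular and that it is triangle-free --- and neither survives the passage from $\mathbb{F}_2$ to a general field; this is not a ``secondary subtlety'' but the point where the argument breaks. Over $\mathbb{F}_2$ the coefficient of $hg$ in $\alpha\beta$ is a sum of $1$'s, so each product value must be attained by exactly two pairs, which is what gives both cubicity and the exclusion of type (II) triangles in Theorem \ref{thm-graph}. Over $\mathbb{Q}$, say, a fiber of size three with coefficients $1,1,-2$ cancels perfectly: a vertex can then have degree greater than $3$, and a type (II) triangle ($a_1g_i=b_1g_j=b_2g_k$ with $\{a_1,b_1,b_2\}=supp(\alpha)$) is not excluded. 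The paper's Lemma \ref{Z-C3} only rules out type (I) triangles over a general field, and Theorem \ref{thm-F} accordingly forbids only $C_3-C_3$ and $K_{2,3}$ --- not $C_3$ itself. Moreover, even if you granted cubicity and triangle-freeness, the two forbidden subgraphs available over an arbitrary field are far too weak to finish: the $3$-cube $Q_3$ is cubic, triangle-free, bipartite, contains no $K_{2,3}$ (no two vertices have three common neighbours) and no $C_3-C_3$, so your $n=8$ case cannot be closed by matching against Figure \ref{forbiddens-F}. The paper eliminates the two cubic triangle-free graphs on $8$ vertices only via the $\mathbb{F}_2$-specific forbidden subgraphs $C_4--C_5$ and $C_4--C_6$, which are not available here.

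The paper's actual proof is entirely different and does not enumerate graphs at all. It uses the product-set inequalities of Kemperman and Hamidoune ($|supp(\alpha)\,supp(\beta)|\geq n+4$ for $n\geq 4$, since $G$ must be nonabelian) together with the fact that every pair $(h,g)$ must lie in a cancellation fiber of size at least $2$: comparing $3n$ with twice the number of fibers kills $n\leq 7$ outright. For $n=8$ the inequalities force every fiber to have size exactly $2$, so the all-ones elements give a zero divisor over $\mathbb{F}_2$ with supports $3$ and $8$, contradicting Corollary \ref{maincor}; for $n=9$ one fiber has size $3$ and the rest size $2$, producing a unit over $\mathbb{F}_2$ with supports $3$ and $9$, contradicting Theorem \ref{dyk-rem}. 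If you want to salvage your plan, you would need to either prove cubicity and triangle-freeness over arbitrary fields (false in general, as above) or find the reduction to $\mathbb{F}_2$ that the paper uses; the counting argument is the mechanism that makes that reduction possible.
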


\begin{thm}\label{mainthm}
None of the graphs in Table \ref{tab-forbiddens} can be isomorphic to a subgraph of any zero-divisor graph of length $3$ over $\mathbb{F}_2$ and on any torsion-free group.
\end{thm}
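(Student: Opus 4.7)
The plan is to push the method used for Theorem~\ref{thm-F} one step further by exploiting the extra structure available over $\mathbb{F}_2$, namely that products $a_ib_j$ must cancel \emph{in pairs} (rather than in triples or longer runs as might occur over a general field). Consequently, in the $\mathbb{F}_2$ setting the zero-divisor graph $Z(\alpha,\beta)$ associated to a pair with $|\mathrm{supp}(\alpha)|=3$ and $\alpha\beta=0$ is a simple cubic triangle-free graph, as was already used in \cite[Theorem~4.2]{pascal}, and, via Theorem~\ref{thm-F}, one may also assume that $Z(\alpha,\beta)$ contains none of the graphs of Figure~\ref{forbiddens-F} as a subgraph. These two constraints will be the standing hypotheses under which each candidate subgraph in Table~\ref{tab-forbiddens} must be ruled out.

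For each graph $H$ in Table~\ref{tab-forbiddens}, I would suppose for contradiction that there is an embedding $H\hookrightarrow Z(\alpha,\beta)$, write $\mathrm{supp}(\alpha)=\{a_1,a_2,a_3\}$ and $\mathrm{supp}(\beta)=\{b_1,\dots,b_n\}$, and identify the vertices of $H$ with a subset of the $b_j$'s. Each edge of $H$ records a pairing $a_ib_j = a_kb_\ell$ arising from a cancellation in $\alpha\beta$, which in the group translates to the relation $a_i^{-1}a_k = b_jb_\ell^{-1}$; equivalently, it labels the edge with an ordered pair $(i,k)$ from $\{1,2,3\}$. The triangle-free cubic condition severely restricts how these labels can be distributed around the three edges at each vertex: no two incident edges may carry the same pair of row indices, so the three edges at a vertex correspond to the three $2$-subsets of $\{1,2,3\}$. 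Chasing labels around any cycle of $H$ then yields an equation in $G$ of the form $w(a_1,a_2,a_3)=1$ where $w$ is a word depending on the cycle's label sequence, and the goal is to show that every admissible labelling produces either a coincidence among the $a_i$'s (contradicting $|\mathrm{supp}(\alpha)|=3$), a coincidence among the chosen $b_j$'s, or a non-trivial torsion element (contradicting torsion-freeness of $G$).

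In practice I would carry this out by: (i)~reducing, via the vertex-colouring described above, the number of distinct edge-labellings of $H$ up to the obvious $S_3$-action on $\{a_1,a_2,a_3\}$ and the automorphisms of $H$; (ii)~for each surviving labelling, writing down the cycle relations produced by a spanning set of independent cycles of $H$; and (iii)~checking that the resulting system, together with the previously established forbidden subgraphs from Theorem~\ref{thm-F} applied to any induced subgraph of $H$, forces one of the three contradictions above. Many of the graphs in the table will in fact be dispatched at step (i) or (ii) because they already contain a Figure~\ref{forbiddens-F} graph as a subgraph, or because the pairing structure forces an immediate relation such as $a_i^2=1$ or $a_ia_j^{-1}=a_ka_\ell^{-1}$ with $\{i,j\}\neq\{k,\ell\}$.

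The principal obstacle is bookkeeping rather than any single hard step: Table~\ref{tab-forbiddens} contains many graphs, each with its own symmetry group, its own menu of admissible edge-labellings, and its own cycle-relations to untangle. The delicate cases will be those asymmetric graphs where Theorem~\ref{thm-F} does not apply to any proper subgraph and where several distinct labellings survive the vertex-colouring reduction; for those, the argument will require systematically exhibiting, for every labelling, an explicit word in the $a_i$'s that is forced to be a non-trivial torsion element. Organising these case distinctions compactly -- presumably by tabulating the labellings and the resulting relations graph-by-graph, as already done implicitly in \cite{pascal} -- is the main work.
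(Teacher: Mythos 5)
There is a genuine gap in your plan: the list of contradictions you propose to derive from each admissible edge-labelling --- a coincidence among the $a_i$'s, a coincidence among the chosen $b_j$'s, or a non-trivial torsion element --- is not sufficient to close most of the cases. When you chase labels around the cycles of a candidate subgraph $H$ you obtain a presentation $\langle h_2,h_3 \mid w_1=\cdots=w_r=1\rangle$ for a quotient of $G$, and for a large fraction of the admissible labellings this group is a perfectly good \emph{torsion-free} group with no forced coincidences: typically a quotient of a Baumslag--Solitar group $BS(1,k)$ or $BS(k,1)$, an abelian or cyclic group, or some other solvable group. For example, the square relation $h_2^2h_3^{-1}h_2^{-1}h_3=1$ presents $BS(1,2)$, which is torsion-free and two-generated with all three support elements distinct. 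The contradiction in these cases is of a different nature entirely: one must invoke the fact that such groups are known to \emph{satisfy} Kaplansky's zero-divisor conjecture (abelian and more generally elementary amenable groups; one-relator groups such as $BS(1,n)$ via Brodski\u{\i}'s local indicability theorem, together with the observation that every torsion-free quotient of $BS(1,n)$ is either abelian or isomorphic to $BS(1,n)$ itself), so that no zero divisor with support of size $3$ can live in their group algebras. This is the workhorse of the paper's argument (see Remark \ref{r-baum} and the column $E$ of Table \ref{tab-C4}), and your proposal has no substitute for it; without it the majority of the labellings survive your three tests.

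A secondary but still substantive point: the remaining cases are not dispatched by hand. The paper enumerates, for each of the $46$ non-trivial subgraphs in Table \ref{tab-forbiddens}, all non-equivalent systems of cycle relations (often several hundred to several thousand per subgraph) and uses GAP to verify that the resulting finitely presented groups are finite or solvable --- a group-theoretic computation (coset enumeration, derived series), not merely combinatorial bookkeeping of labellings modulo $S_3$ and $\mathrm{Aut}(H)$. Your step (iii) as stated ("exhibiting an explicit word forced to be a non-trivial torsion element") would fail for all the labellings whose quotient is an infinite torsion-free solvable or Baumslag--Solitar-type group, and these are precisely the cases marked as surviving the first sieve throughout Section \ref{S2}.
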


By Theorem \ref{thm-F}, the graphs in Figure \ref{forbiddens-F} are forbidden subgraphs of any zero-divisor graph of length $3$ over $\mathbb{F}$ and on any torsion-free group. Also by Theorem \ref{mainthm}, the graphs in Table \ref{tab-forbiddens} are forbidden subgraphs of any zero-divisor graph of length $3$ over $\mathbb{F}_2$ and on any torsion-free group. In Appendix  of \cite{Ab-Ta}, some details of our computations needed in the proof of Theorem \ref{mainthm} are given for the reader's convenience.

The following result improves  parts (iii) and (iv) of  \cite[Theorem 1.3]{pascal}. 
\begin{thm}
Let $\alpha$ and $\beta$ be non-zero elements of the group algebra of any torsion-free group over the field with two elements. If $|supp(\alpha)|=3$ and $\alpha \beta=0$ then $|supp(\beta)|\geq 20$. 
\end{thm}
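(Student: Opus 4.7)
The plan is to reduce the statement to a finite, computer-checkable question about small cubic triangle-free graphs, and then close it by invoking Theorem \ref{mainthm}. Suppose, for a contradiction, that $\alpha,\beta\in\mathbb{F}_2[G]\setminus\{0\}$ with $\alpha\beta=0$, $|supp(\alpha)|=3$, and $|supp(\beta)|=n\leq 19$ for some torsion-free group $G$. The associated zero-divisor graph $Z(\alpha,\beta)$ is isomorphic to Schweitzer's graph $K(M)$ of the corresponding non-degenerate $3\times n$ matched rectangle $M$, and hence by \cite[Theorem~4.2]{pascal} it is simple, $3$-regular, and triangle-free on $n$ vertices. Since $3n=2|\mathcal{E}_{Z(\alpha,\beta)}|$, the number $n$ must be even, and combining this parity constraint with the preceding theorem of this paper (which gives $|supp(\beta)|\geq 10$) reduces the problem to the five values $n\in\{10,12,14,16,18\}$.

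The next step is to enumerate, up to isomorphism, every simple cubic triangle-free graph on $n$ vertices for each of these five values; such lists are finite and can be produced using McKay's \texttt{nauty}/\texttt{geng} with the triangle-free option, or read off from the standard tabulations of small cubic graphs. For each graph $\mathcal{G}$ in each list I would run a subgraph-isomorphism search against each entry of Table \ref{tab-forbiddens} (and, where convenient, of Figure \ref{forbiddens-F}). The target is to certify that every one of these enumerated graphs contains at least one subgraph isomorphic to a member of Table \ref{tab-forbiddens}; once this is established, Theorem \ref{mainthm} rules $\mathcal{G}$ out as a possible $Z(\alpha,\beta)$ and produces the required contradiction, forcing $|supp(\beta)|\geq 20$.

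The main obstacle I expect is twofold. First, Table \ref{tab-forbiddens} must be rich enough to cover every simple cubic triangle-free graph on up to $18$ vertices; since the count grows rapidly with $n$ (there are already several hundred such graphs on $18$ vertices), the verification is genuinely computer-assisted and has to be exhaustive. To keep it manageable, I would sort the forbidden graphs by size and check the smaller, most frequent obstructions first, pruning the enumeration as I go. Second, for any candidate $\mathcal{G}$ that manages to avoid every current entry of the table, the fallback is an ad hoc analysis around a shortest cycle of $\mathcal{G}$ or around a well-chosen vertex, tracking the labellings inherited from the matched rectangle to either derive a direct contradiction (which effectively augments Table \ref{tab-forbiddens}) or to locate a missing forbidden subgraph.

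Finally, the exhaustive case analysis would be packaged in the same spirit as the proof of Theorem \ref{mainthm}: the main text would contain only the reduction to the finite enumeration, the statement that every graph in the enumeration contains a subgraph from Table \ref{tab-forbiddens}, and the invocation of Theorem \ref{mainthm}, while the detailed tables, subgraph-isomorphism certificates, and matched-rectangle bookkeeping would be deferred to the Appendix of \cite{Ab-Ta}. This gives the desired lower bound $|supp(\beta)|\geq 20$.
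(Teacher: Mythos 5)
Your proposal follows essentially the same route as the paper: Section 6 of the paper uses Sage with \texttt{nauty-geng} to enumerate all connected simple cubic triangle-free graphs on $n\leq 20$ vertices and certifies (Table \ref{tab-graphs}) that for every even $n\leq 18$ each such graph either contains one of the forbidden subgraphs of Table \ref{tab-forbiddens} or is isomorphic to $L_n$ or $M_n$, which Theorem \ref{L-M} excludes; Theorem \ref{T2} and Corollary \ref{maincor} then give the bound. One caveat: you reduce to $n\in\{10,12,14,16,18\}$ by invoking the theorem that $|supp(\beta)|\geq 10$ over an arbitrary field, but the paper's proof of that theorem (Section 7, cases $n=8$ and $n=9$) itself appeals to Corollary \ref{maincor}, i.e.\ to the very statement you are proving, so this shortcut is circular as stated. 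The fix is trivial and is what the paper does: start the enumeration at $n=4$ (only three additional graphs arise for $n\in\{4,6,8\}$), or use Schweitzer's independent bound $|supp(\beta)|>6$ together with parity.
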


The best known result on Conjecture \ref{conj-unit}, which has the purely group-theoretic approach, is concerned with unique product groups \cite{pass1,pass2}. 
%Unique product groups are also two unique product groups \cite{stroj}. 
The latter result covers ordered groups, in particular torsion-free nilpotent groups. Nevertheless, it is still unknown whether or not Conjecture \ref{conj-unit} do hold true for supersolvable torsion-free groups. For any field $\mathbb{F}$ and any torsion-free group $G$, it is known that $\mathbb{F}[G]$ does not contain a unit element whose support is of size at most $2$ (see \cite[Theorem 4.2]{dyk}), but it is not  known a similar result for group algebra elements with the support of size $3$. Dykema et al. \cite{dyk} have shown that there exist no $\gamma , \delta \in \mathbb{F}_2[G]$ such that $\gamma \delta=1$, where $|supp(\gamma)|=3$ and $|supp(\delta)|\leq 11$. Concerning Conjecture \ref{conj-unit},  we prove the following result which improves a part of the result in \cite{dyk} to arbitrary fields. 

\begin{thm}
Let $\gamma$ and $\delta$ be elements of the group algebra of any torsion-free group over an arbitrary field. If $|supp(\gamma)|=3$ and $\gamma \delta =1$ then $|supp(\delta)|\geq 9$. 
\end{thm}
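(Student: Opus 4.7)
The plan is to argue by contradiction: assume $\gamma\delta=1$ in $\mathbb{F}[G]$ with $|supp(\gamma)|=3$ and $n:=|supp(\delta)|\leq 8$, and derive a contradiction by adapting the graph-theoretic framework developed for the zero-divisor theorems. First I would normalize. Since the coefficient of $1_G$ in $\gamma\delta$ equals $1$, there exist $g\in supp(\gamma)$ and $h\in supp(\delta)$ with $gh=1_G$; replacing $(\gamma,\delta)$ by $(g^{-1}\gamma,\,\delta g)$ (which preserves the product equation and both support sizes) and then scaling, I may assume $1_G\in supp(\gamma)\cap supp(\delta)$ and that the coefficient of $1_G$ in $\gamma$ equals $1$. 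I then write $\gamma=1+a_2g_2+a_3g_3$ and $\delta=b_1+\sum_{j=2}^n b_jh_j$, and organize the data as a $3\times n$ array whose $(i,j)$-entry is the group element $g_ih_j$ (with $g_1=h_1=1_G$), carrying the nonzero scalar weight $a_ib_j$.

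Next I would exploit that $j\mapsto g_ih_j$ and $i\mapsto g_ih_j$ are injective, so no element of $G$ appears twice in a single row or column of the array; consequently the $3n$ entries partition into equivalence classes (indexed by their $G$-value) of size at most $3$. The equation $\gamma\delta=1$ translates into the requirement that the weights within each class sum to $0$, \emph{except} for the unique class carrying the value $1_G$, whose weights must sum to $1$. To this data I would associate a unit analogue $U(\gamma,\delta)$ of the zero-divisor graph $Z(\alpha,\beta)$ of Definition \ref{def-Zgraph}: its vertex set is the set of $3n$ array positions, and its edges join positions that share a common $G$-value in a class of size at least $2$. Since $U(\gamma,\delta)$ differs from a genuine length-$3$ zero-divisor graph only at the single exceptional class at $1_G$, I expect each of the forbidden subgraphs of Figure \ref{forbiddens-F} (ruled out by Theorem \ref{thm-F} in the zero-divisor setting) to remain forbidden here, after verifying that the original proofs still go through when a single equivalence class is allowed to carry weight sum $1$ instead of $0$. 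The torsion-free hypothesis on $G$ is used here, as in the zero-divisor case, to rule out cycles produced by relations of the form $g_{i_1}^{-1}g_{i_2}h_{j_1}h_{j_2}^{-1}=1$ that would otherwise force a torsion element.

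Finally, for each $n\in\{3,4,\ldots,8\}$ I would enumerate the possible partitions of the $3\times n$ array into row- and column-disjoint equivalence classes of sizes $1,2,3$ (with at most one class of size $1$, necessarily corresponding to $1_G$ if present), and for each pattern check whether it admits nonzero weights $a_ib_j\in\mathbb{F}^\times$ satisfying the required sum conditions while simultaneously avoiding all the obstructions from the adapted Theorem \ref{thm-F}. A contradiction in every case yields $n\geq 9$. The hard part will be adapting the forbidden subgraph list to the unit setting: the arguments supporting Theorem \ref{thm-F} rely crucially on the uniform zero-sum cancellation at every equivalence class, and one must revisit each obstruction in Figure \ref{forbiddens-F} to confirm (or suitably modify) it once a single ``defect'' at $1_G$ is permitted. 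The subsequent enumeration, though combinatorially tedious, is then a finite and mechanical check.
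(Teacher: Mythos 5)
Your setup — normalizing so that $1\in supp(\gamma)\cap supp(\delta)$, organizing the data as a $3\times n$ array, partitioning the $3n$ positions into row- and column-disjoint classes of size at most $3$ whose weights sum to $0$ except for the one class at $1_G$, and attaching a unit analogue of the zero-divisor graph — is exactly the paper's starting point in Section~\ref{S-unit}. But the proposal omits the two ingredients that actually close the cases $n\le 8$, and without them your concluding ``finite and mechanical check'' does not produce a contradiction. First, you have no lower bound on the number of classes. The paper invokes Kemperman's inequality (Theorem~\ref{kemperman}) for $n=3$ and the Hamidoune--Llad\'o--Serra bound (Theorem~\ref{hamidoune}) for $n\ge 4$ (legitimate because $G$ must be nonabelian, abelian torsion-free groups satisfying the unit conjecture) to get $|supp(\gamma)\,supp(\delta)|\ge n+4$; since every class other than $E_1$ has size at least $2$, this forces $3n\ge |E_1|+2(n+3)$, which kills $n=4,5,6$ outright and leaves only extremely rigid partition shapes for $n=7,8$ (at most one class of size $3$). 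Without such a bound your enumeration must confront, for example, a partition of the $3\times 8$ array into eight classes of size $3$; over a general field each such class can sum to $0$ (or to $1$ at $1_G$) with all weights nonzero, and none of the arbitrary-field obstructions you plan to adapt from Theorem~\ref{thm-F} (triangles of type (I), $C_3-C_3$, $K_{2,3}$) excludes these configurations — indeed size-$3$ classes are exactly type (II) triangles, which are not forbidden over general fields.

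Second, the shapes that survive the counting at $n=7$ and $n=8$ are not eliminated by forbidden subgraphs over arbitrary fields; the paper disposes of them by a reduction to $\mathbb{F}_2$ that has no counterpart in your plan. When every class except a singleton at $1_G$ has size exactly $2$, replacing all coefficients by $1\in\mathbb{F}_2$ turns the configuration into a genuine unit in $\mathbb{F}_2[G]$ with $|supp(\delta')|=7$ (contradicting the Dykema--Heister--Juschenko bound), and when $|E_1|=2$ with all other classes of size $2$ it produces a zero divisor in $\mathbb{F}_2[G]$ with $|supp(\beta')|=8$ (contradicting Corollary~\ref{maincor}, which gives $|supp(\beta)|\ge 20$ and itself rests on the paper's long $\mathbb{F}_2$ computation). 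Only the final subcase of $n=8$ — a singleton at $1_G$ plus exactly one size-$3$ class — is settled by the kind of degree-counting and $K_{2,3}$/double-edge analysis you envisage. So the gap is concrete: you need the product-set lower bounds to constrain the partition, and you need the passage to $\mathbb{F}_2$ (with the attendant $\mathbb{F}_2$ results) to handle the all-size-$2$ patterns; your proposed toolkit of arbitrary-field forbidden subgraphs cannot do either job.
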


It is known that a group algebra of a field $\mathbb{F}$ over a torsion-free group $G$ contains a zero divisor if and only if it contains a non-zero element whose square is zero (see \cite[Lemma 13.1.2]{pass1}).  Using the latter fact,  it is mentioned in \cite[p. 691]{pascal} that it is sufficient to check  Conjecture \ref{conj-zero} only for the case that $|supp(\alpha)|=|supp(\beta)|$, but in the construction that, given a zero divisor produces an element of square zero, it is not clear how the length changes.  We clarify the latter by the following.
\begin{prop}
Let $G$ be a torsion-free group and $\mathbb{F}$ be a field with the property that there exists a positive integer $k$ such that  $\mathbb{F}[G]$ has no non-zero element $\alpha$ with $|supp(\alpha)|\leq k$ and $\alpha^2=0$. Then there exist no non-zero elements $\alpha_1,\alpha_2 \in \mathbb{F}[G]$ such that $\alpha_1\alpha_2=0$ and $|supp(\alpha_1)||supp(\alpha_2)|\leq k$.
\end{prop}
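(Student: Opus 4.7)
The plan is to prove the contrapositive: assume there exist non-zero $\alpha_1,\alpha_2\in\mathbb{F}[G]$ with $\alpha_1\alpha_2=0$ and $|supp(\alpha_1)|\cdot|supp(\alpha_2)|\le k$, and produce a non-zero $\alpha\in\mathbb{F}[G]$ with $\alpha^2=0$ and $|supp(\alpha)|\le k$. The construction I would try is the sandwich $\alpha:=\alpha_2\, g\,\alpha_1$ for a suitable $g\in G$; this is the standard device for manufacturing a nilpotent from a zero divisor, and the new twist needed here is to also track the support size so that the conversion is support-preserving in the multiplicative sense $|supp(\alpha_1)|\cdot|supp(\alpha_2)|\le k$.

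For any $g\in G$ one automatically has $(\alpha_2 g\alpha_1)(\alpha_2 g\alpha_1)=\alpha_2\, g\,(\alpha_1\alpha_2)\,g\,\alpha_1=0$, and the trivial bound $supp(\alpha_2 g\alpha_1)\subseteq\{y g x:y\in supp(\alpha_2),\,x\in supp(\alpha_1)\}$ gives $|supp(\alpha_2 g\alpha_1)|\le|supp(\alpha_2)|\cdot|supp(\alpha_1)|\le k$. So the only way the sandwich can fail to produce the desired nilpotent is that $\alpha_2 g\alpha_1=0$ for every $g\in G$; if a single $g\in G$ witnesses $\alpha_2 g\alpha_1\ne 0$, that element is a non-zero square-zero element of support at most $k$, directly contradicting the hypothesis.

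The remaining case is therefore $\alpha_2 g\alpha_1=0$ for all $g\in G$, which by $\mathbb{F}$-linearity is equivalent to $\alpha_2\,\mathbb{F}[G]\,\alpha_1=0$. To rule this out, the key tool is \emph{primeness} of $\mathbb{F}[G]$: by Connell's theorem, $\mathbb{F}[G]$ is a prime ring iff $G$ has no non-identity finite normal subgroup, and for a torsion-free $G$ this holds trivially. Hence $\alpha_2\,\mathbb{F}[G]\,\alpha_1=0$ forces $\alpha_1=0$ or $\alpha_2=0$, contradicting our initial choice. The main substantive input, and essentially the only step beyond elementary manipulation, is this appeal to primeness (or an equivalent direct argument in the torsion-free setting); the rest is a routine bookkeeping computation.
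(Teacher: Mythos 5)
Your proposal is correct and follows essentially the same route as the paper: the sandwich $\alpha_2\,g\,\alpha_1$, the observation that any non-zero such sandwich is a square-zero element of support size at most $|supp(\alpha_1)|\,|supp(\alpha_2)|\le k$, and the elimination of the degenerate case $\alpha_2\,\mathbb{F}[G]\,\alpha_1=0$ by primeness of the group algebra. The only cosmetic difference is that you invoke Connell's theorem as a black box, whereas the paper derives the same conclusion directly from Passman's lemmas on the projection onto $\mathbb{F}[\Delta]$ (after normalizing so that $1$ lies in both supports) — the same underlying argument.
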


%-----------------------------------------------------------------------------------------------------------------------------------------
\begin{figure}[ht]
\centering
\psscalebox{0.9 0.9} % Change this value to rescale the drawing.
{
% [inline block 0: 2 envs, 60461 chars -> data_tex | \begin{pspicture}(0,-1.6985576)(7.594231,1.6985576) \psdots[linecolor=black, dotsize=0.4](1.3971155,1.5014423)...]

\end{center}

%--------------------------------------------------------------------------------------------------------------------------------------

\section{\bf Zero-divisor graphs and Unit graphs}

\begin{defn}\label{def-Zgraph}
{\rm
For any pair of non-zero elements $(\alpha,\beta)$ of a group algebra over a field $\mathbb{F}$ and a group $G$  such that $\alpha \beta=0$, we assign a graph $Z(\alpha,\beta)$ to $(\alpha,\beta)$ called the zero-divisor graph of $(\alpha,\beta)$ as follows: the vertex set is $supp(\beta)$, the edge set is
\begin{equation*}
\left\{   \{(h,h',g,g') ,(h',h,g',g)\}  \;|\; h,h'\in supp(\alpha), \; g,g'\in supp(\beta), \; g\neq g', \; hg=h'g' \right \},
\end{equation*}
and $\psi_{Z(\alpha,\beta)}: \mathcal{E}_{Z(\alpha,\beta)}\rightarrow \mathcal{V}_{Z(\alpha,\beta)}^{(2)}$ is defined by
$$\psi_{Z(\alpha,\beta)}(\{(h,h',g,g') ,(h',h,g',g)\})= \{g,g'\},$$
for all  $\{(h,h',g,g') ,(h',h,g',g)\}\in \mathcal{E}_{Z(\alpha,\beta)}$.

We call $Z(\alpha, \beta)$ a zero-divisor graph of length $|supp(\alpha)|$ over the field $\mathbb{F}$ and on the group $G$.
}
\end{defn}
  
\begin{rem}\label{r-Zgraph}
{\rm 
\begin{enumerate}
\item We note that the graph $Z(\alpha,\beta)$ is an undirected graph with no loops but it may happen that a zero-divisor graph has multi-edge; that is,  in general a zero-divisor graph may not be a simple graph.
\item   Being non-zero of $\beta$ ensures that the vertex set is non-empty
 and the zero-divisor condition $\alpha \beta=0$ and the conditions $\alpha\not=0$ and $\beta\not=0$ imply that the edge set of the graph is non-empty. 
 \item One may use instead $Z(\alpha,\beta)$ the notations such as $Z(\alpha, \beta, |supp(\alpha)|,|supp(\beta)|)$, 
 $Z_{\mathbb{F}}(\alpha,\beta)$ or $Z_{\mathbb{F}[G]}(\alpha,\beta)$, $\dots$ to indicate the support sizes of $\alpha$ and $\beta$, the underlying field or the  underlying group algebra, $\dots$.
% \item One may consider the directed zero-divisor graph $\overset{\rightarrow}{Z}(\alpha,\beta)$ by defining the %edge set as follows: 
 %\begin{equation*}
%\{   (h',h,g',g)  \;|\; h,h'\in supp(\alpha), \; g,g'\in supp(\beta), \; g\neq g', \; hg=h'g'  \}
%\end{equation*}
\item From the notation $Z(\alpha,\beta)$ we understand that $\alpha \beta=0$ for two non-zero elements of a group algebra and so $x\alpha \beta y=0$ for all group elements $x$ and $y$. So we may consider the zero-divisor graph 
 $Z(x\alpha,\beta y)$ which is isomorphic to $Z(\alpha,\beta)$ (see below, Lemma \ref{iso}).
\item In this paper we will only study the zero-divisor graphs of pairs whose first component has support size $3$ and mostly the field is $\mathbb{F}_2$ and the underlying group $G$ is always torsion-free.
\item If $|supp(\alpha)|=3$ and the underlying group $G$ is torsion-free, the zero-divisor graph is  simple so that between two distinct vertices there is at most one edge (see below Proposition \ref{simple}). 
\item If we choose $\beta$ of minimum support size with respect to the property $\alpha \beta=0$, the graph $Z(\alpha,\beta)$ will be connected (see below Lemma \ref{Z-connect}).
\end{enumerate}
} 
\end{rem}

\begin{exm}
\begin{enumerate}
\item
 Let $G_1=\langle x\rangle$ be the cyclic group of order $7$ and let $\alpha=1+x^2+x^3+x^4$ and $\beta=1+x+x^5$ in $\mathbb{F}_2[G_1]$, then $\alpha \beta=0$. Figures \ref{f-zgraph-1} and \ref{f-zgraph-2} show the graphs $Z(\alpha,\beta)$ and $Z(\beta,\alpha)$, respectively.
 
\begin{figure}[ht]
\psscalebox{0.9 0.9} % Change this value to rescale the drawing.
{
\begin{tikzpicture}
[every node/.style={inner sep=0pt}]
\node (1) [circle, minimum size=12.5pt, fill=black, line width=0.625pt, draw=black] at (125.0pt, -37.5pt) {\textcolor{black}{1}};
\node (2) [circle, minimum size=12.5pt, fill=black, line width=0.625pt, draw=black] at (87.5pt, -100.0pt) {\textcolor{black}{2}};
\node (3) [circle, minimum size=12.5pt, fill=black, line width=0.625pt, draw=black] at (162.5pt, -100.0pt) {\textcolor{black}{3}};
\draw [line width=1.25, color=black] (2) to  (1);
\draw [line width=1.25, color=black] (1) to  (3);
\draw [line width=1.25, color=black] (3) to  (2);
\draw [line width=1.25, color=black] (1) to  [in=90, out=209] (2);
\draw [line width=1.25, color=black] (2) to  [in=209, out=331] (3);
\draw [line width=1.25, color=black] (1) to  [in=90, out=331] (3);
\node at (125.0pt, -23.125pt) {\textcolor{black}{$1$}};
\node at ((72.75pt, -100.0pt) {\textcolor{black}{$x^5$}};
\node at (175.625pt, -100.0pt) {\textcolor{black}{$x$}};
\end{tikzpicture}
}
\caption{$Z(1+x^2+x^3+x^4,1+x+x^5)$ over  $\mathbb{F}_2$ and on $G_1$}\label{f-zgraph-1}
\end{figure}

\begin{figure}[ht]
\psscalebox{0.9 0.9} % Change this value to rescale the drawing.
{
\begin{tikzpicture}
[every node/.style={inner sep=0pt}]
\node (1) [circle, minimum size=12.5pt, fill=black, line width=0.625pt, draw=black] at (100.0pt, -37.5pt) {\textcolor{black}{1}};
\node (2) [circle, minimum size=12.5pt, fill=black, line width=0.625pt, draw=black] at (162.5pt, -37.5pt) {\textcolor{black}{2}};
\node (3) [circle, minimum size=12.5pt, fill=black, line width=0.625pt, draw=black] at (162.5pt, -100.0pt) {\textcolor{black}{3}};
\node (4) [circle, minimum size=12.5pt, fill=black, line width=0.625pt, draw=black] at (100.0pt, -100.0pt) {\textcolor{black}{4}};
\draw [line width=1.25, color=black] (1) to  (4);
\draw [line width=1.25, color=black] (4) to  (3);
\draw [line width=1.25, color=black] (3) to  (2);
\draw [line width=1.25, color=black] (1) to  (2);
\draw [line width=1.25, color=black] (1) to  (3);
\draw [line width=1.25, color=black] (2) to  (4);
\node at (100.0pt, -23.125pt) {\textcolor{black}{$1$}};
\node at (162.5pt, -23.125pt) {\textcolor{black}{$x^2$}};
\node at (162.5pt, -114.375pt) {\textcolor{black}{$x^3$}};
\node at (100.0pt, -114.375pt) {\textcolor{black}{$x^4$}};
\end{tikzpicture}
}
\caption{$Z(1+x+x^5,1+x^2+x^3+x^4)$ over  $\mathbb{F}_2$ and on  $G_1$}\label{f-zgraph-2}
\end{figure}
\end{enumerate}
\end{exm}

\begin{lem}\label{iso}
$Z(\alpha,\beta)\cong Z(x \alpha, \beta y)$ for all group elements $x$ and $y$. 
\end{lem}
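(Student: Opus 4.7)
The plan is to exhibit the obvious isomorphism induced by left multiplication by $x$ on the $\alpha$-side and right multiplication by $y$ on the $\beta$-side. First, since $G$ is a group, left multiplication by $x$ is a bijection, so $supp(x\alpha) = x \cdot supp(\alpha)$ with the map $h \mapsto xh$ being a bijection $supp(\alpha) \to supp(x\alpha)$; similarly, $g \mapsto gy$ is a bijection $supp(\beta) \to supp(\beta y)$. Also, $(x\alpha)(\beta y) = x(\alpha \beta) y = 0$ and both $x\alpha, \beta y$ are non-zero, so by Remark \ref{r-Zgraph}(2) the graph $Z(x\alpha, \beta y)$ is defined.

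Next, I would define $\phi_V : \mathcal{V}_{Z(\alpha,\beta)} \to \mathcal{V}_{Z(x\alpha,\beta y)}$ by $\phi_V(g) = gy$, which is a bijection by the observation above, and $\phi_E : \mathcal{E}_{Z(\alpha,\beta)} \to \mathcal{E}_{Z(x\alpha,\beta y)}$ by
$$\phi_E\bigl(\{(h,h',g,g'),(h',h,g',g)\}\bigr) = \{(xh, xh', gy, g'y),(xh', xh, g'y, gy)\}.$$
The image is a legitimate edge of $Z(x\alpha, \beta y)$ because $xh, xh' \in supp(x\alpha)$, $gy, g'y \in supp(\beta y)$, $gy \neq g'y$ (as $g \neq g'$), and $(xh)(gy) = xhgy = xh'g'y = (xh')(g'y)$ since $hg = h'g'$. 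Injectivity and surjectivity of $\phi_E$ follow from cancellation in $G$: every edge of $Z(x\alpha, \beta y)$ has its four components uniquely of the form $xh, xh', gy, g'y$, and $(xh)(gy) = (xh')(g'y)$ is equivalent to $hg = h'g'$.

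Finally, checking the isomorphism conditions is immediate. By construction
$$\psi_{Z(x\alpha,\beta y)}(\phi_E(e)) = \{gy, g'y\} = \{\phi_V(g), \phi_V(g')\},$$
so endpoints are preserved pointwise. Both vertex-adjacency and edge-adjacency in the definition are phrased entirely in terms of endpoint sets, so preservation of endpoints (together with the bijectivity of $\phi_V$ and $\phi_E$) yields preservation of adjacency and non-adjacency in both directions. There is essentially no obstacle here: the lemma is the formal statement that left/right translation of a zero-divisor identity induces a canonical isomorphism of the associated graphs, with the cancellation property of $G$ doing all the work.
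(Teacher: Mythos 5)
Your proposal is correct and is essentially identical to the paper's proof: the paper defines the same maps $\phi_V(g)=gy$ and $\phi_E(\{(h,h',g,g'),(h',h,g',g)\})=\{(xh,xh',gy,g'y),(xh',xh,g'y,gy)\}$ and asserts they form an isomorphism. You simply spell out the routine verifications that the paper leaves as "easy to see."
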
  
\begin{proof}
Consider the maps $\phi_V: supp(\beta) \rightarrow supp(\beta y)$ and $\phi_E: \mathcal{E}_{Z(\alpha,\beta)}\rightarrow \mathcal{E}_{Z(x \alpha, \beta y)}$ defined by $\phi_V(g)= gy$ for all $g\in supp(\beta)$ and 
\begin{equation*}
\phi_E( \{(h,h',g,g') ,(h',h,g',g)\} )=  \{(xh,xh',gy,g'y) ,(xh',xh,g'y,gy)\}
\end{equation*}
for all  $\{(h,h',g,g') ,(h',h,g',g)\}\in \mathcal{E}_{Z(\alpha,\beta)}$. Then it is easy to see that $(\phi_V,\phi_E)$ is an isomorphism from $Z(\alpha,\beta)$ to $Z(x \alpha, \beta y)$.
\end{proof}

\begin{lem}\label{supp}
Suppose that $\alpha$ and $\beta$ are non-zero elements of a group algebra such that $\alpha \beta=0$.
\begin{enumerate}
\item Suppose that if $\alpha \beta'=0$ for some non-zero element $\beta'$ of the group algebra, then $|supp(\beta')|\geq |supp(\beta)|$. Then  $supp(\beta) g^{-1}\subseteq \langle x supp(\alpha)  \rangle$  for all $g\in supp(\beta)$ and all group elements  $x$. 

\item Suppose that if $\alpha' \beta=0$ for some non-zero element  $\alpha'$ of the group algebra, then $|supp(\alpha')|\geq |supp(\alpha)|$. Then 
$h^{-1} supp(\alpha)\subseteq \langle  supp(\beta) y \rangle$ 
 for all $h\in supp(\alpha)$ and all group elements $y$.
\end{enumerate}
 \end{lem}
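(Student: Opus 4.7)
The plan is to show that under each minimality hypothesis, the support of the ``other'' factor must lie in a single coset of the subgroup generated by a translate of the given support. The argument is a standard coset-decomposition of a group algebra element.

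For part (1), fix $g\in supp(\beta)$ and a group element $x$, and set $H=\langle x\cdot supp(\alpha)\rangle$, so that $supp(\alpha)\subseteq x^{-1}H$. I would decompose $\beta=\sum_{i}\beta_i$, where each non-zero piece $\beta_i$ collects the terms of $\beta$ whose group elements lie in a single right coset $Hg_i$ of $H$, the $g_i$ chosen from distinct cosets. Because
$$supp(\alpha)\cdot (Hg_i)\subseteq x^{-1}H\cdot Hg_i = x^{-1}Hg_i,$$
and the right cosets $x^{-1}Hg_i$ are pairwise disjoint for distinct $i$, the supports of the products $\alpha\beta_i$ are pairwise disjoint. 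Hence $\alpha\beta=\sum_i\alpha\beta_i=0$ forces $\alpha\beta_i=0$ for every $i$. If there were two or more non-zero $\beta_i$, each such $\beta_i$ would be a non-zero right annihilator of $\alpha$ with $|supp(\beta_i)|<|supp(\beta)|$, contradicting the minimality hypothesis. Therefore the decomposition has exactly one summand; that is, $supp(\beta)\subseteq Hg$, or equivalently $supp(\beta)g^{-1}\subseteq H$.

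Part (2) is proved by the symmetric argument on the other side. Setting $K=\langle supp(\beta)\,y\rangle$, one has $supp(\beta)\subseteq Ky^{-1}$; I would decompose $\alpha=\sum_j\alpha_j$ into pieces supported on distinct left cosets $h_jK$, and observe that
$$supp(\alpha_j)\cdot supp(\beta)\subseteq h_jK\cdot Ky^{-1}=h_jKy^{-1},$$
with these sets pairwise disjoint for distinct $j$. So $\alpha_j\beta=0$ for each $j$, and minimality of $|supp(\alpha)|$ forces a single non-zero summand, giving $supp(\alpha)\subseteq hK$, i.e.\ $h^{-1}supp(\alpha)\subseteq K$, for every $h\in supp(\alpha)$.

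The only point requiring actual verification is the disjointness of the supports of the pieces $\alpha\beta_i$ (respectively $\alpha_j\beta$), which reduces to the elementary fact that distinct cosets of a subgroup remain distinct after left- or right-multiplication by a fixed element. I do not anticipate a serious obstacle: the argument uses neither torsion-freeness of $G$ nor any property of the field, and is a pure coset-bookkeeping argument of the type used throughout the coset analysis of group-ring annihilators.
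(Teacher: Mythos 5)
Your proof is correct and follows essentially the same coset-decomposition argument as the paper: decompose the second factor over the distinct cosets of $H$ (resp.\ $K$), observe that the resulting products have pairwise disjoint supports so each must vanish, and invoke minimality of the support size to force a single coset, which must be $Hg$ (resp.\ $hK$) since $g$ (resp.\ $h$) lies in it. The only cosmetic difference is that the paper first replaces $\alpha,\beta$ by $x\alpha$ and $\beta g^{-1}$ and then decomposes, whereas you decompose $\beta$ directly and track the translate $x^{-1}H$; the two bookkeeping schemes are equivalent.
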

\begin{proof}
(1) \; Let  $H=\langle x supp(\alpha) \rangle$ and suppose that $\{t_1,t_2,\ldots,t_k\}$ be a subset of right coset representatives of $H$ in the group such that $supp(\beta) g^{-1} \cap Ht_i\neq \varnothing$ for all $i\in\{1,\dots,k\}$ and $supp(\beta)g^{-1} \subseteq \cup_{i=1}^k Ht_i$.   Since $x \alpha \beta g^{-1}=0$ and $Ht_{i}\cap Ht_{j}=\varnothing$ for all distinct $i,j\in \{1,2,\ldots,k\}$,  $(x \alpha )(\sum_{g'\in Ht_i} \beta_{g'} g')=0$ for all $i\in \{1,2,\ldots,k\}$, where $\beta=\sum_{g'\in G} \beta_{g'} g'$. Now it follows from the hypothesis that $k=1$ so that $supp(\beta)g^{-1} \subseteq Ht_1$. Hence $supp(\beta)g^{-1} \subseteq H$ for all $g\in supp(\beta)$.  

(2) \;  Consider the subgroup $K=\langle  supp(\beta) y \rangle$  and take  a finite subset $S$ of left coset representatives of $K$ in the group such that $h^{-1} supp(\alpha) \cap sK\neq \varnothing$ for all $s \in S$ and $h^{-1} supp(\alpha) \subseteq \cup_{s\in S} sK$. By a similar argument as in part (1), $|S|=1$ and 
  the inclusion is proved.
\end{proof}

\begin{lem}\label{Z-connect}
Let $Z(\alpha,\beta)$ be a zero-divisor graph such that $\beta$ has minimum possible support size among all non-zero elements $\gamma$ with $\alpha \gamma=0$. Then $Z(\alpha,\beta)$ is connected. 
\end{lem}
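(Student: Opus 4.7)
The plan is to argue by contradiction: assume $Z(\alpha,\beta)$ is disconnected and split $\beta$ according to the connected components to produce a non-zero element $\beta_1$ with $|supp(\beta_1)|<|supp(\beta)|$ and $\alpha\beta_1=0$, contradicting the minimality hypothesis.

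Concretely, suppose $Z(\alpha,\beta)$ is not connected. Then $supp(\beta)$ admits a non-trivial partition $supp(\beta)=V_1\sqcup V_2$ with no edge of $Z(\alpha,\beta)$ joining a vertex of $V_1$ to a vertex of $V_2$. Write $\beta=\beta_1+\beta_2$ where $\beta_i=\sum_{g\in V_i}\beta_g\, g$, so that $supp(\beta_i)=V_i$ and both $\beta_1$ and $\beta_2$ are non-zero. Since $\alpha\beta=0$, for every group element $f$ we have
\begin{equation*}
c_{1,f}+c_{2,f}=0, \qquad \text{where } c_{i,f}=\sum_{\substack{h\in supp(\alpha),\, g\in V_i \\ hg=f}}\alpha_h\beta_g.
\end{equation*}

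The key claim is that $c_{1,f}=0$ for every $f$, which forces $\alpha\beta_1=0$. Indeed, if some $c_{1,f}$ were non-zero, then $c_{2,f}=-c_{1,f}$ would also be non-zero, hence there would exist $h_1\in supp(\alpha)$, $g_1\in V_1$ with $h_1g_1=f$ and $h_2\in supp(\alpha)$, $g_2\in V_2$ with $h_2g_2=f$. Since $V_1\cap V_2=\varnothing$ we have $g_1\neq g_2$, and the equality $h_1g_1=h_2g_2$ together with Definition \ref{def-Zgraph} yields the edge $\{(h_1,h_2,g_1,g_2),(h_2,h_1,g_2,g_1)\}\in\mathcal{E}_{Z(\alpha,\beta)}$ joining $g_1\in V_1$ to $g_2\in V_2$, contradicting the choice of the partition.

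Therefore $\alpha\beta_1=0$ with $\beta_1\neq 0$ and $|supp(\beta_1)|=|V_1|<|supp(\beta)|$, contradicting the minimality of $|supp(\beta)|$ among non-zero elements $\gamma$ with $\alpha\gamma=0$. Hence $Z(\alpha,\beta)$ is connected. I do not foresee any real obstacle here; the only thing to be careful about is that the partition $V_1\sqcup V_2$ is really non-trivial so that both $\beta_1,\beta_2$ are non-zero, which is guaranteed by writing the partition in terms of the connected components of the graph.
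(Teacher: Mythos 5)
Your proof is correct and follows essentially the same route as the paper: split $\beta=\beta_1+\beta_2$ along the disconnection, observe that the absence of edges between $V_1$ and $V_2$ forces the supports of $\alpha\beta_1$ and $\alpha\beta_2$ to be disjoint (your coefficient computation $c_{1,f}+c_{2,f}=0$ is just a more explicit rendering of this), and conclude $\alpha\beta_1=0$ against minimality. No issues.
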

\begin{proof}
Suppose, for a contradiction, that $\mathcal{V}_{Z(\alpha,\beta)}=supp(\beta)$ is partitioned into two non-empty subsets $V_1$ and $V_2$ such that there is no edge between $V_1$ and $V_2$. It follows that $supp(\alpha\beta_1) \cap supp(\alpha\beta_2)=\varnothing$, where $\beta=\sum_{g\in G} \beta_g g$ and $\beta_i=\sum_{g\in V_i} \beta_g g$ ($i=1,2$). Since $\alpha \beta_1 + \alpha \beta_2=0$, 
\begin{equation} \label{eq1}
\alpha\beta_1=\alpha \beta_2=0,
\end{equation}  Since $|supp(\beta_i)|<|supp(\beta)|$, the equalities \ref{eq1} give contradiction. This completes the proof.
\end{proof}

\begin{defn}\label{def-Ugraph}
{\rm
For any pair of  elements $(\alpha,\beta)$ of a group algebra over a field $\mathbb{F}$ and on a group $G$  such that $\alpha \beta=1$, we assign to $(\alpha,\beta)$ a graph $U(\alpha,\beta)$ called the unit graph of $(\alpha,\beta)$ as follows: the vertex set is $supp(\beta)$, the edge set is
\begin{equation*}
\left \{   \{(h,h',g,g') ,(h',h,g',g)\}  \;|\; h,h'\in supp(\alpha), \; g,g'\in supp(\beta), \; g\neq g', \; hg=h'g' \right \},
\end{equation*}
and if $\mathcal{E}_{U(\alpha,\beta)}\neq \varnothing$, the function $\psi_{U(\alpha,\beta)}: \mathcal{E}_{U(\alpha,\beta)}\rightarrow \mathcal{V}_{U(\alpha,\beta)}^{(2)}$ is defined by
$$\psi_{U(\alpha,\beta)}(\{(h,h',g,g') ,(h',h,g',g)\})= \{g,g'\},$$
for all  $\{(h,h',g,g') ,(h',h,g',g)\}\in \mathcal{E}_{U(\alpha,\beta)}$.

We call $U(\alpha, \beta)$ a unit graph of length $|supp(\alpha)|$ over the field $\mathbb{F}$ and on the group $G$.
}
\end{defn}

\begin{rem}\label{r-Ugraph}
{\rm 
\begin{enumerate}
\item Note that the main difference between the definition of the zero-divisor graph $Z(\alpha,\beta)$ and the unit graph $U(\alpha,\beta)$ is in the conditions on $\alpha$ and $\beta$ and there is no difference in defining the vertex and edge sets.  
\item Any unit graph is an undirected graph with no loops but it may happen that a unit graph has multi-edge; that is,  in general a unit graph may not be a simple graph.
\item The vertex set of $U(\alpha,\beta)$ is non-empty, since $\beta$ is non-zero. 
\item If $\alpha \beta=1$, then there exists $h\in supp(\alpha)$ such that $h^{-1} \in supp(\beta)$. The latter allows  us to assume $1\in supp(\alpha) \cap supp(\beta)$ whenever we are studying the unit graph $U(\alpha,\beta)$ in view of Lemma \ref{U-iso}, below.
 \item One may use instead $U(\alpha,\beta)$ the notations such as $U(\alpha, \beta, |supp(\alpha)|,|supp(\beta)|)$, 
 $U_{\mathbb{F}}(\alpha,\beta)$ or $U_{\mathbb{F}[G]}(\alpha,\beta)$, $\dots$ to indicate the support sizes of $\alpha$ and $\beta$, the underlying field or the  underlying group algebra, $\dots$.
\item In this paper we will only study  unit graphs of pairs whose first component has support size $3$ and the underlying group $G$ is always torsion-free.
\item If $|supp(\alpha)|=3$ and the underlying group $G$ is torsion-free, the unit graph is  simple so that between two distinct vertices there is at most one edge (see below Proposition \ref{simple-unit}). 
\item If we choose $\beta$ of minimum support size with respect to the property $\alpha \beta=1$, the graph $U(\alpha,\beta)$ will be connected (see below  Lemma \ref{U-connect}).
\end{enumerate}
}
\end{rem}
\begin{exm}
\begin{enumerate}
\item
 Let $G_2=\langle x\rangle$ be the cyclic group of order $3$ and $\mathbb{F}_3$ be the  field with $3$ elements. Then $\alpha=-1+x-x^2$ is a non-trivial unit with inverse $\beta=1+x$ in $\mathbb{F}_3[G_2]$. Figures \ref{f-ugraph-1} and \ref{f-ugraph-2} show the graphs $U(\alpha,\beta)$ and $U(\beta,\alpha)$, respectively.
\begin{figure}[ht]
\psscalebox{0.9 0.9} % Change this value to rescale the drawing.
{
\begin{tikzpicture}
[every node/.style={inner sep=0pt}]
\node (1) [circle, minimum size=12.5pt, fill=black, line width=0.625pt, draw=black] at (100.0pt, -87.5pt) {\textcolor{black}{1}};
\node (2) [circle, minimum size=12.5pt, fill=black, line width=0.625pt, draw=black] at (187.5pt, -87.5pt) {\textcolor{black}{2}};
\draw [line width=1.25, color=black] (1) to  (2);
\draw [line width=1.25, color=black] (1) to  [in=151, out=29] (2);
\draw [line width=1.25, color=black] (1) to  [in=209, out=331] (2);
\node at (87.25pt, -87.5pt) {\textcolor{black}{$1$}};
\node at (200.625pt, -87.5pt) {\textcolor{black}{$x$}};
\end{tikzpicture}
}
\caption{$U(-1+x-x^2,1+x)$ over  $\mathbb{F}_3$ and on $G_2$}\label{f-ugraph-1}
\end{figure}
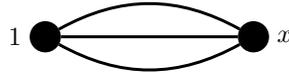

\begin{figure}[ht]
\psscalebox{0.9 0.9} % Change this value to rescale the drawing.
{
\begin{tikzpicture}
[every node/.style={inner sep=0pt}]
\node (1) [circle, minimum size=12.5pt, fill=black, line width=0.625pt, draw=black] at (125.0pt, -37.5pt) {\textcolor{black}{1}};
\node (2) [circle, minimum size=12.5pt, fill=black, line width=0.625pt, draw=black] at (87.5pt, -100.0pt) {\textcolor{black}{2}};
\node (3) [circle, minimum size=12.5pt, fill=black, line width=0.625pt, draw=black] at (162.5pt, -100.0pt) {\textcolor{black}{3}};
\draw [line width=1.25, color=black] (2) to  (1);
\draw [line width=1.25, color=black] (1) to  (3);
\draw [line width=1.25, color=black] (3) to  (2);
\node at (125.0pt, -23.125pt) {\textcolor{black}{$\mathbf{1}$}};
\node at (72.75pt, -100.0pt) {\textcolor{black}{$\mathbf{x^2}$}};
\node at (175.625pt, -100.0pt) {\textcolor{black}{$\mathbf{x}$}};
\end{tikzpicture}
}
\caption{$U(1+x,-1+x-x^2)$ over  $\mathbb{F}_3$ and on $G_2$}\label{f-ugraph-2}
\end{figure}
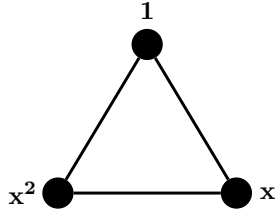

\item
Let $G_3=\langle x \rangle$ be the cyclic group of order $8$ and $\mathbb{F}$ be an arbitrary field whose characteristic is not $2$. Then $\alpha=-1-x+x^3+2\cdot x^4+x^5-x^7$ is a non-trivial unit with inverse $\beta=-1+x-x^3+2 \cdot x^4-x^5+x^7$ in $\mathbb{F}[G_3]$. Figure \ref{f-ugraph-3} shows the graphs $U(\alpha,\beta)$ and $U(\beta,\alpha)$ which are isomorphic.
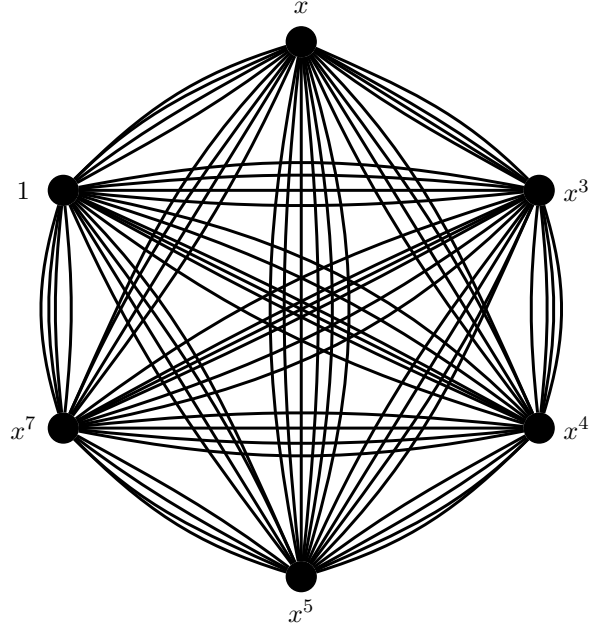
\begin{figure}[h]
\psscalebox{0.9 0.9} % Change this value to rescale the drawing.
{
\begin{tikzpicture}
[every node/.style={inner sep=0pt}]
\node (1) [circle, minimum size=12.5pt, fill=black, line width=0.625pt, draw=black] at (187.5pt, -37.5pt) {\textcolor{black}{1}};
\node (4) [circle, minimum size=12.5pt, fill=black, line width=0.625pt, draw=black] at (187.5pt, -262.5pt) {\textcolor{black}{4}};
\node (2) [circle, minimum size=12.5pt, fill=black, line width=0.625pt, draw=black] at (87.5pt, -100.0pt) {\textcolor{black}{2}};
\node (3) [circle, minimum size=12.5pt, fill=black, line width=0.625pt, draw=black] at (87.5pt, -200.0pt) {\textcolor{black}{3}};
\node (5) [circle, minimum size=12.5pt, fill=black, line width=0.625pt, draw=black] at (287.5pt, -200.0pt) {\textcolor{black}{5}};
\node (6) [circle, minimum size=12.5pt, fill=black, line width=0.625pt, draw=black] at (287.5pt, -100.0pt) {\textcolor{black}{6}};
\draw [line width=1.25, color=black] (2) to  [in=217, out=29] (1);
\draw [line width=1.25, color=black] (2) to  (6);
\draw [line width=1.25, color=black] (5) to  (2);
\draw [line width=1.25, color=black] (3) to  [in=276, out=84] (2);
\draw [line width=1.25, color=black] (2) to  (4);
\draw [line width=1.25, color=black] (1) to  (3);
\draw [line width=1.25, color=black] (4) to  (1);
\draw [line width=1.25, color=black] (5) to  (1);
\draw [line width=1.25, color=black] (6) to  [in=323, out=151] (1);
\draw [line width=1.25, color=black] (5) to  [in=264, out=96] (6);
\draw [line width=1.25, color=black] (6) to  (4);
\draw [line width=1.25, color=black] (6) to  (3);
\draw [line width=1.25, color=black] (5) to  [in=29, out=217] (4);
\draw [line width=1.25, color=black] (5) to  (3);
\draw [line width=1.25, color=black] (4) to  [in=331, out=143] (3);
\draw [line width=1.25, color=black] (2) to  [in=114, out=307] (4);
\draw [line width=1.25, color=black] (2) to  [in=209, out=37] (1);
\draw [line width=1.25, color=black] (2) to  [in=204, out=45] (1);
\draw [line width=1.25, color=black] (2) to  [in=197, out=45] (1);
\draw [line width=1.25, color=black] (2) to  [in=127, out=294] (4);
\draw [line width=1.25, color=black] (2) to  [in=114, out=315] (4);
\draw [line width=1.25, color=black] (2) to  [in=151, out=336] (5);
\draw [line width=1.25, color=black] (2) to  [in=156, out=331] (5);
\draw [line width=1.25, color=black] (2) to  [in=143, out=343] (5);
\draw [line width=1.25, color=black] (2) to  [in=163, out=323] (5);
\draw [line width=1.25, color=black] (2) to  [in=135, out=349] (5);
\draw [line width=1.25, color=black] (1) to  [in=84, out=276] (4);
\draw [line width=1.25, color=black] (1) to  [in=96, out=264] (4);
\draw [line width=1.25, color=black] (1) to  [in=79, out=281] (4);
\draw [line width=1.25, color=black] (1) to  [in=101, out=259] (4);
\draw [line width=1.25, color=black] (1) to  [in=73, out=287] (4);
\draw [line width=1.25, color=black] (1) to  [in=114, out=307] (5);
\draw [line width=1.25, color=black] (1) to  [in=127, out=294] (5);
\draw [line width=1.25, color=black] (1) to  [in=114, out=315] (5);
\draw [line width=1.25, color=black] (5) to  [in=24, out=225] (4);
\draw [line width=1.25, color=black] (5) to  [in=37, out=209] (4);
\draw [line width=1.25, color=black] (5) to  [in=17, out=225] (4);
\draw [line width=1.25, color=black] (2) to  [in=96, out=264] (3);
\draw [line width=1.25, color=black] (2) to  [in=101, out=259] (3);
\draw [line width=1.25, color=black] (2) to  [in=107, out=253] (3);
\draw [line width=1.25, color=black] (3) to  [in=204, out=29] (6);
\draw [line width=1.25, color=black] (6) to  [in=24, out=209] (3);
\draw [line width=1.25, color=black] (6) to  [in=17, out=217] (3);
\draw [line width=1.25, color=black] (3) to  [in=197, out=37] (6);
\draw [line width=1.25, color=black] (6) to  [in=11, out=225] (3);
\draw [line width=1.25, color=black] (2) to  [in=174, out=6] (6);
\draw [line width=1.25, color=black] (2) to  [in=186, out=354] (6);
\draw [line width=1.25, color=black] (2) to  [in=169, out=11] (6);
\draw [line width=1.25, color=black] (6) to  [in=84, out=276] (5);
\draw [line width=1.25, color=black] (6) to  [in=79, out=281] (5);
\draw [line width=1.25, color=black] (6) to  [in=73, out=287] (5);
\draw [line width=1.25, color=black] (1) to  [in=143, out=331] (6);
\draw [line width=1.25, color=black] (1) to  [in=135, out=336] (6);
\draw [line width=1.25, color=black] (1) to  (6);
\draw [line width=1.25, color=black] (5) to  [in=6, out=174] (3);
\draw [line width=1.25, color=black] (5) to  [in=354, out=186] (3);
\draw [line width=1.25, color=black] (5) to  [in=349, out=191] (3);
\draw [line width=1.25, color=black] (3) to  [in=233, out=66] (1);
\draw [line width=1.25, color=black] (3) to  [in=246, out=53] (1);
\draw [line width=1.25, color=black] (3) to  [in=225, out=66] (1);
\draw [line width=1.25, color=black] (3) to  [in=151, out=323] (4);
\draw [line width=1.25, color=black] (3) to  [in=156, out=315] (4);
\draw [line width=1.25, color=black] (3) to  [in=163, out=315] (4);
\draw [line width=1.25, color=black] (6) to  [in=53, out=246] (4);
\draw [line width=1.25, color=black] (6) to  [in=66, out=233] (4);
\draw [line width=1.25, color=black] (6) to  [in=45, out=246] (4);
\node at (187.5pt, -23.125pt) {\textcolor{black}{$x$}};
\node at (187.5pt, -276.875pt) {\textcolor{black}{$x^5$}};
\node at (70.75pt, -100.0pt) {\textcolor{black}{$1$}};
\node at (70.75pt, -200.0pt) {\textcolor{black}{$x^7$}};
\node at (303.25pt, -200.0pt) {\textcolor{black}{$x^4$}};
\node at (303.25pt, -100.0pt) {\textcolor{black}{$x^3$}};
\end{tikzpicture}
}
\caption{$U(\alpha,\beta)\cong U(\beta,\alpha)$ over $\mathbb{F}$ with $char \mathbb{F}\neq 2$ and on $G_3$ where $\alpha=-1-x+x^3+2\cdot x^4+x^5-x^7$ and $\beta=-1+x-x^3+2\cdot x^4-x^5+x^7$}\label{f-ugraph-3}
\end{figure}
\end{enumerate}
\end{exm}

\begin{lem}\label{U-iso}
$U(\alpha,\beta)\cong U(x^{-1} \alpha, \beta x)$ for all group elements $x$. 
\end{lem}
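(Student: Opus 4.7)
The plan is to mirror the proof of Lemma \ref{iso} for the zero-divisor setting, with one essential modification: since the unit condition $\alpha\beta = 1$ is not preserved by arbitrary two-sided translations $(\alpha,\beta) \mapsto (x\alpha, \beta y)$ (only by conjugations $(\alpha,\beta) \mapsto (x^{-1}\alpha, \beta x)$, as then $(x^{-1}\alpha)(\beta x) = x^{-1}(\alpha\beta)x = 1$), the group elements on the two sides must be tied together. This is exactly why the statement of the lemma involves a single $x$ rather than an independent pair.

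Concretely, I would define a vertex bijection $\phi_V \colon supp(\beta) \to supp(\beta x)$ by $\phi_V(g) = gx$, which is obviously a bijection since right multiplication by $x$ is a bijection of the group, and $supp(\beta x) = supp(\beta)\,x$. I would then define the edge map $\phi_E \colon \mathcal{E}_{U(\alpha,\beta)} \to \mathcal{E}_{U(x^{-1}\alpha, \beta x)}$ by
\[
\phi_E\bigl(\{(h,h',g,g'),(h',h,g',g)\}\bigr) = \bigl\{(x^{-1}h, x^{-1}h', gx, g'x),(x^{-1}h', x^{-1}h, g'x, gx)\bigr\}.
\]

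The key routine verifications are then: (i) the image is genuinely an edge of $U(x^{-1}\alpha, \beta x)$, since $x^{-1}h, x^{-1}h' \in supp(x^{-1}\alpha)$, $gx, g'x \in supp(\beta x)$, $gx \neq g'x$ iff $g \neq g'$, and the crucial relation $(x^{-1}h)(gx) = (x^{-1}h')(g'x)$ is equivalent to $hg = h'g'$ by left cancellation of $x^{-1}$ and right cancellation of $x$; (ii) $\phi_E$ is a bijection, with inverse obtained by the analogous construction applied to $x^{-1}$ in place of $x$ (and relabelling); (iii) the maps respect the incidence function $\psi$, i.e.\ $\psi_{U(x^{-1}\alpha, \beta x)}(\phi_E(e)) = \phi_V(\psi_{U(\alpha,\beta)}(e))$, which is immediate since both sides equal $\{gx, g'x\}$.

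There is no real obstacle here; the argument is entirely formal and parallels the zero-divisor case in Lemma \ref{iso}. The only point worth flagging is the conceptual one already mentioned, namely the necessity of using a single element $x$ on both sides to preserve the unit relation, which makes the statement slightly less flexible than its zero-divisor analogue.
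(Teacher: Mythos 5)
Your proposal is correct and follows exactly the paper's approach: the paper first notes that $(x^{-1}\alpha)(\beta x)=1$ so the graph $U(x^{-1}\alpha,\beta x)$ is defined, and then invokes the same translation maps $\phi_V(g)=gx$ and the corresponding edge map as in Lemma \ref{iso}. The routine verifications you spell out are precisely what the paper leaves as ``similar to that of Lemma \ref{iso}.''
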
  
\begin{proof}
We note that $(x^{-1} \alpha) (\beta x)=1$ and so one may speak of $U(x^{-1} \alpha, \beta x)$. The proof is similar to that of Lemma \ref{iso}.
\end{proof}

\begin{lem}\label{U-connect}
Let $U(\alpha,\beta)$ be a unit graph such that $\beta$ has minimum possible support size among all  elements $\gamma$ with $\alpha \gamma=1$. Then $U(\alpha,\beta)$ is connected. 
\end{lem}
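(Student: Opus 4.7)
The plan is to mimic the proof of Lemma \ref{Z-connect}, with the extra twist that $\alpha\beta=1$ rather than $0$ forces a more delicate bookkeeping between the two pieces of a hypothetical disconnection. Suppose, for contradiction, that $U(\alpha,\beta)$ is disconnected, so that $\mathcal{V}_{U(\alpha,\beta)}=supp(\beta)$ decomposes into two non-empty sets $V_1$ and $V_2$ with no edge between them. Writing $\beta=\sum_{g\in G}\beta_g g$, set $\beta_i=\sum_{g\in V_i}\beta_g g$ for $i=1,2$, so that $\beta=\beta_1+\beta_2$ with both $\beta_1,\beta_2\neq 0$.

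The next step is to exploit the absence of edges between $V_1$ and $V_2$ in order to conclude $supp(\alpha\beta_1)\cap supp(\alpha\beta_2)=\varnothing$. Indeed, if some group element $u$ lay in both supports, then $u=hg=h'g'$ for some $h,h'\in supp(\alpha)$, $g\in V_1$, $g'\in V_2$; since $V_1\cap V_2=\varnothing$ we have $g\neq g'$, and hence by Definition \ref{def-Ugraph} the pair $\{(h,h',g,g'),(h',h,g',g)\}$ would be an edge of $U(\alpha,\beta)$ joining $V_1$ to $V_2$, contradicting the disconnection.

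Now $\alpha\beta_1+\alpha\beta_2=\alpha\beta=1$, and the disjointness of the two supports means there is no cancellation between the two summands. Therefore the support of $1$, which is $\{1_G\}$, is contained in exactly one of the two supports; say $1_G\in supp(\alpha\beta_1)$. Disjointness then forces $\alpha\beta_2=0$ and $\alpha\beta_1=1$. The first equality already yields a contradiction in two ways: multiplying on the left by $\beta$ gives $\beta_2=\beta\alpha\beta_2=0$, contradicting $\beta_2\neq 0$; alternatively the second equality shows that $\beta_1$ is a right inverse of $\alpha$ with $|supp(\beta_1)|<|supp(\beta)|$, contradicting the minimality hypothesis on $|supp(\beta)|$. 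Either horn completes the proof.

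The only real content is step (2), namely translating "no edge between $V_1$ and $V_2$" into "the supports of $\alpha\beta_1$ and $\alpha\beta_2$ are disjoint", and I expect this to be the part that needs to be stated carefully; once it is in hand, both the unit equation $\alpha\beta_1+\alpha\beta_2=1$ and the minimality of $\beta$ drive the contradiction immediately.
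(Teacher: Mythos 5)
Your proposal follows the paper's proof essentially verbatim: the same decomposition $\beta=\beta_1+\beta_2$ along the two components, the same observation that $supp(\alpha\beta_1)\cap supp(\alpha\beta_2)=\varnothing$, and the same conclusion that one of $\alpha\beta_1=1$ or $\alpha\beta_2=1$ must hold, contradicting the minimality of $|supp(\beta)|$; your expanded justification of the disjointness step is correct and is exactly what the paper leaves implicit. One caveat: your first ``horn'' is not valid as stated, since from $\alpha\beta=1$ you may not infer $\beta\alpha=1$ in a general group algebra, so the identity $\beta_2=\beta\alpha\beta_2$ is unjustified; this does not damage the proof, because your second horn (the minimality argument, which is the paper's) already completes it.
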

\begin{proof}
Suppose, for a contradiction, that $\mathcal{V}_{U(\alpha,\beta)}=supp(\beta)$ is partitioned into two non-empty subsets $V_1$ and $V_2$ such that there is no edge between $V_1$ and $V_2$. It follows that $supp(\alpha\beta_1) \cap supp(\alpha\beta_2)=\varnothing$, where $\beta=\sum_{g\in G} \beta_g g$ and $\beta_i=\sum_{g\in V_i} \beta_g g$ ($i=1,2$). Since $\alpha\beta_1 + \alpha\beta_2=1$,
\begin{equation} \label{eq2}
\alpha\beta_1=1 \; {\rm or} \; \alpha \beta_2=1.
\end{equation}  Since $|supp(\beta_i)|<|supp(\beta)|$, each of the equalities \ref{eq2} (if happens) is a contradiction. This completes the proof.
\end{proof}

\begin{lem}\label{supp-unit}
Suppose that $\alpha$ and $\beta$ are elements of a group algebra such that $\alpha \beta=1$.
\begin{enumerate}
\item Suppose that if $\alpha \beta'=1$ for some  element $\beta'$ of the group algebra, then $|supp(\beta')|\geq |supp(\beta)|$. Then $\langle  supp(\beta)\rangle \subseteq \langle  supp(\alpha)  \rangle$. 
 \item Suppose that if $\alpha' \beta=1$ for some  element $\alpha'$ of the group algebra, then $|supp(\alpha')|\geq |supp(\alpha)|$. Then $\langle  supp(\alpha)\rangle \subseteq \langle  supp(\beta)  \rangle$. 
\end{enumerate}
\end{lem}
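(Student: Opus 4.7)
The plan is to mimic the argument given for Lemma \ref{supp}, but this time work with the unit equation $\alpha\beta=1$ rather than $\alpha\beta=0$. The key observation is that multiplication in a group algebra respects the coset decomposition of the underlying group: if $a\in \mathbb{F}[G]$ is supported in a subgroup $H$ and $b\in \mathbb{F}[G]$ is supported in a single coset $Hc$, then $ab$ is supported in $Hc$ as well. This lets us split a product across cosets and read off consequences from where the identity element sits.

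For part (1), I set $H=\langle supp(\alpha)\rangle$ and choose finitely many right coset representatives $t_1,\ldots,t_k$ of $H$ in $G$ so that $supp(\beta)\subseteq \bigcup_{i=1}^{k} Ht_i$ with each $supp(\beta)\cap Ht_i\neq \varnothing$. Writing $\beta=\sum_{i=1}^k \beta_i$ where $\beta_i=\sum_{g\in Ht_i}\beta_g g$, I note that $supp(\alpha\beta_i)\subseteq Ht_i$ and the cosets are pairwise disjoint, so the sum $\alpha\beta=\sum_i\alpha\beta_i=1$ forces exactly one index $j$ with $1\in Ht_j$ (so $Ht_j=H$) to satisfy $\alpha\beta_j=1$, while $\alpha\beta_i=0$ for $i\neq j$. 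The minimality hypothesis on $|supp(\beta)|$ then gives $|supp(\beta_j)|\geq |supp(\beta)|$, which combined with $\sum_i |supp(\beta_i)|=|supp(\beta)|$ forces $k=1$. Hence $supp(\beta)\subseteq H$ and therefore $\langle supp(\beta)\rangle \subseteq \langle supp(\alpha)\rangle$.

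Part (2) is entirely dual: I put $K=\langle supp(\beta)\rangle$, pick left coset representatives $s_1,\ldots,s_\ell$ of $K$ with $supp(\alpha)\subseteq \bigcup_{j=1}^{\ell} s_j K$ and $supp(\alpha)\cap s_jK\neq \varnothing$ for every $j$, and split $\alpha=\sum_{j=1}^{\ell}\alpha_j$ with $\alpha_j$ supported in $s_jK$. Now $supp(\alpha_j\beta)\subseteq s_jK$, these cosets are disjoint, and $\sum_j \alpha_j\beta=1$ shows that exactly one $j$ (the one with $s_jK=K$) gives $\alpha_j\beta=1$, while the rest give $\alpha_j\beta=0$. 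The minimality assumption on $|supp(\alpha)|$ forces $\ell=1$, so $supp(\alpha)\subseteq K$ and $\langle supp(\alpha)\rangle\subseteq \langle supp(\beta)\rangle$.

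The argument is essentially bookkeeping once the coset-decomposition principle is set up, and I do not anticipate a serious obstacle. The only point that requires a modicum of care, compared with the zero-divisor version in Lemma \ref{supp}, is locating the identity: in the zero-divisor case any single coset suffices to produce a contradiction via minimality, whereas here one must single out the unique coset equal to $H$ (respectively $K$) in order to identify which of the pieces $\alpha\beta_i$ (respectively $\alpha_j\beta$) equals $1$ and which equal $0$, before invoking the minimality hypothesis. Once this is noted the conclusion follows immediately.
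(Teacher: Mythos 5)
Your proposal is correct and follows essentially the same route as the paper's proof: decompose $\beta$ (respectively $\alpha$) along the right cosets of $H=\langle supp(\alpha)\rangle$ (respectively the left cosets of $K=\langle supp(\beta)\rangle$), observe that the product splits across disjoint cosets so that exactly the piece lying over the trivial coset multiplies to $1$ while the others multiply to $0$, and then invoke the minimality hypothesis to force a single coset. Your write-up merely makes explicit the counting step ($\sum_i|supp(\beta_i)|=|supp(\beta)|$ together with $|supp(\beta_j)|\geq|supp(\beta)|$) that the paper compresses into ``it follows from the hypothesis that $k=i=1$.''
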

\begin{proof}
(1) \; Let  $H=\langle supp(\alpha) \rangle$ and suppose that $\{t_1,t_2,\ldots,t_k\}$ be a subset of right coset representatives of $H$ in the group such that $supp(\beta)  \cap Ht_i\neq \varnothing$ for all $i\in\{1,\dots,k\}$ and $supp(\beta) \subseteq \cup_{i=1}^k Ht_i$.  Since 
\begin{equation*}
\alpha \beta =\alpha \left(\sum_{i=1}^k \sum_{g\in Ht_i} \beta_{g} g\right)=1 \; {\rm and} \; Ht_{i}\cap Ht_{j}=\varnothing
\end{equation*}
  for all distinct $i,j\in \{1,2,\ldots,k\}$,  it follows that there exists $i\in\{1,\dots,k\}$ such that $Ht_i=H$ and 
\begin{equation*}\alpha \left(\sum_{g\in Ht_\ell} \beta_{g} g\right)=\left\{ \begin{array}{ll} 1 & {\rm if} \; \ell=i \\ 0 & {\rm if} \; \ell \not=i \end{array} \right. 
\end{equation*} 
for all $\ell\in \{1,2,\ldots,k\}$, where $\beta=\sum_{g\in G} \beta_{g} g$. Now it follows from the hypothesis that $k=i=1$ so that $\langle supp(\beta) \rangle \subseteq H$. 

(2) \; Consider the subgroup $K=\langle  supp(\beta)  \rangle$  and take  a finite subset $S$ of left coset representatives of $K$ in the group such that $ supp(\alpha) \cap sK\neq \varnothing$ for all $s \in S$ and $ supp(\alpha) \subseteq \cup_{s\in S} sK$. 
The rest of the proof is similar to  the part (1).
\end{proof}

We finish this section with following question:

\begin{qu}
Which graphs can be isomorphic to a zero-divisor or unit graph?
\end{qu}

\section{\bf Zero-divisor graphs and Unit graphs for elements whose supports are of size  $3$}\label{S1-2}

\begin{cor}\label{cor}
Suppose that $\alpha$ and $\beta$ are non-zero elements of a group algebra of a torsion-free group $G$ such that $|supp(\alpha)|=3$, $\alpha\beta=0$ and if  $\alpha \beta'=0$ for some non-zero element $\beta'$ of the group algebra, then  $|supp(\beta')|\geq |supp(\beta)|$. Then $\langle h^{-1} supp(\alpha)\rangle = \langle  supp(\beta) g^{-1} \rangle$ for all $h\in supp(\alpha)$ and $g\in supp(\beta)$.
\end{cor}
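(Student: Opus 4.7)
The plan is to deduce the equality by a single invocation of each part of Lemma \ref{supp}, with the subgroup generators chosen appropriately. One direction uses the hypothesis of the corollary directly, and the other direction uses the torsion-freeness of $G$ to verify the hypothesis of the other part of Lemma \ref{supp}.

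First I would dispose of the inclusion $\langle supp(\beta)g^{-1}\rangle \subseteq \langle h^{-1}supp(\alpha)\rangle$. The assumption that $\beta$ has minimum support among nonzero annihilators of $\alpha$ is literally the hypothesis of Lemma \ref{supp}(1). Applying that lemma with the free group element $x$ specialized to $h^{-1}$ for the given $h\in supp(\alpha)$ gives $supp(\beta)g^{-1}\subseteq \langle h^{-1}supp(\alpha)\rangle$ for every $g\in supp(\beta)$, and passing to the subgroup generated by $supp(\beta)g^{-1}$ yields the desired inclusion.

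For the reverse inclusion $\langle h^{-1}supp(\alpha)\rangle \subseteq \langle supp(\beta)g^{-1}\rangle$, I would invoke Lemma \ref{supp}(2) with $y=g^{-1}$. The point is that its hypothesis, namely $|supp(\alpha')|\geq|supp(\alpha)|=3$ for every nonzero $\alpha'$ with $\alpha'\beta=0$, is automatic here: since $G$ is torsion-free, the group algebra $\mathbb{F}[G]$ contains no zero divisor whose support has size at most $2$ (the fact cited in the introduction from \cite[Theorem 2.1]{pascal}), so any nonzero left annihilator of $\beta$ must have support of size at least $3=|supp(\alpha)|$. Then Lemma \ref{supp}(2) gives $h^{-1}supp(\alpha)\subseteq\langle supp(\beta)g^{-1}\rangle$, and taking the subgroup generated by the left-hand side finishes this inclusion.

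Combining the two inclusions yields the claimed equality $\langle h^{-1}supp(\alpha)\rangle = \langle supp(\beta)g^{-1}\rangle$ for every $h\in supp(\alpha)$ and $g\in supp(\beta)$. There is no serious obstacle here; the only substantive observation is that, although the corollary's hypothesis is stated only as minimality of $\beta$ (which is the input for Lemma \ref{supp}(1)), the minimality of $\alpha$ required for Lemma \ref{supp}(2) comes for free from the hypothesis $|supp(\alpha)|=3$ together with the torsion-freeness of $G$.
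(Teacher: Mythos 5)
Your proposal is correct and matches the paper's own proof: the paper likewise cites \cite[Theorem 2.1]{pascal} to get that any non-zero left annihilator of $\beta$ has support of size at least $3=|supp(\alpha)|$, and then applies both parts of Lemma \ref{supp} (with $x=h^{-1}$ and $y=g^{-1}$) to obtain the two inclusions. No issues.
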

\begin{proof}
It follows from \cite[Theorem 2.1]{pascal} that 
if $\alpha' \beta=0$ for some non-zero element $\alpha'$  of the group algebra, then $|supp(\alpha')|\geq |supp(\alpha)|=3$. Now  Lemma \ref{supp} completes the proof.
\end{proof}

\begin{cor}\label{Unit-cor}
Suppose that $\alpha$ and $\beta$ are elements of a group algebra of a torsion-free group $G$ such that $|supp(\alpha)|=3$, $\alpha\beta=1$ and if  $\alpha \beta'=1$ for some  element $\beta'$ of the group algebra, then  $|supp(\beta')|\geq |supp(\beta)|$. Then $\langle  supp(\alpha)\rangle = \langle  supp(\beta)  \rangle$.
\end{cor}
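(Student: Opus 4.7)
The plan is to derive the equality from both parts of Lemma \ref{supp-unit}. The hypothesis of the corollary is exactly the minimality condition required by Lemma \ref{supp-unit}(1), so that lemma immediately gives $\langle supp(\beta)\rangle \subseteq \langle supp(\alpha)\rangle$. For the reverse inclusion, I would like to invoke Lemma \ref{supp-unit}(2), but its hypothesis is a dual minimality statement about $\alpha$ that is not assumed directly. Thus the real work is to verify the following: for every $\alpha'\in \mathbb{F}[G]$ with $\alpha'\beta=1$, one has $|supp(\alpha')|\geq |supp(\alpha)|=3$.

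To establish this I would argue by cases on $|supp(\alpha')|\in\{1,2\}$ and reach a contradiction in each. The case $|supp(\alpha')|=1$ is routine: writing $\alpha'=cg$ with $c\in \mathbb{F}^\times$ and $g\in G$, the equation $\alpha'\beta=1$ forces $\beta=c^{-1}g^{-1}$, so $|supp(\beta)|=1$ and therefore $|supp(\alpha\beta)|=|supp(\alpha)|=3$, contradicting $\alpha\beta=1$.

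The main obstacle is the case $|supp(\alpha')|=2$, where one must rule out a two-element left inverse of $\beta$. My plan is to upgrade the one-sided identity $\alpha'\beta=1$ to a two-sided one. Multiplying $\alpha'\beta=1$ on the right by $\alpha'$ gives $\alpha'(\beta\alpha'-1)=0$. Since $|supp(\alpha')|=2$, the zero-divisor result \cite[Theorem 2.1]{pascal} prevents $\alpha'$ from being a zero-divisor in $\mathbb{F}[G]$, forcing $\beta\alpha'=1$. Together with $\alpha'\beta=1$, this shows that $\alpha'$ is a (two-sided) unit of $\mathbb{F}[G]$ of support size $2$, contradicting \cite[Theorem 4.2]{dyk}, which forbids any such unit in the group algebra of a torsion-free group over any field.

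Combining the two cases, the minimality of $|supp(\alpha)|$ among the left factors of $1=\alpha'\beta$ is established, and Lemma \ref{supp-unit}(2) then yields $\langle supp(\alpha)\rangle \subseteq \langle supp(\beta)\rangle$. Together with the first inclusion this gives $\langle supp(\alpha)\rangle=\langle supp(\beta)\rangle$, completing the proof. I expect no further subtleties; the crucial point is the passage from a one-sided inverse of $\beta$ to a two-sided unit, which is exactly where the zero-divisor theorem of \cite{pascal} is doing the work.
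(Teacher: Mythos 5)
Your proof is correct and follows essentially the same route as the paper: establish that every left inverse $\alpha'$ of $\beta$ has $|supp(\alpha')|\geq 3$, then apply both parts of Lemma \ref{supp-unit} to get the two inclusions. The only difference is that the paper obtains the support bound by citing \cite[Theorem 4.2]{dyk} outright, whereas you additionally spell out the passage from the one-sided identity $\alpha'\beta=1$ to a two-sided unit via the zero-divisor theorem of \cite{pascal} — a legitimate (and arguably more careful) elaboration of the same step.
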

\begin{proof}
It follows from \cite[Theorem 4.2]{dyk} that 
if $\alpha' \beta=1$ for some non-zero element $\alpha'$  of the group algebra, then $|supp(\alpha')|\geq |supp(\alpha)|=3$. Now  Lemma \ref{supp-unit} completes the proof.
\end{proof}

\begin{rem}\label{r-G}
{\rm
Suppose that $\alpha$ and $\beta$ are non-zero elements of a group algebra of a torsion-free group $G$ such that $|supp(\alpha)|=3$, $\alpha\beta=0$ and if  $\alpha \beta'=0$ for some non-zero element $\beta'$ of the group algebra, then  $|supp(\beta')|\geq |supp(\beta)|$. In studying the zero-divisor graph $Z(\alpha,\beta)$, in view of Lemma \ref{iso} and Corollary \ref{cor} one may assume that $1 \in supp(\alpha) \cap supp(\beta)$ and $G=\langle supp(\alpha) \rangle= \langle supp(\beta) \rangle$. 
}
\end{rem}

\begin{rem}\label{Unit-r-G}
{\rm
Suppose that $\alpha$ and $\beta$ are elements of a group algebra of a torsion-free group $G$ such that $|supp(\alpha)|=3$, $\alpha\beta=1$ and if  $\alpha \beta'=1$ for some element $\beta'$ of the group algebra, then  $|supp(\beta')|\geq |supp(\beta)|$. In studying the unit graph $U(\alpha,\beta)$, by part (4) of Remark 
\ref{r-Ugraph} and  Corollary \ref{Unit-cor} one may assume that $1 \in supp(\alpha) \cap supp(\beta)$ and $G=\langle supp(\alpha) \rangle= \langle supp(\beta) \rangle$. 
}
\end{rem}

\begin{lem}\label{SS}
Let $A$ be a subset of size $3$ of a torsion-free group. If $S:=\{h_1^{-1} h_2 \;|\; h_1,h_2\in A, \; h_1\neq h_2\}$ is of size at most $5$ then there exists $h\in A$ such that $\langle h^{-1} A\rangle$ is an infinite cyclic group.
\end{lem}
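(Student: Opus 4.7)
The plan is to write $A=\{a_1,a_2,a_3\}$ and consider the six elements $x_{ij}:=a_i^{-1}a_j$ for $i\neq j$, which exhaust $S$. The first thing I would note is that $S$ is closed under inversion (since $x_{ji}=x_{ij}^{-1}$) and that the torsion-free assumption forbids $x_{ij}=x_{ji}$, because that would mean $(a_i^{-1}a_j)^2=1$ and hence $a_i=a_j$. Similarly, a coincidence $x_{ij}=x_{ik}$ forces $a_j=a_k$, and $x_{ij}=x_{kj}$ forces $a_i=a_k$, both contradicting $|A|=3$. So the only coincidences that are allowed among the six $x_{ij}$ are of the shape $x_{ij}=x_{jk}$ or $x_{ij}=x_{ki}$, with $\{i,j,k\}=\{1,2,3\}$.

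Since $|S|\le 5$, at least two of the six $x_{ij}$ must coincide, and by the previous paragraph one of these two allowed patterns must occur. In the first pattern, setting $g:=x_{ij}=x_{jk}$, the definitions give $a_j=a_i g$ and $a_k=a_j g=a_i g^2$, so
\[
a_i^{-1}A=\{1,g,g^2\}\subseteq\langle g\rangle.
\]
In the second pattern, setting $g:=x_{ij}=x_{ki}$, we get $a_j=a_i g$ and $a_i=a_k g$, hence $a_k=a_i g^{-1}$, and so
\[
a_i^{-1}A=\{1,g,g^{-1}\}\subseteq\langle g\rangle.
\]

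In either case, choosing $h:=a_i$ gives $\langle h^{-1}A\rangle\subseteq\langle g\rangle$, so $\langle h^{-1}A\rangle$ is cyclic. Because $a_i\neq a_j$ we have $g\neq 1$, and since $G$ is torsion-free, the cyclic group $\langle g\rangle$ is infinite; hence $\langle h^{-1}A\rangle$ itself is infinite cyclic, as required. The only delicate point in this plan is the preliminary bookkeeping that narrows the possible coincidences to these two patterns, and this is a clean consequence of the torsion-free hypothesis together with $|A|=3$; after that, both cases immediately exhibit $a_i^{-1}A$ as three powers of a single element.
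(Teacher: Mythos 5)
Your proof is correct and follows essentially the same route as the paper: both arguments classify which coincidences among the six quotients $a_i^{-1}a_j$ are compatible with $|A|=3$ and torsion-freeness, reduce to the two patterns $x_{ij}=x_{jk}$ and $x_{ij}=x_{ki}$, and in each case exhibit $a_i^{-1}A$ as powers of a single non-identity element $g$, whence $\langle a_i^{-1}A\rangle=\langle g\rangle$ is infinite cyclic.
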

\begin{proof}
Let $A=\{h_1,h_2,h_3\}$. Since $|S|\leq 5$, it follows that $h_{i}^{-1}h_j=h_{i'}^{-1}h_{j'}$ for some $(i,j)\not= (i',j')$ and $(i,i')\not=(j,j')$.
 It follows that $(i',j')$ is equal to $(j,i)$, $(j,k)$ or $(k,i)$, where $k \in\{1,2,3\}\setminus\{i,j\}$. If $(i',j')=(j,i)$, then $(h_i^{-1}h_j)^2=1$ and since the group is torsion-free, $h_i=h_j$, a contradiction. If $(i',j')=(j,k)$, then $h_i^{-1}h_k=(h_i^{-1}h_j)^2$ and if $(i',j')=(k,i)$ then 
 $h_k^{-1}h_j=(h_i^{-1} h_j)^2$. Thus $S=\{h_i^{-1} h_j,(h_i^{-1} h_j)^{-1}, (h_i^{-1} h_j)^2, (h_i^{-1} h_j)^{-2}\}$. It follows that $\langle h_i^{-1} A \rangle=\langle h_i^{-1} h_j \rangle$ is an infinite cyclic group. 
\end{proof}

\begin{lem}\label{size of S}
Let $\alpha$  be a non-zero element in the group algebra of a torsion-free group  such that $\alpha \beta=0$ for some non-zero element $\beta$ of the group algebra. If $|supp(\alpha)|=3$,  then $S=\{h^{-1} h' \;|\; h,h'\in supp(\alpha), h \neq h' \}$ has size $6$.
\end{lem}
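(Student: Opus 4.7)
The plan is to argue by contradiction and leverage Lemma \ref{SS} together with the fact that a group algebra over a torsion-free abelian group is a domain.

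Write $supp(\alpha)=\{h_1,h_2,h_3\}$. The set $S$ consists of at most the six elements $h_i^{-1}h_j$ with $i\neq j$, so $|S|\leq 6$. Suppose for contradiction that $|S|\leq 5$. By Lemma \ref{SS}, there exists $h\in supp(\alpha)$ such that $H:=\langle h^{-1}supp(\alpha)\rangle$ is an infinite cyclic group, in particular a torsion-free abelian group. Consequently $\mathbb{F}[H]$ is isomorphic to the Laurent polynomial ring $\mathbb{F}[t,t^{-1}]$, which is an integral domain.

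Next I would push the zero-divisor relation into $\mathbb{F}[H]$. Let $\alpha_0:=h^{-1}\alpha$; then $\alpha_0$ is a nonzero element of $\mathbb{F}[H]$ with $supp(\alpha_0)\subseteq H$, and still $\alpha_0\beta=0$. Decompose $\beta$ according to the right cosets of $H$ in $G$: write
\[
\beta=\sum_{Ht\in H\backslash G}\beta_{Ht},\qquad supp(\beta_{Ht})\subseteq Ht.
\]
Since $H$ is a subgroup, $supp(\alpha_0\beta_{Ht})\subseteq H\cdot Ht=Ht$, and the cosets $Ht$ are pairwise disjoint. Hence the equality $0=\alpha_0\beta=\sum_{Ht}\alpha_0\beta_{Ht}$ forces $\alpha_0\beta_{Ht}=0$ for every coset $Ht$.

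Because $\beta\neq 0$, some $\beta_{Ht}$ is nonzero; fix such a coset representative $t$ and write $\beta_{Ht}=\eta t$ with $\eta\in\mathbb{F}[H]$, $\eta\neq 0$. Then $\alpha_0\eta t=0$, and right-multiplying by $t^{-1}$ gives $\alpha_0\eta=0$ in $\mathbb{F}[H]$. This contradicts the fact that $\mathbb{F}[H]$ is an integral domain, since both $\alpha_0$ and $\eta$ are nonzero elements of $\mathbb{F}[H]$. Therefore $|S|=6$.

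The only genuine input is Lemma \ref{SS}; the rest is a routine coset-projection argument. I expect no real obstacle beyond being careful that the decomposition of $\beta$ by cosets of $H$ is compatible with left multiplication by $\alpha_0\in\mathbb{F}[H]$, which is exactly why one takes \emph{right} cosets and uses torsion-freeness of $H$ to conclude domain-ness of $\mathbb{F}[H]$.
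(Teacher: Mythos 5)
Your proof is correct. It uses the same two essential inputs as the paper --- Lemma \ref{SS} to produce an infinite cyclic subgroup $H=\langle h^{-1}supp(\alpha)\rangle$ when $|S|\leq 5$, and the fact that the group algebra of an infinite cyclic group is a domain --- but it reaches the contradiction by a different intermediate route. The paper first replaces $\beta$ by one of minimal support size among all nonzero $\gamma$ with $\alpha\gamma=0$, and then invokes Lemma \ref{supp} to conclude that $\beta g^{-1}$ lies entirely in $\mathbb{F}[H]$, so that $(h^{-1}\alpha)(\beta g^{-1})=0$ is a zero-divisor relation inside $\mathbb{F}[H]$. You instead skip the minimality reduction and perform the right-coset decomposition of $\beta$ directly: since $supp(h^{-1}\alpha)\subseteq H$, left multiplication preserves each coset $Ht$, the products $\alpha_0\beta_{Ht}$ have disjoint supports, and hence each vanishes; one nonzero piece $\eta t$ then yields $\alpha_0\eta=0$ in $\mathbb{F}[H]$. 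This is essentially the same coset argument that the paper hides inside the proof of Lemma \ref{supp}, but inlined, and it buys you a slightly more self-contained proof: you need neither the existence of a support-minimal $\beta$ nor the full strength of Lemma \ref{supp} (which concludes $k=1$, i.e.\ that \emph{all} of $supp(\beta)g^{-1}$ lies in $H$, whereas you only need one nonzero coset component). Both arguments are sound; yours trades one citation for a short direct computation.
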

\begin{proof}
Let $A:=supp(\alpha)$ and suppose, for a contradiction, that $|S|\leq 5$. It follows from 
Lemma \ref{SS} that $H:=\langle h^{-1} A\rangle$ is an infinite cyclic group for some $h\in A$. Now assume that  $|supp(\beta)|$ is minimum with respect to the property $\alpha \beta=0$.   It follows from  Lemma \ref{supp} that  $h^{-1} \alpha, \beta g^{-1}$ belong to the group algebra on $H$ for any arbitrary element $g\in supp(\beta)$. 
 Now $(h^{-1} \alpha) (\beta g^{-1})=0$ contradicts the fact that the group algebra of the infinite cyclic group has no zero-divisors (see \cite[Theorem 26.2]{pass2}). This completes  the proof.   
\end{proof}

\begin{prop}\label{simple}
Let $\alpha$  be a non-zero element in the group algebra of a torsion-free group such that $\alpha \beta=0$ for some non-zero element $\beta$ of the group algebra. If  $|supp(\alpha)|=3$ and $S:=\{h^{-1} h' \;|\; h,h'\in supp(\alpha), h \neq h' \}$, then
$Z(\alpha,\beta)$ is isomorphic to the induced subgraph of the Cayley graph $Cay(G,S)$ on $supp(\beta)$.
\end{prop}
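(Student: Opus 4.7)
The plan is to exhibit the natural identity-on-vertices isomorphism between $Z(\alpha,\beta)$ and the induced subgraph of $Cay(G,S)$ on $supp(\beta)$. I would take $\phi_V$ to be the identity on $supp(\beta)$; the two vertex sets agree literally, so this is trivially a bijection. For the edge map, observe that every edge of $Z(\alpha,\beta)$ has the form $e=\{(h,h',g,g'),(h',h,g',g)\}$ with $h\neq h'$ in $supp(\alpha)$, $g\neq g'$ in $supp(\beta)$, and $hg=h'g'$; the last relation rearranges to $g(g')^{-1}=h^{-1}h'\in S$. I would then define $\phi_E(e)=\{g,g'\}$, which by definition of $Cay(G,S)$ is an edge of its induced subgraph on $supp(\beta)$. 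Conversely, any such Cayley edge $\{g,g'\}$ satisfies $g(g')^{-1}=h^{-1}h'$ for some ordered pair $(h,h')$ of distinct elements of $supp(\alpha)$, furnishing a preimage in $\mathcal{E}_{Z(\alpha,\beta)}$.

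The crux of the argument is the injectivity of $\phi_E$, i.e.\ showing that $Z(\alpha,\beta)$ has no multi-edge. Suppose two edges of $Z(\alpha,\beta)$ share endpoints $\{g,g'\}$. Reading off the corresponding relations $hg=h'g'$ yields $h_1^{-1}h_1'=h_2^{-1}h_2'=g(g')^{-1}$ for the two underlying ordered pairs. By Lemma \ref{size of S}, $|S|=6$, and since there are exactly $6=3\cdot 2$ ordered pairs $(h,h')$ of distinct elements of $supp(\alpha)$, the assignment $(h,h')\mapsto h^{-1}h'$ is a bijection onto $S$; therefore $(h_1,h_1')=(h_2,h_2')$ and the two edges coincide.

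The main obstacle, and the reason this proposition sits right after Lemma \ref{size of S}, is the no-multi-edge step above; without $|S|=6$, which in turn rests on the torsion-freeness of $G$, the graph $Z(\alpha,\beta)$ could a priori carry two parallel edges between the same pair of vertices coming from distinct choices of $(h,h')$. Once this is settled, $\phi_E$ is a bijection, the identity $\psi_{Z(\alpha,\beta)}(e)=\{g,g'\}=\phi_E(e)$ shows the endpoint structure is preserved, and both vertex-adjacency and edge-adjacency preservation for the pair $(\phi_V,\phi_E)$ follow at once from the equivalence $g\sim g'\text{ in }Z(\alpha,\beta)\iff g(g')^{-1}\in S\iff g\sim g'\text{ in }Cay(G,S)$.
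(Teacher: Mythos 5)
Your proof is correct and follows essentially the same route as the paper: identity on vertices, the edge map $\{(h,h',g,g'),(h',h,g',g)\}\mapsto\{g,g'\}$, and injectivity of this map deduced from $|S|=6$ (Lemma \ref{size of S}), which forces $h^{-1}h'=h_1^{-1}h_1'$ to imply $(h,h')=(h_1,h_1')$. Your write-up merely spells out the surjectivity and adjacency-preservation steps that the paper leaves implicit.
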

\begin{proof}
Let $\Gamma$ be the  induced subgraph of the Cayley graph $Cay(G,S)$ on $supp(\beta)$.
 The map $\phi_E: \mathcal{E}_{Z(\alpha,\beta)} \rightarrow \mathcal{E}_{\Gamma}$ defined by $\{(h,h',g,g'), (h',h,g',g)\}\mapsto \{g,g'\}$ is a bijective map; for by Lemma \ref{size of S}, $|S|=6$ which implies that $h^{-1} h'= h_1^{-1} h_1'$ if and only if $(h,h')=(h_1,h_1')$, whenever the entries of the latter pairs are distinct and belong to $supp(\alpha)$. Now take $\phi_V$ be the identity map on $supp(\beta)$, then $(\phi_V,\phi_E)$ is an isomorphism from $Z(\alpha,\beta)$ to $\Gamma$. 
\end{proof}

\begin{lem}\label{size of S for Unit}
Let $\alpha$  be an element in the group algebra of a torsion-free group $G$ such that $\alpha \beta=1$ for some element $\beta$ of the group algebra. If $|supp(\alpha)|=3$,  then $S=\{h^{-1} h' \;|\; h,h'\in supp(\alpha), h \neq h' \}$ has size $6$.
\end{lem}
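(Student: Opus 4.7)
The plan is to imitate the proof of Lemma \ref{size of S} almost verbatim, with one crucial substitution: at the end, the contradiction will come not from the absence of zero divisors in the group algebra of an infinite cyclic group, but from the absence of non-trivial units in such an algebra. Put $A:=supp(\alpha)$ and suppose, for a contradiction, that $|S|\leq 5$. Lemma \ref{SS} supplies some $h\in A$ for which $H:=\langle h^{-1}A\rangle$ is an infinite cyclic group. Without loss of generality, I would replace $\beta$ by an element of the group algebra of minimum support size among all $\beta'$ satisfying $\alpha\beta'=1$; this is possible because $\beta$ itself satisfies the equation, and it does not alter $\alpha$ or $S$.

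Next I would verify that $\beta h$ has minimum support size among all right inverses of $h^{-1}\alpha$. Indeed, if $(h^{-1}\alpha)\gamma=1$ for some $\gamma$, then $\alpha(\gamma h^{-1})=h\cdot 1\cdot h^{-1}=1$, and $|supp(\gamma h^{-1})|=|supp(\gamma)|$, so any right inverse of $h^{-1}\alpha$ shorter than $\beta h$ would yield a right inverse of $\alpha$ shorter than $\beta$, contradicting our choice. Invoking Lemma \ref{supp-unit}(1) for the pair $(h^{-1}\alpha,\beta h)$ then gives
$$\langle supp(\beta h)\rangle \subseteq \langle supp(h^{-1}\alpha)\rangle = \langle h^{-1}A\rangle = H.$$
Hence both $h^{-1}\alpha$ and $\beta h$ belong to the group subalgebra $\mathbb{F}[H]$, and their product is $1$, making $h^{-1}\alpha$ a unit of $\mathbb{F}[H]$ with $|supp(h^{-1}\alpha)|=3$.

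Since $H$ is an infinite cyclic group, $\mathbb{F}[H]$ is isomorphic to the Laurent polynomial ring $\mathbb{F}[x,x^{-1}]$, whose units are exactly the monomials $c\,x^n$ with $c\in\mathbb{F}^{\times}$ and $n\in\mathbb{Z}$; in particular, every unit has support of size $1$. This contradicts $|supp(h^{-1}\alpha)|=3$ and completes the argument. (Alternatively, one can cite \cite[Theorem 4.2]{dyk} applied to the torsion-free group $H$, which rules out units of support size at most $2$ and certainly any would-be unit in a commutative domain like $\mathbb{F}[H]$ whose support exceeds $1$.)

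The only step requiring genuine care is the verification that $\beta h$ inherits the minimality property needed to apply Lemma \ref{supp-unit}(1); once this is in hand the rest is bookkeeping. I do not foresee any substantial obstacle.
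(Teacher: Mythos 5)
Your proof is correct and follows essentially the same route as the paper's: reduce via Lemma \ref{SS} to an infinite cyclic subgroup $H$, use the minimality of $|supp(\beta)|$ together with Lemma \ref{supp-unit} to place $h^{-1}\alpha$ and $\beta h$ in the group algebra of $H$, and derive a contradiction from the absence of non-trivial units there. Your explicit check that $\beta h$ inherits the minimality property is a detail the paper leaves implicit and is welcome; the only blemish is the parenthetical fallback to \cite[Theorem 4.2]{dyk}, which only excludes supports of size at most $2$ and so would not by itself rule out a support of size $3$, but your main Laurent-polynomial argument stands on its own.
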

\begin{proof}
Let $A:=supp(\alpha)$ and suppose, for a contradiction, that $|S|\leq 5$. It follows from 
Lemma \ref{SS} that $H:=\langle h^{-1} A\rangle$ is an infinite cyclic group for some $h\in A$. Here we have $(h^{-1} \alpha) (\beta h)=1$. Now assume that  $|supp(\beta)|$ is minimum with respect to the property $ \alpha\beta=1$.   
 It follows from  Lemma \ref{supp-unit} that  $ h^{-1} \alpha, \beta h$ belong to the group algebra on $H$.
 Now $(h^{-1} \alpha) (\beta h)=1$ contradicts the fact that the group algebra of an infinite cyclic group has no non-trivial units (see \cite[Theorem 26.2]{pass2}). This completes  the proof.
\end{proof}

\begin{prop}\label{simple-unit}
Let $\alpha$  be an element in the group algebra of a torsion-free group such that $\alpha \beta=1$ for some  element $\beta$ of the group algebra. If  $|supp(\alpha)|=3$ and $S:=\{h^{-1} h' \;|\; h,h'\in supp(\alpha), h \neq h' \}$, then
$U(\alpha,\beta)$ is isomorphic to the induced subgraph of the Cayley graph $Cay(G,S)$ on $supp(\beta)$.
\end{prop}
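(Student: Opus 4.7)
The plan is to mirror the proof of Proposition \ref{simple} exactly, since the definitions of $Z(\alpha,\beta)$ and $U(\alpha,\beta)$ are identical on the combinatorial level (same vertex set $supp(\beta)$, same edge set prescription via pairs $(h,h',g,g')$ with $hg=h'g'$), and the only input that changes is that we appeal to Lemma \ref{size of S for Unit} instead of Lemma \ref{size of S}.

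Concretely, I would let $\Gamma$ denote the induced subgraph of $Cay(G,S)$ on $supp(\beta)$, take $\phi_V$ to be the identity map $supp(\beta)\to supp(\beta)$, and define
\[
\phi_E\bigl(\{(h,h',g,g'),(h',h,g',g)\}\bigr)=\{g,g'\}
\]
on $\mathcal{E}_{U(\alpha,\beta)}$. By the definition of the Cayley graph, $\{g,g'\}$ is an edge of $\Gamma$ precisely when $g^{-1}g'\in S$, i.e.\ when there exist distinct $h,h'\in supp(\alpha)$ with $h^{-1}h'=g^{-1}g'$, equivalently $hg'=h'g$; so $\phi_E$ lands in $\mathcal{E}_\Gamma$ and is surjective onto it.

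For injectivity, Lemma \ref{size of S for Unit} gives $|S|=6$, and since there are exactly $6$ ordered pairs $(h,h')$ of distinct elements of $supp(\alpha)$, the assignment $(h,h')\mapsto h^{-1}h'$ is a bijection onto $S$. Consequently, if $\{(h,h',g,g'),(h',h,g',g)\}$ and $\{(h_1,h_1',g,g'),(h_1',h_1,g',g)\}$ are both edges of $U(\alpha,\beta)$ with the same image $\{g,g'\}$ under $\phi_E$, then $h^{-1}h'=g'g^{-1}\cdot\text{(or its inverse)}=h_1^{-1}h_1'$ (with matching orientation), forcing $(h,h')=(h_1,h_1')$ and hence equality of the edges. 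Thus $\phi_E$ is a bijection.

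Finally, adjacency preservation for the pair $(\phi_V,\phi_E)$ is automatic from the construction: two vertices $g,g'$ of $U(\alpha,\beta)$ are joined by some edge iff $hg=h'g'$ for some distinct $h,h'\in supp(\alpha)$ iff $g^{-1}g'\in S$ iff $\phi_V(g)\sim\phi_V(g')$ in $\Gamma$; and two edges of $U(\alpha,\beta)$ share an endpoint iff their images under $\phi_E$ do. The only nontrivial ingredient is Lemma \ref{size of S for Unit}, which has already been established; the rest is bookkeeping, so I do not expect any real obstacle.
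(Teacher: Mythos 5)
Your proposal is correct and is essentially the paper's own proof: the paper proves Proposition \ref{simple-unit} by declaring it "similar to that of Proposition \ref{simple}", i.e.\ exactly the identity map on $supp(\beta)$ together with $\phi_E(\{(h,h',g,g'),(h',h,g',g)\})=\{g,g'\}$, with injectivity coming from $|S|=6$ via Lemma \ref{size of S for Unit}. The only blemish is a left/right slip: with the paper's convention ($g\sim g'$ in $Cay(G,S)$ iff $g(g')^{-1}\in S$) the relation $hg=h'g'$ gives $g(g')^{-1}=h^{-1}h'\in S$, not $g^{-1}g'\in S$ as you wrote, but since $S=S^{-1}$ and the argument is symmetric in $(g,g')$ this does not affect the proof.
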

\begin{proof}
The proof is similar to that of Proposition \ref{simple}.
\end{proof}

%-------------------------------------------------------------------------------------------------------------------------------

\begin{figure}[ht]
\psscalebox{0.9 0.9} % Change this value to rescale the drawing.
{
\begin{pspicture}(0,-2.105)(4.72,2.105)
\psdots[linecolor=black, dotsize=0.4](2.4,1.495)
\psdots[linecolor=black, dotsize=0.4](1.2,1.095)
\psdots[linecolor=black, dotsize=0.4](0.8,-0.105)
\psdots[linecolor=black, dotsize=0.4](1.2,-1.305)
\psdots[linecolor=black, dotsize=0.4](2.4,-1.705)
\psdots[linecolor=black, dotsize=0.4](3.6,-1.305)
\psdots[linecolor=black, dotsize=0.4](4.0,-0.105)
\psdots[linecolor=black, dotsize=0.4](3.6,1.095)
\psline[linecolor=black, linewidth=0.04](3.6,1.095)(2.4,1.495)(1.2,1.095)(0.8,-0.105)(1.2,-1.305)(2.4,-1.705)(3.6,-1.305)(4.0,-0.105)
\psline[linecolor=black, linewidth=0.04, linestyle=dotted, dotsep=0.10583334cm](3.6,1.095)(4.0,-0.105)
\rput[bl](2.4,1.73){$g_1$}
\rput[bl](0.6,1.095){$g_2$}
\rput[bl](0.2,-0.105){$g_3$}
\rput[bl](0.6,-1.705){$g_4$}
\rput[bl](2.5,-2.105){$g_5$}
\rput[bl](3.83,-1.305){$g_6$}
\rput[bl](4.23,-0.105){$g_7$}
\rput[bl](3.83,1.095){$g_k$}
\end{pspicture}
}
\caption{A cycle of length $k$ in a zero-divisor graph or a unit graph}\label{f-cycle}
\end{figure}
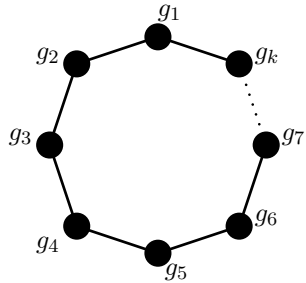

\begin{defn}\label{tuples}
{\rm
Let $\Gamma$ be a zero-divisor graph or a unit graph on a pair of elements $(\alpha,\beta)$ in a group algebra such that $|supp(\alpha)|=3$. Let $C$ be a cycle of length $k$ in $\Gamma$ as Figure \ref{f-cycle} and suppose that $\{g_1,g_2,g_3,\dots,g_k\}\subseteq supp(\beta)$ is the vertex set of $C$ such that $g_i\sim g_{i+1}$ for all $i\in \{1,2,\ldots, k-1\}$ and $g_1\sim g_k$. By an arrangement $\ell$ of the vertex set $C$, we mean  a sequence of all vertices as $x_1,\dots, x_k$  such that $x_i\sim x_{i+1}$ for all $i\in \{1,2,\ldots, k-1\}$ and $x_1\sim x_k$.
  There exist $a_1, b_1,a_2,b_2,\dots,a_k,b_k \in supp(\alpha)$  satisfying the following relations:
\begin{equation}\label{e-cycle1}
R:= \left\{
\begin{array}{l}
a_1g_1=b_1g_2\\
a_2g_2=b_2g_3\\
\vdots\\
a_kg_k=b_kg_1\\
\end{array} \right.
\end{equation}
 We assign the $2k$-tuple $T^{\ell}_C=[a_1,b_1,a_2,b_2,\dots,a_k,b_k]$ to $C$ corresponding to the above arrangement $\ell$ of  the vertex set of $C$. We denote by $R(T^{\ell}_C)$ the above set $R$ of relations.  

It can be derived from the relations \ref{e-cycle1} that 
$r(T^{\ell}_C):=(a_1^{-1}b_1)(a_2^{-1}b_2)\cdots(a_k^{-1}b_k)$ is equal to $1$. It follows from Lemma \ref{size of S} or Lemma \ref{size of S for Unit} that if $T^{\ell'}_C=[a'_1,b'_1,\dots,a'_k,b'_k]$ is the $2k$-tuple  of $C$ corresponding to another arrangement $\ell'$ of the vertex set of $C$, then $T^{\ell'}_C$ is one of  the following $2k$-tuples:

\begin{equation*}
\begin{matrix}
[a_1,b_1,a_2,b_2,\ldots, a_{k-1},b_{k-1}, a_k,b_k],\\
[a_k,b_k,a_1,b_1,\ldots, a_{k-2},b_{k-2}, a_{k-1},b_{k-1}],\\
\vdots\\
[a_2,b_2,a_3,b_3,\ldots, a_{k},b_{k},a_1,b_1],\\
[b_1,a_1,b_k,a_k, \ldots, b_3,a_3,b_2,a_2],\\
[b_2,a_2,b_1,a_1,\ldots, b_4,a_4,b_3,a_3],\\
\vdots\\
[b_k,a_k,b_{k-1},a_{k-1},\ldots, b_2,a_2,b_1,a_1].
\end{matrix}
\end{equation*}
The set of all such $2k$-tuples will be denoted by $\mathcal{T}(C)$. Also, $\mathcal{R}(C)=\{R(T) | T\in \mathcal{T}(C)\}$.
}
\end{defn}
\begin{defn}
{\rm
Let $\Gamma$ be a zero-divisor graph or a unit graph on a pair of elements $(\alpha,\beta)$ in a group algebra such that $|supp(\alpha)|=3$. Let $C$ be a cycle of length $k$ in $\Gamma$. Since $r(T_1)=1$ if and only if $r(T_2)=1$, for all $T_1,T_2\in \mathcal{T}(C)$, a member of $\{r(T) | T\in \mathcal{T}(C)\}$ is given as a representative and denoted by $r(C)$. Also, $r(C)=1$ is called the relation of $C$. 
}
\end{defn}

\begin{defn}\label{def-equ}
{\rm
Let $\Gamma$ be a zero-divisor graph or a unit graph on a pair of elements $(\alpha,\beta)$ in a group algebra such that $|supp(\alpha)|=3$. Let $C$ and $C'$ be two cycles of length $k$ in $\Gamma$.  We say that these two cycles  are equivalent, if $\mathcal{T}(C)\cap \mathcal{T}(C')\not=\varnothing$.
}
\end{defn}
\begin{rem}{\rm
Let $\Gamma$ be a zero-divisor graph or a unit graph on a pair of elements $(\alpha,\beta)$ in a group algebra such that $|supp(\alpha)|=3$. If $C$ and $C'$ are two equivalent cycles of length $k$ in $\Gamma$, then  $\mathcal{T}(C)=\mathcal{T}(C')$.
}\end{rem}

%-------------------------------------------------------------------------------------------------------------------------------
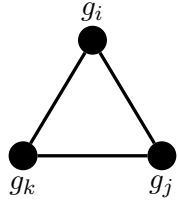
\begin{figure}[ht]
\centering
\begin{tikzpicture}[scale=0.7]
\node (2) [circle, minimum size=3pt, fill=black, line width=0.625pt, draw=black] at (125.0pt, -125.0pt)  {};
\node (3) [circle, minimum size=3pt, fill=black, line width=0.625pt, draw=black] at (50.0pt, -125.0pt)  {};
\node (1) [circle, minimum size=3pt, fill=black, line width=0.625pt, draw=black] at (87.5pt, -62.5pt)  {};
\draw [line width=1.25, color=black] (1) to  (2);
\draw [line width=1.25, color=black] (1) to  (3);
\draw [line width=1.25, color=black] (3) to  (2);
\node at (125.0pt, -141.375pt) {\textcolor{black}{$g_j$}};
\node at (50.0pt, -141.375pt) {\textcolor{black}{$g_k$}};
\node at (87.5pt, -47.125pt) {\textcolor{black}{$g_i$}};
\end{tikzpicture}
\caption{A triangle $C_3$ in a zero-divisor graph or a unit graph}\label{f-1}
\end{figure}

\begin{rem}\label{r-C3}
{\rm Let $\Gamma$ be a zero-divisor graph or a unit graph on a pair of elements $(\alpha,\beta)$ in a group algebra such that $|supp(\alpha)|=3$. If $C$ is a cycle of length $3$ (a triangle)  in $\Gamma$ and $T\in \mathcal{T}(C)$, then $T=[a_1,b_1,a_2,b_2,a_3,b_3]$ with exactly one of the following conditions:
\begin{equation}\label{e-1}
a_1\not=b_1\not=a_2 \not=b_2\not=a_3\not=b_3\not=a_1
\end{equation}
\begin{equation}\label{e-K3 over F}
a_1\not=b_1=a_2\not=b_2=a_3\not=b_3=a_1.
\end{equation}
We note that if the condition \ref{e-K3 over F} is satisfied, then $\{a_1,b_1,a_2,b_2,a_3,b_3\}=\{a_1,b_1,b_2\}=supp(\alpha)$.
}
\end{rem}
\begin{defn}
{\rm
Let $\Gamma$ be a zero-divisor graph or a unit graph on a pair of elements $(\alpha,\beta)$ in a group algebra such that $|supp(\alpha)|=3$.  Suppose that $C$ is a triangle in $\Gamma$ with $T=[a_1,b_1,a_2,b_2,a_3,b_3]\in \mathcal{T}(C)$. If the condition \ref{e-1} or \ref{e-K3 over F} is satisfied, we call $C$ a triangle of type (I) or type (II), respectively.
}
\end{defn}

We need the following remark in the sequel and we apply it without referring. 

\begin{rem}\label{r-baum}
{\rm
For integers $m$ and $n$, the Baumslag-Solitar group $BS(m,n)$ is the group given by the presentation $\langle a,b\mid b a^m b^{-1}=a^n\rangle $. Baumslag-Solitar groups are HNN-extensions of an infinite cyclic group and so they are torsion-free by \cite[Theorem 6.4.5]{robinson}. Since Baumslag-Solitar groups are one-relator, it follows from \cite{brod} that they are locally indicable. Now \cite[Lemmas 1.8 (iii) and 1.9]{pass1} imply that they satisfy both Conjectures \ref{conj-zero} and \ref{conj-unit}. It follows from the first part of the proof of \cite[Theorem 3.1]{pascal} that every torsion-free quotient of $BS(1,n)$ is either  abelian or it is isomorphic to $BS(1,n)$ itself. Therefore every torsion-free quotient of $B(1,n)$ satisfies  both Conjectures \ref{conj-zero} and \ref{conj-unit}.}
\end{rem}
\begin{lem}\label{Z-C3}
Suppose that $\alpha$ and $\beta$ are non-zero elements of a group algebra of a torsion-free group $G$ such that $|supp(\alpha)|=3$, $\alpha\beta=0$ and if  $\alpha \beta'=0$ for some non-zero element $\beta'$ of the group algebra, then  $|supp(\beta')|\geq |supp(\beta)|$. Then there is no triangle of type (I) in $Z(\alpha,\beta)$.
\end{lem}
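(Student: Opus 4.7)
The plan is to argue by contradiction. Suppose $Z(\alpha,\beta)$ does contain a triangle $C$ of type (I). By Remark \ref{r-G} combined with Lemma \ref{iso}, I may normalise so that $supp(\alpha)=\{1,x,y\}$, $G=\langle x,y\rangle$, and the vertex $1$ belongs to $C$. I pick a representative $T=[a_1,b_1,a_2,b_2,a_3,b_3]\in\mathcal{T}(C)$ and use the symmetries of $\mathcal{T}(C)$ (cyclic rotation by two places and reversal), together with a possible swap of $x$ and $y$, to reduce to the case $a_1=1$ and $b_1=x$. The triangle then yields the identity $r(T)=(a_1^{-1}b_1)(a_2^{-1}b_2)(a_3^{-1}b_3)=1$ in $G$, and the type (I) condition becomes the chain of inequalities $a_2\neq x$, $b_2\neq a_2$, $a_3\neq b_2$, $b_3\neq a_3$ and $b_3\neq 1$, with all six entries in $\{1,x,y\}$.

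I then enumerate the admissible $6$-tuples (after the symmetries above, a short list of roughly a dozen) and compute the word $r(T)$ for each. My claim is that every relation thereby obtained places $G$ in one of four situations, each of which contradicts the existence of a nonzero $\beta$ with $\alpha\beta=0$. First, some tuples yield a relation which is a nontrivial power of a single letter (for instance $x^{3}=1$); torsion-freeness then forces that letter to be trivial, contradicting $|supp(\alpha)|=3$. Second, some relations force $y\in\langle x\rangle$ (for instance $y=x^{-2}$, $y=x^{3}$, $x=y^{-2}$, or, after a one-step substitution $z=xy^{-1}$, $x=z^{-2}$ and $y=z^{-3}$), so $G$ is infinite cyclic and $\mathbb{F}[G]$ is a Laurent polynomial ring, hence a domain by \cite[Theorem 26.2]{pass2}.

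Third, in some cases the relation reduces to $xy=yx$, so $G$ is a finitely generated torsion-free abelian group, hence orderable and a unique product group, and $\mathbb{F}[G]$ has no zero divisors. Finally, the remaining cases yield a Baumslag--Solitar style relation of the form $xyx^{-1}=y^{\pm 1}$, $yxy^{-1}=x^{\pm 1}$ or $x^{2}=y^{2}$; after an appropriate change of generators each of these exhibits $G$ as a torsion-free quotient of some $BS(1,n)$, and then Remark \ref{r-baum} rules out the existence of a zero-divisor pair $(\alpha,\beta)$ in $\mathbb{F}[G]$. Since every admissible $6$-tuple sits in one of the four buckets, in every case we reach a contradiction and no triangle of type (I) can occur in $Z(\alpha,\beta)$.

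The main obstacle is the combinatorial bookkeeping: after quotienting by the symmetries of $\mathcal{T}(C)$ and fixing $a_1=1$, $b_1=x$, one must verify that each remaining admissible $6$-tuple really does fall into one of the four buckets above. The least transparent case is $x^{2}=y^{2}$, which is not of Baumslag--Solitar form at first glance but becomes so after the substitution $t=xy^{-1}$: the relation then rewrites as $tyt=y$, equivalently $y^{-1}ty=t^{-1}$, which is the Klein bottle presentation $BS(1,-1)$.
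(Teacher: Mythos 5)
Your strategy coincides with the paper's: enumerate the equivalence classes of type (I) triangles, read off the relation $r(C)=1$ in the two generators of $G$, and rule out each resulting one-relator quotient by showing it is abelian (or cyclic), has a non-trivial torsion element, or is a torsion-free quotient of $BS(1,\pm1)$, which Remark \ref{r-baum} excludes. The paper does exactly this and arrives at a list of $13$ relations; your four ``buckets'' match its three (your infinite cyclic bucket is subsumed under abelian), and your identifications --- e.g.\ $x^2=y^2$ becoming $y^{-1}ty=t^{-1}$ with $t=xy^{-1}$, hence $BS(1,-1)$ --- are correct.

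There is, however, one slip in the symmetry reduction. The symmetries of $\mathcal{T}(C)$ only rotate and reverse the three ordered pairs $(a_i,b_i)$, so you can normalise to $a_1=1$, $b_1=x$ only when at least one of these pairs contains the entry $1$. The type (I) tuple $[x,y,x,y,x,y]$ contains no $1$ at all and therefore escapes your normal form; this is exactly why you count ``roughly a dozen'' classes where the paper counts $13$. The missing class yields the relation $(x^{-1}y)^3=1$, which your torsion bucket handles immediately (torsion-freeness forces $x=y$, contradicting $|supp(\alpha)|=3$), so the gap is harmless in substance, but the enumeration as written is incomplete and should be amended either by adding this case by hand or by weakening the normalisation to ``some $a_i=1$ or the tuple has no entry equal to $1$''.
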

\begin{proof}
By Remark \ref{r-G}, one may assume that $1\in supp(\alpha)$ and $G=\langle supp(\alpha) \rangle$. Let $supp(\alpha)=\{1,h_2,h_3\}$. Suppose, for a contradiction, that $C$ is a triangle of type (I) in $Z(\alpha,\beta)$. Then it is easy to see that there are $13$ non-equivalent  cases for $C$, and $r(C)$ corresponding to such cases are one of the members of the following set:
\begin{align*}
A=\{ h_2^3, h_2^2h_3,h_2^2h_3^{-1}h_2, h_2h_3^2, h_2h_3h_2^{-1}h_3, h_2h_3^{-1}h_2^{-1}h_3, h_2h_3^{-2}h_2,\\ h_2h_3^{-1}h_2h_3,(h_2h_3^{-1})^2h_2, h_3^3, h_3^2h_2^{-1}h_3, (h_3h_2^{-1})^2h_3, (h_2^{-1}h_3)^3 \}
\end{align*}
Therefore, $G$ is the group generated by $h_2$ and $h_3$ with a relation $a=1$ for some $a\in A$. We will arrive to a  contradiction since  such a  group $G$ has at least one of the following properties:
\begin{enumerate}
\item 
It is an abelian group,
\item
It is a quotient of $BS(1,k)$ or $BS(k,1)$ where $k\in \{-1,1\}$,
\item
It has a non-trivial torsion element. 
\end{enumerate}
This completes the proof.
\end{proof}

\begin{lem}\label{U-C3}
Suppose that $\alpha$ and $\beta$ are elements of a group algebra of a torsion-free group $G$ such that $|supp(\alpha)|=3$, $\alpha\beta=1$ and if  $\alpha \beta'=1$ for some element $\beta'$ of the group algebra, then  $|supp(\beta')|\geq |supp(\beta)|$. Then there is no triangle of type (I) in $U(\alpha,\beta)$.
\end{lem}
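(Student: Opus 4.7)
The plan is to mimic the proof of Lemma \ref{Z-C3}, replacing the appeal to the zero-divisor conjecture for the relevant groups with an appeal to the unit conjecture, which (by Remark \ref{r-baum}) holds for the same class of groups.

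First, by Remark \ref{Unit-r-G} I may assume that $1\in supp(\alpha)\cap supp(\beta)$ and $G=\langle supp(\alpha)\rangle$, so that $supp(\alpha)=\{1,h_2,h_3\}$ for some $h_2,h_3\in G$. Suppose for contradiction that $C$ is a triangle of type (I) in $U(\alpha,\beta)$, and choose $T=[a_1,b_1,a_2,b_2,a_3,b_3]\in\mathcal{T}(C)$ satisfying condition \ref{e-1}. The identity $r(C)=(a_1^{-1}b_1)(a_2^{-1}b_2)(a_3^{-1}b_3)=1$ together with the constraint \ref{e-1} depends only on the combinatorial/algebraic structure of the triangle and the fact that $|supp(\alpha)|=3$ and $|S|=6$ (the latter given by Lemma \ref{size of S for Unit}); it does not use $\alpha\beta=0$ anywhere. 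Hence the same case analysis as in Lemma \ref{Z-C3}, up to equivalence of cycles, yields $r(C)\in A$, where $A$ is precisely the $13$-element set of words listed in the proof of Lemma \ref{Z-C3}.

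Now $G$ is the two-generator group $\langle h_2,h_3\mid a=1\rangle$ for some $a\in A$. As noted in the proof of Lemma \ref{Z-C3}, for every choice of $a\in A$ this group is either abelian, a quotient of $BS(1,k)$ or $BS(k,1)$ for some $k\in\{-1,1\}$, or contains a non-trivial torsion element. Since $G$ is torsion-free by hypothesis, only the first two alternatives are possible. By Remark \ref{r-baum}, every torsion-free group in either of these classes satisfies Kaplansky's unit conjecture; therefore $\mathbb{F}[G]$ contains no non-trivial units. But $|supp(\alpha)|=3>1$, so $\alpha$ is a non-trivial unit with inverse $\beta$, a contradiction.

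The main (and really the only) obstacle is to be sure that the enumeration producing the set $A$ is genuinely a consequence of the triangle-of-type-(I) relations together with $r(C)=1$, and therefore transfers verbatim from the zero-divisor setting to the unit setting; after that, the work is done by Remark \ref{r-baum}, which was deliberately stated so as to cover \emph{both} Conjecture \ref{conj-zero} and Conjecture \ref{conj-unit} for precisely the groups appearing in the case analysis. No new group-theoretic input beyond what was used for Lemma \ref{Z-C3} is required.
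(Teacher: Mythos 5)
Your proposal is correct and is essentially the paper's own argument: the paper proves Lemma \ref{U-C3} simply by declaring it ``similar to that of Lemma \ref{Z-C3},'' and your write-up is exactly the intended transfer, replacing Lemma \ref{size of S} with Lemma \ref{size of S for Unit}, Remark \ref{r-G} with Remark \ref{Unit-r-G}, and the zero-divisor conclusion with the unit conclusion, all covered by Remark \ref{r-baum}. Your observation that the enumeration of the set $A$ depends only on $|S|=6$ and the type-(I) condition, not on whether the product is $0$ or $1$, is precisely the point that makes the paper's one-line proof legitimate.
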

\begin{proof}
The proof is similar to that of Lemma \ref{Z-C3}.
\end{proof}
%K3-K3------------------------------------------------------------------------------------------------------------------------
\begin{figure}[h]
\centering
\psscalebox{1.0 1.0} % Change this value to rescale the drawing.
{
\begin{pspicture}(0,-1.905)(3.91,1.905)
\psdots[linecolor=black, dotsize=0.4](2.0,1.295)
\psdots[linecolor=black, dotsize=0.4](2.0,-1.105)
\psdots[linecolor=black, dotsize=0.4](0.8,0.095)
\psdots[linecolor=black, dotsize=0.4](3.2,0.095)
\psline[linecolor=black, linewidth=0.04](2.0,1.295)(0.8,0.095)(2.0,-1.105)(3.2,0.095)(2.0,1.295)(2.0,-1.105)(2.0,-1.105)
\rput[bl](1.9,1.53){$g_i$}
\rput[bl](3.43,-0.1){$g_j$}
\rput[bl](1.9,-1.6){$g_k$}
\rput[bl](0.28,-0.1){$g_l$}
\end{pspicture}
}
\caption{Two triangles with a common edge in a zero-divisor graph or a unit graph}\label{f-K3-K3}
\end{figure}
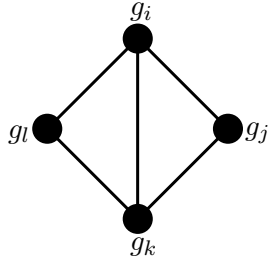

\begin{thm}\label{Z-K3-K3}
Suppose that $\alpha$ and $\beta$ are non-zero elements of a group algebra of a torsion-free group such that $|supp(\alpha)|=3$, $\alpha\beta=0$ and if  $\alpha \beta'=0$ for some non-zero element $\beta'$ of the group algebra, then  $|supp(\beta')|\geq |supp(\beta)|$. Then $Z(\alpha,\beta)$ contains no subgraphs isomorphic to the graph in Figure \ref{f-K3-K3} i.e. two triangles with one edge in common. 
\end{thm}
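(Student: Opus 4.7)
The plan is to combine Lemma \ref{Z-C3}, which eliminates triangles of type (I) in $Z(\alpha,\beta)$, with the rigidity of type (II) triangles forced by Lemma \ref{size of S}. The key observation I will establish first is that a type (II) triangle is entirely determined by a single ``center'' $w \in G$: if its tuple is $[a_1,b_1,a_2,b_2,a_3,b_3]$, then the type (II) conditions $b_1=a_2$, $b_2=a_3$, $b_3=a_1$ together with the defining relations $a_i g_i = b_i g_{i+1}$ collapse to
\[a_1 g_1 = a_2 g_2 = a_3 g_3 = w\]
for some $w \in G$, and since $\{a_1,a_2,a_3\} = supp(\alpha)$, the vertex set is the right translate $A^{-1} w$, where $A := supp(\alpha)$.

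Next, assume toward a contradiction that $Z(\alpha,\beta)$ contains two triangles $T_1$ on $\{u,v,x\}$ and $T_2$ on $\{u,v,y\}$ sharing the edge $\{u,v\}$, with $x \neq y$. By Lemma \ref{Z-C3}, both triangles are of type (II), so by the previous paragraph there exist $w_1,w_2 \in G$ and elements $h_u, h_v, h_u', h_v' \in A$ satisfying $u = h_u^{-1} w_1 = (h_u')^{-1} w_2$ and $v = h_v^{-1} w_1 = (h_v')^{-1} w_2$. Eliminating the $w_i$ yields
\[h_u' h_u^{-1} = w_2 w_1^{-1} = h_v' h_v^{-1}.\]

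If $h_u = h_u'$, then $w_1 = w_2$, which forces $h_v = h_v'$ and hence $T_1 = T_2$, contradicting $x \neq y$; therefore $h_u \neq h_u'$ and symmetrically $h_v \neq h_v'$. Both sides of the displayed equality are then nontrivial products of the form $h h'^{-1}$ with $h, h' \in A$ distinct. Lemma \ref{size of S} asserts that the six elements $h^{-1} h'$ are pairwise distinct; rewriting $h_u' h_u^{-1} = h_v' h_v^{-1}$ as $(h_u')^{-1} h_v' = h_u^{-1} h_v$ (or inverting), any collision among the six ``inverted'' ratios would produce a collision among the six elements of $S$. Hence $(h_u', h_u) = (h_v', h_v)$, giving $h_u = h_v$. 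Since distinct vertices of a type (II) triangle correspond to distinct elements of $A$ (via the bijection $g \mapsto $ the unique $h \in A$ with $h g = w$), this contradicts $u \neq v$.

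The main obstacle I anticipate is the delicate bookkeeping of which element of $A$ corresponds to which shared vertex in each of the two triangles; the conceptual simplification that makes it tractable is the reformulation of a type (II) triangle in terms of its center $w$, after which the desired contradiction reduces to a clean application of the difference-set rigidity from Lemma \ref{size of S}.
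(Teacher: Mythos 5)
Your proof is correct and follows essentially the same route as the paper's: both triangles must be of type (II) by Lemma \ref{Z-C3}, and the ``center'' description of a type (II) triangle combined with the $|S|=6$ rigidity of Lemma \ref{size of S} forces the two triangles to coincide, just as the paper forces $c_1=c_2$ and hence $g_j=g_l$. The only blemish is the final pairing: from $(h_u')^{-1}h_v'=h_u^{-1}h_v$ the injectivity of $(h,h')\mapsto h^{-1}h'$ gives $h_u'=h_u$ and $h_v'=h_v$ (contradicting your established $h_u\neq h_u'$) rather than $h_u=h_v$, but either rearrangement of the displayed equality yields a valid contradiction, so the argument stands.
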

\begin{proof}
Suppose, for a contradiction, that $Z(\alpha,\beta)$ contains two triangles with one edge in common as Figure \ref{f-K3-K3} for some distinct elements $g_i,g_j,g_k,g_l\in supp(\beta)$. Then $Z(\alpha,\beta)$ contains two triangles $C$ and $C'$ with vertex sets $\{g_i,g_j,g_k\}$ and $\{g_i,g_l,g_k\}$. Therefore by Remark \ref{r-C3} and Lemma \ref{Z-C3}, $C$ and $C'$ are triangles of type (II) and there are $R_C\in \mathcal{R}(C)$ and  $R_{C'}\in \mathcal{R}(C')$ as $a_1g_i=b_1g_k=c_1g_j$  and $a_1g_i=b_1g_k=c_2g_l$, respectively, where $\{a_1,b_1,c_1\}=\{a_1,b_1,c_2\}= supp(\alpha)$. Hence, $c_1=c_2$ which implies $g_j=g_l$, a contradiction. This completes the proof.
\end{proof}

\begin{thm}\label{U-K3-K3}
Suppose that $\alpha$ and $\beta$ are elements of a group algebra of a torsion-free group such that $|supp(\alpha)|=3$, $\alpha\beta=1$ and if  $\alpha \beta'=1$ for some element $\beta'$ of the group algebra, then  $|supp(\beta')|\geq |supp(\beta)|$. Then $U(\alpha,\beta)$ contains no subgraphs isomorphic to the graph in Figure \ref{f-K3-K3} i.e. two triangles with one edge in common. 
\end{thm}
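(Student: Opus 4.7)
The plan is to run the argument of Theorem \ref{Z-K3-K3} essentially verbatim in the unit setting, substituting Lemma \ref{U-C3} for Lemma \ref{Z-C3} and Lemma \ref{size of S for Unit} for Lemma \ref{size of S}. The two proofs differ only in the hypothesis on the product $\alpha\beta$, and every structural fact invoked in the zero-divisor version has been established separately for unit graphs earlier in the paper.

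Concretely, I would argue by contradiction: suppose $U(\alpha,\beta)$ contains two triangles $C$ and $C'$ sharing an edge, with vertex sets $\{g_i,g_j,g_k\}$ and $\{g_i,g_l,g_k\}$ respectively, where $g_j\neq g_l$ and the common edge joins $g_i$ to $g_k$. By Remark \ref{r-C3}, each triangle in $U(\alpha,\beta)$ is of type (I) or type (II); Lemma \ref{U-C3} rules out type (I) under our minimality hypothesis on $|supp(\beta)|$, so both $C$ and $C'$ must be of type (II). For a type (II) triangle, Remark \ref{r-C3} gives $\{a,b,c\}=supp(\alpha)$ with relations of the form $a g_i = b g_k = c g_j$ (and similarly for $C'$).

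The key observation is that the common edge $\{g_i,g_k\}$ determines the pair $\{a,b\}\subseteq supp(\alpha)$ uniquely: by Lemma \ref{size of S for Unit} we have $|S|=6$, so if $ag_i=bg_k$ and $a'g_i=b'g_k$ with $a\neq b$, $a'\neq b'$ in $supp(\alpha)$, then $a^{-1}b=a'^{-1}b'$ forces $(a,b)=(a',b')$. Hence I may assume that the two type (II) relations read
\begin{equation*}
a g_i = b g_k = c_1 g_j, \qquad a g_i = b g_k = c_2 g_l,
\end{equation*}
with $\{a,b,c_1\}=\{a,b,c_2\}=supp(\alpha)$. This immediately forces $c_1=c_2$, whence $g_j=c_1^{-1}bg_k=c_2^{-1}bg_k=g_l$, contradicting $g_j\neq g_l$.

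No step here is expected to present any real obstacle: the structural Remark \ref{r-C3} is the same statement for both zero-divisor and unit graphs, and Lemma \ref{U-C3} already transfers the "no triangle of type (I)" conclusion to the unit setting. The only point one must be slightly careful about is checking that the uniqueness of the pair $(a,b)$ attached to the shared edge indeed holds in the unit graph — but this follows from Lemma \ref{size of S for Unit} in exactly the way the corresponding fact for zero-divisor graphs follows from Lemma \ref{size of S}.
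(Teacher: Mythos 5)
Your proposal is correct and follows essentially the same route as the paper, which proves Theorem \ref{U-K3-K3} by declaring it "similar to" Theorem \ref{Z-K3-K3}: both triangles are forced to be of type (II) via Remark \ref{r-C3} and Lemma \ref{U-C3}, and the shared edge forces $c_1=c_2$ and hence $g_j=g_l$. Your explicit justification via Lemma \ref{size of S for Unit} that the pair attached to the common edge is uniquely determined is a point the paper leaves implicit, but it is exactly the right supporting fact.
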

\begin{proof}
The proof is similar to that of Theorem \ref{Z-K3-K3}.
\end{proof}

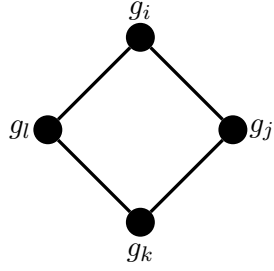
\begin{figure}[ht]
\centering
\begin{tikzpicture}[scale=0.7]
\node (1) [circle, minimum size=3pt, fill=black, line width=0.625pt, draw=black] at (75.0pt, -25.0pt)  {};
\node (2) [circle, minimum size=3pt, fill=black, line width=0.625pt, draw=black] at (125.0pt, -75.0pt)  {};
\node (3) [circle, minimum size=3pt, fill=black, line width=0.625pt, draw=black] at (25.0pt, -75.0pt)  {};
\node (5) [circle, minimum size=3pt, fill=black, line width=0.625pt, draw=black] at (75.0pt, -125.0pt)  {};
\draw [line width=1.25, color=black] (1) to  (2);
\draw [line width=1.25, color=black] (1) to  (3);
\draw [line width=1.25, color=black] (2) to  (5);
\draw [line width=1.25, color=black] (3) to  (5);
\node at (75.0pt, -10.625pt) {\textcolor{black}{$g_i$}};
\node at (140.875pt, -75.0pt) {\textcolor{black}{$g_j$}};
\node at (10.125pt, -75.0pt) {\textcolor{black}{$g_l$}};
\node at (75.0pt, -141.375pt) {\textcolor{black}{$g_k$}};
\end{tikzpicture}
\caption{An square in a zero-divisor graph or a unit graph}\label{f-C4}
\end{figure}

\begin{rem}\label{r-Z-K3-K3}
{\rm Suppose that $\alpha$ and $\beta$ are non-zero elements of a group algebra of a torsion-free group such that $|supp(\alpha)|=3$, $\alpha\beta=0$ and if  $\alpha \beta'=0$ for some non-zero element $\beta'$ of the group algebra, then  $|supp(\beta')|\geq |supp(\beta)|$. By Theorem \ref{Z-K3-K3}, if $C$ is a cycle of length $4$ (an square)  in $Z(\alpha,\beta)$ as Figure \ref{f-C4} with vertex set  $g_i,g_j,g_k,g_l\in supp(\beta)$, then $g_i \not\sim g_k$ and $g_j \not\sim g_l$. So, if $T\in \mathcal{T}(C)$, then $T=[a_1,b_1,a_2,b_2,a_3,b_3,a_4,b_4]$ with the condition $a_1\not= b_1\not= a_2 \not= b_2\not= a_3\not= b_3\not= a_4\not= b_4\not= a_1$.
}
\end{rem}

\begin{rem}\label{r-U-K3-K3}
{\rm Suppose that $\alpha$ and $\beta$ are elements of a group algebra of a torsion-free group such that $|supp(\alpha)|=3$, $\alpha\beta=1$ and if  $\alpha \beta'=1$ for some element $\beta'$ of the group algebra, then  $|supp(\beta')|\geq |supp(\beta)|$. By Theorem \ref{U-K3-K3}, if $C$ is an square in $U(\alpha,\beta)$ as Figure \ref{f-C4} with vertex set $g_i,g_j,g_k,g_l\in supp(\beta)$, then $g_i \not\sim g_k$ and $g_j \not\sim g_l$. So, if $T\in \mathcal{T}(C)$, then $T=[a_1,b_1,a_2,b_2,a_3,b_3,a_4,b_4]$ with the condition $a_1\not= b_1\not= a_2 \not= b_2\not= a_3\not= b_3\not= a_4\not= b_4\not= a_1$.
}
\end{rem}

%the corresponding relations of such an square are as follows:
%\begin{equation}\label{e-4}
%\left\{
%\begin{array}{l}
%a_1g_i=b_1g_j\\
%a_2g_j=b_2g_k\\
%a_3g_k=b_3g_l\\
%a_4g_l=b_4g_i,\\
%\text{where } a_s,b_s \in supp(\alpha) \text{ for all } s\in \{1,2,3,4\}\\
%\text{and } a_1\not= b_1\not= a_2 \not= b_2\not= a_3\not= b_3\not= a_4\not= b_4\not= a_1.
%\end{array} \right.
%\end{equation} 

It is proved in \cite[Theorem 4.2]{pascal} that a triangle is a forbidden subgraph for any zero-divisor graph of length $3$ over the field $\mathbb{F}_2$ on any torsion-free group (see below, Theorem \ref{thm-graph}). To finish this section, we consider zero-divisor graphs and unit graphs of length $3$ over a field $\mathbb{F}$ and on any torsion-free group containing a subgraph isomorphic to an square.  We have not been able to prove  that squares are forbidden subgraphs for such latter graphs even for the case that $\mathbb{F}=\mathbb{F}_2$. However we show that by existence of squares, zero-divisor graphs (unit graphs, respectively) of length $3$ over a field $\mathbb{F}$ and on any torsion-free group give us certain slightly significant relations on elements of the support of a possible zero-divisor (on elements of the support of a possible unit) (see below,  Theorems \ref{Z-C4} and \ref{U-C4}).

\begin{table}[h]
\centering
\caption{The possible relations of an square in a zero-divisor graph or a unit graph of length $3$ on a torsion-free group}\label{tab-C4}
\begin{tabular}{|c|l|l||c|l|l|}\hline
$n$&$R$&$ \ E \ $&$n$&$R$&$ \ E \ $\\\hline
$1$&$h_2^4=1$& $T $&$19$&$h_2 h_3^{-1} (h_2^{-1} h_3)^2=1$& $BS(2,1) $\\
$2$&$h_2^3 h_3=1$& $ A$&$20$&$h_2 h_3^{-2} h_2^{-1} h_3=1$& $BS(1,2)$\\
$3$&$h_2^3 h_3^{-1} h_2=1$& $ A$&$21$&$h_2 h_3^{-3} h_2=1$& $ *$\\
$4$&$h_2^2 h_3^2=1$& $BS(1,-1) $&$22$&$h_2 h_3^{-2} h_2 h_3=1$& $ *$\\
$5$&$h_2^2 h_3 h_2^{-1} h_3=1$& $ *$&$23$&$h_2 h_3^{-1} (h_3^{-1} h_2)^2=1$& $BS(-2,1) $\\
$6$&$h_2^2 h_3^{-1} h_2^{-1} h_3=1$& $BS(1,2) $&$24$&$(h_2 h_3^{-1} h_2)^2=1$& $ A$\\
$7$&$h_2^2 h_3^{-2} h_2=1$& $* $&$25$&$h_2 h_3^{-1} h_2 h_3^2=1$& $ *$\\
$8$&$h_2^2 h_3^{-1} h_2 h_3=1$& $ BS(1,-2)$&$26$&$h_2 h_3^{-1} h_2 h_3 h_2^{-1} h_3=1$& $ *$\\
$9$&$h_2 (h_2 h_3^{-1})^2 h_2=1$& $BS(1,-1) $&$27$&$(h_2 h_3^{-1})^2 h_2^{-1} h_3=1$& $BS(2,1) $\\
$10$&$(h_2 h_3)^2=1$& $ A$&$28$&$(h_2 h_3^{-1})^2 h_3^{-1} h_2=1$& $ BS(1,-2)$\\
$11$&$h_2 h_3 h_2 h_3^{-1} h_2=1$& $ BS(-2,1)$&$29$&$(h_2 h_3^{-1})^2 h_2 h_3=1$& $* $\\
$12$&$h_2 h_3^3=1$& $ A$&$30$&$(h_2 h_3^{-1})^3 h_2=1$& $A $\\
$13$&$h_2 h_3^2 h_2^{-1} h_3=1$& $BS(1,-2) $&$31$&$h_3^4=1$& $ T$\\
$14$&$h_2 h_3 h_2^{-2} h_3=1$& $ *$&$32$&$h_3^3 h_2^{-1} h_3=1$& $ A$\\
$15$&$h_2 h_3 h_2^{-1} h_3^{-1} h_2=1$& $ BS(2,1)$&$33$&$h_3 (h_3 h_2^{-1})^2 h_3=1$& $BS(1,-1) $\\
$16$&$h_2 h_3 h_2^{-1} h_3^2=1$& $BS(-2,1) $&$34$&$(h_3 h_2^{-1} h_3)^2=1$& $ A$\\
$17$&$h_2 (h_3 h_2^{-1})^2 h_3=1$& $* $&$35$&$(h_3 h_2^{-1})^3 h_3=1$& $ A$\\
$18$&$h_2 h_3^{-1} h_2^{-1} h_3^2=1$& $BS(2,1) $&$36$&$(h_2^{-1} h_3)^4=1$& $A $\\\hline
\end{tabular}
\end{table}

\begin{thm}\label{Z-C4-1}
Suppose that $\alpha$ and $\beta$ are non-zero elements of a group algebra of a torsion-free group $G$ such that $|supp(\alpha)|=3$, $\alpha\beta=0$ and if  $\alpha \beta'=0$ for some non-zero element $\beta'$ of the group algebra, then  $|supp(\beta')|\geq |supp(\beta)|$. If $Z(\alpha,\beta)$ contains an square $C$, then there are $9$ non-equivalent cases for $C$ and $r(C)=1$ is one of the relations $5$, $7$, $14$, $17$, $21$, $22$, $25$, $26$ or $29$  in Table \ref{tab-C4}, for $\{h_2,h_3\}=supp(\alpha)\setminus \{1\}$.
\end{thm}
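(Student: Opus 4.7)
By Remark \ref{r-G}, one may assume $1\in supp(\alpha)$ and $G=\langle supp(\alpha)\rangle$, so $supp(\alpha)=\{1,h_2,h_3\}$ with $h_2,h_3$ distinct non-identity elements. Let $C$ be a square in $Z(\alpha,\beta)$. Remark \ref{r-Z-K3-K3} guarantees that every $T=[a_1,b_1,a_2,b_2,a_3,b_3,a_4,b_4]\in\mathcal{T}(C)$ has entries in $\{1,h_2,h_3\}$ with no two cyclically consecutive entries equal, i.e.\ $a_1\neq b_1\neq a_2\neq b_2\neq a_3\neq b_3\neq a_4\neq b_4\neq a_1$.

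The first step of the plan is a finite enumeration: list all such $8$-tuples over $\{1,h_2,h_3\}$ modulo the $\mathcal{T}(C)$-equivalence of Definition \ref{tuples} (four cyclic shifts combined with the reversal described there, an order-$8$ action). A direct count produces exactly $36$ equivalence classes. For each class the corresponding relation
\[ r(C)=(a_1^{-1}b_1)(a_2^{-1}b_2)(a_3^{-1}b_3)(a_4^{-1}b_4)=1 \]
is computed as a reduced cyclic word in $h_2^{\pm1},h_3^{\pm1}$; these yield exactly the $36$ candidate relations displayed in Table \ref{tab-C4}.

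The second step follows the mechanism of Lemma \ref{Z-C3}. Since $G=\langle h_2,h_3\rangle$ satisfies the relation $w=1$ produced by $C$, one argues row by row: when $w$ forces $G$ to be abelian (label $A$) or to be a torsion-free quotient of one of the Baumslag--Solitar groups $BS(1,\pm 1)$, $BS(1,\pm 2)\cong BS(\pm 2,1)$ (label $BS(m,n)$), Remark \ref{r-baum} implies that $G$ satisfies Conjecture \ref{conj-zero}, contradicting $\alpha\beta=0$; when instead $w$ forces a specific non-identity element to have finite order (label $T$, which occurs only for $w\in\{h_2^4,h_3^4,(h_2^{-1}h_3)^4\}$, each involving a non-identity element by distinctness of $supp(\alpha)$), torsion-freeness of $G$ gives the contradiction. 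Carrying this out eliminates exactly $27$ of the $36$ rows of Table \ref{tab-C4}.

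The nine relations that survive are precisely those marked $*$, namely rows $5,7,14,17,21,22,25,26,29$, as claimed. The principal obstacle is organisational rather than conceptual: the $\mathcal{T}(C)$-orbits have varying sizes, so correctly identifying the $36$ representatives (and recognising when two distinct tuples collapse to the same cyclic word after free reduction) requires careful case-splitting, most naturally automated. Once each relator is in reduced cyclic form, deciding membership in the abelian, Baumslag--Solitar, or torsion category is a direct algebraic inspection, possibly after renaming generators via substitutions such as $h_2\mapsto h_3$ or $h_2\mapsto h_2^{-1}h_3$.
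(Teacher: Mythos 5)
Your proposal is correct and follows essentially the same route as the paper: reduce to $supp(\alpha)=\{1,h_2,h_3\}$ via Remark \ref{r-G}, use Remark \ref{r-Z-K3-K3} to constrain the $8$-tuples, enumerate the $36$ equivalence classes of Definition \ref{def-equ} (the paper does this with GAP), and eliminate all but the nine starred relations by showing each forces $G$ to be abelian, a torsion-free quotient of a Baumslag--Solitar group $BS(1,k)$ or $BS(k,1)$ with $k\in\{-2,-1,1,2\}$, or to contain torsion. The only cosmetic discrepancy is that the paper labels row $36$, $(h_2^{-1}h_3)^4=1$, as abelian rather than torsion, but either reading yields the same contradiction.
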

\begin{proof} 
By Remark \ref{r-G}, one may assume that $1\in supp(\alpha)$ and $G=\langle supp(\alpha) \rangle$. Let $supp(\alpha)=\{1,h_2,h_3\}$ and $C$ be an square in $Z(\alpha,\beta)$. By Remark \ref{r-Z-K3-K3}, if $T\in \mathcal{T}(C)$ then $T=[a_1,b_1,a_2,b_2,a_3,b_3,a_4,b_4]$ with the condition $a_1\not= b_1\not= a_2 \not= b_2\not= a_3\not= b_3\not= a_4\not= b_4\not= a_1$. Therefore by using GAP \cite{gap}, there are $36$ non-equivalent cases for $C$. The relations of such non-equivalent cases are listed in the column labelled by $R$ of Table \ref{tab-C4}. It is easy to see that each of such relations, except the $9$ cases marked by ``$*$''s in the column labelled by $E$ of Table \ref{tab-C4}, gives a contradiction because the group $G$ generated by $h_2$ and $h_3$ with one of such relations has at least one of the following properties:
\begin{enumerate}
\item 
It is an abelian group,
\item
It is a quotient of $BS(1,k)$ or $BS(k,1)$ where $k\in \{-2,-1,1,2\}$,
\item
It has a non-trivial torsion element. 
\end{enumerate}
Each relation which leads to being $G$ an abelian group or $G$ having  a non-trivial torsion element is marked by an $A$ or a $T$ in the column labelled by $E$, respectively. Also, if $G$ is a quotient of a Baumslag-Solitar group, then it is denoted by $BS(1,k)$ or $BS(k,1)$ for $k\in \{-2,-1,1,2\}$,  in the column $E$. This completes the proof.
\end{proof}

\begin{thm}\label{U-C4-1}
Suppose that $\alpha$ and $\beta$ are elements of a group algebra of a torsion-free group $G$ such that $|supp(\alpha)|=3$, $\alpha\beta=1$ and if  $\alpha \beta'=1$ for some element $\beta'$ of the group algebra, then  $|supp(\beta')|\geq |supp(\beta)|$. If $U(\alpha,\beta)$ contains an square $C$, then there are $9$ non-equivalent cases for $C$ and $r(C)=1$ is one of the relations $5$, $7$, $14$, $17$, $21$, $22$, $25$, $26$ or $29$  in Table \ref{tab-C4}, for $\{h_2,h_3\}=supp(\alpha)\setminus \{1\}$.
\end{thm}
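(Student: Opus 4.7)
The plan is to mirror the proof of Theorem \ref{Z-C4-1} almost verbatim, replacing invocations of the zero-divisor conjecture with the unit conjecture, and using the unit-graph analogs of the preparatory results instead of their zero-divisor counterparts.

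First, by Remark \ref{Unit-r-G}, I may assume that $1\in supp(\alpha)\cap supp(\beta)$ and that $G=\langle supp(\alpha)\rangle=\langle supp(\beta)\rangle$. Write $supp(\alpha)=\{1,h_2,h_3\}$. Let $C$ be a square in $U(\alpha,\beta)$. By Remark \ref{r-U-K3-K3}, any tuple $T=[a_1,b_1,a_2,b_2,a_3,b_3,a_4,b_4]\in \mathcal{T}(C)$ satisfies $a_1\neq b_1\neq a_2\neq b_2\neq a_3\neq b_3\neq a_4\neq b_4\neq a_1$, so the same enumeration as in the zero-divisor case applies: running through all tuples of that shape with entries in $\{1,h_2,h_3\}$ and factoring out the $\mathcal{T}(C)$-equivalence produces exactly $36$ non-equivalent candidate relations $r(C)=1$, namely those in Table \ref{tab-C4}.

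The second step is to eliminate $27$ of these cases. For each relation $R$ in Table \ref{tab-C4} other than those labelled ``$*$'', the group $G=\langle h_2,h_3\mid R\rangle$ has at least one of the three properties listed in the proof of Theorem \ref{Z-C4-1}: it is abelian, it is a quotient of some $BS(1,k)$ or $BS(k,1)$ with $k\in\{-2,-1,1,2\}$, or it has non-trivial torsion. The torsion case is immediate since $G$ is torsion-free by hypothesis. The abelian case contradicts the existence of a non-trivial unit of support size $3$, because $\mathbb{F}[G]$ for a torsion-free abelian (hence orderable) group $G$ is a domain with only trivial units (see the discussion of ordered/unique product groups just after Conjecture \ref{conj-unit}). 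For the Baumslag-Solitar case, Remark \ref{r-baum} states that every torsion-free quotient of $BS(1,n)$ satisfies Conjecture \ref{conj-unit}, so again $\alpha$ cannot be a non-trivial unit; since $|supp(\alpha)|=3$, a trivial unit is excluded as well. Note that in each case the identifications $h_2,h_3\in supp(\alpha)$ forced by the relation and $1\in supp(\alpha)$ remain consistent with $|supp(\alpha)|=3$, so these are genuine contradictions rather than vacuous cases.

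What remains are precisely the nine relations marked by ``$*$'' in Table \ref{tab-C4}, namely numbers $5$, $7$, $14$, $17$, $21$, $22$, $25$, $26$, $29$, yielding the claim. The only real subtlety, compared with the zero-divisor proof, is ensuring that each non-$*$ relation does obstruct the existence of a non-trivial unit (not just a zero divisor); this is handled uniformly by Remark \ref{r-baum} together with the classical unique-product result for orderable groups. The enumeration itself is routine and identical to that used for Theorem \ref{Z-C4-1}, so it need not be redone; the main effort is simply invoking the unit conjecture for the listed families in place of the zero-divisor conjecture.
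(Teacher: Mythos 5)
Your proposal is correct and follows essentially the same route as the paper, which simply states that the proof is similar to that of Theorem \ref{Z-C4-1}. You in fact make explicit the one adaptation the paper leaves implicit --- that the abelian and Baumslag--Solitar cases must be ruled out via the unit conjecture (orderable groups have only trivial units; Remark \ref{r-baum} covers the $BS$ quotients) rather than the zero-divisor conjecture --- which is exactly the right justification.
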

\begin{proof}
The proof is similar to that of Theorem \ref{Z-C4-1}.
\end{proof}

\begin{thm}\label{Z-C4}
Suppose that $\alpha$ and $\beta$ are non-zero elements of a group algebra of a torsion-free group $G$ over a field $\mathbb{F}$ such that $|supp(\alpha)|=3$, $\alpha\beta=0$ and if  $\alpha \beta'=0$ for some non-zero element $\beta'$ of the group algebra, then  $|supp(\beta')|\geq |supp(\beta)|$. If $Z(\alpha,\beta)$ contains an square $C$, then there exist non-trivial group elements $x$ and $y$ such that  $x^2=y^3$ and either $\{1,x,y\}$ or $\{1,y,y^{-1}x\}$ is the support of a zero divisor in $\mathbb{F}[G]$.
\end{thm}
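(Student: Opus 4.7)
The plan is to apply Theorem~\ref{Z-C4-1} to reduce to the nine non-equivalent cases for the relation $r(C)=1$, and then for each case to produce an identity of the form $x^2=y^3$ together with a zero divisor whose support is $\{1,x,y\}$ or $\{1,y,y^{-1}x\}$. By Remark~\ref{r-G}, I may assume throughout that $1\in supp(\alpha)$ and $G=\langle h_2,h_3\rangle$ with $supp(\alpha)=\{1,h_2,h_3\}$. The key simple fact I would use repeatedly is that if $\alpha\beta=0$ then $(g_1\alpha g_2)(g_2^{-1}\beta)=0$ for every $g_1,g_2\in G$, so every two-sided translate $g_1\alpha g_2$ is a zero divisor.

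The two trivial cases are relations $7$ and $21$ of Table~\ref{tab-C4}, which read $h_2^3=h_3^2$ and $h_2^2=h_3^3$; here I set $(x,y)=(h_3,h_2)$ or $(h_2,h_3)$ respectively and observe that $\{1,x,y\}=supp(\alpha)$, so the zero divisor is $\alpha$ itself. For the remaining seven relations I would follow the same template: first rearrange the relation algebraically into an identity $A^2=B^3$ in $G$, and then realize $\{1,A,B\}$ or $\{1,B,B^{-1}A\}$ as the support of an explicit translate or conjugate of $\alpha$. As a sample of the calculation, relation $5$ ($h_2^2 h_3 h_2^{-1} h_3=1$) rearranges to $(h_3 h_2^{-1})^2=h_2^{-3}$, so $(x,y)=(h_3 h_2^{-1},h_2^{-1})$ satisfies $x^2=y^3$ and $\{1,x,y\}=\{1,h_2^{-1},h_3 h_2^{-1}\}$ is exactly $supp(\alpha h_2^{-1})$; relation $17$ is entirely parallel using $(h_3 h_2^{-1})^3=h_2^{-2}$. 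Relations $14$ and $22$ simplify to $h_3 h_2 h_3=h_2^2$ and $h_2 h_3 h_2=h_3^2$, and squaring yields $(h_3 h_2)^2=h_2^3$ and $(h_2 h_3)^2=h_3^3$; the sets $\{1,y,y^{-1}x\}$ here coincide with $supp(h_2^{-1}\alpha h_2)$ and $supp(h_3^{-1}\alpha h_3)$ respectively. Relations $25$, $26$, $29$ would be handled by the same recipe after analogous cyclic or inverse rearrangements.

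The main obstacle is not conceptual but purely the bookkeeping across all nine cases: for each relation one must (a)~find the correct rewriting producing an identity $x^2=y^3$, and (b)~independently identify the correct one-sided translate $\alpha g^{-1}$ or symmetric conjugate $g^{-1}\alpha g$ of $\alpha$ whose support realizes $\{1,x,y\}$ or $\{1,y,y^{-1}x\}$. The disjunction ``$\{1,x,y\}$ or $\{1,y,y^{-1}x\}$'' in the statement is essential, since different relations naturally force different matches --- one-sided translates $\alpha g^{-1}$ tend to produce the first form while symmetric conjugates $g^{-1}\alpha g$ tend to produce the second, and neither form alone is adequate for all nine cases. The unifying moral of the proof is that each of the nine relations forces $\langle h_2,h_3\rangle$ to contain two elements satisfying $x^2=y^3$, i.e., to have a quotient onto (or resembling) a trefoil-type group $\langle x,y\mid x^2=y^3\rangle$, and that the support of $\alpha$ can be read off from such a pair $(x,y)$ after an appropriate normalization.
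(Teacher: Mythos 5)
Your proposal is correct and follows essentially the same route as the paper's proof: reduce to the nine relations of Theorem \ref{Z-C4-1} and, for each, extract $x,y$ with $x^2=y^3$ such that $\{1,x,y\}$ or $\{1,y,y^{-1}x\}$ is the support of a two-sided translate of $\alpha$. Your sample computations match the paper's up to harmless choices of left versus right translates (or conjugates), and the cases you defer (25, 26, 29) do succumb to the same recipe --- 25 and 29 by swapping $h_2$ and $h_3$ in 5 and 17, and 26 via the translate $h_3^{-1}\alpha$ with $y=h_3^{-1}h_2$ and $x=h_2h_3^{-1}h_2$.
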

\begin{proof}
By Remark \ref{r-G}, one may assume that $1\in supp(\alpha)$ and $G=\langle supp(\alpha) \rangle$. Let $\alpha=\alpha_1\cdot 1 +\alpha_2h_2+\alpha_3h_3$.  By Theorem \ref{Z-C4-1}, $r(C)$ is one of the relations $5$, $7$, $14$, $17$, $21$, $22$, $25$, $26$ or $29$  in Table \ref{tab-C4}. So, we have the followings:
\begin{enumerate}
\item[(5)]
$h_2^2 h_3 h_2^{-1} h_3=1$: Let $x=h_2^{-1}h_3$ and $y=h_2^{-1}$. So, $x^2=y^3$. Also since $\alpha\beta=0$, we have $h_2^{-1}(\alpha\beta)=0$. So,  $\alpha_2\cdot 1+\alpha_3x+\alpha_1y$ is a zero divisor with the support $\{1,x,y\}$.
\item[(7)]
$h_2^2 h_3^{-2} h_2=1$: Let $x=h_3$ and $y=h_2$. So $x^2=y^3$ and $\alpha_1\cdot 1 +\alpha_2y+\alpha_3x$ is a zero divisor with the support $\{1,x,y\}$.
\item[(14)]
$h_2 h_3 h_2^{-2} h_3=1$: Let $x=h_2h_3$ and $y=h_2$. So $x^2=y^3$ and $\alpha_1\cdot 1+\alpha_2y+\alpha_3y^{-1}x=\alpha_1\cdot 1 +\alpha_2h_2+\alpha_3h_3$ is a zero divisor with the support $\{1,y,y^{-1}x\}$.
\item[(17)] 
$h_2 (h_3 h_2^{-1})^2 h_3=1$: Let $x=h_2^{-1}$ and $y=h_3h_2^{-1}$. So $x^2=y^3$. Also since $\alpha\beta=0$, we have $\alpha h_2^{-1}h_2\beta=0$. So, $\alpha_2\cdot 1 +\alpha_1x+\alpha_3y$ is a zero divisor with the support $\{1,x,y\}$.
\item[(21)]
$h_2 h_3^{-3} h_2=1$: By interchanging $h_2$ and $h_3$ in (7) and with the same discussion, the statement is true.
\item[(22)]
$h_2 h_3^{-2} h_2 h_3=1$: By interchanging $h_2$ and $h_3$ in (14) and with the same discussion, the statement is true.
\item[(25)]
$h_2 h_3^{-1} h_2 h_3^2=1$: By interchanging $h_2$ and $h_3$ in (5) and with the same discussion, the statement is true.
\item[(26)] 
$h_2 h_3^{-1} h_2 h_3 h_2^{-1} h_3=1$: Let $x=h_2h_3^{-1}h_2$ and $y=h_3^{-1}h_2$. So, $x^2=y^3$. Also since $\alpha_1\cdot 1 +\alpha_2h_2+\alpha_3h_3=\alpha_2xy^{-1}+\alpha_3xy^{-2}+\alpha_1 x^2y^{-3}$, we have $x^{-1}(\alpha_2xy^{-1}+\alpha_3xy^{-2}+\alpha_1 x^2y^{-3})yy^{-1}\beta=0$. Therefore, $\alpha_2\cdot 1+\alpha_3y^{-1}+\alpha_1 xy^{-2}$ is also a zero divisor with the support of size $3$. Furthermore, $(\alpha_3\cdot 1+\alpha_2y+\alpha_1y^{-1}x)(y^{-3}\beta)=(y^{-1}(\alpha_2\cdot 1+\alpha_3y^{-1}+\alpha_1 xy^{-2})y^2)(y^{-3}\beta)=0$. Hence, $\alpha_3\cdot 1+\alpha_2y+\alpha_1y^{-1}x$ is a zero divisor with the support $\{1,y,y^{-1}x\}$.
\item[(29)]
$(h_2 h_3^{-1})^2 h_2 h_3=1$: By interchanging $h_2$ and $h_3$ in (17) and with the same discussion, the statement is true.
\end{enumerate}
This completes the proof.
\end{proof}

\begin{thm}\label{U-C4}
Suppose that $\alpha$ and $\beta$ are elements of a group algebra of a torsion-free group $G$ over a field $\mathbb{F}$ such that $|supp(\alpha)|=3$, $\alpha\beta=1$ and if  $\alpha \beta'=1$ for some element $\beta'$ of the group algebra, then  $|supp(\beta')|\geq |supp(\beta)|$. If $U(\alpha,\beta)$ contains an square, then there exist non-trivial group elements $x$ and $y$ such that  $x^2=y^3$ and either $\{1,x,y\}$ or $\{1,y,y^{-1}x\}$ is the support of a unit in $\mathbb{F}[G]$.
\end{thm}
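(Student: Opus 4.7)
The plan is to mirror the proof of Theorem~\ref{Z-C4} line by line, replacing the zero-divisor infrastructure by its unit analogue. I would first apply Remark~\ref{Unit-r-G} to reduce to the case $1 \in supp(\alpha) \cap supp(\beta)$ and $G = \langle supp(\alpha) \rangle$, writing $supp(\alpha) = \{1, h_2, h_3\}$ and $\alpha = \alpha_1 \cdot 1 + \alpha_2 h_2 + \alpha_3 h_3$. Then Theorem~\ref{U-C4-1} reduces the problem to the nine explicit relations numbered $5, 7, 14, 17, 21, 22, 25, 26, 29$ of Table~\ref{tab-C4}; in each of these the elements $x, y$ chosen in the proof of Theorem~\ref{Z-C4} satisfy $x^2 = y^3$ by direct expansion of $r(C)=1$, so the remaining task is just to produce, in each case, a unit whose support is of the claimed form.

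The single observation that converts each zero-divisor construction into a unit construction is that, since $\alpha\beta = 1$, for every $g \in G$ we have $(g^{-1}\alpha)(\beta g) = g^{-1}(\alpha\beta)g = 1$, so each left-translate $g^{-1}\alpha$ of $\alpha$ is itself a unit in $\mathbb{F}[G]$ with inverse $\beta g$. In eight of the nine cases (numbered $5, 7, 17, 21, 25, 29$, together with the cases $14, 22$ in which no translation is needed and one merely rewrites the coefficients of $\alpha$ in terms of $x, y$), the element exhibited in the proof of Theorem~\ref{Z-C4} is either $\alpha$ itself or a single left-translate $g^{-1}\alpha$, whose support becomes $\{1, x, y\}$ or $\{1, y, y^{-1}x\}$; by the observation above, this element is a unit.

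Case~$26$ is the only one that is slightly more delicate, because the construction in Theorem~\ref{Z-C4} uses two successive translations. Exactly as there, I would take $x = h_2 h_3^{-1} h_2$ and $y = h_3^{-1} h_2$ and use the rewriting $\alpha = \alpha_2 x y^{-1} + \alpha_3 x y^{-2} + \alpha_1 x^2 y^{-3}$; applying $x^{-1}(\,\cdot\,)y$ yields a unit $\alpha_2 \cdot 1 + \alpha_3 y^{-1} + \alpha_1 x y^{-2}$, and applying $y^{-1}(\,\cdot\,)y^{2}$ to this yields $\alpha_3 \cdot 1 + \alpha_2 y + \alpha_1 y^{-1} x$, still a unit, with support $\{1, y, y^{-1}x\}$. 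In each step the paired inverse is the corresponding right-translate of $\beta$, and the product remains $1$ by iterating the identity $(g^{-1}\gamma)(\delta g)=1$ whenever $\gamma\delta=1$.

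I do not anticipate any genuine obstacle: the heavy lifting has already been done in Theorem~\ref{U-C4-1}, whose Baumslag--Solitar argument (via Remark~\ref{r-baum}) has already ruled out $27$ of the $36$ a priori possible relations. What remains is essentially bookkeeping, structurally identical to the proof of Theorem~\ref{Z-C4} with ``$=0$'' replaced by ``$=1$'' throughout. The only minor care needed is in case~$26$, where one must check that the two successive one-sided translations produce products still equal to $1$; this is immediate from the observation above applied twice.
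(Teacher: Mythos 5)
Your proposal is correct and takes essentially the same route as the paper, which proves Theorem \ref{U-C4} simply by declaring it ``similar to that of Theorem \ref{Z-C4}''; you have filled in exactly the point that makes the transfer work, namely that one-sided (and, in case $26$, paired two-sided) translates of a unit remain units because $(u\gamma v)(v^{-1}\delta u^{-1})=u(\gamma\delta)u^{-1}=1$. The only cosmetic slip is calling the translate in cases $17$ and $29$ a left-translate $g^{-1}\alpha$ when the paper's construction uses the right-translate $\alpha h_2^{-1}$, but the identical observation $(\alpha g)(g^{-1}\beta)=1$ covers it.
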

\begin{proof}
The proof is similar to that of Theorem \ref{Z-C4}.
\end{proof}

In the following, we discuss about the existence of two squares in a zero-divisor graph or a unit graph of length $3$ over an arbitrary field and on a torsion-free group. 
\begin{lem}\label{Z-C4,C4}
Suppose that $\alpha$ and $\beta$ are non-zero elements of a group algebra of a torsion-free group $G$ such that $|supp(\alpha)|=3$, $\alpha\beta=0$ and if  $\alpha \beta'=0$ for some non-zero element $\beta'$ of the group algebra, then  $|supp(\beta')|\geq |supp(\beta)|$. Every two squares of $Z(\alpha,\beta)$ are equivalent.
\end{lem}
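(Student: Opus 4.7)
The plan is to argue by contradiction. Suppose there exist two squares $C$ and $C'$ of $Z(\alpha,\beta)$ that are not equivalent, i.e.\ $\mathcal{T}(C)\cap\mathcal{T}(C')=\varnothing$. By Remark \ref{r-G} we may assume $1\in supp(\alpha)$ and $G=\langle supp(\alpha)\rangle$ with $supp(\alpha)=\{1,h_2,h_3\}$. By Theorem \ref{Z-C4-1}, each of the relations $r(C)=1$ and $r(C')=1$ must coincide with one of the nine admissible relations numbered $5, 7, 14, 17, 21, 22, 25, 26, 29$ in Table \ref{tab-C4}.

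The first step is to rule out the possibility that the two cycles give the same admissible relation. The set $S=\{h_2,h_3,h_2^{-1},h_3^{-1},h_2^{-1}h_3,h_3^{-1}h_2\}$ is small, and a direct inspection (enumerating all ways to break each of the nine relator words into four successive factors each lying in $S$) shows that for each such word there is only one factorization up to the $\mathcal{T}$-symmetries (cyclic rotation and inversion). Consequently a coincidence $r(C)=r(C')$ among the nine cases forces $\mathcal{T}(C)=\mathcal{T}(C')$, contradicting non-equivalence. This reduces the problem to the case in which $r(C)$ and $r(C')$ are two \emph{distinct} admissible relations.

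It then remains to check that no two of the nine admissible relations can hold simultaneously in $G=\langle h_2,h_3\rangle$. I would run through all $\binom{9}{2}=36$ unordered pairs, in the spirit of the proof of Theorem \ref{Z-C4-1} and again with the help of GAP, verifying that the two-generator, two-relator group defined by any such pair is either abelian, a homomorphic image of $BS(1,n)$ or $BS(n,1)$ for some $n\in\{-2,-1,1,2\}$, or contains a non-trivial torsion element. By Remark \ref{r-baum} and the torsion-freeness of $G$, any one of these conclusions contradicts the existence of a non-zero $\beta$ with $\alpha\beta=0$ in $\mathbb{F}[G]$, completing the proof. A typical easy case is the pair $(7,21)$: from relation $7$ one has $h_2^3=h_3^2$, and from relation $21$ one has $h_2^2=h_3^3$; together these give $h_3^9=h_2^6=h_3^4$, so $h_3^5=1$ and $G$ has torsion.

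The main obstacle is the case analysis over the $36$ pairs in the second step, several of which are considerably less transparent than the specimen $(7,21)$; handling these requires GAP-assisted simplification of each two-relator presentation and an effective test for torsion or for a surjection onto a prescribed Baumslag--Solitar group. A secondary, but much lighter, subtlety is the uniqueness statement in the first step, which is settled by a direct combinatorial enumeration of the factorizations of the nine admissible words into elements of $S$.
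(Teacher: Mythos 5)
Your proposal is correct and follows essentially the same route as the paper: reduce via Theorem \ref{Z-C4-1} to the nine admissible relations and then use GAP to rule out all $\binom{9}{2}=36$ pairs of distinct relations by showing each resulting two-generator, two-relator group is incompatible with $G$ being an infinite torsion-free group admitting a zero divisor. Your additional ``first step'' (same relation implies same $\mathcal{T}$-class) is already implicit in the enumeration of the $36$ non-equivalent cases in the proof of Theorem \ref{Z-C4-1}, so it is a welcome clarification rather than a new ingredient.
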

\begin{proof}
By Remark \ref{r-G}, one may assume that $1\in supp(\alpha)$ and $G=\langle supp(\alpha) \rangle$. Let $supp(\alpha)=\{1,h_2,h_3\}$. By Theorem \ref{Z-C4-1}, if there exist two squares in  $Z(\alpha,\beta)$, then these two cycles must be one of the $9$ non-equivalent cases with the relations $5$, $7$, $14$, $17$, $21$, $22$, $25$, $26$ or $29$  in Table \ref{tab-C4}. We may choose two relations similar to or different from each other.  When choosing two relations different from each other, there are $\binom{9}{2}= 36$ cases. Using GAP \cite{gap}, each group with two generators $h_2$ and $h_3$, and two of the relations of the $36$ latter cases is finite and solvable, that is a contradiction. So, if there exist two squares in the graph $Z(\alpha,\beta)$, then such cycles must be equivalent.
\end{proof}

\begin{lem}\label{U-C4,C4}
Suppose that $\alpha$ and $\beta$ are elements of a group algebra of a torsion-free group $G$ such that $|supp(\alpha)|=3$, $\alpha\beta=1$ and if  $\alpha \beta'=1$ for some element $\beta'$ of the group algebra, then  $|supp(\beta')|\geq |supp(\beta)|$.  Every two squares of $U(\alpha,\beta)$ are equivalent.
\end{lem}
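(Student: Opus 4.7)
The plan is to carry over the proof of Lemma \ref{Z-C4,C4} almost verbatim, with the unit-graph analogues of the ingredients replacing their zero-divisor counterparts. First, I will invoke Remark \ref{Unit-r-G} to reduce to the case where $1\in supp(\alpha)\cap supp(\beta)$ and $G=\langle supp(\alpha)\rangle$; write $supp(\alpha)=\{1,h_2,h_3\}$. Then, by Theorem \ref{U-C4-1}, every square appearing in $U(\alpha,\beta)$ must belong to one of the $9$ non-equivalent classes whose representative relations are entries $5$, $7$, $14$, $17$, $21$, $22$, $25$, $26$, and $29$ of Table \ref{tab-C4}.

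Next I argue by contradiction: suppose $U(\alpha,\beta)$ contains two non-equivalent squares $C$ and $C'$. Since, by Definition \ref{def-equ}, two squares lying in the same class are automatically equivalent, $C$ and $C'$ must come from two distinct classes among the $9$ above. This means that $G$ simultaneously satisfies two distinct relations $R_i$ and $R_j$ drawn from that list of $9$, giving $\binom{9}{2}=36$ pairs $(R_i,R_j)$ to consider.

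For each of these $36$ pairs, I will use GAP (the same computation already carried out in the proof of Lemma \ref{Z-C4,C4}) to verify that the two-generator two-relator group $\langle h_2,h_3\mid R_i,R_j\rangle$ is finite and solvable. Since $G$ is a quotient of this finitely-presented group, $G$ itself would then be finite; but $G$ is torsion-free, so $G$ would be trivial, contradicting the fact that $1,h_2,h_3$ are distinct.

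The conceptual step that might look like an obstacle, namely the possibility of two non-equivalent squares lying in the same class among the $9$, does not actually arise: membership in the same class is by definition equivalence. So the only substantive work is the GAP verification of the $36$ cases, which is identical to (and can be directly cited from) the computation performed in the zero-divisor case, since the list of $9$ admissible relations is the same in both Theorem \ref{Z-C4-1} and Theorem \ref{U-C4-1}.
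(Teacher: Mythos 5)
Your proposal is correct and takes essentially the same approach as the paper: the paper disposes of Lemma \ref{U-C4,C4} by declaring its proof ``similar to that of Lemma \ref{Z-C4,C4}'', and that proof is precisely your argument --- reduce via Remark \ref{Unit-r-G}, invoke the $9$ admissible square relations from Theorem \ref{U-C4-1}, and rule out the $\binom{9}{2}=36$ pairs of distinct relations by the GAP computation showing each resulting two-generator quotient is finite and solvable, hence trivial for a torsion-free $G$.
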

\begin{proof}
The proof is similar to that of Lemma \ref{Z-C4,C4}.
\end{proof}

%1) K2,3------------------------------------------------------------------------------------------------------------------------
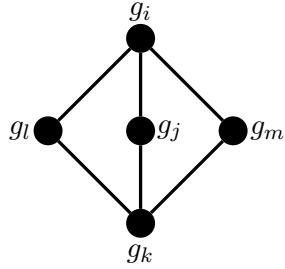
\begin{figure}[ht]
\centering
\begin{tikzpicture}[scale=0.7]
\node (1) [circle, minimum size=3pt, fill=black, line width=0.625pt, draw=black] at (75.0pt, -25.0pt)  {};
\node (2) [circle, minimum size=3pt, fill=black, line width=0.625pt, draw=black] at (125.0pt, -75.0pt)  {};
\node (3) [circle, minimum size=3pt, fill=black, line width=0.625pt, draw=black] at (25.0pt, -75.0pt)  {};
\node (4) [circle, minimum size=3pt, fill=black, line width=0.625pt, draw=black] at (75.0pt, -75.0pt)  {};
\node (5) [circle, minimum size=3pt, fill=black, line width=0.625pt, draw=black] at (75.0pt, -125.0pt)  {};
\draw [line width=1.25, color=black] (1) to  (2);
\draw [line width=1.25, color=black] (1) to  (4);
\draw [line width=1.25, color=black] (1) to  (3);
\draw [line width=1.25, color=black] (4) to  (5);
\draw [line width=1.25, color=black] (2) to  (5);
\draw [line width=1.25, color=black] (3) to  (5);
\node at (75.0pt, -10.625pt) {\textcolor{black}{$g_i$}};
\node at (143.875pt, -75.0pt) {\textcolor{black}{$g_m$}};
\node at (9.875pt, -75.0pt) {\textcolor{black}{$g_l$}};
\node at (90.875pt, -75.0pt) {\textcolor{black}{$g_j$}};
\node at (75.0pt, -141.375pt) {\textcolor{black}{$g_k$}};
\end{tikzpicture}
\caption{The complete bipartite graph $K_{2,3}$ in a zero-divisor graph or a unit graph}\label{f-K2,3}
\end{figure}
\begin{thm}\label{Z-K2,3}
Suppose that $\alpha$ and $\beta$ are non-zero elements of a group algebra of a torsion-free group $G$ such that $|supp(\alpha)|=3$, $\alpha\beta=0$ and if  $\alpha \beta'=0$ for some non-zero element $\beta'$ of the group algebra, then  $|supp(\beta')|\geq |supp(\beta)|$. Then  $Z(\alpha,\beta)$ contains no subgraph isomorphic to the complete bipartite graph $K_{2,3}$.
\end{thm}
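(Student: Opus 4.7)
The plan is to argue by contradiction. Assume that $Z(\alpha,\beta)$ contains a subgraph isomorphic to $K_{2,3}$ with bipartition $\{g_i,g_k\}\mid\{g_j,g_l,g_m\}$ as in Figure~\ref{f-K2,3}. Among the three $4$-cycles $K_{2,3}$ contains, I will only use $C_1:=g_ig_jg_kg_l$ and $C_2:=g_ig_jg_kg_m$, which share the length-two path $g_i-g_j-g_k$. By Lemma~\ref{Z-C4,C4}, any two squares of $Z(\alpha,\beta)$ are equivalent, so $\mathcal{T}(C_1)=\mathcal{T}(C_2)$.

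Write the tuple of $C_1$ read starting at $g_i$ in the direction of $g_j$ as $T_{C_1}=[a_1,b_1,a_2,b_2,a_3,b_3,a_4,b_4]$, with $a_1g_i=b_1g_j$, $a_2g_j=b_2g_k$, $a_3g_k=b_3g_l$, $a_4g_l=b_4g_i$. Because the edges $g_i\sim g_j$ and $g_j\sim g_k$ are common to both cycles, the corresponding tuple of $C_2$ is forced to have the form $T_{C_2}=[a_1,b_1,a_2,b_2,a'_3,b'_3,a'_4,b'_4]$ with $a'_3g_k=b'_3g_m$ and $a'_4g_m=b'_4g_i$. Since $T_{C_2}\in\mathcal{T}(C_1)$, the tuple $T_{C_2}$ must coincide with one of the eight cyclic shifts and reversals of $T_{C_1}$ listed in Definition~\ref{tuples}; the plan is to rule out each of the eight matchings.

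Six of the eight matchings produce a contradiction at once. The identity matching $T_{C_2}=T_{C_1}$ gives $a_3g_k=b'_3g_m=b_3g_l$ and hence $g_l=g_m$, and the cyclic shift by two positions leads to the same collapse. Two of the four reversals produce one of the forbidden equalities $a_1=b_1$ (from the no-shift reversal) or $a_2=b_2$ (from the reversal shifted by two), violating Remark~\ref{r-Z-K3-K3}. The two remaining reversals force the vertex identification $g_i=g_k$ or $g_j=g_l$ via relations such as $b_1g_j=a_1g_k$. The final two matchings are the cyclic shifts by one and by three positions; each identifies three of the four pairs $(a_r,b_r)$ with a single pair $(a,b)$, so that the defining relation $r(C_1)=1$ rearranges to the identity $u^3=s$, where $u:=b^{-1}a\in S$ and $s\in S$ is the ratio coming from the sole surviving pair. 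If $s\in\{u,u^{-1}\}$ then $u$ has order $2$ or $4$, contradicting torsion-freeness of $G$. Otherwise $s$ equals one of the other four elements of $S$, and in every such case the identity $u^3=s$ forces $G=\langle supp(\alpha)\rangle=\langle 1,h_2,h_3\rangle$ (after Remark~\ref{r-G}) to be cyclic, hence torsion-free abelian, so that $\mathbb{F}[G]$ has no zero divisors---contradicting $\alpha\beta=0$ with $\alpha,\beta\neq 0$.

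The main obstacle will be the two cyclic-shift cases: the word $r(C_1)$ is a non-commutative product, so one must keep careful track of where the surviving free pair sits in order to rearrange the identity into the form $u^3=s$. Once this is done, the subsequent check over the finitely many choices of $(u,s)\in S\times S$ with $s\notin\{u,u^{-1}\}$, verifying that $\langle h_2,h_3\rangle$ collapses to a cyclic subgroup in every subcase, is a short enumeration parallel to (and much simpler than) the one underlying Theorem~\ref{Z-C4-1} and Table~\ref{tab-C4}.
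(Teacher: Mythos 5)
Your proposal is correct and follows the same route as the paper's own proof: reduce the $K_{2,3}$ to two squares sharing the path $g_i\,$--$\,g_j\,$--$\,g_k$, invoke Lemma \ref{Z-C4,C4} to force $T_{C_2}\in\mathcal{T}(C_1)$, and eliminate the eight possible matchings one by one; your treatments of the identity, the shift by two, and the four reversals all agree with the paper's (up to deriving the contradiction as a vertex identification rather than as a forbidden equality of tuple entries). The only genuine divergence is in the two odd cyclic shifts. There you pass to the relation $u^3=s$ and enumerate the pairs $(u,s)\in S\times S$, concluding that $G$ is either torsion or cyclic; this works — the word $w^ivw^j=1$ rearranges to $v=w^{-3}$ irrespective of where the surviving free pair sits, and each of the four non-degenerate choices of $s$ does collapse $\langle h_2,h_3\rangle$ to a cyclic group. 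The paper instead closes these two cases with no group theory at all: it first applies Remark \ref{r-Z-K3-K3} to the \emph{third} square of the $K_{2,3}$, on the vertices $\{g_i,g_l,g_k,g_m\}$, to extract the extra inequalities $a_3\neq a_3'$ and $b_4\neq b_4'$; combined with the collapse of three pairs onto $(a_1,b_1)$, these force both entries of the surviving pair to equal the unique element of $supp(\alpha)\setminus\{a_1,b_1\}$, so $a_3=b_3$, contradicting the tuple condition. So the paper's argument buys a purely combinatorial proof of these subcases, while yours trades that for a short relation-by-relation check; both are sound.
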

\begin{proof}
Suppose, for a contradiction, that $Z(\alpha,\beta)$ contains $K_{2,3}$ as a subgraph. Then it contains two squares $C$ and $C'$ with two edges in common as Figure \ref{f-K2,3}, for some distinct elements $g_i,g_j,g_k,g_l,g_m\in supp(\beta)$. So, there are $T_C\in \mathcal{T}(C)$ and $T_{C'}\in \mathcal{T}(C')$ such that $T_C=[a_1,b_1,a_2,b_2,a_3,b_3,a_4,b_4]$ and $T_{C'}=[a_1,b_1,a_2,b_2,a_3',b_3',a_4',b_4']$. Also by Remark \ref{r-Z-K3-K3}, the following conditions are satisfied:
\begin{align}\label{e-7-2}
a_1\not= b_1\not= a_2 \not= b_2\not= a_3\not= b_3\not= a_4\not= b_4\not= a_1 \text{ and } b_2\not= a_3'\not= b_3'\not= a_4'\not= b_4'\not= a_1.
\end{align}

Since the graph with the vertex set $\{g_i,g_m,g_k,g_l\}$ in $K_{2,3}$ is also an square, by Remark \ref{r-Z-K3-K3} the following conditions are also  satisfied:
\begin{align}\label{e-5}
a_3 \neq a_3'.
\end{align}
\begin{align}\label{e-6}
b_4 \neq b_4'.
\end{align}

By Lemma \ref{Z-C4,C4}, the cycles $C$ and $C '$ are equivalent.  So, $T_{C'}$ must be in $\mathcal{T}(C)$. In the following, we show that this gives contradictions.
\begin{enumerate}
\item
Let $T_{C'}=[a_1,b_1,a_2,b_2,a_3,b_3,a_4,b_4]$: So $a_3=a_3'$, that is a contradiction with \ref{e-5}.
\item
Let $T_{C'}=[a_4,b_4,a_1,b_1,a_2,b_2,a_3,b_3]$: Therefore, $T_C=[a_1,b_1,a_1,b_1,a_3,b_3,a_1,b_1]$ and  $T_{C'}=[a_1,b_1,a_1,b_1,a_1,b_1,a_3,b_3]$. By \ref{e-5} and \ref{e-6}, we have $a_3\neq a_1$ and $b_3\neq b_1$. Also, in such $8$-tuples we have  $a_3\neq b_1$, $b_3\neq a_1$ and $a_1\neq b_1$. Therefore, $a_3=b_3$ since $a_3,b_3 \in supp(\alpha)$, that is a contradiction.
\item
Let $T_{C'}=[a_3,b_3,a_4,b_4,a_1,b_1,a_2,b_2]$: So $a_3=a_1=a_3'$, that is a contradiction with \ref{e-5}.
\item
Let $T_{C'}=[a_2,b_2,a_3,b_3,a_4,b_4,a_1,b_1]$: Therefore, $T_C=[a_1,b_1,a_1,b_1,a_1,b_1,a_4,b_4]$ and  $T_{C'}=[a_1,b_1,a_1,b_1,a_4,b_4,a_1,b_1]$. By \ref{e-5} and \ref{e-6}, we have $a_4\neq a_1$ and $b_4\neq b_1$. Also, in such $8$-tuples we have  $a_4\neq b_1$, $b_4\neq a_1$ and $a_1\neq b_1$. Therefore, $a_4=b_4$ since $a_4,b_4 \in supp(\alpha)$, that is a contradiction.
\item
Let $T_{C'}=[b_1,a_1,b_4,a_4,b_3,a_3,b_2,a_2]$: So $a_1=b_1$, that is a contradiction with \ref{e-7-2}.
\item
Let $T_{C'}=[b_2,a_2,b_1,a_1,b_4,a_4,b_3,a_3]$: So $b_1=a_2$, that is a contradiction with \ref{e-7-2}.
\item
Let $T_{C'}=[b_3,a_3,b_2,a_2,b_1,a_1,b_4,a_4]$: So $a_2=b_2$, that is a contradiction with  \ref{e-7-2}.
\item
Let $T_{C'}=[b_4,a_4,b_3,a_3,b_2,a_2,b_1,a_1]$: So $b_2=a_3$, that is a contradiction with  \ref{e-7-2}.
\end{enumerate}
This completes the proof.
\end{proof}

\begin{thm}\label{U-K2,3}
Suppose that $\alpha$ and $\beta$ are elements of a group algebra of a torsion-free group $G$ such that $|supp(\alpha)|=3$, $\alpha\beta=1$ and if  $\alpha \beta'=1$ for some element $\beta'$ of the group algebra, then  $|supp(\beta')|\geq |supp(\beta)|$. Then  $U(\alpha,\beta)$ contains no subgraph isomorphic to the complete bipartite graph $K_{2,3}$.
\end{thm}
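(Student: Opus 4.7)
The plan is to adapt the proof of Theorem \ref{Z-K2,3} verbatim, using the unit-graph analogues of every preparatory fact that the zero-divisor argument relied on. All of these analogues are available in the excerpt: Remark \ref{r-U-K3-K3} plays the role of Remark \ref{r-Z-K3-K3}, Theorem \ref{U-C4-1} plays the role of Theorem \ref{Z-C4-1}, Lemma \ref{U-C4,C4} plays the role of Lemma \ref{Z-C4,C4}, and Theorem \ref{U-K3-K3} plays the role of Theorem \ref{Z-K3-K3}. Since the argument is purely combinatorial once these inputs are in place, translating it should be mechanical.

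First, I would argue by contradiction: suppose $U(\alpha,\beta)$ contains a subgraph isomorphic to $K_{2,3}$. The $K_{2,3}$ naturally decomposes into two squares $C$ and $C'$ sharing exactly two edges (equivalently, three vertices), arranged as in Figure \ref{f-K2,3}, with vertex labels $g_i,g_j,g_k,g_l,g_m \in supp(\beta)$. Appealing to Remark \ref{r-U-K3-K3}, I would pick tuples $T_C \in \mathcal{T}(C)$ and $T_{C'}\in \mathcal{T}(C')$ of the form $T_C=[a_1,b_1,a_2,b_2,a_3,b_3,a_4,b_4]$ and $T_{C'}=[a_1,b_1,a_2,b_2,a_3',b_3',a_4',b_4']$ (shared first four entries because of the two shared edges), satisfying
\begin{equation*}
a_1\ne b_1 \ne a_2\ne b_2\ne a_3\ne b_3\ne a_4\ne b_4\ne a_1, \qquad b_2\ne a_3'\ne b_3'\ne a_4'\ne b_4'\ne a_1.
\end{equation*}

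Next, I would observe that the third square visible inside $K_{2,3}$, namely the one on $\{g_i,g_m,g_k,g_l\}$, is also a square in $U(\alpha,\beta)$, which together with Remark \ref{r-U-K3-K3} forces the additional inequalities $a_3\ne a_3'$ and $b_4\ne b_4'$. Now I would apply Lemma \ref{U-C4,C4}: every two squares of $U(\alpha,\beta)$ are equivalent, hence $\mathcal{T}(C) = \mathcal{T}(C')$, so $T_{C'}$ must actually be one of the eight cyclic-rotation/reflection permutations of $T_C$ listed in Definition \ref{tuples}.

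Finally, I would run the same eight-case analysis as in Theorem \ref{Z-K2,3}, checking each candidate form for $T_{C'}$ against the above inequalities. In the four rotation cases one concludes either $a_3=a_3'$ (contradicting $a_3\ne a_3'$) or one is forced to $a_k=b_k$ for some index, contradicting that the pair $(a_k,b_k)$ comes from distinct elements of $supp(\alpha)$. In the four reflection cases one directly violates one of the consecutive-inequality conditions ($a_1=b_1$, $b_1=a_2$, $a_2=b_2$, or $b_2=a_3$). Each case therefore yields a contradiction, completing the proof. The main (very mild) obstacle is bookkeeping the eight cases carefully enough to confirm that the reasoning of Theorem \ref{Z-K2,3} transfers without any hidden use of the zero-divisor hypothesis; a direct inspection shows that only the structural properties captured by Remark \ref{r-U-K3-K3} and Lemma \ref{U-C4,C4} are used, so the translation goes through without incident.
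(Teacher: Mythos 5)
Your proposal is correct and is essentially the paper's own argument: the paper proves this theorem by simply noting that the proof is the same as that of Theorem \ref{Z-K2,3} with the unit-graph analogues (Remark \ref{r-U-K3-K3}, Theorem \ref{U-K3-K3}, Lemma \ref{U-C4,C4}) substituted for the zero-divisor versions, which is exactly the translation you carry out. Your identification of the shared two edges, the extra inequalities $a_3\ne a_3'$ and $b_4\ne b_4'$ coming from the third square on $\{g_i,g_m,g_k,g_l\}$, and the eight-case check against $\mathcal{T}(C)$ all match the paper's proof of Theorem \ref{Z-K2,3}.
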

\begin{proof}
The proof is similar to that of Theorem \ref{Z-K2,3}.
\end{proof}

In the next three sections, we discuss about zero-divisor graphs of length $3$ over  $\mathbb{F}_2$ on any torsion-free group and give some forbidden subgraphs for such graphs.

\section{\bf Zero-divisor graphs of length $3$ over $\mathbb{F}_2$ on any torsion-free group \\and some of their subgraphs containing an square}\label{S-C4}

Throughout this section suppose that $\alpha$ and $\beta$ are non-zero elements of the group algebra $ \mathbb{F}_2[G]$ of a torsion-free group $G$ such that $|supp(\alpha)|=3$, $\alpha\beta=0$ and if  $\alpha \beta'=0$ for some non-zero element $\beta'$ of the group algebra, then  $|supp(\beta')|\geq |supp(\beta)|$. By Remark \ref{r-G}, one may assume that $1 \in supp(\alpha)$ and $G=\langle supp(\alpha) \rangle$. Let $supp(\alpha)=\{ 1,h_2,h_3\}$ and $n:=|supp(\beta)|$. The following theorem is obtained in \cite{pascal}.

\begin{thm}[Theorem 4.2 of \cite{pascal}]\label{thm-graph}
The zero-divisor graph $Z(\alpha,\beta)$ is a connected simple cubic one containing no subgraph isomorphic to a triangle. 
\end{thm}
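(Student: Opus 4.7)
The plan is to verify the four asserted properties of $Z(\alpha,\beta)$ — connectedness, simplicity, 3-regularity, triangle-freeness — in turn. Three of them fall out of results already set up; only the 3-regularity requires a genuinely new argument, and it is the one step where the characteristic $2$ hypothesis is essential.

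Connectedness is immediate: since $\beta$ has minimum support size among non-zero $\gamma$ with $\alpha\gamma=0$, Lemma \ref{Z-connect} applies directly. Simplicity is also essentially free: by Lemma \ref{size of S}, $|S|=6$, so Proposition \ref{simple} identifies $Z(\alpha,\beta)$ with an induced subgraph of the Cayley graph $Cay(G,S)$, which is simple; loops are already excluded by the condition $g\neq g'$ in the definition of an edge.

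The core of the proof is 3-regularity, where I would use $\mathbb{F}=\mathbb{F}_2$ decisively. Writing $\alpha=\sum_{h\in supp(\alpha)}h$ and $\beta=\sum_{g\in supp(\beta)}g$ (all coefficients equal to $1$), the relation $\alpha\beta=0$ says exactly that for every $y\in G$ the set $N(y):=\{(h,g)\in supp(\alpha)\times supp(\beta):\;hg=y\}$ has even cardinality. Fix a vertex $g\in supp(\beta)$ and an element $h\in supp(\alpha)$, and put $y=hg$. Then $(h,g)\in N(y)$, and since for each $h'\in supp(\alpha)$ there is at most one $g'\in supp(\beta)$ with $h'g'=y$, one has $1\leq|N(y)|\leq 3$. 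Evenness then forces $|N(y)|=2$, so there is a unique $h'\in supp(\alpha)\setminus\{h\}$ with $g':=h'^{-1}hg\in supp(\beta)$, and necessarily $g'\neq g$ (otherwise $h'=h$). This produces exactly one edge at $g$ per choice of $h$, hence three edges in total; by Lemma \ref{size of S}, the six elements $h'^{-1}h$ arising from ordered distinct pairs are pairwise distinct, so the three neighbors thus obtained are distinct and $\deg(g)=3$.

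Finally, triangle-freeness splits into two cases by Remark \ref{r-C3}. Triangles of type (I) are forbidden by Lemma \ref{Z-C3}. A triangle of type (II) on vertices $g_1,g_2,g_3$ yields the chain $h_1g_1=h_2g_2=h_3g_3$ with $\{h_1,h_2,h_3\}=supp(\alpha)$; letting $y$ be this common value, $|N(y)|\geq 3$, and the bound $|N(y)|\leq|supp(\alpha)|=3$ from the previous paragraph forces $|N(y)|=3$, which is odd and contradicts $\alpha\beta=0$ in $\mathbb{F}_2[G]$. The main obstacle in the whole argument is the 3-regularity step: one must combine the parity count (which uses both $\mathbb{F}_2$ and the bound $|supp(\alpha)|=3$) with the injectivity output of Lemma \ref{size of S} in order to guarantee that the three edges found at each vertex lead to three genuinely distinct neighbors.
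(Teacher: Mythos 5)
Your proposal is correct and follows essentially the same route as the paper's own proof: connectedness via Lemma \ref{Z-connect}, simplicity via Proposition \ref{simple}, $3$-regularity from the fact that over $\mathbb{F}_2$ each pair $(h,g)$ has a unique partner $(h',g')$ with $hg=h'g'$, and triangle-freeness by splitting into types (I) and (II) per Remark \ref{r-C3} and Lemma \ref{Z-C3}. You merely make explicit the parity count $|N(y)|\in\{1,2,3\}$ forced to equal $2$ (and equal to $3$ for a type (II) triangle, contradicting $\alpha\beta=0$ in characteristic $2$), which the paper states more tersely.
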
 
\begin{proof}
The connectedness follows from Lemma \ref{Z-connect}. Also by Proposition \ref{simple}, the graph is simple. 

Furthermor, for $(h,g)\in supp(\alpha)\times supp(\beta)$, there is a unique $(h',g')\in supp(\alpha)\times supp(\beta)$ such that $(h,g)\not=(h',g')$ and $hg=h'g'$. So, $Z(\alpha,\beta)$ is a cubic graph. 

Suppose, for a contradiction, that $C$ is a triangle in $Z(\alpha,\beta)$. If $C$ is of type (II) and $T\in \mathcal{T}(C)$, then by Remark \ref{r-C3}, $T=[a_1,b_1,a_2,b_2,a_3,b_3]$ with the condition $a_1\not=b_1=a_2\not=b_2=a_3\not=b_3=a_1$ where $\{a_1,b_1,b_2\}=supp(\alpha)$. So, there are distinct elements $a,b,c\in supp(\beta)$ such that $a_1a=b_1b=b_2c$, that is a contradiction. Therefore, $C$ is of type (I) and so by Lemma \ref{Z-C3}, there is no triangle in $Z(\alpha,\beta)$.
\end{proof}

\begin{rem}\label{r-F2}
{\rm It follows from Theorem \ref{thm-graph} that $n=|supp(\beta)|$, which is the number of vertices of $Z(\alpha,\beta)$, is always an even number because the number of vertices of any simple cubic graph is even.}
\end{rem}

\begin{figure}[ht]
\centering
\begin{tikzpicture}[scale=0.9]
\node (2) [circle, minimum size=3pt, fill=black, line width=0.625pt, draw=black] at (100.0pt, -75.0pt)  {};
\node (4) [circle, minimum size=3pt, fill=black, line width=0.625pt, draw=black] at (100.0pt, -25.0pt)  {};
\node (1) [circle, minimum size=3pt, fill=black, line width=0.625pt, draw=black] at (50.0pt, -25.0pt)  {};
\node (3) [circle, minimum size=3pt, fill=black, line width=0.625pt, draw=black] at (50.0pt, -75.0pt)  {};
\node (5) [circle, minimum size=3pt, fill=black, line width=0.625pt, draw=black] at (150.0pt, -25.0pt)  {};
\node (6) [circle, minimum size=3pt, fill=black, line width=0.625pt, draw=black] at (150.0pt, -75.0pt)  {};
\draw [line width=1.25, color=black] (1) to  (3);
\draw [line width=1.25, color=black] (3) to  (2);
\draw [line width=1.25, color=black] (1) to  (4);
\draw [line width=1.25, color=black] (4) to  (5);
\draw [line width=1.25, color=black] (6) to  (5);
\draw [line width=1.25, color=black] (2) to  (6);
\draw [line width=1.25, color=black] (2) to  (4);
\node at (100.0pt, -89.375pt) {\textcolor{black}{$g_j$}};
\node at (100.0pt, -10.625pt) {\textcolor{black}{$g_i$}};
\node at (50.0pt, -10.625pt) {\textcolor{black}{$g_l$}};
\node at (50.0pt, -89.375pt) {\textcolor{black}{$g_k$}};
\node at (150.0pt, -10.625pt) {\textcolor{black}{$g_p$}};
\node at (150.0pt, -89.375pt) {\textcolor{black}{$g_m$}};
\end{tikzpicture}
\caption{Two squares with one common edge in $Z(\alpha,\beta)$}\label{f-C4-C4}
\end{figure}
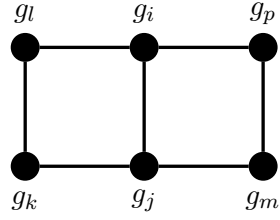

\begin{thm}\label{C4-C4}
Suppose that $Z(\alpha,\beta)$ contains two squares with exactly one edge in common. Then exactly one of the relations $14$, $22$ or $26$ of Table \ref{tab-C4} is satisfied in $G$.
\end{thm}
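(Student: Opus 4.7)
The plan is to identify the two squares via their common edge and reduce the problem to a finite enumeration. By Remark \ref{r-G} we may assume $supp(\alpha) = \{1, h_2, h_3\}$ and $G = \langle h_2, h_3\rangle$. Choose arrangements so that the shared edge $g_j \to g_i$ is the last edge in both tuples, giving $T_C = [a_1, b_1, a_2, b_2, a_3, b_3, a_4, b_4]$ with vertex order $(g_i, g_l, g_k, g_j)$ and $T_{C'} = [a_1', b_1', a_2', b_2', a_3', b_3', a_4', b_4']$ with vertex order $(g_i, g_p, g_m, g_j)$. Since by Lemma \ref{size of S} the pair $(a, b) \in supp(\alpha)^2$ with $a g_j = b g_i$ is unique, the shared edge forces $(a_4', b_4') = (a_4, b_4)$, and by Lemma \ref{Z-C4,C4} the tuple $T_{C'}$ must be one of the eight members of $\mathcal{T}(C)$ listed in Definition \ref{tuples}.

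Among these eight candidates, $[b_3, a_3, b_2, a_2, b_1, a_1, b_4, a_4]$ forces $a_4 = b_4$, which is impossible by Remark \ref{r-Z-K3-K3}, and $T_{C'} = T_C$ forces $g_p = g_l$, contradicting that only one edge is shared. Each of the remaining six candidates imposes a specific pair of equalities among the entries of $T_C$; for instance, $[a_4, b_4, a_1, b_1, a_2, b_2, a_3, b_3]$ yields $a_3 = a_4$ and $b_3 = b_4$, while $[b_2, a_2, b_1, a_1, b_4, a_4, b_3, a_3]$ yields $a_4 = b_3$ and $b_4 = a_3$, and similarly for the others.

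For each of these six identifications, intersect with the nine admissible values of $r(C)$ provided by Theorem \ref{Z-C4-1}, namely relations $5, 7, 14, 17, 21, 22, 25, 26, 29$ of Table \ref{tab-C4}. In each surviving combination, compute the vertex positions $g_l, g_k, g_p, g_m$ as words in $h_2^{\pm 1}, h_3^{\pm 1}$ applied to $g_i$, simplifying modulo $r(C) = 1$, and test whether the diagonals $g_l g_p^{-1}$ or $g_k g_m^{-1}$ belong to the connection set $S$. If one does, then by Proposition \ref{simple} it is an additional edge of $Z(\alpha,\beta)$ which, together with the edges of $C$ and $C'$ that meet it at $g_i$ or $g_j$, closes a triangle, contradicting Theorem \ref{thm-graph}. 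Automating this verification in GAP in the same style as the proof of Lemma \ref{Z-C4,C4}, the only combinations that survive correspond to $r(C)$ being relation $14$, $22$, or $26$, which proves the claim.

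The main obstacle is the sheer scale of the case analysis---six tuple identifications times nine admissible relations, with vertex words requiring simplification using $r(C) = 1$ before the diagonal test can be applied---so the argument is most cleanly organized as a computer-assisted enumeration rather than carried out by hand.
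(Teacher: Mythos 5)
Your reduction matches the paper's skeleton up to the decisive case analysis: the shared edge pins down one pair of the $8$-tuple, Lemma \ref{Z-C4,C4} forces $T_{C'}\in\mathcal{T}(C)$, and two of the eight candidates die at once. From there the two arguments genuinely diverge. The paper first proves two additional inequalities at the shared vertices (in its notation $a_2\neq a_2'$ and $b_4\neq b_4'$) by an $\mathbb{F}_2$-specific count: three pairs $(h,g)$ producing the same group element would exhaust $supp(\alpha)$ and leave no fourth pair to make the coefficient vanish. Combined with the adjacency inequalities of Remark \ref{r-Z-K3-K3}, this pins each surviving candidate down to a few explicit words, which the paper then kills one by one by exhibiting $G$ as a quotient of a Baumslag--Solitar group, except in the single candidate that yields relations $14$, $22$, $26$. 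You instead propose to eliminate everything by intersecting with the nine admissible square relations of Theorem \ref{Z-C4-1} and running a diagonal test against triangle-freedom (Theorem \ref{thm-graph} via Proposition \ref{simple}). That substitution is legitimate in principle --- the branching inequalities you never state are exactly what your diagonal test recovers, since $a_2=a_2'$ forces $g_k\sim g_m$ and hence a triangle on $g_j,g_k,g_m$ --- and leaning on Theorem \ref{Z-C4-1} spares you the Baumslag--Solitar computations the paper redoes by hand.

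The gap is that the decisive claim, namely that after these two filters only relations $14$, $22$, $26$ survive, is asserted rather than established: you neither carry out the enumeration nor argue its completeness, whereas the paper works through all eight candidates explicitly. If you do run the computation, three points need care. First, the test ``diagonal $\in S$'' is only sound in one direction: you may certify membership as a consequence of $r(C)=1$, but a failure to certify does not eliminate a case, so you must check that every unwanted combination is in fact caught by one of your two filters. Second, some degenerate tuples make $g_k=g_m$ or $g_l=g_p$ rather than producing a triangle; these must be rejected against the hypothesis that the two squares share exactly one edge, which your write-up does not mention. Third, the theorem asserts that \emph{exactly} one of the three relations holds; this needs the additional observation, implicit in the proof of Lemma \ref{Z-C4,C4}, that no two distinct admissible relations can hold simultaneously in a torsion-free $G$.
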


So, there are $T_C\in \mathcal{T}(C)$ and $T_{C'}\in \mathcal{T}(C')$ such that $T_C=[a_1,b_1,a_2,b_2,a_3,b_3,a_4,b_4]$ and $T_{C'}=[a_1,b_1,a_2,b_2,a_3',b_3',a_4',b_4']$.

\begin{proof}
Suppose that the graph $Z(\alpha,\beta)$ contains two squares $C$ and $C'$ with exactly one common edge as Figure \ref{f-C4-C4}, for some distinct elements $g_i,g_j,g_k,g_l,g_m,g_p\in supp(\beta)$. So, there are $T_C\in \mathcal{T}(C)$ and $T_{C'}\in \mathcal{T}(C')$ such that  $T_C=[a_1,b_1,a_2,b_2,a_3,b_3,a_4,b_4]$ and $T_{C'}=[a_1,b_1,a_2',b_2',a_3',b_3',a_4',b_4']$, and $R(T_C)$ and $R(T_{C'})$ are as follows:
\begin{equation}\label{e-8}
R(T_C)= \left\{
\begin{array}{l}
a_1g_i=b_1g_j\\
a_2g_j=b_2g_k\\
a_3g_k=b_3g_l\\
a_4g_l=b_4g_i\\
\end{array} \right.
\qquad \qquad
R(T_{C'})= \left\{
\begin{array}{l}
a_1g_i=b_1g_j\\
a_2'g_j=b_2'g_m\\
a_3'g_m=b_3'g_p\\
a_4'g_p=b_4'g_i\\
\end{array} \right.
\end{equation}
By Remark \ref{r-Z-K3-K3}, we have
\begin{align}\label{e-new1}
a_1\neq b_1\neq \cdots \neq a_4\neq b_4\neq a_1 \text{ and } b_1\neq  a_2'\neq \cdots \neq a_4' \neq b_4'\neq a_1.
\end{align}
Now we prove that $a_2 \neq a_2'$ and $b_4 \neq b_4'$. Suppose, for a contradiction, that  $a_2=a_2'$.  So by \ref{e-8},  $b_2g_k=a_2'g_j=b_2'g_m$. Since $\alpha\beta=0$ in $\mathbb{F}_2[G]$, there are $g_a \in supp(\beta)\setminus \{g_j,g_k,g_m\}$ and $h_a \in supp(\alpha) \setminus \{a_2',b_2,b_2'\}$ such that $b_2g_k=a_2'g_j=b_2'g_m=h_ag_a$, a contradiction because $|supp(\alpha)|=3$ and $ \{a_2',b_2,b_2'\}=supp(\alpha)$. Hence, 
\begin{align}\label{e-9}
a_2 \neq a_2'.
\end{align}
Also with the same discussion such as above, 
\begin{align}\label{e-10}
b_4 \neq b_4'.
\end{align} 
By Lemma \ref{Z-C4,C4}, the cycles $C$ and $C'$ are equivalent. So, $T_{C'}$ must be in $\mathcal{T}(C)$. In the following, we explain each cases in details.
\begin{enumerate}
\item
Let $T_{C'}=[a_1,b_1,a_2,b_2,a_3,b_3,a_4,b_4]$: So $a_2= a_2'$, that is a contradiction with \ref{e-9}.
\item
Let $T_{C'}=[a_4,b_4,a_1,b_1,a_2,b_2,a_3,b_3]$: So, we have 
\begin{equation*}
T_{C}=[a_1,b_1,a_2,b_2,a_3,b_3,a_1,b_1] \text{ and } T_{C'}=[a_1,b_1,a_1,b_1,a_2, b_2,a_3,b_3].
\end{equation*}
By \ref{e-new1}, \ref{e-9} and \ref{e-10}, $a_2\neq a_1$, $a_2\neq b_1$, $b_3\neq a_1$, $b_3\neq b_1$ and $a_1\neq b_1$. So, $a_2=b_3$ because $b_3 \in supp(\alpha)=\{a_1,b_1,a_2\}$. So,  $T_{C}=[a_1,b_1,a_2,b_2,a_3,a_2,a_1,b_1]$ and  $T_{C'}=[a_1,b_1,a_1,b_1,a_2,b_2,a_3,a_2]$. By \ref{e-new1},  $b_2\neq a_2$, $a_3\neq a_2$ and $b_2\neq a_3$. Also,  $supp(\alpha)=\{a_1,b_1,a_2\}$. So, there are exactly two cases for $b_2,a_3\in supp(\alpha)$. In the following, we show that each of such cases gives a  contradiction.
\begin{enumerate}
\item[i)]
Let $b_2=a_1$ and $a_3=b_1$: So, $T_C=[a_1,b_1,a_2,a_1,b_1,a_2,a_1,b_1]$ and the relation of $C$ is $a_1^{-1}b_1a_2^{-1}a_1b_1^{-1}a_2a_1^{-1}b_1=1$. Since $1\in supp(\alpha)=\{a_1,b_1,a_2\}$, the possible cases are as follows:
\begin{enumerate}
\item[a)]
Let $a_1=1$: $a_2^{-1}b_1a_2=b_1^{\ 2}$ and so $G$ is a quotient of $BS(1,2)$, a contradiction.
\item[b)]
Let $b_1=1$: $a_2^{-1}a_1a_2=a_1^{\ 2}$ and so $G$ is a quotient of $BS(1,2)$, a contradiction.
\item[c)]
Let $a_2=1$: $a_1^{-1}b_1a_1b_1^{-1}a_1^{-1}b_1=1$. If $x=a_1^{-1}b_1$ and $y=b_1^{-1}$, then $y^{-1}xy=x^2$ and $G$ is a quotient of $BS(1,2)$, a contradiction.
\end{enumerate}
\end{enumerate}
\begin{enumerate}
\item[ii)]
Let $b_2=b_1$ and $a_3=a_1$: So, $T_C=[a_1,b_1,a_2,b_1,a_1,a_2,a_1,b_1]$ and the relation of $C$ is $a_1^{-1}b_1a_2^{-1}b_1a_1^{-1}a_2a_1^{-1}b_1=1$. With the same discussion as item (i), the possible cases are as follows:
\begin{enumerate}
\item[a)]
Let $a_1=1$: $a_2^{-1}b_1a_2=b_1^{-2}$ and so $G$ is a quotient of $BS(1,-2)$, a contradiction.
\item[b)]
Let $b_1=1$: $a_2^{-1}a_1a_2=a_1^{-2}$ and so $G$ is a quotient of $BS(1,-2)$, a contradiction.
\item[c)]
Let $a_2=1$: $a_1^{-1}b_1^{\ 2}a_1^{-2}b_1=1$. If $x=b_1^{-1}a_1$ and $y=b_1^{-1}$, then $y^{-1}xy=x^{-2}$ and $G$ is a quotient of $BS(1,-2)$, a contradiction.
\end{enumerate}
\end{enumerate}
Therefore, $T_{C'}\neq [a_4,b_4,a_1,b_1,a_2,b_2,a_3,b_3]$.
\item
Let $T_{C'}=[a_3,b_3,a_4,b_4,a_1,b_1,a_2,b_2]$: So, we have
\begin{align*}
T_{C}=[a_1,b_1,a_2,b_2,a_1,b_1,a_4,b_4] \text{ and } T_{C'}=[a_1,b_1,a_4,b_4,a_1,b_1,a_2,b_2].
\end{align*}
By \ref{e-new1}, $a_2 \neq b_1$. In the following, we show that $a_2\neq a_1$. 

Suppose, for a contradiction, that $a_2=a_1$. So, $T_{C}=[a_1,b_1,a_1, b_2,a_1,b_1,a_4,b_4]$ and  $T_{C'}=[a_1,b_1,a_4,b_4,a_1,b_1,a_1,b_2]$. By \ref{e-new1}, \ref{e-9} and \ref{e-10},  $b_1\neq a_1$, $b_1\neq a_4$, $b_4\neq a_1$, $b_4\neq a_4$ and $a_1\neq a_4$. So, $b_1=b_4$ because $b_4 \in supp(\alpha)=\{a_1,b_1,a_4\}$. Now by \ref{e-new1}, \ref{e-9} and \ref{e-10}, $b_2\neq a_1$, $b_2\neq b_1$, $a_4\neq a_1$, $a_4\neq b_1$ and $a_1\neq b_1$. Hence, $b_2=a_4$ because $b_2\in supp(\alpha)=\{a_1,b_1,a_4\}$. So, $T_C=[a_1,b_1,a_1,b_2,a_1,b_1,b_2,b_1]$ and the relation of $C$ is $a_1^{-1}b_1a_1^{-1}b_2a_1^{-1}b_1b_2^{-1}b_1=1$. Since $1\in supp(\alpha)=\{a_1,b_1,b_2\}$,  the possible cases are as follows:
\begin{enumerate}
\item[a)]
Let $a_1=1$: $b_2^{-1}b_1^{-2}b_2=b_1$ and so $G$ is a quotient of $BS(-2,1)$, a contradiction.
\item[b)]
Let $b_1=1$: $b_2^{-1}a_1^{-2}b_2=a_1$ and so $G$ is a quotient of $BS(-2,1)$, a contradiction.
\item[c)]
Let $b_2=1$: $a_1^{-1}b_1a_1^{-2}b_1^{\ 2}=1$. If $x=b_1a_1^{-1}$ and $y=a_1^{-1}$, then $y^{-1}x^{-2}y=x$ and $G$ is a quotient of $BS(-2,1)$, a contradiction.
\end{enumerate}
Therefore, $a_2\neq a_1$. 

By \ref{e-new1}, \ref{e-9} and \ref{e-10}, $a_1\neq b_1$, $a_4\neq a_2$, $a_4\neq b_1$ and $b_1\neq a_2$. So, $a_1=a_4$ because $a_1\in supp(\alpha)=\{b_1,a_2,a_4\}$. Now by \ref{e-new1}, \ref{e-9} and \ref{e-10}, $a_2\neq a_1$, $a_2\neq b_2$, $b_4\neq a_1$, $b_4\neq b_2$ and $a_1\neq b_2$. So, $a_2=b_4$ because $b_4\in supp(\alpha)=\{a_1,a_2,b_2\}$. Also, $b_2\neq a_1$, $b_2\neq a_2$, $b_1\neq a_1$, $b_1\neq a_2$ and $a_1\neq a_2$. Therefore, $b_1=b_2$ because $b_1 \in supp(\alpha)=\{a_1,a_2,b_2\}$. Hence, $T_C=[a_1,b_1,a_2,b_1,a_1,b_1,a_1,a_2]$ and the relation of $C$ is $a_1^{-1}b_1a_2^{-1}b_1a_1^{-1}b_1a_1^{-1}a_2=1$. Since $1\in supp(\alpha)=\{a_1,b_1,a_2\}$,  the possible cases are as follows:
\begin{enumerate}
\item[a)]
Let $a_1=1$: $a_2^{-1}b_1^{-2}a_2=b_1$ and so $G$ is a quotient of $BS(-2,1)$, a contradiction.
\item[b)]
Let $b_1=1$: $a_2^{-1}a_1^{-2}a_2=a_1$ and so $G$ is a quotient of $BS(-2,1)$, a contradiction.
\item[c)]
Let $a_2=1$: $a_1^{-1}b_1^{\ 2}a_1^{-1}b_1a_1^{-1}=1$. If $x=a_1b_1^{-1}$ and $y=a_1^{-1}$, then $y^{-1}x^{-2}y=x$ and $G$ is a quotient of $BS(-2,1)$, a contradiction.
\end{enumerate}
Therefore, $T_{C'}\neq [a_3,b_3,a_4,b_4,a_1,b_1,a_2,b_2]$.
\item
Let $T_{C'}=[a_2,b_2,a_3,b_3,a_4,b_4,a_1,b_1]$: So, we have
\begin{align*}
T_{C}=[a_1,b_1,a_1,b_1,a_3,b_3,a_4,b_4] \text{ and } T_{C'}=[a_1,b_1,a_3,b_3,a_4,b_4,a_1,b_1]. 
\end{align*}
By \ref{e-new1}, \ref{e-9} and \ref{e-10}, we have $a_3\neq a_1$, $a_3\neq b_1$, $b_4\neq a_1$, $b_4\neq b_1$ and $a_1\neq b_1$. Therefore, $a_3=b_4$ because $b_4 \in supp(\alpha)=\{a_1,b_1,a_3\}$. By \ref{e-new1}, $b_3\neq a_3$, $a_4\neq a_3$ and $b_3\neq a_4$. Also,  $supp(\alpha)=\{a_1,b_1,a_3\}$. So, there are exactly two cases for  $b_3,a_4\in supp(\alpha)$. In the following, we show that each of such cases gives a  contradiction.
\begin{enumerate}
\item[i)]
Let $b_3=a_1$ and $a_4=b_1$: So, $T_C=[a_1,b_1,a_1,b_1,a_3,a_1,b_1,a_3]$ and the relation of $C$ is $a_1^{-1}b_1a_1^{-1}b_1a_3^{-1}a_1b_1^{-1}a_3=1$. Since $1\in supp(\alpha)=\{a_1,b_1,a_3\}$, the possible cases are as follows:
\begin{enumerate}
\item[a)]
Let $a_1=1$: $a_3^{-1}b_1a_3=b_1^{\ 2}$ and so $G$ is a quotient of $BS(1,2)$, a contradiction.
\item[b)]
Let $b_1=1$: $a_3^{-1}a_1a_3=a_1^{\ 2}$ and so $G$ is a quotient of $BS(1,2)$, a contradiction.
\item[c)]
Let $a_3=1$: $a_1^{-1}b_1a_1b_1^{-1}a_1^{-1}b_1=1$. If $x=a_1^{-1}b_1$ and $y=b_1^{-1}$, then $y^{-1}xy=x^2$ and $G$ is a quotient of $BS(1,2)$, a contradiction.
\end{enumerate}
\end{enumerate}
\begin{enumerate}
\item[ii)]
Let $b_3=b_1$ and $a_4=a_1$: So, $T_C=[a_1,b_1,a_1,b_1,a_3,b_1,a_1,a_3]$ and the relation of $C$ is $a_1^{-1}b_1a_1^{-1}b_1a_3^{-1}b_1a_1^{-1}a_3=1$. With the same discussion as item (i), the possible cases are as follows:
\begin{enumerate}
\item[a)]
Let $a_1=1$: $a_3^{-1}b_1a_3=b_1^{-2}$ and so $G$ is a quotient of $BS(1,-2)$, a contradiction.
\item[b)]
Let $b_1=1$: $a_3^{-1}a_1a_3=a_1^{-2}$ and so $G$ is a quotient of $BS(1,-2)$, a contradiction.
\item[c)]
Let $a_3=1$: $a_1^{-1}b_1^{\ 2}a_1^{-2}b_1=1$. If $x=b_1^{-1}a_1$ and $y=b_1^{-1}$, then $y^{-1}xy=x^{-2}$ and $G$ is a quotient of $BS(1,-2)$, a contradiction.
\end{enumerate}
\end{enumerate}
Therefore, $T_{C'}\neq [a_2,b_2,a_3,b_3,a_4,b_4,a_1,b_1]$.
\item
Let $T_{C'}=[b_1,a_1,b_4,a_4,b_3,a_3,b_2,a_2]$: So $a_1 =b_1$, that is a contradiction with \ref{e-new1}.
\item
Let $T_{C'}=[b_4,a_4,b_3,a_3,b_2,a_2,b_1,a_1]$: So $a_1=b_4$, that is a contradiction with \ref{e-new1}.
\item
Let $T_{C'}=[b_2,a_2,b_1,a_1,b_4,a_4,b_3,a_3]$: So $b_1=a_2$, that is a contradiction with \ref{e-new1}.
\item
Let $T_{C'}=[b_3,a_3,b_2,a_2,b_1,a_1,b_4,a_4]$: So, we have
\begin{align*}
T_{C}=[a_1,b_1,a_2,b_2,b_1,a_1,a_4,b_4] \text{ and } T_{C'}=[a_1,b_1,b_2,a_2,b_1,a_1,b_4,a_4].
\end{align*}
We have $a_2=a_1$ or $a_2\neq a_1$. In the following, we explain each cases in details.
\begin{enumerate}
\item[A)] Let $a_2=a_1$: So, $T_{C}=[a_1,b_1,a_1,b_2,b_1,a_1, a_4,b_4]$ and  $T_{C'}=[a_1,b_1,b_2,a_1,b_1,a_1,b_4,a_4]$. By \ref{e-new1}, $b_2\neq a_1$, $b_2\neq b_1$ and $a_1\neq b_1$. So, $\{a_1,b_1,b_2\}=supp(\alpha)$. Also, $a_4\neq a_1$, $b_4\neq a_1$ and $a_4\neq b_4$. Therefore, there are exactly two cases for $a_4,b_4\in supp(\alpha)$. In the following, we show that $a_4=b_1$ and $b_4=b_2$.

Suppose, for a contradiction, that $a_4=b_2$ and $b_4=b_1$. So, $T_C=[a_1,b_1,a_1,b_2,b_1,a_1,b_2,b_1]$ and the relation of $C$ is $a_1^{-1}b_1a_1^{-1}b_2b_1^{-1}a_1b_2^{-1}b_1=1$. Since $1\in supp(\alpha)=\{a_1,b_1,b_2\}$,  the possible cases are as follows:
\begin{enumerate}
\item[a)]
Let $a_1=1$: $b_2^{-1}b_1^{\ 2}b_2=b_1$ and so $G$ is a quotient of $BS(2,1)$, a contradiction.
\item[b)]
Let $b_1=1$: $b_2^{-1}a_1^{\ 2}b_2=a_1$ and so $G$ is a quotient of $BS(2,1)$, a contradiction.
\item[c)]
Let $b_2=1$: $a_1^{-1}b_1a_1^{-1}b_1^{-1}a_1b_1=1$. If $x=b_1a_1^{-1}$ and $y=a_1^{-1}$, then $y^{-1}x^2y=x$ and $G$ is a quotient of $BS(2,1)$, a contradiction.
\end{enumerate}
Therefore, $a_4=b_1$ and $b_4=b_2$. So, $T_C=[a_1,b_1,a_1,b_2,b_1,a_1,b_1,b_2]$ and the relation of $C$ is $a_1^{-1}b_1a_1^{-1}b_2b_1^{-1}a_1b_1^{-1}b_2=1$. Since $1\in supp(\alpha)=\{a_1,b_1,b_2\}$, the possible cases are as follows:
\begin{enumerate}
\item[a)]
Let $a_1=1$: $b_1b_2b_1^{-2}b_2=1$, where $\{b_1,b_2\}=\{h_2,h_3\}$.
\item[b)]
Let $b_1=1$: $a_1b_2a_1^{-2}b_2=1$, where $\{a_1,b_2\}=\{h_2,h_3\}$.
\item[c)]
Let $b_2=1$: $b_1 a_1^{-1} b_1 a_1 b_1^{-1} a_1=1$, where $\{a_1,b_1\}=\{h_2,h_3\}$.
\end{enumerate}
So, $a_2=a_1$ implies that $T_{C}=[a_1,b_1,a_1,b_2,b_1, a_1,b_1,b_2]$, $T_{C'}=[a_1,b_1,b_2,a_1,b_1,a_1,b_2,b_1]$ and $\{a_1,b_1, b_2\}=supp(\alpha)$. Also, exactly one of the relations $14$, $22$ or $26$ of Table \ref{tab-C4} is satisfied in $G$.

\item[B)] Let $a_2\neq a_1$: By \ref{e-new1}, $a_2\neq a_1$, $a_2\neq b_1$ and $a_1\neq b_1$. So, $\{a_1,b_1,a_2\}=supp(\alpha)$. Also, $b_2\neq b_1$ and $b_2\neq a_2$. So, $b_2= a_1$,  $T_{C}=[a_1,b_1,a_2,a_1,b_1,a_1,a_4,b_4]$ and  $T_{C'}=[a_1,b_1,a_1,a_2,b_1,a_1,b_4,a_4]$. Also by \ref{e-new1}, $a_4\neq a_1$, $b_4\neq a_1$ and $a_4\neq b_4$. Therefore, there are exactly two cases for $a_4,b_4\in supp(\alpha)$. In the following, we show that $a_4=a_2$ and $b_4=b_1$.

Suppose, for a contradiction, that $a_4=b_1$ and $b_4=a_2$. So,  $T_C=[a_1,b_1,a_2,a_1,b_1,a_1,b_1,a_2]$ and the relation of $C$ is $a_1^{-1}b_1a_2^{-1}a_1b_1^{-1}a_1b_1^{-1}a_2=1$. Since $1\in supp(\alpha)=\{a_1,b_1,a_2\}$, the possible cases are as follows:
\begin{enumerate}
\item[a)]
Let $a_1=1$: $a_2^{-1}b_1^{\ 2}a_2=b_1$ and so $G$ is a quotient of $BS(2,1)$, a contradiction.
\item[b)]
Let $b_1=1$: $a_2^{-1}a_1^{\ 2}a_2=a_1$ and so $G$ is a quotient of $BS(2,1)$, a contradiction.
\item[c)]
Let $a_2=1$: $a_1^{-1}b_1a_1b_1^{-1}a_1b_1^{-1}=1$. If $x=a_1b_1^{-1}$ and $y=b_1^{-1}$, then $y^{-1}x^2y=x$ and $G$ is a quotient of $BS(2,1)$, a contradiction.
\end{enumerate}
Therefore, $a_4=a_2$ and $b_4=b_1$. So, $T_C=[a_1,b_1,a_2,a_1,b_1,a_1,a_2,b_1]$ and the relation of $C$ is $a_1^{-1}b_1a_2^{-1}a_1b_1^{-1}a_1a_2^{-1}b_1=1$. Since $1\in supp(\alpha)=\{a_1,b_1,a_2\}$, the possible cases are as follows:
\begin{enumerate}
\item[a)]
Let $a_1=1$: $b_1a_2b_1^{-2}a_2=1$, where $\{b_1,a_2\}=\{h_2,h_3\}$.
\item[b)]
Let $b_1=1$: $a_1a_2a_1^{-2}a_2=1$, where $\{a_1,a_2\}=\{h_2,h_3\}$.
\item[c)]
Let $a_2=1$: $b_1 a_1^{-1} b_1 a_1 b_1^{-1} a_1=1$, where $\{a_1,b_1\}=\{h_2,h_3\}$.
\end{enumerate}
So, $a_2\neq a_1$ implies that $T_{C}=[a_1,b_1,a_2,a_1,b_1, a_1,a_2,b_1]$,  $T_{C'}=[a_1,b_1,a_1,a_2,b_1,a_1,b_1,a_2]$ and $\{a_1,b_1, a_2\}=supp(\alpha)$. Also, exactly one of the relations $14$, $22$ or $26$ of Table \ref{tab-C4} is satisfied in $G$.
\end{enumerate}
\end{enumerate}
This completes the proof.
\end{proof}

\begin{rem}\label{r-1}
{\rm Suppose that $Z(\alpha, \beta)$ contains two squares $C$ and $C'$ with exactly one common edge as Figure \ref{f-C4-C4}, for some distinct elements $g_i,g_j,g_k,g_l,g_m,g_p\in supp(\beta)$. Let $T_C$ and $T_{C'}$ be $8$-tuples of $C$ and $C'$, respectively, with $R(T_C)$ and $R(T_{C'})$ as follows:
\begin{equation*}
R(T_C)= \left\{
\begin{array}{l}
a_1g_i=b_1g_j\\
a_2g_j=b_2g_k\\
a_3g_k=b_3g_l\\
a_4g_l=b_4g_i\\
\end{array} \right.
\qquad \qquad
R(T_{C'})= \left\{
\begin{array}{l}
a_1g_i=b_1g_j\\
a_2'g_j=b_2'g_m\\
a_3'g_m=b_3'g_p\\
a_4'g_p=b_4'g_i\\
\end{array} \right.
\end{equation*}
where $a_1,b_1,a_2,b_2,a_3,b_3,a_4,b_4,a_2',b_2',a_3',b_3',a_4',b_4'\in supp(\alpha)$.\\
By the proof of Theorem \ref{C4-C4}, the possible cases for $\{T_C,T_{C'}\}$ and the relation of $C$ and $C'$ are:
\begin{enumerate}
\item[1)]
$\{T_C,T_{C'}\}=\{[h_2,1,h_2,h_3,1,h_2,1,h_3],[h_2,1,h_3,h_2,1,h_2,h_3,1]\}$ and the relation of $C$ and $C'$ is relation $14$ of Table \ref{tab-C4}.
\item[2)]
$\{T_C,T_{C'}\}=\{[1,h_2,1,h_3,h_2,1,h_2,h_3],[1,h_2,h_3,1,h_2,1,h_3,h_2]\}$ and the relation of $C$ and $C'$ is relation $14$ of Table \ref{tab-C4}.
\item[3)]
$\{T_C,T_{C'}\}=\{[h_3,1,h_3,h_2,1,h_3,1,h_2],[h_3,1,h_2,h_3,1,h_3,h_2,1]\}$ and the relation of $C$ and $C'$ is relation $22$ of Table \ref{tab-C4}.
\item[4)]
$\{T_C,T_{C'}\}=\{[1,h_3,1,h_2,h_3,1,h_3,h_2],[1,h_3,h_2,1,h_3,1,h_2,h_3]\}$ and the relation of $C$ and $C'$ is relation $22$ of Table \ref{tab-C4}.
\item[5)]
$\{T_C,T_{C'}\}=\{[h_2,h_3,h_2,1,h_3,h_2,h_3,1],[h_2,h_3,1,h_2,h_3,h_2,1,h_3]\}$ and the relation of $C$ and $C'$ is relation $26$ of Table \ref{tab-C4}.
\item[6)]
$\{T_C,T_{C'}\}=\{[h_3,h_2,h_3,1,h_2,h_3,h_2,1],[h_3,h_2,1,h_3,h_2,h_3,1,h_2]\}$ and the relation of $C$ and $C'$ is relation $26$ of Table \ref{tab-C4}.
\end{enumerate}
}
\end{rem}

\begin{figure}[ht]
\centering
\psscalebox{0.8 0.8} {\begin{tikzpicture}
\node (14) [circle, minimum size=3pt, fill=black, line width=0.625pt, draw=black] at (75.0pt, -62.5pt)  {};
\node (12) [circle, minimum size=3pt, fill=black, line width=0.625pt, draw=black] at (75.0pt, -100.0pt)  {};
\node (15) [circle, minimum size=3pt, fill=black, line width=0.625pt, draw=black] at (112.5pt, -62.5pt)  {};
\node (16) [circle, minimum size=3pt, fill=black, line width=0.625pt, draw=black] at (112.5pt, -100.0pt)  {};
\node (10) [circle, minimum size=3pt, fill=black, line width=0.625pt, draw=black] at (150.0pt, -100.0pt)  {};
\node (9) [circle, minimum size=3pt, fill=black, line width=0.625pt, draw=black] at (150.0pt, -62.5pt)  {};
\node (19) [circle, minimum size=3pt, fill=black, line width=0.625pt, draw=black] at (187.5pt, -100.0pt)  {};
\node (20) [circle, minimum size=3pt, fill=black, line width=0.625pt, draw=black] at (187.5pt, -62.5pt)  {};
\node (22) [circle, minimum size=3pt, fill=black, line width=0.625pt, draw=black] at (225.0pt, -62.5pt)  {};
\node (21) [circle, minimum size=3pt, fill=black, line width=0.625pt, draw=black] at (225.0pt, -100.0pt)  {};
\node (2) [circle, minimum size=3pt, fill=black, line width=0.625pt, draw=black] at (262.5pt, -100.0pt)  {};
\node (4) [circle, minimum size=3pt, fill=black, line width=0.625pt, draw=black] at (262.5pt, -62.5pt)  {};
\node (6) [circle, minimum size=3pt, fill=black, line width=0.625pt, draw=black] at (300.0pt, -100.0pt)  {};
\node (5) [circle, minimum size=3pt, fill=black, line width=0.625pt, draw=black] at (300.0pt, -62.5pt)  {};
\draw [line width=1.25, color=black] (14) to  (15);
\draw [line width=1.25, color=black] (16) to  (15);
\draw [line width=1.25, color=black] (12) to  (16);
\draw [line width=1.25, color=black] (12) to  (14);
\draw [line width=1.25, color=black] (2) to  (6);
\draw [line width=1.25, color=black] (5) to  (6);
\draw [line width=1.25, color=black] (16) to  (10);
\draw [line width=1.25, color=black] (15) to  (9);
\draw [line width=1.25, color=black] (21) to  (22);
\draw [line width=1.25, dotted, color=black] (21) to  (19);
\draw [line width=1.25, dotted, color=black] (22) to  (20);
\draw [line width=1.25, color=black] (20) to  (19);
\draw [line width=1.25, color=black] (10) to  (9);
\draw [line width=1.25, color=black] (10) to  (19);
\draw [line width=1.25, color=black] (20) to  (9);
\draw [line width=1.25, color=black] (4) to  (2);
\draw [line width=1.25, color=black] (5) to  (4);
\draw [line width=1.25, color=black] (2) to  (21);
\draw [line width=1.25, color=black] (4) to  (22);
\draw [line width=1.25, color=black] (14) to  [in=162, out=18] (5);
\draw [line width=1.25, color=black] (6) to  [in=342, out=198] (12);
%\node at (66.875pt, -73.75pt) {\textcolor{black}{$g_i$}};
%\node at (65.625pt, -88.125pt) {\textcolor{black}{$g_{i'}$}};
%\node at (104.375pt, -73.75pt) {\textcolor{black}{$g_j$}};
%\node at (103.125pt, -88.125pt) {\textcolor{black}{$g_{j'}$}};
%\node at (139.375pt, -88.125pt) {\textcolor{black}{$g_{k'}$}};
%\node at (140.625pt, -73.75pt) {\textcolor{black}{$g_k$}};
%\node at (178.125pt, -88.125pt) {\textcolor{black}{$g_{l'}$}};
%\node at (179.375pt, -73.75pt) {\textcolor{black}{$g_l$}};
%\node at (214.375pt, -73.75pt) {\textcolor{black}{$g_y$}};
%\node at (213.125pt, -88.125pt) {\textcolor{black}{$g_{y'}$}};
%\node at (251.875pt, -88.125pt) {\textcolor{black}{$g_{x'}$}};
%\node at (253.125pt, -73.75pt) {\textcolor{black}{$g_x$}};
%\node at (289.375pt, -88.125pt) {\textcolor{black}{$g_{z'}$}};
%\node at (290.625pt, -73.75pt) {\textcolor{black}{$g_z$}};
\node at (189.125pt, -160.375pt) {\textcolor{black}{$\mathbf{L_n}$}};
\end{tikzpicture}}
\hspace{2.5cc}
\psscalebox{0.8 0.8} {\begin{tikzpicture}
\node (14) [circle, minimum size=3pt, fill=black, line width=0.625pt, draw=black] at (75.0pt, -62.5pt)  {};
\node (12) [circle, minimum size=3pt, fill=black, line width=0.625pt, draw=black] at (75.0pt, -100.0pt)  {};
\node (15) [circle, minimum size=3pt, fill=black, line width=0.625pt, draw=black] at (112.5pt, -62.5pt)  {};
\node (16) [circle, minimum size=3pt, fill=black, line width=0.625pt, draw=black] at (112.5pt, -100.0pt)  {};
\node (10) [circle, minimum size=3pt, fill=black, line width=0.625pt, draw=black] at (150.0pt, -100.0pt)  {};
\node (9) [circle, minimum size=3pt, fill=black, line width=0.625pt, draw=black] at (150.0pt, -62.5pt)  {};
\node (19) [circle, minimum size=3pt, fill=black, line width=0.625pt, draw=black] at (187.5pt, -100.0pt)  {};
\node (20) [circle, minimum size=3pt, fill=black, line width=0.625pt, draw=black] at (187.5pt, -62.5pt)  {};
\node (22) [circle, minimum size=3pt, fill=black, line width=0.625pt, draw=black] at (225.0pt, -62.5pt)  {};
\node (21) [circle, minimum size=3pt, fill=black, line width=0.625pt, draw=black] at (225.0pt, -100.0pt)  {};
\node (2) [circle, minimum size=3pt, fill=black, line width=0.625pt, draw=black] at (262.5pt, -100.0pt)  {};
\node (4) [circle, minimum size=3pt, fill=black, line width=0.625pt, draw=black] at (262.5pt, -62.5pt)  {};
\node (6) [circle, minimum size=3pt, fill=black, line width=0.625pt, draw=black] at (300.0pt, -100.0pt)  {};
\node (5) [circle, minimum size=3pt, fill=black, line width=0.625pt, draw=black] at (300.0pt, -62.5pt)  {};
\draw [line width=1.25, color=black] (14) to  (15);
\draw [line width=1.25, color=black] (16) to  (15);
\draw [line width=1.25, color=black] (12) to  (16);
\draw [line width=1.25, color=black] (12) to  (14);
\draw [line width=1.25, color=black] (2) to  (6);
\draw [line width=1.25, color=black] (5) to  (6);
\draw [line width=1.25, color=black] (16) to  (10);
\draw [line width=1.25, color=black] (15) to  (9);
\draw [line width=1.25, color=black] (21) to  (22);
\draw [line width=1.25, dotted, color=black] (21) to  (19);
\draw [line width=1.25, dotted, color=black] (22) to  (20);
\draw [line width=1.25, color=black] (20) to  (19);
\draw [line width=1.25, color=black] (10) to  (9);
\draw [line width=1.25, color=black] (10) to  (19);
\draw [line width=1.25, color=black] (20) to  (9);
\draw [line width=1.25, color=black] (4) to  (2);
\draw [line width=1.25, color=black] (5) to  (4);
\draw [line width=1.25, color=black] (2) to  (21);
\draw [line width=1.25, color=black] (4) to  (22);
\draw [line width=1.25, color=black] (5) to  [in=66, out=135] (12);
\draw [line width=1.25, color=black] (6) to  [in=294, out=225] (14);
%\node at (66.875pt, -73.75pt) {\textcolor{black}{$g_i$}};
%\node at (65.625pt, -88.125pt) {\textcolor{black}{$g_{i'}$}};
%\node at (104.375pt, -73.75pt) {\textcolor{black}{$g_j$}};
%\node at (103.125pt, -88.125pt) {\textcolor{black}{$g_{j'}$}};
%\node at (139.375pt, -88.125pt) {\textcolor{black}{$g_{k'}$}};
%\node at (140.625pt, -73.75pt) {\textcolor{black}{$g_k$}};
%\node at (178.125pt, -88.125pt) {\textcolor{black}{$g_{l'}$}};
%\node at (179.375pt, -73.75pt) {\textcolor{black}{$g_l$}};
%\node at (214.375pt, -73.75pt) {\textcolor{black}{$g_y$}};
%\node at (213.125pt, -88.125pt) {\textcolor{black}{$g_{y'}$}};
%\node at (251.875pt, -88.125pt) {\textcolor{black}{$g_{x'}$}};
%\node at (253.125pt, -73.75pt) {\textcolor{black}{$g_x$}};
%\node at (289.375pt, -88.125pt) {\textcolor{black}{$g_{z'}$}};
%\node at (290.625pt, -73.75pt) {\textcolor{black}{$g_z$}};
\node at (189.125pt, -160.375pt) {\textcolor{black}{$\mathbf{M_n}$}};
\end{tikzpicture}}
\caption{Two graphs which are not isomorphic to $Z(\alpha,\beta)$}\label{f-3}
\end{figure}
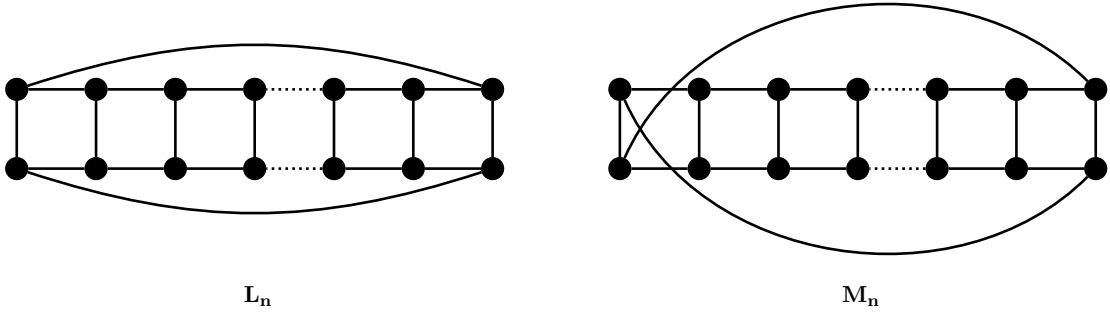

\begin{thm}\label{L-M}
The zero-divisor graph $Z(\alpha,\beta)$ is isomorphic to none of the graphs $L_n$ and $M_n$ in Figure \ref{f-3}.
\end{thm}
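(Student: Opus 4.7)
The plan is to argue by contradiction: assume $Z(\alpha,\beta)\cong L_n$ or $Z(\alpha,\beta)\cong M_n$. Both graphs are connected, simple, cubic, and triangle-free, so they pass the basic tests of Theorem \ref{thm-graph}. The structural feature to exploit is that both $L_n$ and $M_n$ contain a cyclic chain $S_1,S_2,\dots,S_n$ of exactly $n$ squares, where each consecutive pair $(S_i,S_{i+1})$ (indices mod $n$) shares a single edge, namely the rung $r_{i+1}$; moreover the two rungs $r_i$ and $r_{i+1}$ of any single square $S_i$ are opposite edges of $S_i$.

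First I would invoke Lemma \ref{Z-C4,C4} to conclude that these $n$ squares are pairwise equivalent, and then apply Theorem \ref{C4-C4} together with Remark \ref{r-1} to force the tuple of every consecutive pair $(S_i,S_{i+1})$ to match one of the six listed configurations. The key observation, which can be read off case-by-case from the six tuples in Remark \ref{r-1}, is that in every configuration the shared edge (positions 1--2) and its opposite edge in the square (positions 5--6) carry the same unordered pair of labels from $supp(\alpha)$ but with reversed order. Applied to the square $S_{i+1}$, whose two rungs are the shared edge $r_{i+1}$ with $S_i$ and the shared edge $r_{i+2}$ with $S_{i+2}$, this forces $r_{i+1}$ and $r_{i+2}$ to share one unordered color and to have reversed directed labels. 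Propagating around the cyclic sequence, all $n$ rungs must share a common color while their directions alternate; for $n$ odd, this alternation is inconsistent, yielding an immediate contradiction.

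For $n$ even, the common color of the rungs identifies which of relations 14, 22, or 26 of Table \ref{tab-C4} is satisfied in $G$. I would then trace the directed labels of the remaining edges---the top and bottom $n$-cycles of $L_n$, or the rim cycle of length $2n$ of $M_n$---using the explicit entries at positions 3--4 and 7--8 in the relevant cases of Remark \ref{r-1}. Traversing each such horizontal/rim cycle gives a product of group elements that must equal $1$ in $G$; combining this with the square relation forces, as in the arguments of Lemma \ref{Z-C3} and Theorem \ref{Z-C4-1}, that $G$ is abelian, a quotient of a Baumslag-Solitar group $BS(1,k)$ or $BS(k,1)$ with $|k|\le 2$, or contains a non-trivial torsion element, each of which contradicts the torsion-freeness of $G$ or the minimality of $|supp(\beta)|$.

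The main obstacle is the Möbius ladder $M_n$ together with relation 26: relation 26 does not immediately collapse to a Baumslag-Solitar quotient, and the twist in $M_n$ complicates the rim analysis because it merges the ``top'' and ``bottom'' alternating patterns into a single cycle of length $2n$. Here careful bookkeeping of how the twist interacts with the alternating color pattern is required; one exploits the precise entries of cases 5--6 of Remark \ref{r-1} together with the rim cycle product to extract an additional relation in $G$ that, combined with relation 26, forces $G$ to be abelian or contain torsion, completing the contradiction.
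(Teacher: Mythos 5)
Your overall strategy --- pairwise equivalence of the squares via Lemma \ref{Z-C4,C4}, reduction to the six configurations of Remark \ref{r-1}, propagation along the chain of squares, and a final contradiction from a closed cycle of relations --- is the same as the paper's. However, your ``key observation'' is false, and with it the parity argument you use to dispose of an odd number of squares. It is true that in each of the six tuples of Remark \ref{r-1} the entries in positions $1$--$2$ and $5$--$6$ form the same unordered pair in reversed order (e.g.\ $(h_2,1)$ and $(1,h_2)$ in case 1). But this reversal is an artifact of traversing the $4$-cycle $g_i\to g_j\to g_k\to g_l\to g_i$: positions $1$--$2$ encode $a_1g_i=b_1g_j$ (read from the top row to the bottom row on one rung), while positions $5$--$6$ encode $a_3g_k=b_3g_l$ (read from the bottom row to the top row on the opposite rung). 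Rewriting both rungs consistently top-to-bottom they carry the \emph{same} directed label: in case 1 one gets $g_j=h_2g_i$ and $g_k=h_2g_l$. Hence, propagating along the ladder, \emph{all} rungs carry one and the same directed label; there is no alternation and no parity obstruction. This is exactly what the paper's induction establishes --- every consecutive pair of squares realizes the same ordered pair of tuples $\left(T^1_{C_1},T^1_{C_2}\right)$, independently of the parity of the number of squares --- so the ``immediate contradiction for $n$ odd'' you invoke does not exist, and that case is left unproved in your write-up.

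The contradiction has to come from the step you reserve for the even case: composing the horizontal (rim) relations around the closed chain. For $L_n$ with $m=n/2$ squares this yields $a_1=h_3^{m}a_1$, hence $h_3^{m}=1$, a non-trivial torsion element; for $M_n$ the twist splices one rung into the rim and yields $h_2=h_3^{\pm m}$, forcing $G$ to be abelian. Both contradictions are valid for every $m$ (odd or even) and for each of the six configurations of Remark \ref{r-1}, so no parity split is needed, and your worry about relation 26 on $M_n$ is moot: the abelian conclusion for $M_n$ comes from the rim computation alone, not from identifying which of relations 14, 22 or 26 holds. If you replace the parity argument by this rim computation applied uniformly, your proof becomes the paper's.
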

\begin{proof}
Let $n$ be the number of vertices of the graph $L_n$ or $M_n$. Then by Figure \ref{f-3}, the number of cycles of length $4$ in $L_n$ or $M_n$ is equal to $n/2$. Also, each two consecutive squares have a common edge. So, if $Z(\alpha,\beta)$ contains no two squares with exactly one common edge, then the latter graph is isomorphic to none of the graphs $L_n$ and $M_n$. 
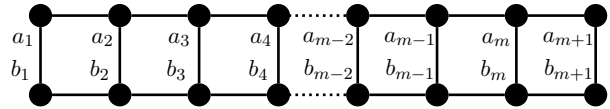
\begin{figure}[h]
\psscalebox{0.8 0.8} {\begin{tikzpicture}
\node (14) [circle, minimum size=3pt, fill=black, line width=0.625pt, draw=black] at (75.0pt, -62.5pt)  {};
\node (12) [circle, minimum size=3pt, fill=black, line width=0.625pt, draw=black] at (75.0pt, -100.0pt)  {};
\node (15) [circle, minimum size=3pt, fill=black, line width=0.625pt, draw=black] at (112.5pt, -62.5pt)  {};
\node (16) [circle, minimum size=3pt, fill=black, line width=0.625pt, draw=black] at (112.5pt, -100.0pt)  {};
\node (10) [circle, minimum size=3pt, fill=black, line width=0.625pt, draw=black] at (150.0pt, -100.0pt)  {};
\node (9) [circle, minimum size=3pt, fill=black, line width=0.625pt, draw=black] at (150.0pt, -62.5pt)  {};
\node (19) [circle, minimum size=3pt, fill=black, line width=0.625pt, draw=black] at (187.5pt, -100.0pt)  {};
\node (20) [circle, minimum size=3pt, fill=black, line width=0.625pt, draw=black] at (187.5pt, -62.5pt)  {};
\node (22) [circle, minimum size=3pt, fill=black, line width=0.625pt, draw=black] at (225.0pt, -62.5pt)  {};
\node (21) [circle, minimum size=3pt, fill=black, line width=0.625pt, draw=black] at (225.0pt, -100.0pt)  {};
\node (2) [circle, minimum size=3pt, fill=black, line width=0.625pt, draw=black] at (262.5pt, -100.0pt)  {};
\node (4) [circle, minimum size=3pt, fill=black, line width=0.625pt, draw=black] at (262.5pt, -62.5pt)  {};
\node (6) [circle, minimum size=3pt, fill=black, line width=0.625pt, draw=black] at (300.0pt, -100.0pt)  {};
\node (5) [circle, minimum size=3pt, fill=black, line width=0.625pt, draw=black] at (300.0pt, -62.5pt)  {};
\node (1) [circle, minimum size=3pt, fill=black, line width=0.625pt, draw=black] at (337.5pt, -62.5pt)  {};
\node (3) [circle, minimum size=3pt, fill=black, line width=0.625pt, draw=black] at (337.5pt, -100.0pt)  {};
\draw [line width=1.25, color=black] (14) to  (15);
\draw [line width=1.25, color=black] (16) to  (15);
\draw [line width=1.25, color=black] (12) to  (16);
\draw [line width=1.25, color=black] (12) to  (14);
\draw [line width=1.25, color=black] (2) to  (6);
\draw [line width=1.25, color=black] (5) to  (6);
\draw [line width=1.25, color=black] (16) to  (10);
\draw [line width=1.25, color=black] (15) to  (9);
\draw [line width=1.25, color=black] (21) to  (22);
\draw [line width=1.25, dotted, color=black] (21) to  (19);
\draw [line width=1.25, dotted, color=black] (22) to  (20);
\draw [line width=1.25, color=black] (20) to  (19);
\draw [line width=1.25, color=black] (10) to  (9);
\draw [line width=1.25, color=black] (10) to  (19);
\draw [line width=1.25, color=black] (20) to  (9);
\draw [line width=1.25, color=black] (4) to  (2);
\draw [line width=1.25, color=black] (5) to  (4);
\draw [line width=1.25, color=black] (2) to  (21);
\draw [line width=1.25, color=black] (4) to  (22);
\draw [line width=1.25, color=black] (5) to  (1);
\draw [line width=1.25, color=black] (1) to  (3);
\draw [line width=1.25, color=black] (6) to  (3);
\node at (66.875pt, -73.75pt) {\textcolor{black}{$a_1$}};
\node at (65.625pt, -88.125pt) {\textcolor{black}{$b_1$}};
\node at (104.375pt, -73.75pt) {\textcolor{black}{$a_2$}};
\node at (103.125pt, -88.125pt) {\textcolor{black}{$b_2$}};
\node at (139.375pt, -88.125pt) {\textcolor{black}{$b_3$}};
\node at (140.625pt, -73.75pt) {\textcolor{black}{$a_3$}};
\node at (178.125pt, -88.125pt) {\textcolor{black}{$b_4$}};
\node at (179.375pt, -73.75pt) {\textcolor{black}{$a_4$}};
\node at (211.375pt, -73.75pt) {\textcolor{black}{$a_{m-2}$}};
\node at (211.375pt, -88.125pt) {\textcolor{black}{$b_{m-2}$}};
\node at (249.875pt, -88.125pt) {\textcolor{black}{$b_{m-1}$}};
\node at (249.875pt, -73.75pt) {\textcolor{black}{$a_{m-1}$}};
\node at (289.375pt, -88.125pt) {\textcolor{black}{$b_m$}};
\node at (290.625pt, -73.75pt) {\textcolor{black}{$a_m$}};
\node at (324.875pt, -73.75pt) {\textcolor{black}{$a_{m+1}$}};
\node at (324.875pt, -88.125pt) {\textcolor{black}{$b_{m+1}$}};
\end{tikzpicture}}
\caption{Consecutive cycles of length $4$ in the graph $Z(\alpha,\beta)$}\label{f-4}
\end{figure}

Suppose that $Z(\alpha,\beta)$ contains a subgraph as Figure \ref{f-4}, for some elements $a_1,b_1,\ldots,a_{m+1},b_{m+1}\in supp(\beta)$, in which the number of consecutive $C_4$ cycles is denoted by $m\geq 2$. We denote by $C_1,C_2,\ldots,C_{m}$ the consecutive squares in Figure \ref{f-4}, from the left to the right respectively. For $i\in \{1,2,\ldots,m-1\}$, let $T^i_{C_i}$ and $T^i_{C_{i+1}}$ be $8$-tuples of $C_i$ and $C_{i+1}$, respectively, with $R(T^i_{C_i})$ and $R(T^i_{C_{i+1}})$ as follows:
\begin{equation}\label{e-new2}
R(T^i_{C_i})=\left\{
\begin{array}{l}
c_1a_{i+1}=d_1b_{i+1}\\
c_2b_{i+1}=d_2b_i\\
c_3b_i=d_3a_i\\
c_4a_i=d_4a_{i+1}\\
\end{array} \right.
\qquad \qquad
R(T^i_{C_{i+1}})=\left\{
\begin{array}{l}
c_1a_{i+1}=d_1b_{i+1}\\
c_2'b_{i+1}=d_2'b_{i+2}\\
c_3'b_{i+2}=d_3'a_{i+2}\\
c_4'a_{i+2}=d_4'a_{i+1}\\
\end{array} \right.
\end{equation}
where $c_1,d_1,c_2,d_2,c_3,d_3,c_4,d_4,c_2',d_2',c_3',d_3',c_4',d_4'\in supp(\alpha)$.\\
We claim that for each $i$, $i\in \{1,2,\ldots,m-1\}$, $T^i_{C_i}=T^1_{C_1}$ and $T^i_{C_{i+1}}=T^1_{C_2}$. Note that by Remark \ref{r-1}, there are $6$ possible cases for $\{T^i_{C_i},T^i_{C_{i+1}}\}$, where $i\in \{1,2,\ldots,m-1\}$, and in particular for $\{T^1_{C_1},T^1_{C_2}\}$. For each one of the latter cases, we prove our claim by induction on $m$:

{\bf Case 1:} Let $\{T^1_{C_1},T^1_{C_2}\}=\{A,B\}$, where $A=[h_2,1,h_2,h_3,1,h_2,1,h_3]$ and $B=[h_2,1,h_3,h_2,1,h_2,h_3,1]$. So, there are two subcases as follows:
\begin{enumerate}
\item[a)]
Let  $T^1_{C_1}=A$ and $T^1_{C_2}=B$. If $m=2$, then the statement is obviously true. Suppose that the statement is true for $m-1$. Therefore, $T^{m-2}_{C_{m-2}}=A$ and $T^{m-2}_{C_{m-1}}=B$. So by \ref{e-new2}, $R(T^{m-2}_{C_{m-1}})$ is: 
\begin{equation*}
R(T^{m-2}_{C_{m-1}})=\left\{
\begin{array}{l}
h_2a_{m-1}=b_{m-1}\\
h_3b_{m-1}=h_2b_{m}\\
b_{m}=h_2a_{m}\\
h_3a_{m}=a_{m-1}\\
\end{array} \right.
\end{equation*}
Hence, $T^{m-1}_{C_{m-1}}=A$ and so by Remark \ref{r-1}, $T^{m-1}_{C_{m}}=B$.
\item[b)]
Let  $T^1_{C_1}=B$ and $T^1_{C_2}=A$. If $m=2$, then the statement is obviously true. Suppose that the statement is true for $m-1$. Therefore, $T^{m-2}_{C_{m-2}}=B$ and $T^{m-2}_{C_{m-1}}=A$. So by \ref{e-new2}, $R(T^{m-2}_{C_{m-1}})$ is: 
\begin{equation*}
R(T^{m-2}_{C_{m-1}})=\left\{
\begin{array}{l}
h_2a_{m-1}=b_{m-1}\\
h_2b_{m-1}=h_3b_{m}\\
b_{m}=h_2a_{m}\\
a_{m}=h_3a_{m-1}\\
\end{array} \right.
\end{equation*}
Hence, $T^{m-1}_{C_{m-1}}=B$ and so by Remark \ref{r-1}, $T^{m-1}_{C_{m}}=A$.
\end{enumerate}

{\bf Cases 2-6:} The proof of our claim for the remaining five possible cases of $\{T^1_{C_1},T^1_{C_2}\}$ are similar to the proof for case 1.

Therefore for each $i\in \{1,2,\ldots,m-1\}$, $T^i_{C_i}=T^1_{C_1}$ and $T^i_{C_{i+1}}=T^1_{C_2}$.

Now suppose, for a contradiction, that $Z(\alpha,\beta)$ is isomorphic to the graph $L_n$. So if $m=n/2$, then $Z(\alpha,\beta)$ contains a subgraph as Figure \ref{f-4}, where $\{a_1,b_1,\ldots,a_m, b_m\}=supp(\beta)$, $a_{m+1}=a_1$ and $b_{m+1}=b_1$. We denote by $C_1,C_2,\ldots,C_{m}$ the consecutive squares in Figure \ref{f-4}, from the left to the right respectively. With the above discussion, $T^i_{C_i}=T^1_{C_1}$ and $T^i_{C_{i+1}}=T^1_{C_2}$, for $i\in \{1,2,\ldots,m-1\}$. By Remark \ref{r-1}, there are $6$ possible cases for $\{T^1_{C_1},T^1_{C_2}\}$. Here we show that for each one of the latter cases, there is a contradiction:

{\bf Case 1:} Let $\{T^1_{C_1},T^1_{C_2}\}=\{A,B\}$, where $A=[h_2,1,h_2,h_3,1,h_2,1,h_3]$ and $B=[h_2,1,h_3,h_2,1,h_2,h_3,1]$. So, there are two subcases as follows:
\begin{enumerate}
\item[a)]
Let  $T^1_{C_1}=A$ and $T^1_{C_2}=B$. Then $T^i_{C_i}=A$ and $T^i_{C_{i+1}}=B$, for $i\in \{1,2,\ldots,m-1\}$. So by \ref{e-new2}, $a_1=h_3a_2, a_2=h_3a_3, \ldots,a_m=h_3a_1$. Therefore, $a_1=h_3^ma_1$ and so $h_3^m=1$, a contradiction because $G$ is torsion-free and $\vert supp(\alpha)\vert=3$.
\item[b)]
Let  $T^1_{C_1}=B$ and $T^1_{C_2}=A$. Then $T^i_{C_i}=B$ and $T^i_{C_{i+1}}=A$, for $i\in \{1,2,\ldots,m-1\}$. So by \ref{e-new2}, $h_3a_1=a_2, h_3a_2=a_3, \ldots,h_3a_m=a_1$. Therefore, $a_1=h_3^ma_1$ and so $h_3^m=1$, a contradiction because $G$ is torsion-free and $\vert supp(\alpha)\vert=3$.
\end{enumerate}

{\bf Cases 2-6:} The proof for the remaining five possible cases of $\{T^1_{C_1},T^1_{C_2}\}$ are similar to the proof for case 1.

Therefore, the graph $Z(\alpha,\beta)$ cannot be isomorphic to the graph $L_n$.

Now suppose, for a contradiction, that $Z(\alpha,\beta)$ is isomorphic to the graph $M_n$. So if $m=n/2$, then $Z(\alpha,\beta)$ contains a subgraph as Figure \ref{f-4}, where $\{a_1,b_1,\ldots,a_m, b_m\}=supp(\beta)$, $a_{m+1}=b_1$ and $b_{m+1}=a_1$. We denote by $C_1,C_2,\ldots,C_m$ the consecutive squares in Figure \ref{f-4}, from the left to the right respectively. With the above discussion, $T^i_{C_i}=T^1_{C_1}$ and $T^i_{C_{i+1}}=T^1_{C_2}$, for $i\in \{1,2,\ldots,m-1\}$. By Remark \ref{r-1}, there are $6$ possible cases for $\{T^1_{C_1},T^1_{C_2}\}$. Here we show that for each one of the latter cases, there is a contradiction:

{\bf Case 1:} Let $\{T^1_{C_1},T^1_{C_2}\}=\{A,B\}$, where $A=[h_2,1,h_2,h_3,1,h_2,1,h_3]$ and $B=[h_2,1,h_3,h_2,1,h_2,h_3,1]$. So, there are two subcases as follows:
\begin{enumerate}
\item[a)]
Let  $T^1_{C_1}=A$ and $T^1_{C_2}=B$. Then $T^i_{C_i}=A$ and $T^i_{C_{i+1}}=B$, for $i\in \{1,2,\ldots,m-1\}$. So by \ref{e-new2}, $a_1=h_3a_2, a_2=h_3a_3, \ldots,a_{m-1}=h_3a_m$, $a_m=h_3b_1$ and $b_1=h_2a_1$. Therefore, $a_1=h_3^mh_2a_1$ and so $h_3^mh_2=1$. Hence, $h_2=h_3^{-m}$ and so $G$ is abelian, a contradiction because abelian torsion-free groups satisfy the Conjecture \ref{conj-zero} (see \cite[Theorem 26.2]{pass2}).
\item[b)]
Let  $T^1_{C_1}=B$ and $T^1_{C_2}=A$. Then $T^i_{C_i}=B$ and $T^i_{C_{i+1}}=A$, for $i\in \{1,2,\ldots,m-1\}$. So by \ref{e-new2}, $h_3a_1=a_2, h_3a_2=a_3, \ldots,h_3a_{m-1}=a_m$, $h_3a_m=b_1$ and $b_1=h_2a_1$. Therefore, $a_1=h_2^{-1}h_3^ma_1$ and so  $h_2^{-1}h_3^m=1$. Hence, $h_2=h_3^m$ and so $G$ is abelian, a contradiction because abelian torsion-free groups satisfy the Conjecture \ref{conj-zero}.
\end{enumerate}

{\bf Cases 2-6:} The proof for the remaining five possible cases of $\{T^1_{C_1},T^1_{C_2}\}$ are similar to the proof for case 1.

Therefore, the graph $Z(\alpha,\beta)$ cannot be isomorphic to the graph $M_n$. This completes the proof.
\end{proof}

%------------------------------------------------------------------------------------------------------------------------

\section{\bf Forbidden subgraphs of Zero-divisor graphs over $\mathbb{F}_2$ on any torsion-free group}\label{S2}
In Section \ref{S1-2}, we studied the existence of triangles and squares in zero-divisor graphs of length $3$ over an arbitrary field and on any torsion-free group and we showed that $C_3$ and $K_{2,3}$ are two forbidden subgraphs of such graphs.

Throughout this section suppose that $\alpha$ and $\beta$ are non-zero elements of the group algebra $ \mathbb{F}_2[G]$ of a torsion-free group $G$ such that $|supp(\alpha)|=3$, $\alpha\beta=0$ and if  $\alpha \beta'=0$ for some non-zero element $\beta'$ of the group algebra, then  $|supp(\beta')|\geq |supp(\beta)|$. By Remark \ref{r-G}, one may assume that $1 \in supp(\alpha)$ and $G=\langle supp(\alpha) \rangle$. Let $supp(\alpha)=\{ 1,h_2,h_3\}$ and $n=|supp(\beta)|$.

With the same discussion such as about $C_3$ and $C_4$ cycles, we can study cycles of other lengths in the graph $Z(\alpha,\beta)$ by using their relations. In this section, by using cycles up to lengths $7$ and their relations, we find additional forbidden subgraphs of the graph $Z(\alpha,\beta)$. The procedure of finding such graphs is similar to the procedure of finding previous examples. So, the frequent tedious details are  omitted. In Appendix  of \cite{Ab-Ta}, some details of our computations are given for the reader's convenience. Forbidden subgraphs of $Z(\alpha,\beta)$ are listed in Table \ref{tab-forbiddens}. In the following, we give some details about such subgraphs. 

\subsection{$\mathbf{K_{2,3}}$}
%1) K2,3------------------------------------------------------------------------------------------------------------------------

By Theorem \ref{Z-K2,3}, $Z(\alpha,\beta)$ contains no $K_{2,3}$ as a subgraph.

\subsection{$\mathbf{C_4--C_5}$}
%2) C4--C5------------------------------------------------------------------------------------------------------------------------
%$\mathbf{2) \ C_4--C_5}$ \textbf{subgraph:} 
It can be seen that there are $121$ different cases for the relations of the cycles $C_4$ and $C_5$ in the subgraph $C_4--C_5$. Using GAP \cite{gap}, we see that in $111$ cases of these $121$ cases, the groups which are obtained are finite and solvable, a contradiction. So, there are just $10$ cases which may lead to the existence of a subgraph isomorphic to the graph $C_4--C_5$ in $Z(\alpha,\beta)$. It can be seen that each of such cases gives a contradiction and so, the graph $Z(\alpha,\beta)$ contains no subgraph isomorphic to the graph $C_4--C_5$.

We note that each group with two generators $h_2$ and $h_3$ and two relations which is one of the $10$ latter cases is a quotient of $B(1,k)$, for some integer $k$, or has a torsion element.

\subsection{$\mathbf{C_4--C_6}$}
%3) C4--C6------------------------------------------------------------------------------------------------------------------------
%$\mathbf{3) \ C_4--C_6}$ \textbf{subgraph:} 
It can be seen that there are $658$ different cases for the relations of the cycles $C_4$ and $C_6$ in the subgraph $C_4--C_6$. Using GAP \cite{gap}, we see that in $632$ cases of these $658$ cases, the groups which are obtained are finite or solvable, a contradiction. So, there are just $20$ cases which may lead to the existence of a subgraph isomorphic to the graph $C_4--C_6$ in $Z(\alpha,\beta)$. It can be seen that each of such cases gives a contradiction and so, the graph $Z(\alpha,\beta)$ contains no subgraph isomorphic to the graph $C_4--C_6$.

We note that each group with two generators $h_2$ and $h_3$ and two relations which is one of the  $20$ latter cases is a quotient of $B(1,k)$, for some integer $k$, has a torsion element or is a cyclic group.

\subsection{$\mathbf{C_4-C_5(-C_5-)}$}
%4) C4-C5(-C5-)----------------------------------------------------------------------------------------------------------------------
%$\mathbf{4) \ C_4-C_5(-C_5-)}$ \textbf{subgraph:} 
It can be seen that there are $42$ different cases for the relations of a cycle $C_4$ and two cycles $C_5$ in the subgraph $C_4-C_5(-C_5-)$. Using GAP \cite{gap}, we see that in $38$ cases of these $42$ cases, the groups which are obtained are finite and solvable, a contradiction. So, there are just $4$ cases which may lead to the existence of a subgraph isomorphic to the graph $C_4-C_5(-C_5-)$ in $Z(\alpha,\beta)$. It can be seen that each of such cases gives a contradiction and so, the graph $Z(\alpha,\beta)$ contains no subgraph isomorphic to the graph $C_4-C_5(-C_5-)$.

We note that each group with two generators $h_2$ and $h_3$ and three relations which is one of the  $4$ latter cases is a quotient of $B(1,k)$, for some integer $k$, has a torsion element.

\subsection{$\mathbf{C_4-C_5(-C_4-)}$}
%5) C4-C5(-C4-)----------------------------------------------------------------------------------------------------------------------
%$\mathbf{5) \ C_4-C_5(-C_4-)}$ \textbf{subgraph:} 
It can be seen that there are $4$ different cases for the relations of two cycles $C_4$ and a cycle $C_5$ in the subgraph $C_4-C_5(-C_4-)$. Using GAP \cite{gap}, we see that in all of such cases, the groups which are obtained are finite and solvable, a contradiction. So, the graph $Z(\alpha,\beta)$ contains no subgraph isomorphic to the graph $C_4-C_5(-C_4-)$.

\subsection{$\mathbf{C_4-C_5(-C_6--)}$}
%6) C4-C5(-C_6--)-------------------------------------------------------------------------------------------------------------------
%$\mathbf{6) \ C_4-C_5(-C_6--)}$ \textbf{subgraph:} 
It can be seen that there are $126$ different cases for the relations of a cycle $C_4$, a cycle $C_5$ and a cycle $C_6$ in the subgraph $C_4-C_5(-C_6--)$. Using GAP \cite{gap}, we see that in $122$ cases of these $126$ cases, the groups which are obtained are finite and solvable, a contradiction. So, there are just $4$ cases which may lead to the existence of a subgraph isomorphic to the graph $C_4-C_5(-C_6--)$ in $Z(\alpha,\beta)$. It can be seen that each of such cases gives a contradiction and so, the graph $Z(\alpha,\beta)$ contains no subgraph isomorphic to the graph $C_4-C_5(-C_6--)$.

We note that each group with two generators $h_2$ and $h_3$ and three relations which is one of the  $4$ latter cases is a quotient of $B(1,k)$, for some integer $k$.

\subsection{$\mathbf{C_4-C_5(-C_6-)}$}
%7) C4-C5(-C6-)----------------------------------------------------------------------------------------------------------------------
%$\mathbf{7) \ C_4-C_5(-C_6-)}$ \textbf{subgraph:} 
It can be seen that there are $462$ different cases for the relations of a cycle $C_4$, a cycle $C_5$ and a cycle $C_6$ in the subgraph $C_4-C_5(-C_6-)$. Using GAP \cite{gap}, we see that in $436$ cases of these $462$ cases, the groups which are obtained are finite or solvable, a contradiction. So, there are just $22$ cases which may lead to the existence of a subgraph isomorphic to the graph $C_4-C_5(-C_6-)$ in $Z(\alpha,\beta)$. It can be seen that each of such cases gives a contradiction and so, the graph $Z(\alpha,\beta)$ contains no subgraph isomorphic to the graph $C_4-C_5(-C_6-)$.

We note that each group with two generators $h_2$ and $h_3$ and three relations which is one of the  $22$ latter cases is a quotient of $B(1,k)$, for some integer $k$, is a cyclic group or is a solvable group.

\subsection{$\mathbf{C_4-C_5(-C_7--)}$}
%8) C4-C5(-C7--)---------------------------------------------------------------------------------------------------------------------
%$\mathbf{8) \ C_4-C_5(-C_7--)}$ \textbf{subgraph:} 
It can be seen that there are $648$ different cases for the relations of a cycle $C_4$, a cycle $C_5$ and a cycle $C_7$ in the subgraph $C_4-C_5(-C_7--)$. Using GAP \cite{gap}, we see that in $608$ cases of these $648$ cases, the groups which are obtained are finite and solvable, a contradiction. So, there are just $40$ cases which may lead to the existence of a subgraph isomorphic to the graph $C_4-C_5(-C_7--)$ in $Z(\alpha,\beta)$. It can be seen that each of such cases gives a contradiction and so, the graph $Z(\alpha,\beta)$ contains no subgraph isomorphic to the graph $C_4-C_5(-C_7--)$.

We note that each group with two generators $h_2$ and $h_3$ and three relations which is one of the $40$ latter cases is a quotient of $B(1,k)$, for some integer $k$, has a torsion element, is a cyclic group or is a solvable group.

\subsection{$\mathbf{C_5--C_5(--C_5)}$}
%9) C5--C5(--C5)--------------------------------------------------------------------------------------------------------------------
%$\mathbf{9) \ C_5--C_5(--C_5)}$ \textbf{subgraph:} 
It can be seen that there are $192$ different cases for the relations of three cycles $C_5$ in the subgraph $C_5--C_5(--C_5)$. Using GAP \cite{gap}, we see that in $188$ cases of these $192$ cases, the groups which are obtained are finite and solvable, a contradiction. So, there are just $4$ cases which may lead to the existence of a subgraph isomorphic to the graph $C_5--C_5(--C_5)$ in $Z(\alpha,\beta)$. It can be seen that each of such cases gives a contradiction and so, the graph $Z(\alpha,\beta)$ contains no subgraph isomorphic to the graph $C_5--C_5(--C_5)$.

We note that each group with two generators $h_2$ and $h_3$ and three relations which is one of the $4$ latter cases is a quotient of $B(1,k)$, for some integer $k$.

\subsection{$\mathbf{C_5--C_5(--C_6)}$}
%10) C5--C5(--C6)-------------------------------------------------------------------------------------------------------------------
%$\mathbf{10) \ C_5--C_5(--C_6)}$ \textbf{subgraph:} 
It can be seen that there are $1006$ different cases for the relations of two cycles $C_5$ and a cycle $C_6$ in the subgraph $C_5--C_5(--C_6)$. Using GAP \cite{gap}, we see that in $986$ cases of these $1006$ cases, the groups which are obtained are finite and solvable, a contradiction. So, there are just $20$ cases which may lead to the existence of a subgraph isomorphic to the graph $C_5--C_5(--C_6)$ in $Z(\alpha,\beta)$. It can be seen that each of such cases gives a contradiction and so, the graph $Z(\alpha,\beta)$ contains no subgraph isomorphic to the graph $C_5--C_5(--C_6)$.

We note that each group with two generators $h_2$ and $h_3$ and three relations which is one of the $20$ latter cases is a quotient of $B(1,k)$, for some integer $k$, is a cyclic group or is a solvable group.

\subsection{$\mathbf{C_4-C_6(--C_7--)(C_7-1)}$}
%11) C4-C6(--C7--)(--C7--)---------------------------------------------------------------------------------------------------------
%$\mathbf{11) \ C_4-C_6(--C_7--)(C_7-1)}$ \textbf{subgraph:} 
It can be seen that there are $176$ different cases for the relations of a cycle $C_4$, two cycles $C_7$ and a cycle $C_6$ in the subgraph $C_4-C_6(--C_7--)(C_7-1)$. Using GAP \cite{gap}, we see that in all of such cases, the groups which are obtained are finite and solvable, a contradiction. So, the graph $Z(\alpha,\beta)$ contains no subgraph isomorphic to the graph $C_4-C_6(--C_7--)(C_7-1)$.

\subsection{$\mathbf{C_4-C_6(--C_7--)(--C_5-)}$}
%12) C4-C6(--C7--)(--C5-)----------------------------------------------------------------------------------------------------------
%$\mathbf{12) \ C_4-C_6(--C_7--)(--C_5-)}$ \textbf{subgraph:} 
It can be seen that there are $28$ different cases for the relations of a cycle $C_4$, a cycle $C_6$, a cycle $C_7$ and a cycle $C_5$ in the subgraph $C_4-C_6(--C_7--)(--C_5-)$. Using GAP \cite{gap}, we see that in $24$ cases of these $28$ cases, the groups which are obtained are finite and solvable, a contradiction. So, there are just $4$ cases which may lead to the existence of a subgraph isomorphic to the graph $C_4-C_6(--C_7--)(--C_5-)$ in $Z(\alpha,\beta)$. It can be seen that each of such cases gives a contradiction and so, the graph $Z(\alpha,\beta)$ contains no subgraph isomorphic to the graph $C_4-C_6(--C_7--)(--C_5-)$.

We note that each group with two generators $h_2$ and $h_3$ and four relations which is one of the $4$ latter cases is a cyclic group.

\subsection{$\mathbf{C_4-C_6(-C_6--)(-C_4-)}$}
%13) \ C4-C6(-C6--)(-C4-)----------------------------------------------------------------------------------------------------------
%$\mathbf{13) \ C_4-C_6(-C_6--)(-C_4-)}$ \textbf{subgraph:} 
It can be seen that there is no case for the relations of two cycles $C_4$ and two cycles $C_6$ in the subgraph $C_4-C_6(-C_6--)(-C_4-)$. It means that the graph $Z(\alpha,\beta)$ contains no subgraph isomorphic to the graph $C_4-C_6(-C_6--)(-C_4-)$.

\subsection{$\mathbf{C_4-C_6(-C_6--)(--C_5-)}$}
%14) C4-C6(-C6--)(--C5-)-----------------------------------------------------------------------------------------------------------
%$\mathbf{14) \ C_4-C_6(-C_6--)(--C_5-)}$ \textbf{subgraph:} 
It can be seen that there are $22$ different cases for the relations of a cycle $C_4$, two cycles $C_6$ and a cycle $C_5$ in the subgraph $C_4-C_6(-C_6--)(--C_5-)$. Using GAP \cite{gap}, we see that in all of such cases, the groups which are obtained are finite and solvable, a contradiction. So, the graph $Z(\alpha,\beta)$ contains no subgraph isomorphic to the graph $C_4-C_6(-C_6--)(--C_5-)$.

\subsection{$\mathbf{C_4-C_6(-C_6--)(C_6---)}$}
%15) C4-C6(-C6--)(C6---)-----------------------------------------------------------------------------------------------------------
%$\mathbf{15) \ C_4-C_6(-C_6--)(C_6---)}$ \textbf{subgraph:} 
It can be seen that there are $66$ different cases for the relations of a cycle $C_4$ and three cycles $C_6$ in the subgraph $C_4-C_6(-C_6--)(C_6---)$. Using GAP \cite{gap}, we see that in $62$ cases of these $66$ cases, the groups which are obtained are finite and solvable, a contradiction. So, there are just $4$ cases which may lead to the existence of a subgraph isomorphic to the graph $C_4-C_6(-C_6--)(C_6---)$ in $Z(\alpha,\beta)$. It can be seen that each of such cases gives a contradiction and so, the graph $Z(\alpha,\beta)$ contains no subgraph isomorphic to the graph $C_4-C_6(-C_6--)(C_6---)$.

We note that each group with two generators $h_2$ and $h_3$ and four relations which is one of the $4$ latter cases is a quotient of $B(1,k)$, for some integer $k$.

\subsection{$\mathbf{C_4-C_6(-C_6--)(---C_4)}$}
%16) C4-C6(-C6--)(---C4)----------------------------------------------------------------------------------------------------------
%$\mathbf{16) \ C_4-C_6(-C_6--)(---C_4)}$ \textbf{subgraph:} 
It can be seen that there is no case for the relations of two cycles $C_4$ and two cycles $C_6$ in the subgraph $C_4-C_6(-C_6--)(---C_4)$. It means that the graph $Z(\alpha,\beta)$ contains no subgraph isomorphic to the graph $C_4-C_6(-C_6--)(---C_4)$.

\subsection{$\mathbf{C_5-C_5(--C_6--)}$}
%17) C5-C5(--C6--)------------------------------------------------------------------------------------------------------------------
%$\mathbf{17) \ C_5-C_5(--C_6--)}$ \textbf{subgraph:} 
It can be seen that there are $440$ different cases for the relations of two cycles $C_5$ and a cycle $C_6$ in the subgraph $C_5-C_5(--C_6--)$. Using GAP \cite{gap}, we see that in $404$ cases of these $440$ cases, the groups which are obtained are finite and solvable, a contradiction. So, there are just $36$ cases which may lead to the existence of a subgraph isomorphic to the graph $C_5-C_5(--C_6--)$ in $Z(\alpha,\beta)$. It can be seen that each of such cases gives a contradiction and so, the graph $Z(\alpha,\beta)$ contains no subgraph isomorphic to the graph $C_5-C_5(--C_6--)$.

We note that each group with two generators $h_2$ and $h_3$ and three relations which is one of the $36$ latter cases is a quotient of $B(1,k)$, for some integer $k$, has a torsion element or is a cyclic group.

\subsection{$\mathbf{C_5-C_5(-C_6--)(C_6---)}$}
%18) C5-C5(-C6--)(C6---)-----------------------------------------------------------------------------------------------------------
%$\mathbf{18) \ C_5-C_5(-C_6--)(C_6---)}$ \textbf{subgraph:} 
It can be seen that there are $56$ different cases for the relations of two cycles $C_5$ and two cycles $C_6$ in the subgraph $C_5-C_5(-C_6--)(C_6---)$. Using GAP \cite{gap}, we see that in $54$ cases of these $56$ cases, the groups which are obtained are finite or solvable, a contradiction. So, there are just $2$ cases which may lead to the existence of a subgraph isomorphic to the graph $C_5-C_5(-C_6--)(C_6---)$ in $Z(\alpha,\beta)$. It can be seen that each of such cases gives a contradiction and so, the graph $Z(\alpha,\beta)$ contains no subgraph isomorphic to the graph $C_5-C_5(-C_6--)(C_6---)$.

We note that each group with two generators $h_2$ and $h_3$ and four relations which is one of the $2$ latter cases is a cyclic group.

\subsection{$\mathbf{C_5-C_5(-C_6--)(--C_6-1)}$}
%19) C5-C5(-C6--)(--C6- 1)---------------------------------------------------------------------------------------------------------
%$\mathbf{19) \ C_5-C_5(-C_6--)(--C_6-1)}$ \textbf{subgraph:} 
It can be seen that there are $56$ different cases for the relations of two cycles $C_5$ and two cycles $C_6$ in the subgraph $C_5-C_5(-C_6--)(--C_6-1)$. Using GAP \cite{gap}, we see that in all of such cases, the groups which are obtained are finite and solvable, a contradiction. So, the graph $Z(\alpha,\beta)$ contains no subgraph isomorphic to the graph $C_5-C_5(-C_6--)(--C_6-1)$.

\subsection{$\mathbf{C_5-C_5(-C_6--)(-C_5--)}$}
%20) C5-C5(-C6--)(-C5--)----------------------------------------------------------------------------------------------------------
%$\mathbf{20) \ C_5-C_5(-C_6--)(-C_5--)}$ \textbf{subgraph:} 
It can be seen that there are $14$ different cases for the relations of three cycles $C_5$ and one cycle $C_6$ in the subgraph $C_5-C_5(-C_6--)(-C_5--)$. Using GAP \cite{gap}, we see that in all of such cases, the groups which are obtained are finite and solvable, a contradiction. So, the graph $Z(\alpha,\beta)$ contains no subgraph isomorphic to the graph $C_5-C_5(-C_6--)(-C_5--)$.

\subsection{$\mathbf{C_6---C_6(C_6---C_6)}$}
%21) C6---C6(C6---C6)--------------------------------------------------------------------------------------------------------------
%$\mathbf{21) \ C_6---C_6(C_6---C_6)}$ \textbf{subgraph:} 
It can be seen that there are $46$ different cases for the relations of four $C_6$ cycles in the subgraph $C_6---C_6(C_6---C_6)$. Using GAP \cite{gap}, we see that in $30$ cases of these $46$ cases, the groups which are obtained are finite or solvable, a contradiction. So, there are just $16$ cases which may lead to the existence of a subgraph isomorphic to the graph $C_6---C_6(C_6---C_6)$ in $Z(\alpha,\beta)$. It can be seen that each of such cases gives a contradiction and so, the graph $Z(\alpha,\beta)$ contains no subgraph isomorphic to the graph $C_6---C_6(C_6---C_6)$.

We note that each group with two generators $h_2$ and $h_3$ and four relations which is one of the $16$ latter cases is a quotient of $B(1,k)$, for some integer $k$, or has a torsion element.

\subsection{$\mathbf{C_6---C_6(C_6)(C_6)(C_6)}$}
%22) C_6---C_6(C_6)(C_6)(C_6)------------------------------------------------------------------------------------------------
%$\mathbf{22) \ C_6---C_6(C_6)(C_6)(C_6)}$ \textbf{subgraph:} 
It can be seen that there are $10$ different cases for the relations of five $C_6$ cycles in the subgraph $C_6---C_6(C_6)(C_6)(C_6)$. Using GAP \cite{gap}, we see that in $6$ cases of these $10$ cases, the groups which are obtained are finite or solvable, a contradiction. So, there are just $4$ cases which may lead to the existence of a subgraph isomorphic to the graph $C_6---C_6(C_6)(C_6)(C_6)$ in $Z(\alpha,\beta)$. It can be seen that each of such cases gives a contradiction and so, the graph $Z(\alpha,\beta)$ contains no subgraph isomorphic to the graph $C_6---C_6(C_6)(C_6)(C_6)$.

We note that each group with two generators $h_2$ and $h_3$ and five relations which is one of the $4$ latter cases is a quotient of $B(1,k)$, for some integer $k$, or is a cyclic group.

\subsection{$\mathbf{C_5(--C_6--)C_5(---C_6)}$}
%23) C_5(--C_6--)C_5(---C_6)----------------------------------------------------------------------------------------------------
%$\mathbf{23) \ C_5(--C_6--)C_5(---C_6)}$ \textbf{subgraph:} 
It can be seen that there are $134$ different cases for the relations of two cycles $C_5$ and two cycles $C_6$ in the subgraph $C_5(--C_6--)C_5(---C_6)$. Using GAP \cite{gap}, we see that in $130$ cases of these $134$ cases, the groups which are obtained are finite or solvable, a contradiction. So, there are just $4$ cases which may lead to the existence of a subgraph isomorphic to the graph $C_5(--C_6--)C_5(---C_6)$ in $Z(\alpha,\beta)$. It can be seen that each of such cases gives a contradiction and so, the graph $Z(\alpha,\beta)$ contains no subgraph isomorphic to the graph $C_5(--C_6--)C_5(---C_6)$.

We note that each group with two generators $h_2$ and $h_3$ and four relations which is one of the $4$ latter cases is a quotient of $B(1,k)$, for some integer $k$, or is a cyclic group.

\subsection{$\mathbf{C_6--C_6(C_6--C_6)}$}
%24) C_6--C_6(C_6--C_6)--------------------------------------------------------------------------------------------------------
%$\mathbf{24) \ C_6--C_6(C_6--C_6)}$ \textbf{subgraph:} 
It can be seen that there are $5119$ different cases for the relations of four $C_6$ cycles in the subgraph $C_6--C_6(C_6--C_6)$. Using GAP \cite{gap}, we see that in $4983$ cases of these $5119$ cases, the groups which are obtained are finite or solvable, a contradiction. So, there are just $136$ cases which may lead to the existence of a subgraph isomorphic to the graph $C_6--C_6(C_6--C_6)$ in $Z(\alpha,\beta)$. It can be seen that each of such cases gives a contradiction and so, the graph $Z(\alpha,\beta)$ contains no subgraph isomorphic to the graph $C_6--C_6(C_6--C_6)$.

We note that each group with two generators $h_2$ and $h_3$ and four relations which is one of the $136$ latter cases is a quotient of $B(1,k)$, for some integer $k$, has a torsion element or is a cyclic group.

\subsection{$\mathbf{C_6---C_6(C_6--C_6)}$}
%25) C_6---C_6(C_6--C_6)--------------------------------------------------------------------------------------------------------
%$\mathbf{25) \ C_6---C_6(C_6--C_6)}$ \textbf{subgraph:} 
It can be seen that there are $1594$ different cases for the relations of four $C_6$ cycles in the subgraph $C_6---C_6(C_6--C_6)$. Using GAP \cite{gap}, we see that in $1446$ cases of these $1594$ cases, the groups which are obtained are finite or solvable, a contradiction. So, there are just $148$ cases which may lead to the existence of a subgraph isomorphic to the graph $C_6---C_6(C_6--C_6)$ in $Z(\alpha,\beta)$. It can be seen that each of such cases gives a contradiction and so, the graph $Z(\alpha,\beta)$ contains no subgraph isomorphic to the graph $C_6---C_6(C_6--C_6)$.

We note that each group with two generators $h_2$ and $h_3$ and four relations which is one of the $148$ latter cases is a quotient of $B(1,k)$, for some integer $k$, has a torsion element or is a cyclic group.

\subsection{$\mathbf{C_6---C_6(-C_5-)}$}
%26) C_6---C_6(-C_5-)--------------------------------------------------------------------------------------------------------------
%$\mathbf{26) \ C_6---C_6(-C_5-)}$ \textbf{subgraph:} 
It can be seen that there are $1482$ different cases for the relations of two cycles $C_6$ and a cycle $C_5$ in the subgraph $C_6---C_6(-C_5-)$. Using GAP \cite{gap}, we see that in $1358$ cases of these $1482$ cases, the groups which are obtained are finite and solvable, a contradiction. So, there are just $124$ cases which may lead to the existence of a subgraph isomorphic to the graph $C_6---C_6(-C_5-)$ in $Z(\alpha,\beta)$. It can be seen that each of such cases gives a contradiction and so, the graph $Z(\alpha,\beta)$ contains no subgraph isomorphic to the graph $C_6---C_6(-C_5-)$.

We note that each group with two generators $h_2$ and $h_3$ and three relations which is one of the $124$ latter cases is a quotient of $B(1,k)$, for some integer $k$, has a torsion element or is a cyclic group.

\subsection{$\mathbf{C_4-C_6(--C_7--)(---C_6)}$}
%27) C_4-C_6(--C_7--)(---C_6)-----------------------------------------------------------------------------------------------------
%$\mathbf{27) \ C_4-C_6(--C_7--)(---C_6)}$ \textbf{subgraph:} 
It can be seen that there are $124$ different cases for the relations of a cycle $C_4$, two cycles $C_6$ and a cycle $C_7$ in the subgraph $C_4-C_6(--C_7--)(---C_6)$. Using GAP \cite{gap}, we see that in $112$ cases of these $124$ cases, the groups which are obtained are finite and solvable, a contradiction. So, there are just $12$ cases which may lead to the existence of a subgraph isomorphic to the graph $C_4-C_6(--C_7--)(---C_6)$ in $Z(\alpha,\beta)$. It can be seen that each of such cases gives a contradiction and so, the graph $Z(\alpha,\beta)$ contains no subgraph isomorphic to the graph $C_4-C_6(--C_7--)(---C_6)$.

We note that each group with two generators $h_2$ and $h_3$ and four relations which is one of the $12$ latter cases is a quotient of $B(1,k)$, for some integer $k$.

\subsection{$\mathbf{C_4-C_6(--C_7--)(C_4)(C_4)}$}
%28) C_4-C_6(--C_7--)(C_4)(C_4)--------------------------------------------------------------------------------------------------
%$\mathbf{28) \ C_4-C_6(--C_7--)(C_4)(C_4)}$ \textbf{subgraph:} 
It can be seen that there are $8$ different cases for the relations of three cycles $C_4$, a cycle $C_7$ and a cycle $C_6$ in the subgraph $C_4-C_6(--C_7--)(C_4)(C_4)$. Using GAP \cite{gap}, we see that in all of such cases, the groups which are obtained are finite and solvable, a contradiction. So, the graph $Z(\alpha,\beta)$ contains no subgraph isomorphic to the graph $C_4-C_6(--C_7--)(C_4)(C_4)$.

\subsection{$\mathbf{C_6---C_6(-C_5--)}$}
%29) C_6---C_6(-C_5--)------------------------------------------------------------------------------------------------------------
%$\mathbf{29) \ C_6---C_6(-C_5--)}$ \textbf{subgraph:} 
It can be seen that there are $418$ different cases for the relations of two cycles $C_6$ and a cycle $C_5$ in the subgraph $C_6---C_6(-C_5--)$. Using GAP \cite{gap}, we see that in $358$ cases of these $418$ cases, the groups which are obtained are finite and solvable, a contradiction. So, there are just $60$ cases which may lead to the existence of a subgraph isomorphic to the graph $C_6---C_6(-C_5--)$ in $Z(\alpha,\beta)$. It can be seen that each of such cases gives a contradiction and so, the graph $Z(\alpha,\beta)$ contains no subgraph isomorphic to the graph $C_6---C_6(-C_5--)$.

We note that each group with two generators $h_2$ and $h_3$ and three relations which is one of the $60$ latter cases is a quotient of $B(1,k)$, for some integer $k$, has a torsion element, is a cyclic group or is a solvable group.

\subsection{$\mathbf{C_6--C_6(--C_5-)(-C_5-)}$}
%30) C_6--C_6(--C_5-)(-C_5-)-----------------------------------------------------------------------------------------------------
%$\mathbf{30) \ C_6--C_6(--C_5-)(-C_5-)}$ \textbf{subgraph:} 
It can be seen that there are $62$ different cases for the relations of two cycles $C_5$ and two cycles $C_6$ in the subgraph $C_6--C_6(--C_5-)(-C_5-)$. Using GAP \cite{gap}, we see that in $56$ cases of these $62$ cases, the groups which are obtained are finite or solvable, a contradiction. So, there are just $6$ cases which may lead to the existence of a subgraph isomorphic to the graph $C_6--C_6(--C_5-)(-C_5-)$ in $Z(\alpha,\beta)$. It can be seen that each of such cases gives a contradiction and so, the graph $Z(\alpha,\beta)$ contains no subgraph isomorphic to the graph $C_6--C_6(--C_5-)(-C_5-)$.

We note that each group with two generators $h_2$ and $h_3$ and four relations which is one of the $6$ latter cases has a torsion element or is a cyclic group.

\subsection{$\mathbf{C_6--C_6(--C_5-)(C_6---)}$}
%31) C_6--C_6(--C_5-)(C_6---)----------------------------------------------------------------------------------------------------
%$\mathbf{31) \ C_6--C_6(--C_5-)(C_6---)}$ \textbf{subgraph:} 
It can be seen that there are $76$ different cases for the relations of a cycle $C_5$ and three cycles $C_6$ in the subgraph $C_6--C_6(--C_5-)(C_6---)$. Using GAP \cite{gap}, we see that in $64$ cases of these $76$ cases, the groups which are obtained are finite or solvable, a contradiction. So, there are just $12$ cases which may lead to the existence of a subgraph isomorphic to the graph $C_6--C_6(--C_5-)(C_6---)$ in $Z(\alpha,\beta)$. It can be seen that each of such cases gives a contradiction and so, the graph $Z(\alpha,\beta)$ contains no subgraph isomorphic to the graph $C_6--C_6(--C_5-)(C_6---)$.

We note that each group with two generators $h_2$ and $h_3$ and four relations which is one of the $12$ latter cases is a quotient of $B(1,k)$, for some integer $k$, has a torsion element or is a cyclic group.

\subsection{$\mathbf{C_5(--C_6--)C_5(C_6)}$}
%32) C_5(--C_6--)C_5(C_6)--------------------------------------------------------------------------------------------
%$\mathbf{32) \ C_5(--C_6--)C_5(C_6)}$ \textbf{subgraph:} 
It can be seen that there are $120$ different cases for the relations of two cycles $C_5$ and two cycles $C_6$ in the subgraph $C_5(--C_6--)C_5(C_6)$. Using GAP \cite{gap}, we see that in $104$ cases of these $120$ cases, the groups which are obtained are finite or solvable, a contradiction. So, there are just $16$ cases which may lead to the existence of a subgraph isomorphic to the graph $C_5(--C_6--)C_5(C_6)$ in $Z(\alpha,\beta)$. It can be seen that each of such cases gives a contradiction and so, the graph $Z(\alpha,\beta)$ contains no subgraph isomorphic to the graph $C_5(--C_6--)C_5(C_6)$.

We note that each group with two generators $h_2$ and $h_3$ and four relations which is one of the $16$ latter cases is a quotient of $B(1,k)$, for some integer $k$, has a torsion element or is a cyclic group.

\subsection{$\mathbf{C_5(--C_6--)C_5(C_7)}$}
%33) C_5(--C_6--)C_5(C_7)--------------------------------------------------------------------------------------------------
%$\mathbf{33) \ C_5(--C_6--)C_5(C_7)}$ \textbf{subgraph:} 
It can be seen that there are $248$ different cases for the relations of two cycles $C_5$, a cycle $C_6$ and a cycle $C_7$ in the subgraph $C_5(--C_6--)C_5(C_7)$. Using GAP \cite{gap}, we see that in $220$ cases of these $248$ cases, the groups which are obtained are finite or solvable, a contradiction. So, there are just $28$ cases which may lead to the existence of a subgraph isomorphic to the graph $C_5(--C_6--)C_5(C_7)$ in $Z(\alpha,\beta)$. It can be seen that each of such cases gives a contradiction and so, the graph $Z(\alpha,\beta)$ contains no subgraph isomorphic to the graph $C_5(--C_6--)C_5(C_7)$.

We note that each group with two generators $h_2$ and $h_3$ and four relations which is one of the $28$ latter cases is a quotient of $B(1,k)$, for some integer $k$, has a torsion element or is a cyclic group.

\subsection{$\mathbf{C_5-C_5(--C_7--)(--C_5)}$}
%34) C_5-C_5(--C_7--)(--C_5)-----------------------------------------------------------------------------------------------------
%$\mathbf{34) \ C_5-C_5(--C_7--)(--C_5)}$ \textbf{subgraph:} 
It can be seen that there are $394$ different cases for the relations of a cycle $C_7$ and three cycles $C_5$ in the subgraph $C_5-C_5(--C_7--)(--C_5)$. Using GAP \cite{gap}, we see that in $352$ cases of these $394$ cases, the groups which are obtained are finite or solvable, a contradiction. So, there are just $42$ cases which may lead to the existence of a subgraph isomorphic to the graph $C_5-C_5(--C_7--)(--C_5)$ in $Z(\alpha,\beta)$. It can be seen that each of such cases gives a contradiction and so, the graph $Z(\alpha,\beta)$ contains no subgraph isomorphic to the graph $C_5-C_5(--C_7--)(--C_5)$.

We note that each group with two generators $h_2$ and $h_3$ and four relations which is one of the $42$ latter cases is a quotient of $B(1,k)$, for some integer $k$, has a torsion element, is a cyclic group or is a solvable group.

\subsection{$\mathbf{C_5-C_5(--C_7--)(-C_5-)}$}
%35) C_5-C_5(--C_7--)(-C_5-)------------------------------------------------------------------------------------------------------
%$\mathbf{35) \ C_5-C_5(--C_7--)(-C_5-)}$ \textbf{subgraph:} 
It can be seen that there are $138$ different cases for the relations of a cycle $C_7$ and three cycles $C_5$ in the subgraph $C_5-C_5(--C_7--)(-C_5-)$. Using GAP \cite{gap}, we see that in $132$ cases of these $138$ cases, the groups which are obtained are finite or solvable, a contradiction. So, there are just $6$ cases which may lead to the existence of a subgraph isomorphic to the graph $C_5-C_5(--C_7--)(-C_5-)$ in $Z(\alpha,\beta)$. It can be seen that each of such cases gives a contradiction and so, the graph $Z(\alpha,\beta)$ contains no subgraph isomorphic to the graph $C_5-C_5(--C_7--)(-C_5-)$.

We note that each group with two generators $h_2$ and $h_3$ and four relations which is one of the $6$ latter cases has a torsion element, is a cyclic group or is a solvable group.

\subsection{$\mathbf{C_5-C_5(-C_6--)(--C_6-2)}$}
%36) C_5-C_5(-C_6--)(--C_6-2)-----------------------------------------------------------------------------------------------------
%$\mathbf{36) \ C_5-C_5(-C_6--)(--C_6-2)}$ \textbf{subgraph:} 
It can be seen that there are $22$ different cases for the relations of two cycles $C_5$ and two cycles $C_6$ in the subgraph $C_5-C_5(-C_6--)(--C_6-2)$. Using GAP \cite{gap}, we see that in all of such cases, the groups which are obtained are finite and solvable, a contradiction. So, the graph $Z(\alpha,\beta)$ contains no subgraph isomorphic to the graph $C_5-C_5(-C_6--)(--C_6-2)$.

\subsection{$\mathbf{C_4-C_4(-C_7-)(C_4)}$}
%37) C_4-C_4(-C_7-)(C_4)-------------------------------------------------------------------------------------------------------
%$\mathbf{37) \ C_4-C_4(-C_7-)(C_4)}$ \textbf{subgraph:} 
It can be seen that there are $32$ different cases for the relations of a cycle $C_7$ and three cycles $C_4$ in the subgraph $C_4-C_4(-C_7-)(C_4)$. Using GAP \cite{gap}, we see that in all of such cases, the groups which are obtained are finite and solvable, a contradiction. So, the graph $Z(\alpha,\beta)$ contains no subgraph isomorphic to the graph $C_4-C_4(-C_7-)(C_4)$.

\subsection{$\mathbf{C_5--C_5(-C_5--)}$}
%38) C_5--C_5(-C_5--)-------------------------------------------------------------------------------------------------------------
%$\mathbf{38) \ C_5--C_5(-C_5--)}$ \textbf{subgraph:} 
It can be seen that there are $64$ different cases for the relations of three cycles $C_5$ in the subgraph $C_5--C_5(-C_5--)$. Using GAP \cite{gap}, we see that in $58$ cases of these $64$ cases, the groups which are obtained are finite and solvable, a contradiction. So, there are just $6$ cases which may lead to the existence of a subgraph isomorphic to the graph $C_5--C_5(-C_5--)$ in $Z(\alpha,\beta)$. It can be seen that each of such cases gives a contradiction and so, the graph $Z(\alpha,\beta)$ contains no subgraph isomorphic to the graph $C_5--C_5(-C_5--)$.

We note that each group with two generators $h_2$ and $h_3$ and three relations which is one of the $6$ latter cases is a quotient of $B(1,k)$, for some integer $k$ or is a cyclic group.

\subsection{$\mathbf{C_6---C_6(-C_4)}$}
%39) C_6---C_6(-C_4)---------------------------------------------------------------------------------------------------------------
%$\mathbf{39) \ C_6---C_6(-C_4)}$ \textbf{subgraph:} 
It can be seen that there are $420$ different cases for the relations of two cycles $C_6$ and a cycle $C_4$ in the subgraph $C_6---C_6(-C_4)$. Using GAP \cite{gap}, we see that in $398$ cases of these $420$ cases, the groups which are obtained are finite or solvable, a contradiction. So, there are just $22$ cases which may lead to the existence of a subgraph isomorphic to the graph $C_6---C_6(-C_4)$ in $Z(\alpha,\beta)$. It can be seen that each of such cases gives a contradiction and so, the graph $Z(\alpha,\beta)$ contains no subgraph isomorphic to the graph $C_6---C_6(-C_4)$.

We note that each group with two generators $h_2$ and $h_3$ and three relations which is one of the $22$ latter cases is a cyclic group.

\subsection{$\mathbf{C_6--C_6(C_4)}$}
%40) C_6--C_6(C_4)---------------------------------------------------------------------------------------------------------------
%$\mathbf{40) \ C_6--C_6(C_4)}$ \textbf{subgraph:} 
It can be seen that there are $279$ different cases for the relations of two cycles $C_6$ and a cycle $C_4$ in the subgraph $C_6--C_6(C_4)$. Using GAP \cite{gap}, we see that in $268$ cases of these $279$ cases, the groups which are obtained are finite or solvable, a contradiction. So, there are just $11$ cases which may lead to the existence of a subgraph isomorphic to the graph $C_6--C_6(C_4)$ in $Z(\alpha,\beta)$. It can be seen that each of such cases gives a contradiction and so, the graph $Z(\alpha,\beta)$ contains no subgraph isomorphic to the graph $C_6--C_6(C_4)$.

We note that each group with two generators $h_2$ and $h_3$ and three relations which is one of the $11$ latter cases is a quotient of $B(1,k)$, for some integer $k$, or is a cyclic group.

\subsection{$\mathbf{C_4-C_6(-C_4)(-C_4)}$}
%41) C_4-C_6(-C_4)(-C_4)----------------------------------------------------------------------------------------------------------
%$\mathbf{41) \ C_4-C_6(-C_4)(-C_4)}$ \textbf{subgraph:} 
It can be seen that there are $36$ different cases for the relations of a cycle $C_6$ and three cycles $C_4$ in the subgraph $C_4-C_6(-C_4)(-C_4)$. Using GAP \cite{gap}, we see that in all of such cases, the groups which are obtained are solvable, a contradiction. So, the graph $Z(\alpha,\beta)$ contains no subgraph isomorphic to the graph $C_4-C_6(-C_4)(-C_4)$.

\subsection{$\mathbf{C_4-C_6(--C_7--)(-C_5-)}$}
%42) C_4-C_6(--C_7--)(-C_5-)------------------------------------------------------------------------------------------------------
%$\mathbf{42) \ C_4-C_6(--C_7--)(-C_5-)}$ \textbf{subgraph:} 
It can be seen that there are $62$ different cases for the relations of a cycle $C_4$, a cycle $C_6$, a cycle $C_7$ and a cycle $C_5$ in the subgraph $C_4-C_6(--C_7--)(-C_5-)$. Using GAP \cite{gap}, we see that in $58$ cases of these $62$ cases, the groups which are obtained are solvable, a contradiction. So, there are just $4$ cases which may lead to the existence of a subgraph isomorphic to the graph $C_4-C_6(--C_7--)(-C_5-)$ in $Z(\alpha,\beta)$. It can be seen that each of such cases gives a contradiction and so, the graph $Z(\alpha,\beta)$ contains no subgraph isomorphic to the graph $C_4-C_6(--C_7--)(-C_5-)$.

We note that each group with two generators $h_2$ and $h_3$ and four relations which is one of the $4$ latter cases is a cyclic group.

\subsection{$\mathbf{C_5-C_5(-C_6--)(--C_5-)}$}
%43) C_5-C_5(-C_6--)(--C_5-)-----------------------------------------------------------------------------------------------
%$\mathbf{43) \ C_5-C_5(-C_6--)(--C_5-)}$ \textbf{subgraph:} 
It can be seen that there are $14$ different cases for the relations of three cycles $C_5$ and a cycle $C_6$ in the subgraph $C_5-C_5(-C_6--)(--C_5-)$. Using GAP \cite{gap}, we see that in all of such cases, the groups which are obtained are solvable, a contradiction. So, the graph $Z(\alpha,\beta)$ contains no subgraph isomorphic to the graph $C_5-C_5(-C_6--)(--C_5-)$.

\subsection{$\mathbf{C_4-C_6(--C_7--)(C_7-2)}$}
%44) C_4-C_6(--C_7--)(C_7-2)------------------------------------------------------------------------------------------------------
%$\mathbf{44) \ C_4-C_6(--C_7--)(C_7-2)}$ \textbf{subgraph:} 
It can be seen that there are $168$ different cases for the relations of a cycle $C_4$, a cycle $C_6$ and two cycles $C_7$ in the subgraph $C_4-C_6(--C_7--)(C_7-2)$. Using GAP \cite{gap}, we see that in $152$ cases of these $168$ cases, the groups which are obtained are solvable, a contradiction. So, there are just $16$ cases which may lead to the existence of a subgraph isomorphic to the graph $C_4-C_6(--C_7--)(C_7-2)$ in $Z(\alpha,\beta)$. It can be seen that each of such cases gives a contradiction and so, the graph $Z(\alpha,\beta)$ contains no subgraph isomorphic to the graph $C_4-C_6(--C_7--)(C_7-2)$.

We note that each group with two generators $h_2$ and $h_3$ and four relations which is one of the $16$ latter cases is a quotient of $B(1,k)$, for some integer $k$, or is a cyclic group.

%------------------------------------------------------------------------------------------------------------------------------------
In the following theorem, we summarize our results about forbidden subgraphs of a zero-divisor graph of length $3$ over $\mathbb{F}_2$ on any torsion-free group. 
\begin{thm}\label{forbiddens}
Suppose that $\alpha$ and $\beta$ are non-zero elements of a group algebra over $\mathbb{F}_2$ on any torsion-free group $G$ such that $|supp(\alpha)|=3$, $\alpha\beta=0$ and if  $\alpha \beta'=0$ for some non-zero element $\beta'$ of the group algebra, then  $|supp(\beta')|\geq |supp(\beta)|$. Then  $Z(\alpha,\beta)$ is a triangle-free graph which contains none of the other $46$ graphs in Table \ref{tab-forbiddens} as a subgraph.
\end{thm}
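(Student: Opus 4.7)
The statement is a compilation theorem: it asserts simultaneously that $Z(\alpha,\beta)$ contains no triangle and no subgraph isomorphic to any of the 46 other graphs depicted in Table~\ref{tab-forbiddens}. My plan is therefore to assemble the theorem from the individual forbidden-subgraph results established earlier in the paper rather than to re-prove anything from scratch. The triangle-freeness is Theorem~\ref{thm-graph}, which in turn rests on the fact that any triangle of type (II) would force three distinct vertices $a,b,c\in supp(\beta)$ with $a_1a=b_1b=b_2c$ (impossible over $\mathbb{F}_2$), and that triangles of type (I) are excluded by Lemma~\ref{Z-C3}. The forbidden subgraph $K_{2,3}$ is exactly Theorem~\ref{Z-K2,3}. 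The 44 remaining entries of Table~\ref{tab-forbiddens} are precisely the subgraphs ruled out one by one in Subsections 5.2--5.44 of Section~\ref{S2}, together with the fact from Theorem~\ref{L-M} that $L_n$ and $M_n$ cannot be realized as $Z(\alpha,\beta)$.

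The common engine driving all of these individual results is the following uniform procedure, which I would explicitly articulate before quoting the subsection-level conclusions. Suppose $\mathcal{H}$ is one of the candidate subgraphs and assume for contradiction that $Z(\alpha,\beta)$ contains $\mathcal{H}$. By Remark~\ref{r-G} we may assume $1\in supp(\alpha)$, $supp(\alpha)=\{1,h_2,h_3\}$, and $G=\langle h_2,h_3\rangle$. Each cycle $C$ of $\mathcal{H}$ produces (via Definition~\ref{tuples}) a relation $r(C)=1$ in $G$ built from the labels on its edges, and the number of admissible label patterns is finite and enumerable. Using the restrictions of Remark~\ref{r-Z-K3-K3} (no diagonals), Lemma~\ref{Z-C3} (no type-(I) triangles), Theorem~\ref{Z-K3-K3} (no pair of triangles sharing an edge), Lemma~\ref{Z-C4,C4} (all squares are equivalent), and Remark~\ref{r-1} (the six explicit pairs $\{T_C,T_{C'}\}$ for two squares sharing an edge), the admissible patterns drop to a manageable finite list.

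For each such pattern one obtains a two-generator group $\langle h_2,h_3\mid r_1,\dots,r_m\rangle$ which must admit a surjection onto $G$. A GAP computation then shows that in the overwhelming majority of cases this finitely presented group is finite and solvable, contradicting torsion-freeness and the fact (from \cite[Theorem~26.2]{pass2}, Remark~\ref{r-baum}, and the abelian case of Conjecture~\ref{conj-zero}) that polycyclic, Baumslag-Solitar, cyclic, and abelian torsion-free groups all satisfy Kaplansky's zero-divisor conjecture. The ``residual'' patterns that survive GAP's solvability test turn out in every instance to yield a group that is either cyclic, abelian, a quotient of some $BS(1,k)$ or $BS(k,1)$, or torsion-containing; each possibility is ruled out by the same list of references and by the minimality hypothesis on $|supp(\beta)|$, which together with $\alpha\beta=0$ would be incompatible with $\alpha$ having a zero-divisor companion in such a well-behaved group.

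The genuine obstacle, already visible in the section as written, is combinatorial bookkeeping rather than mathematical depth: the number of admissible label tuples for the larger subgraphs (e.g.\ $C_6--C_6(C_6--C_6)$ has $5119$ cases, and several others exceed a thousand) makes an entirely by-hand enumeration unrealistic, so the proof must transparently delegate the enumeration and solvability checks to GAP and then handle by direct algebraic analysis the small number of surviving cases. My writeup would therefore (i) state the general reduction clearly once, (ii) point to the GAP scripts in the Appendix of \cite{Ab-Ta} for the bulk elimination, (iii) for each subgraph cite the corresponding subsection of Section~\ref{S2} for the residual-case analysis, and (iv) invoke Theorem~\ref{thm-graph} and Theorem~\ref{Z-K2,3} for the two earlier cases, thereby closing the argument.
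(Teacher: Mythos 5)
Your proposal is correct and mirrors the paper's own treatment exactly: the theorem is stated as a summary whose proof is the aggregation of Theorem \ref{thm-graph} (triangle-freeness), Theorem \ref{Z-K2,3} ($K_{2,3}$), Theorem \ref{L-M} ($L_n$, $M_n$), and the subsection-by-subsection cycle-relation/GAP eliminations of Section \ref{S2}, with the residual cases deferred to the Appendix of \cite{Ab-Ta}. The only nit is a harmless miscount (entries 2--44 of Table \ref{tab-forbiddens} comprise 43 graphs, not 44).
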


%---------------------------------------------------------------------------------------------------------------------------------------------------

\section{\bf The possible number of vertices of a zero-divisor graph \\ of length $3$ over  $\mathbb{F}_2$ on any torsion-free group}\label{S3}

By Remark \ref{r-F2}, the number of vertices of a zero-divisor graph of length $3$ over  $\mathbb{F}_2$ on any torsion-free group must be an even positive integer $n\geq4$. Also by Theorem \ref{thm-graph}, such a latter graph is a connected simple cubic one containing no subgraph isomorphic to a triangle. Furthermore, we found $46$ other forbidden subgraphs of such a graph in Sections \ref{S-C4} and \ref{S2}.

Using Sage Mathematics Software \cite{sage} and its package \textit{Nauty-geng}, all non-isomorphic connected cubic triangle-free graphs with the size of the vertex sets $n$ can be found. In this section by using Sage Mathematics Software, we give some results about checking each of the mentioned forbidden subgraphs in all of non-isomorphic connected cubic triangle-free graphs with the size of vertex sets $n \leq 20$. By using such results, we show that $n$ must be greater than or equal to $20$. Also, some results in  the case $n=20$ is given.

Table \ref{tab-graphs} lists all results about the number of non-isomorphic connected cubic triangle-free graphs with the size of vertex sets $n \leq 20$ which contain each of the forbidden subgraphs. The results in this table, from top to bottom, are presented in such a way that by checking each of the forbidden subgraphs in a row, the number of graphs containing these subgraph are omitted from the total number and the existence of the next forbidden subgraph is checked among the remaining ones.

\begin{scriptsize}
\begin{longtable}{|rl|l|l|l|l|l|l|l|l|l|}
\caption{Existence of the forbidden subgraphs in non-isomorphic connected cubic triangle-free graphs with the size of vertex sets $n \leq 20$}\label{tab-graphs}\\
\hline \multicolumn{1}{|r}{$ $} & \multicolumn{1}{l|}{$ $} & \multicolumn{1}{l|}{$n=4$} & \multicolumn{1}{l|}{$n=6$}
& \multicolumn{1}{l|}{$n=8$} & \multicolumn{1}{l|}{$n=10$} & \multicolumn{1}{l|}{$n=12$} & \multicolumn{1}{l|}{$n=14$}
& \multicolumn{1}{l|}{$n=16$} & \multicolumn{1}{l|}{$n=18$} & \multicolumn{1}{l|}{$n=20$}
\\\hline
\endfirsthead
\multicolumn{11}{c}%
{{\tablename\ \thetable{} -- continued from previous page}} \\
\hline \multicolumn{1}{|r}{$ $} & \multicolumn{1}{l|}{$ $} & \multicolumn{1}{l|}{$n=4$} & \multicolumn{1}{l|}{$n=6$}
& \multicolumn{1}{l|}{$n=8$} & \multicolumn{1}{l|}{$n=10$} & \multicolumn{1}{l|}{$n=12$} & \multicolumn{1}{l|}{$n=14$}
& \multicolumn{1}{l|}{$n=16$} & \multicolumn{1}{l|}{$n=18$} & \multicolumn{1}{l|}{$n=20$}
\\\hline
\endhead
\hline \multicolumn{11}{|r|}{{Continued on next page}} \\\hline
\endfoot
\hline %\hline
\endlastfoot
$ $&$\text{Total}$&$0$&$1$&$2$&$6$&$22$&$110$&$792$&$7805$&$97546$\\\hline
$1)$&$K_{2,3}$&$0$&$1$&$0$&$1$&$4$&$22$&$144$&$1222$&$12991$\\
$2)$&$C_4--C_5$&$0$&$0$&$1$&$2$&$6$&$30$&$223$&$2161$&$25427$\\
$3)$&$C_4--C_6$&$0$&$0$&$1$&$1$&$6$&$31$&$223$&$2228$&$28080$\\
$4)$&$C_4-C_5(-C_5-)$&$0$&$0$&$0$&$0$&$2$&$6$&$40$&$319$&$3396$\\
$5)$&$C_4-C_5(-C_4-)$&$0$&$0$&$0$&$1$&$0$&$3$&$12$&$88$&$1123$\\
$6)$&$C_4-C_5(-C_6--)$&$0$&$0$&$0$&$0$&$0$&$4$&$42$&$389$&$4548$\\
$7)$&$C_4-C_5(-C_6-)$&$0$&$0$&$0$&$0$&$0$&$1$&$20$&$382$&$5661$\\
$8)$&$C_4-C_5(-C_7--)$&$0$&$0$&$0$&$0$&$0$&$1$&$10$&$176$&$3172$\\
$9)$&$C_5--C_5(--C_5)$&$0$&$0$&$0$&$1$&$2$&$3$&$18$&$157$&$1617$\\
$10)$&$C_5--C_5(--C_6)$&$0$&$0$&$0$&$0$&$0$&$4$&$32$&$291$&$4289$\\
$11)$&$C_4-C_6(--C_7--)(C_7-1)$&$0$&$0$&$0$&$0$&$1$&$0$&$9$&$64$&$446$\\
$12)$&$C_4-C_6(--C_7--)(--C_5-)$&$0$&$0$&$0$&$0$&$0$&$0$&$0$&$2$&$51$\\
$13)$&$C_4-C_6(-C_6--)(-C_4-)$&$0$&$0$&$0$&$0$&$1$&$0$&$1$&$5$&$35$\\
$14)$&$C_4-C_6(-C_6--)(--C_5-)$&$0$&$0$&$0$&$0$&$0$&$0$&$1$&$1$&$149$\\
$15)$&$C_4-C_6(-C_6--)(C_6---)$&$0$&$0$&$0$&$0$&$0$&$0$&$1$&$30$&$404$\\
$16)$&$C_4-C_6(-C_6--)(---C_4)$&$0$&$0$&$0$&$0$&$0$&$0$&$1$&$4$&$12$\\
$17)$&$C_5-C_5(--C_6--)$&$0$&$0$&$0$&$0$&$0$&$2$&$0$&$41$&$352$\\
$18)$&$C_5-C_5(-C_6--)(C_6---)$&$0$&$0$&$0$&$0$&$0$&$0$&$7$&$47$&$529$\\
$19)$&$C_5-C_5(-C_6--)(--C_6- 1)$&$0$&$0$&$0$&$0$&$0$&$0$&$1$&$31$&$249$\\
$20)$&$C_5-C_5(-C_6--)(-C_5--)$&$0$&$0$&$0$&$0$&$0$&$0$&$1$&$1$&$69$\\
$21)$&$C_6---C_6(C_6---C_6)$&$0$&$0$&$0$&$0$&$0$&$1$&$1$&$8$&$43$\\
$22)$&$C_6---C_6(C_6)(C_6)(C_6)$&$0$&$0$&$0$&$0$&$0$&$0$&$2$&$6$&$25$\\
$23)$&$C_5(--C_6--)C_5(---C_6)$&$0$&$0$&$0$&$0$&$0$&$0$&$1$&$16$&$374$\\
$24)$&$C_6--C_6(C_6--C_6)$&$0$&$0$&$0$&$0$&$0$&$0$&$0$&$29$&$438$\\
$25)$&$C_6---C_6(C_6--C_6)$&$0$&$0$&$0$&$0$&$0$&$0$&$0$&$20$&$505$\\
$26)$&$C_6---C_6(-C_5-)$&$0$&$0$&$0$&$0$&$0$&$0$&$0$&$27$&$721$\\
$27)$&$C_4-C_6(--C_7--)(---C_6)$&$0$&$0$&$0$&$0$&$0$&$0$&$0$&$8$&$257$\\
$28)$&$C_4-C_6(--C_7--)(C_4)(C_4)$&$0$&$0$&$0$&$0$&$0$&$0$&$0$&$2$&$2$\\
$29)$&$C_6---C_6(-C_5--)$&$0$&$0$&$0$&$0$&$0$&$0$&$0$&$5$&$66$\\
$30)$&$C_6--C_6(--C_5-)(-C_5-)$&$0$&$0$&$0$&$0$&$0$&$0$&$0$&$12$&$293$\\
$31)$&$C_6--C_6(--C_5-)(C_6---)$&$0$&$0$&$0$&$0$&$0$&$0$&$0$&$2$&$267$\\
$32)$&$C_5(--C_6--)C_5(C_6)$&$0$&$0$&$0$&$0$&$0$&$0$&$0$&$1$&$43$\\
$33)$&$C_5(--C_6--)C_5(C_7)$&$0$&$0$&$0$&$0$&$0$&$0$&$0$&$2$&$50$\\
$34)$&$C_5-C_5(--C_7--)(--C_5)$&$0$&$0$&$0$&$0$&$0$&$0$&$0$&$6$&$199$\\
$35)$&$C_5-C_5(--C_7--)(-C_5-)$&$0$&$0$&$0$&$0$&$0$&$0$&$0$&$2$&$69$\\
$36)$&$C_5-C_5(-C_6--)(--C_6-2)$&$0$&$0$&$0$&$0$&$0$&$0$&$0$&$2$&$114$\\
$37)$&$C_4-C_4(-C_7-)(C_4)$&$0$&$0$&$0$&$0$&$0$&$1$&$0$&$6$&$72$\\
$38)$&$C_5--C_5(-C_5--)$&$0$&$0$&$0$&$0$&$0$&$0$&$0$&$1$&$11$\\
$39)$&$C_6---C_6(-C_4)$&$0$&$0$&$0$&$0$&$0$&$0$&$0$&$3$&$94$\\
$40)$&$C_6--C_6(C_4)$&$0$&$0$&$0$&$0$&$0$&$0$&$0$&$2$&$67$\\
$41)$&$C_4-C_6(-C_4)(-C_4)$&$0$&$0$&$0$&$0$&$0$&$0$&$0$&$1$&$30$\\
$42)$&$C_4-C_6(--C_7--)(-C_5-)$&$0$&$0$&$0$&$0$&$0$&$0$&$0$&$1$&$26$\\
$43)$&$C_5-C_5(-C_6--)(--C_5-)$&$0$&$0$&$0$&$0$&$0$&$0$&$0$&$1$&$17$\\
$44)$&$C_4-C_6(--C_7--)(C_7-2)$&$0$&$0$&$0$&$0$&$0$&$0$&$0$&$1$&$41$\\
\hline
$ $&$\text{Isomorphic to }L_n$&$0$&$0$&$0$&$0$&$0$&$0$&$1$&$1$&$1$\\
$ $&$\text{Isomorphic to }M_n$&$0$&$0$&$0$&$0$&$0$&$1$&$1$&$1$&$1$\\
$ $&$\text{Remains}$&$0$&$0$&$0$&$0$&$0$&$0$&$0$&$0$&$1120$
\end{longtable}
\end{scriptsize}

The discussion above and the results of Table \ref{tab-graphs} are summarized in the following theorem. 
\begin{thm}\label{T2}
The vertex set size of a zero-divisor graph of length $3$ over  $\mathbb{F}_2$ on any torsion-free group must be greater than or equal to $20$. Furthermore, there are just $1120$ graphs with vertex set size equal to $20$ which may be isomorphic to such latter graphs.
\end{thm}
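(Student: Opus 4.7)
The plan is to combine the structural restrictions already proved on zero-divisor graphs of length $3$ over $\mathbb{F}_2$ with an exhaustive computer enumeration of small graphs satisfying those restrictions. By Theorem \ref{thm-graph}, any such graph $Z(\alpha,\beta)$ (under the minimality assumption on $|supp(\beta)|$) is a connected simple cubic graph with no triangle, and by Remark \ref{r-F2} its vertex count $n$ is even. The strategy is to show that for every even $n$ in the range $4\leq n\leq 18$, no connected triangle-free cubic graph on $n$ vertices avoids all forbidden subgraphs listed earlier, while for $n=20$ exactly $1120$ graphs survive the elimination.

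First I would invoke \textit{nauty}'s generator \texttt{geng} (accessed through Sage) to enumerate, for each even $n\in\{4,6,\dots,20\}$, the complete list of pairwise non-isomorphic connected cubic triangle-free graphs on $n$ vertices. The totals appearing in the first data row of Table \ref{tab-graphs} ($0,1,2,6,22,110,792,7805,97546$) are exactly the outputs of this generator and are independently available in the combinatorial literature (Meringer, Brinkmann--McKay et al.), so they can be cross-checked. Next, for each such graph I would run a subgraph-isomorphism test (e.g.\ Sage's \texttt{subgraph\_search}) against every graph in Theorem \ref{forbiddens}, namely the $46$ forbidden configurations in Table \ref{tab-forbiddens} together with the two infinite families $L_n$ and $M_n$ of Theorem \ref{L-M}. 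The row-by-row accounting in Table \ref{tab-graphs} records, as each forbidden configuration is tested in sequence, how many previously-surviving graphs on $n$ vertices contain it; subtracting those counts from the running total yields the number of remaining candidates.

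Carrying out the elimination in the order listed, one reads off that for every $n\in\{4,6,8,10,12,14,16,18\}$ the cumulative count of graphs eliminated equals the total number of connected cubic triangle-free graphs on $n$ vertices, so zero candidates remain; this forces $n\geq 20$. For $n=20$ the same procedure leaves exactly $97546-(12991+25427+\cdots+41)-1-1=1120$ graphs, which gives the second assertion of the theorem. Combining this with Theorem \ref{forbiddens} (which shows that $Z(\alpha,\beta)$ contains none of the listed subgraphs) and Theorem \ref{L-M} (which excludes $L_n$ and $M_n$) then completes the proof.

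The only genuine obstacle is computational reliability: for $n=18$ and $n=20$ the graph lists have thousands and tens of thousands of entries, and each must be tested against $48$ potential subgraphs, some of which (e.g.\ configurations $11$--$44$) have up to $14$ vertices. The expected difficulty is therefore not conceptual but rather (i) trusting the correctness of \texttt{nauty}/\texttt{geng} and of the subgraph-matching routines, and (ii) organizing the elimination order so that no candidate is double-counted or missed. The author's strategy of eliminating in a fixed sequence and reporting the per-step counts in Table \ref{tab-graphs} is precisely what makes the result auditable; a referee can replicate the computation graph by graph. Once the tabulated counts are accepted, the statement $n\geq 20$ and the figure $1120$ are immediate arithmetic consequences.
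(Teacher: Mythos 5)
Your proposal is essentially identical to the paper's own argument: the authors likewise generate all non-isomorphic connected cubic triangle-free graphs on $n\leq 20$ vertices with Sage's \textit{Nauty-geng}, eliminate those containing any of the forbidden subgraphs of Theorem \ref{forbiddens} or isomorphic to $L_n$ or $M_n$ (Theorem \ref{L-M}), and read off from Table \ref{tab-graphs} that nothing survives for $n\leq 18$ while exactly $1120$ candidates remain at $n=20$. No substantive difference in method or in the points of reliance on computation.
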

\begin{cor}\label{maincor}
Let $\alpha$ and $\beta$ be non-zero elements of the group algebra of any torsion-free group over  $\mathbb{F}_2$. If $|supp(\alpha)|=3$ and $\alpha \beta=0$ then $|supp(\beta)|\geq 20$. 
\end{cor}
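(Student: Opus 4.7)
The proof I propose is a short deduction from Theorem~\ref{T2} combined with a standard minimality argument. I would argue by contradiction: suppose there exist non-zero $\alpha,\beta\in\mathbb{F}_2[G]$ with $|supp(\alpha)|=3$, $\alpha\beta=0$, and $|supp(\beta)|\leq 19$ (in fact at most $18$, since by Remark~\ref{r-F2} any zero-divisor graph of length $3$ over $\mathbb{F}_2$ has an even number of vertices).

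The only preparatory step needed is to replace $\beta$ by an element satisfying the minimality hypothesis standing throughout Sections~\ref{S-C4} and~\ref{S2}. Concretely, I would choose $\beta_0\in\mathbb{F}_2[G]$ to be a non-zero element with $\alpha\beta_0=0$ and $|supp(\beta_0)|$ as small as possible; such a $\beta_0$ exists by well-ordering of $\mathbb{Z}_{>0}$, and by construction $|supp(\beta_0)|\leq |supp(\beta)|\leq 19$. This minimality is precisely the running assumption under which Theorems~\ref{thm-graph}, \ref{forbiddens} and~\ref{T2} are proved, so we may legitimately apply them to the pair $(\alpha,\beta_0)$.

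Next I would invoke Theorem~\ref{T2} for $Z(\alpha,\beta_0)$: by Definition~\ref{def-Zgraph} the vertex set of $Z(\alpha,\beta_0)$ is $supp(\beta_0)$, so the graph has exactly $|supp(\beta_0)|\leq 19$ vertices. This contradicts Theorem~\ref{T2}, which forces the number of vertices to be at least $20$, and the corollary follows.

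The main obstacle sits entirely upstream in the paper, not in this final deduction: it lies in Theorem~\ref{forbiddens} (the lengthy case-by-case verification, carried out across Sections~\ref{S-C4} and~\ref{S2}, that each of the $46$ graphs in Table~\ref{tab-forbiddens} together with $C_3$ is a forbidden subgraph) and in the exhaustive computer search summarised in Table~\ref{tab-graphs}, which rules out every connected cubic triangle-free graph on at most $18$ vertices. Once those two ingredients are available, the corollary reduces to a one-line consequence; the only conceptual subtlety worth making explicit is the minimality reduction above, since the hypothesis ``$|supp(\beta)|$ is minimum'' is tacit in the framework of Sections~\ref{S-C4} and~\ref{S2} but absent from the statement of the corollary itself.
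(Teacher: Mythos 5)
Your proposal is correct and matches the paper's (implicit) argument: the corollary is stated as an immediate consequence of Theorem \ref{T2}, since the vertex set of $Z(\alpha,\beta)$ is $supp(\beta)$. Your explicit minimality reduction to a $\beta_0$ of smallest support is exactly the tacit step needed to apply the running hypothesis of Sections \ref{S-C4}--\ref{S3}, so nothing is missing.
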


\section{\bf Possible zero divisors with supports of size $3$ in $\mathbb{F}[G]$}\label{S1-1}
Throughout this section let $\alpha$  be a non-zero element in the group algebra of a torsion-free group $G$ over a field $\mathbb{F}$ such that $|supp(\alpha)|=3$ and  $\alpha \beta=0$ for some non-zero element $\beta$ of the group algebra. It is known that $|supp(\beta)|\geq3$ (see \cite[Theorem 2.1]{pascal}). In this section, we show that $|supp(\beta)|$ must be at least $10$. Here, one may assume that $\beta$ has minimum possible support size among all  elements $\gamma$ with $\alpha \gamma=0$. Therefore by Remark \ref{r-G}, $1 \in supp(\alpha)$ and $G=\langle supp(\alpha) \rangle$. Let  $supp(\alpha)=\{h_1,h_2,h_3\}$, $supp(\beta)=\{g_1,g_2,\ldots,g_n\}$ and $n=|supp(\beta)|$. If $A=\{1,2,3\}\times \{1,2,\ldots,n\}$, then for all $(i,j)\in A$ there must be an $(i',j')\in A$ such that $i\neq i'$, $j\neq j'$ and  $h_ig_j=h_{i'}g_{j'}$ because $\alpha\beta=0$. 

\begin{thm}[Corollary of \cite{kemp}]\label{kemperman}
Let $G$ be an arbitrary group and let $B$ and $C$ be finite non-empty subsets of $G$. Suppose that each non-identity element $g$ of $G$ has a finite or infinite order greater than or equal to $|B|+|C|-1$. Then $|BC|\geq |B|+|C|-1$.
\end{thm}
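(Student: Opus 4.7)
The approach is to derive this inequality as a direct consequence of Kemperman's addition theorem for arbitrary (possibly non-abelian) groups, proved in \cite{kemp}. The relevant consequence of that theorem is the following structural dichotomy: if $B$ and $C$ are finite non-empty subsets of a group $G$ with $|BC|<|B|+|C|-1$, then $BC$ admits a non-trivial periodicity, in the sense that the (left or right) stabilizer $H=\{g\in G : gBC=BC\}$ is a non-trivial finite subgroup of $G$, and $BC$ is a union of cosets of $H$; in particular $|BC|\geq |H|$.

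First I would dispose of the extreme cases. If $|B|=1$ or $|C|=1$, then $|BC|=|B|\cdot|C|=|B|+|C|-1$, so the inequality holds trivially; assume from now on that $|B|,|C|\geq 2$.

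Next I would argue by contradiction: suppose that $|BC|<|B|+|C|-1$, and apply Kemperman's theorem to obtain a non-trivial finite subgroup $H\leq G$ with $|BC|\geq |H|$. Here the order hypothesis enters decisively: any non-identity element $h\in H$ has order dividing $|H|$, so $\mathrm{ord}(h)\leq |H|$; on the other hand the hypothesis forces $\mathrm{ord}(h)\geq |B|+|C|-1$. Combining these gives $|H|\geq |B|+|C|-1$, and then $|BC|\geq |H|\geq |B|+|C|-1$, contradicting the assumption.

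The main obstacle is simply importing Kemperman's theorem from \cite{kemp} in a form that yields both the non-trivial finite periodicity subgroup $H$ and the coset inequality $|BC|\geq |H|$; the non-abelian Kneser-type argument behind that is not elementary. Once this is in hand, the role of the order hypothesis is exactly to rule out the sole exceptional source of deficient products (cosets of finite subgroups) by forcing any such subgroup to already be large enough that the bound $|BC|\geq |H|$ recovers $|BC|\geq |B|+|C|-1$.
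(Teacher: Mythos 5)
The paper contains no proof of this statement: it is imported verbatim from Kemperman's paper \cite{kemp}, where it arises as a corollary of his ``unique expression'' theorem (if some element of $BC$ has exactly one representation $bc$ with $b\in B$, $c\in C$, then $|BC|\ge |B|+|C|-1$), which Kemperman proves by a direct transform-and-induction argument. So your deduction cannot be matched against an argument in the paper; it has to stand on its own, and there it has one genuine gap --- precisely the step you yourself flag as ``the main obstacle.''

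The structural dichotomy you invoke, namely that $|BC|<|B|+|C|-1$ forces the left or right stabilizer $H$ of $BC$ to be a non-trivial finite subgroup with $BC$ a union of $H$-cosets, is the direct non-commutative analogue of Kneser's theorem. It is not proved in \cite{kemp}, and it is not a result one can take off the shelf for arbitrary groups: the naive non-abelian generalization of Kneser's theorem is known to fail, and the statements that do survive are weaker. The correct substitute is Olson's theorem (J.~Number Theory 18 (1984)): for finite non-empty $B,C$ in any group there exist a finite subgroup $H$ and a non-empty subset $T\subseteq BC$ with $HT=T$ or $TH=T$ and $|BC|\ge |T|\ge |B|+|C|-|H|$. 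With that input your endgame closes exactly as written: deficiency forces $|H|\ge 2$; $T$ being a non-empty union of cosets gives $|BC|\ge |T|\ge |H|$; and Lagrange together with the order hypothesis gives $|H|\ge |B|+|C|-1$, a contradiction. The reduction to $|B|,|C|\ge 2$ and the Lagrange argument are both fine. What is missing is a valid source or proof for the periodicity input: as stated (periodicity of $BC$ itself, attributed to \cite{kemp}) it is both misattributed and stronger than what is true, and since the entire content of the theorem lives in that input, the proposal as written is not yet a proof. Either cite Olson's theorem in its subset form, or prove the statement directly by Kemperman's own elementary induction.
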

\begin{thm}[Corollary 11 of \cite{ham}]\label{hamidoune}
If $C$ is a finite generating subset of a nonabelian torsion-free group $G$ such that $1\in C$ and $|C|\geq 4$, then $|BC|\geq |B|+|C|+1$ for all $B\subset G$ with $|B|\geq 3$.
\end{thm}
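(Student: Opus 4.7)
The plan is to prove this via the isoperimetric method on Cayley graphs, which is Hamidoune's signature technique, combined with a structural analysis of extremal configurations. Writing $S = C \setminus \{1\}$, since $C$ generates $G$, so does $S$, and one can identify $|BC|$ with $|B|$ plus the size of the out-boundary $\partial_{C}B := BC \setminus B$ in the directed Cayley graph on $G$ with connection set $S$. The target inequality $|BC| \geq |B|+|C|+1$ is equivalent to the boundary bound $|\partial_C B| \geq |C|+1$.

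First I would invoke Theorem \ref{kemperman}: since $G$ is torsion-free, every non-identity element has infinite order, so $|BC| \geq |B| + |C| - 1$ unconditionally. The task is therefore to improve this baseline by exactly two under the extra hypotheses that $G$ is nonabelian, $|C|\geq 4$, $1 \in C$, $|B|\geq 3$, and $\langle C\rangle = G$. Argue by contradiction: suppose $|BC| \in \{|B|+|C|-1,\ |B|+|C|\}$, and derive structural constraints strong enough to force $G$ to be abelian, contradicting the hypothesis.

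The heart of the argument is Hamidoune's atoms-and-fragments machinery. Define a $k$-\emph{fragment} to be a finite set $F$ with $|F|\geq k$ that minimizes $|FC|-|F|$, and an \emph{atom} to be a fragment of minimum cardinality. In this regime one proves that the atom $H$ containing $1$ (after right-translation) is a subgroup of $G$, that distinct atoms are pairwise disjoint translates of $H$, and that both $B$ and $BC$ are unions of cosets of $H$. From $|BC|-|B|\leq |C|$, one then concludes that $|H|$ divides both $|B|$ and $|BC|$, and the extremal case forces $C$ itself to be essentially contained in a coset of $H$ together with a bounded exceptional set. The hypotheses $|B|\geq 3$ and $|C|\geq 4$ rule out the small-parameter exceptional configurations, so $H$ must be a proper non-trivial subgroup normalized by $C$; since $\langle C\rangle = G$, this makes $H$ normal in $G$ and forces the quotient $G/H$ to be generated by a coset-image of $C$ with very small Freiman doubling, which in a torsion-free group (combined with $1\in C$) collapses to commutativity of $G/H$ and then of $G$.

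The main obstacle, and the step where Hamidoune's original proof does the technical work, is establishing the atom-is-a-subgroup dichotomy in the nonabelian torsion-free setting, since the classical Kneser-type theorems that underpin Kemperman's inequality are naturally abelian. My plan would not reprove this structure theorem from scratch; instead I would invoke the isoperimetric framework of \cite{ham} and concentrate effort on the specialization: verifying that the hypotheses $|B|\geq 3$, $|C|\geq 4$, $1\in C$, and $\langle C\rangle = G$ together eliminate the borderline fragments yielding $|BC|=|B|+|C|-1$ and $|BC|=|B|+|C|$, which is precisely what upgrades Kemperman's bound by $+2$.
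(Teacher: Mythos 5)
The first thing to say is that the paper does not prove this statement at all: it is imported verbatim as Corollary~11 of \cite{ham} (Hamidoune, Llad\'o and Serra), exactly as the bracketed attribution in the theorem header indicates, and is then used as a black box in Sections~7 and~8. So there is no internal proof to compare yours against. Your proposal is candid that it would likewise ``invoke the isoperimetric framework of \cite{ham}'' rather than reprove the structure theorem, which means that as a self-contained argument it reduces to the same citation the paper makes; what you add on top is a heuristic outline of how that cited machinery works, not a proof of the statement.

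That outline also has a substantive misdirection worth flagging. You propose that the atom $H$ containing $1$ is a nontrivial finite subgroup, that $B$ and $BC$ decompose into cosets of $H$, and that one then passes to $G/H$ and argues about Freiman doubling there. But $G$ is torsion-free, so it has no nontrivial finite subgroups: if the $k$-atom is a subgroup it is forced to be $\{1\}$, which for $k\geq 2$ is an immediate contradiction rather than the start of a coset/quotient analysis. The actual dichotomy in the isoperimetric method for $2$- and $3$-atoms is ``subgroup (hence trivial here) versus arithmetic-progression-like structure,'' and it is the progression branch that the nonabelian hypothesis and the size conditions $|B|\geq 3$, $|C|\geq 4$ must eliminate --- progressions live inside cyclic, hence abelian, subgroups, and a generating set contained in such a configuration would make $G$ abelian. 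So the ``specialization'' you plan to carry out is aimed at a branch that cannot occur, while the branch that actually carries the difficulty (ruling out quasi-progressions in a nonabelian torsion-free group) is not addressed. If you want a genuine proof rather than an annotated citation, that is where the work has to go; otherwise the honest course, and the one the paper itself takes, is simply to quote the result.
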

Abelian torsion-free groups satisfy the Conjecture \ref{conj-zero} (see \cite[Theorem 26.2]{pass2}). So, $G$ must be a nonabelian torsion-free group. Also, $1 \in supp(\alpha)$ and $G=\langle supp(\alpha) \rangle$. Therefore by Theorem \ref{hamidoune}, $3n\geq |supp(\alpha) supp(\beta)|\geq 4+n$.
\begin{thm}[Proposition 4.12 of \cite{dyk}]\label{dyk-rem}
There exist no $\gamma , \delta \in \mathbb{F}_2[G]$ such that $\gamma \delta=1$, where $|supp(\gamma)|=3$ and $|supp(\delta)|\geq 13$ is an odd integer. 
\end{thm}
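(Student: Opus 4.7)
Assume for contradiction that $\gamma, \delta \in \mathbb{F}_2[G]$ satisfy $\gamma\delta = 1$ with $|supp(\gamma)| = 3$ and $n := |supp(\delta)| \geq 13$ odd. By Lemma \ref{U-iso} I may translate so that $1 \in supp(\gamma)$, and choosing $\delta$ with minimum support size among all right inverses of $\gamma$ lets me invoke Corollary \ref{Unit-cor}, Lemma \ref{U-connect} and Proposition \ref{simple-unit}. Then $G = \langle supp(\gamma)\rangle$, the unit graph $U(\gamma,\delta)$ is connected and simple, and it is the induced subgraph of $Cay(G, S)$ on $supp(\delta)$, where $|S| = 6$ by Lemma \ref{size of S for Unit}.

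First I would extract the degree sequence. For each $x \in G$, let $N(x)$ denote the number of pairs $(h,g) \in supp(\gamma) \times supp(\delta)$ with $hg = x$. Working in characteristic $2$ and using $\gamma\delta = 1$ yields $N(1)$ odd and $N(x)$ even for $x \ne 1$; since $N(x) \le 3$, we obtain $N(1) \in \{1, 3\}$ and $N(x) \in \{0, 2\}$ otherwise. A short inspection of Definition \ref{def-Ugraph} then shows two possibilities. In Case A, when $N(1) = 1$, a single vertex of $U(\gamma,\delta)$ has degree $2$ and all others have degree $3$. In Case B, when $N(1) = 3$, the three vertices $h^{-1}$ for $h \in supp(\gamma)$ have degree $4$ and induce a triangle of type (II), while all others have degree $3$. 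The handshake lemma forces $n$ to be odd in both cases, consistent with the hypothesis.

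Next I would harvest the forbidden-subgraph machinery already developed for unit graphs. Lemma \ref{U-C3} rules out type (I) triangles, Theorem \ref{U-K3-K3} forbids two triangles meeting in an edge, and Theorem \ref{U-K2,3} forbids $K_{2,3}$. The arguments of Section 5 (formulated there for the zero-divisor graph) are purely group-theoretic and, appealing to Corollary \ref{Unit-cor} in place of Corollary \ref{cor}, should translate essentially verbatim to the unit setting, yielding a unit-graph analogue of Table \ref{tab-forbiddens}. I would then refine these restrictions to the two specific degree profiles above, in order to control the cycle structure around the degree-$2$ defect vertex in Case A or the type-(II) triangle in Case B.

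The final step would be a computer enumeration modelled on Section 6: for each odd $n$ in a manageable range, enumerate via Nauty all connected simple graphs realising the Case A or Case B degree sequence, test each against the forbidden-subgraph list, and show the resulting list is empty. The principal obstacle will be to make the argument uniform in $n$, since brute enumeration is infeasible for all odd $n \ge 13$ simultaneously. My plan to overcome this would combine a girth bound coming from the $6$-regular Cayley graph $Cay(G, S)$ with cycle relations in the spirit of Table \ref{tab-C4}, to force any sufficiently large almost-cubic induced subgraph of $Cay(G,S)$ to contain one of the forbidden configurations; if that geometric route fails, a fallback is to adapt the matched-rectangle construction of Schweitzer \cite{pascal} to the unit equation $\gamma\delta = 1$, so that the extra equation coming from the identity coefficient gives a linear constraint that a long-enough rectangle cannot satisfy.
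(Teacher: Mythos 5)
The paper does not prove this statement at all: it is imported verbatim as Proposition 4.12 of \cite{dyk}, and is used as a black box in Sections 7 and 8. So there is no internal proof to compare against; what you have written is an attempt at an independent proof, and it must be judged as such. Your first step is correct and worth keeping: over $\mathbb{F}_2$ with $|supp(\gamma)|=3$, writing $N(x)$ for the number of pairs $(h,g)\in supp(\gamma)\times supp(\delta)$ with $hg=x$, the equation $\gamma\delta=1$ forces $N(1)\in\{1,3\}$ and $N(x)\in\{0,2\}$ otherwise, and summing gives $3n\equiv 1\pmod 2$ --- so $n$ is \emph{always} odd (your ``consistent with the hypothesis'' undersells this: the parity hypothesis in the statement is automatic). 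The two resulting degree profiles (one vertex of degree $2$, or three degree-$4$ vertices spanning a type (II) triangle) are also correctly derived.

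The proof, however, has a genuine and fatal gap exactly where you acknowledge it. The statement quantifies over \emph{all} odd $n\geq 13$, so the Nauty-style enumeration of Section 6 is structurally incapable of finishing the argument: it can only ever dispose of finitely many values of $n$, and the paper's own Table \ref{tab-graphs} shows that the full list of $47$ forbidden subgraphs still leaves $1120$ survivors already at $n=20$, so a fixed finite forbidden-subgraph list cannot exclude arbitrarily large graphs on its own. Your two proposed escapes are not arguments: there is no girth bound for $Cay(G,S)$ for a general torsion-free $G$ (torsion-freeness forbids short \emph{cycles through the identity of a special form}, not short cycles in the Cayley graph), and ``adapt the matched-rectangle construction'' is a pointer to the literature rather than a derivation of the needed linear constraint. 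A secondary problem is that the forbidden-subgraph machinery of Sections 4--6 is built on Theorem \ref{thm-graph}, i.e.\ on the graph being \emph{cubic}; your unit graph is not cubic in either case, so the subgraph counts and the case analyses (e.g.\ the consecutive-square induction in Theorem \ref{L-M}) do not transfer ``essentially verbatim'' and would have to be redone around the defect vertex or the type (II) triangle. As it stands the proposal establishes the parity and degree structure but does not prove the theorem.
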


\begin{enumerate}
\item
Let $n=3$. By Theorem \ref{kemperman}, $3n\geq|supp(\alpha) supp(\beta)|\geq |supp(\alpha)|+|supp(\beta)|-1$ because $G$ is torsion-free. Let $|supp(\beta)|=3$. Then $9\geq|supp(\alpha) supp(\beta)|\geq 5$. Since $9-5=4$, there is an $(i,j)\in A$ such that $h_ig_j\not=h_{i'}g_{j'}$ for all $(i',j')\in A$ where $i\neq i'$ and $j\neq j'$, a contradiction with $\alpha \beta =0$. So, $|supp(\beta)|$ must be at least $4$.
\item
Let $n=4$. Then by Theorem \ref{hamidoune}, $12\geq |supp(\alpha) supp(\beta)|\geq 8$. Since $12-8=4$, there is an $(i,j)\in A$ such that $h_ig_j\not=h_{i'}g_{j'}$ for all $(i',j')\in A$ where $i\neq i'$ and $j\neq j'$, a contradiction with $\alpha \beta =0$. So, $|supp(\beta)|$ must be at least $5$.
\item
Let $n=5$. Then by Theorem \ref{hamidoune}, $15\geq |supp(\alpha) supp(\beta)|\geq 9$. Since $15-9=6$, there is an $(i,j)\in A$ such that $h_ig_j\not=h_{i'}g_{j'}$ for all $(i',j')\in A$ where $i\neq i'$ and $j\neq j'$, a contradiction with $\alpha \beta =0$. So, $|supp(\beta)|$ must be at least $6$.
\item
Let $n=6$. Then by Theorem \ref{hamidoune}, $18\geq |supp(\alpha) supp(\beta)|\geq 10$. Since $18-10=8$, there is an $(i,j)\in A$ such that $h_ig_j\not=h_{i'}g_{j'}$ for all $(i',j')\in A$ where $i\neq i'$ and $j\neq j'$, a contradiction with $\alpha \beta =0$. So, $|supp(\beta)|$ must be at least $7$.
\item
Let $n=7$. Then by Theorem \ref{hamidoune}, $21\geq |supp(\alpha) supp(\beta)|\geq 11$. Since $21-11=10$, there is an $(i,j)\in A$ such that $h_ig_j\not=h_{i'}g_{j'}$ for all $(i',j')\in A$ where $i\neq i'$ and $j\neq j'$, a contradiction with $\alpha \beta =0$. So, $|supp(\beta)|$ must be at least $8$.
\item
Let $n=8$. Then by Theorem \ref{hamidoune}, $24\geq |supp(\alpha) supp(\beta)|\geq 12$. Let $|supp(\alpha) supp(\beta)|> 12$. Then $|supp(\alpha) supp(\beta)|\geq 13$. Since $24-13=11$, there is an $(i,j)\in A$ such that $h_ig_j\not=h_{i'}g_{j'}$ for all $(i',j')\in A$ where $i\neq i'$ and $j\neq j'$, a contradiction with $\alpha \beta =0$. So, $|supp(\alpha) supp(\beta)|=12$ and because $\alpha \beta =0$, there is a partition $\pi$ of $A$ with all sets containing two elements, such that if $(i, j)$ and $(i',j')$ belong to the same set of $\pi$, then $h_ig_j=h_{i'}g_{j'}$. Let $\alpha'=\sum_{a\in supp(\alpha)}{a}$ and $\beta'=\sum_{b\in supp(\beta)}{b}$. So, $\alpha', \beta' \in \mathbb{F}_2[G]$, $|supp(\alpha')|=3$ and $|supp(\beta')|=8$ and with the above discussion we have $\alpha'\beta'=0$, that is a contradiction (see Corollary \ref{maincor}). Therefore, $|supp(\alpha) supp(\beta)|\not=12$ and so $|supp(\beta)|$ must be at least $9$.
\item
Let $n=9$. Then by Theorem \ref{hamidoune}, $27\geq |supp(\alpha) supp(\beta)|\geq 13$. Let $|supp(\alpha) supp(\beta)|> 13$. Then $|supp(\alpha) supp(\beta)|\geq 14$. Since $27-14=13$, there is an $(i,j)\in A$ such that $h_ig_j\not=h_{i'}g_{j'}$ for all $(i',j')\in A$ where $i\neq i'$ and $j\neq j'$, a contradiction with $\alpha \beta =0$. So, $|supp(\alpha) supp(\beta)|=13$ and because $\alpha \beta =0$, there is a partition $\pi$ of $A$ with one set of size $3$ and all other sets containing two elements, such that if $(i, j)$ and $(i',j')$ belong to the same set of $\pi$, then $h_ig_j=h_{i'}g_{j'}$. With the discussion above, $\left(\sum_{a\in supp(\alpha)}{a}\right)\left(\sum_{b\in supp(\beta)}{b}\right)x^{-1}=1$ where $x=h_ig_j$ for some $(i,j)$ belongs to the set  of size $3$ in $\pi$. Hence, there are $\gamma , \delta \in \mathbb{F}_2[G]$ such that $\gamma \delta=1$, where $\gamma=\sum_{a\in supp(\alpha)}{a}$, $\delta=\sum_{b\in supp(\beta)}{bx^{-1}}$, $|supp(\gamma)|=3$ and $|supp(\delta)|=|supp(\beta)|=9$, that is a contradiction with Theorem \ref{dyk-rem}. Therefore, $|supp(\alpha) supp(\beta)|\not=13$ and so $|supp(\beta)|$ must be at least $10$.
\end{enumerate}
With the discussion above, we have the following theorem.
\begin{thm}\label{main-F}
Let $\alpha$  be a non-zero element in the group algebra of a torsion-free group over an arbitrary field such that $|supp(\alpha)|=3$ and  $\alpha \beta=0$, for some non-zero element $\beta$ of the group algebra. Then $|supp(\beta)|\geq 10$. 
\end{thm}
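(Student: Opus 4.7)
The plan is to combine set-theoretic product inequalities with the pigeonhole-style constraints imposed by the equation $\alpha\beta=0$, then invoke the two auxiliary results available over $\mathbb{F}_2$ (Corollary \ref{maincor} and Theorem \ref{dyk-rem}) to close out the remaining tight cases.

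First I would set up the standard reductions. By \cite[Theorem 26.2]{pass2}, torsion-free abelian groups satisfy the zero-divisor conjecture, so I may assume $G$ is nonabelian. Choose $\beta$ so that $|\mathrm{supp}(\beta)|$ is minimum among all $\beta'$ with $\alpha\beta'=0$, and apply Remark \ref{r-G} to assume $1\in\mathrm{supp}(\alpha)$ and $G=\langle\mathrm{supp}(\alpha)\rangle$. Write $\mathrm{supp}(\alpha)=\{h_1,h_2,h_3\}$, $\mathrm{supp}(\beta)=\{g_1,\dots,g_n\}$, and set $A=\{1,2,3\}\times\{1,\dots,n\}$. The known bound $n\geq 3$ (Schweitzer's \cite[Theorem 2.1]{pascal}) is my starting point.

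The next step is the key combinatorial observation. For each value $v\in\mathrm{supp}(\alpha)\mathrm{supp}(\beta)$, let $k_v$ be the number of pairs $(i,j)\in A$ with $h_ig_j=v$. Then $\sum_v k_v = 3n$, and since $h_ig_j=h_{i'}g_{j'}$ with $(i,j)\neq(i',j')$ forces $i\neq i'$ and $j\neq j'$ (cancellation in the torsion-free group), a value with $k_v=1$ would contribute a single nonzero term to $\alpha\beta$. Hence $k_v\geq 2$ for every $v$, giving the upper bound
\[
|\mathrm{supp}(\alpha)\mathrm{supp}(\beta)|\leq \tfrac{3n}{2}.
\]
For a lower bound I will use Kemperman (Theorem \ref{kemperman}) when $n=3$ to get $|\mathrm{supp}(\alpha)\mathrm{supp}(\beta)|\geq 5$, and Hamidoune (Theorem \ref{hamidoune}) applied with $B=\mathrm{supp}(\alpha)$ and $C=\mathrm{supp}(\beta)$ (after translating so that $1\in\mathrm{supp}(\beta)$ and $\langle\mathrm{supp}(\beta)\rangle=G$, via Lemma \ref{iso} and Corollary \ref{cor}) when $n\geq 4$ to get $|\mathrm{supp}(\alpha)\mathrm{supp}(\beta)|\geq n+4$. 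Comparing with $\tfrac{3n}{2}$ directly eliminates $n\in\{3,4,5,6,7\}$.

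The remaining cases $n=8$ and $n=9$ are the delicate ones, and I expect this is where the real work lies. For $n=8$, the two inequalities force $|\mathrm{supp}(\alpha)\mathrm{supp}(\beta)|=12$, so $k_v=2$ for every value; the multiplicities all being even means that over $\mathbb{F}_2$ the element $\alpha'=\sum_{h\in\mathrm{supp}(\alpha)}h$ and $\beta'=\sum_{g\in\mathrm{supp}(\beta)}g$ satisfy $\alpha'\beta'=0$ with $|\mathrm{supp}(\alpha')|=3$ and $|\mathrm{supp}(\beta')|=8$, contradicting Corollary \ref{maincor}. For $n=9$, the inequalities force $|\mathrm{supp}(\alpha)\mathrm{supp}(\beta)|=13$; since $\sum k_v=27$ with each $k_v\geq 2$, exactly one value $x$ has $k_x=3$ and the remaining twelve have $k_v=2$. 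Passing to $\mathbb{F}_2[G]$, the same $\alpha',\beta'$ satisfy $\alpha'\beta'=x$, so $\alpha'(\beta' x^{-1})=1$ is a unit identity in $\mathbb{F}_2[G]$ with support sizes $3$ and $9$; this contradicts the result of Dykema et al. (which rules out units with $|\mathrm{supp}(\gamma)|=3$ and $|\mathrm{supp}(\delta)|\leq 11$, cf. the discussion preceding Theorem \ref{dyk-rem}). The main obstacle throughout is ensuring that the reduction to $\mathbb{F}_2$ is legitimate in these equality cases and that the hypotheses of Hamidoune's theorem (in particular $1\in C$ and $\langle C\rangle=G$) can be arranged simultaneously with the minimality of $\beta$, both of which are handled by the translation lemmas in Section 2.
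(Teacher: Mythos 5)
Your proposal is correct and follows essentially the same route as the paper's proof: the same reduction via Remark \ref{r-G}, the same pigeonhole bound forcing every product value to be attained at least twice (which the paper phrases as ``$3n - |supp(\alpha)supp(\beta)| $ too small''), the same use of Kemperman for $n=3$ and Hamidoune for $4\leq n\leq 9$, and the same passage to $\mathbb{F}_2$ in the tight cases $n=8$ (all multiplicities $2$, contradicting Corollary \ref{maincor}) and $n=9$ (one multiplicity $3$, yielding a unit of support size $9$ contradicting the Dykema--Heister--Juschenko bound). The only difference is cosmetic: you state the upper bound explicitly as $|supp(\alpha)supp(\beta)|\leq 3n/2$, and you correctly cite the $|supp(\delta)|\leq 11$ form of the Dykema et al.\ result for the $n=9$ case, whereas the paper's stated Theorem \ref{dyk-rem} covers odd $|supp(\delta)|\geq 13$ and the $\leq 11$ version appears only in the introduction.
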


\begin{prop}\label{supp2}
If $\mathbb{F}[G]$ has no non-zero element $\gamma$ with $|supp(\gamma)|\leq k$ such that $\gamma^2=0$, then there exist no non-zero elements $\gamma_1,\gamma_2 \in \mathbb{F}[G]$ such that $\gamma_1\gamma_2=0$ and $|supp(\gamma_1)||supp(\gamma_2)|\leq k$.
\end{prop}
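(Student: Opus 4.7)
The plan is to argue by contradiction: assume there exist nonzero $\alpha_1,\alpha_2\in\mathbb{F}[G]$ with $\alpha_1\alpha_2=0$ and $|supp(\alpha_1)|\,|supp(\alpha_2)|\le k$, and exhibit a nonzero $\gamma\in\mathbb{F}[G]$ with $\gamma^2=0$ and $|supp(\gamma)|\le k$, contradicting the standing hypothesis of the proposition.

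The natural candidate is $\gamma:=\alpha_2\, x\,\alpha_1$ for an appropriately chosen $x\in G$. For any such choice one has
$$\gamma^2=\alpha_2\, x\,\alpha_1\,\alpha_2\, x\,\alpha_1=\alpha_2\, x\cdot (\alpha_1\alpha_2)\cdot x\,\alpha_1=0,$$
and since left multiplication by a group element is an $\mathbb{F}$-linear bijection of $\mathbb{F}[G]$ that preserves support sizes,
$$|supp(\gamma)|\le|supp(\alpha_2)|\cdot|supp(x\,\alpha_1)|=|supp(\alpha_1)|\cdot|supp(\alpha_2)|\le k.$$
So the proof reduces entirely to producing one $x\in G$ for which $\alpha_2\, x\,\alpha_1\neq 0$.

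The key step will be to invoke a classical theorem of Connell (see \cite{pass1}): $\mathbb{F}[G]$ is a prime ring precisely when $G$ has no nonidentity finite normal subgroup, which is automatic for torsion-free $G$. Primeness says exactly that $\alpha_2\cdot\mathbb{F}[G]\cdot\alpha_1\neq 0$ whenever $\alpha_1,\alpha_2\neq 0$; expanding any $r\in\mathbb{F}[G]$ with $\alpha_2\, r\,\alpha_1\neq 0$ in the $\mathbb{F}$-basis $G$ then forces $\alpha_2\, x\,\alpha_1\neq 0$ for at least one group element $x$. Setting $\gamma:=\alpha_2\, x\,\alpha_1$ closes the argument.

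The one substantive input is the primeness of $\mathbb{F}[G]$; everything else is bookkeeping on supports. A self-contained variant, avoiding a direct appeal to primeness, is to try first the simplest choice $x=1$: if $\alpha_2\alpha_1\neq 0$ we are already done by the same identity $(\alpha_2\alpha_1)^2=\alpha_2(\alpha_1\alpha_2)\alpha_1=0$. The only awkward case is when both $\alpha_1\alpha_2=0$ and $\alpha_2\alpha_1=0$, and that is precisely where one must reach into $G$ to twist the product so that it does not cancel; this is the concrete content of primeness in the present setting and is where the main care must be taken when writing out the proof.
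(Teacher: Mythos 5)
Your proposal is correct and follows essentially the same route as the paper: both arguments take $\gamma:=\gamma_2 x\gamma_1$ for a suitable $x\in G$, observe that $\gamma^2=\gamma_2 x(\gamma_1\gamma_2)x\gamma_1=0$ with $|supp(\gamma)|\leq|supp(\gamma_1)||supp(\gamma_2)|\leq k$, and reduce everything to finding one $x$ with $\gamma_2 x\gamma_1\neq 0$. The only difference is that you obtain this $x$ by citing Connell's theorem that $\mathbb{F}[G]$ is prime (valid, since a torsion-free group has no non-trivial finite normal subgroup), whereas the paper proves exactly this non-vanishing inline via Passman's $\Delta$-method; these are the same fact, so no gap.
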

\begin{proof}
Suppose, for a contradiction, that $\gamma_1,\gamma_2 \in \mathbb{F}[G]\setminus \{0\}$ such that $\gamma_1\gamma_2=0$ and $|supp(\gamma_1)||supp(\gamma_2)|\leq k$. We may assume that $1\in supp(\gamma_1) \cap supp(\gamma_2)$, since $(a^{-1} \gamma_1 ) (\gamma_2 b^{-1})=0$ for any $a\in supp(\gamma_1)$ and $b\in supp(\gamma_2)$. 

Suppose, for a contradiction, that $\gamma_2 x \gamma_1=0$ for all $x\in G$. Then it follows from 
\cite[Lemma 1.3]{pass2} that $\theta(\gamma_2) \theta(\gamma_1)=0$, where $\theta$ is the projection $\theta:\mathbb{F}[G]\rightarrow \mathbb{F}[\Delta]$ given by 
$\beta= \sum_{x\in G} f_x x \mapsto \theta(\beta)=\sum_{x\in \Delta} f_x x$, where $\Delta$ is the subgroup of all
elements of $G$ having a finite number of conjugates in $G$ (see \cite[p. 3]{pass2}). Now it follows from \cite[Lemma 2.2]{pass2} and \cite[Lemma 2.4]{pass2} that $\theta(\gamma_1)=0$ or $\theta(\gamma_2)=0$, which are both contradiction since  $1\in supp(\gamma_1) \cap supp(\gamma_2)$.  
Therefore, there exists an element $x\in G$ such that $\beta=\gamma_2 x \gamma_1\not=0$. Now    
$$\beta^2=(\gamma_2 x \gamma_1)^2=\gamma_2 x \gamma_1 \gamma_2 x \gamma_1=0$$
and $$|supp(\beta)|\leq |supp(\gamma_2)||supp(x \gamma_1)|=|supp(\gamma_2)||supp(\gamma_1)|\leq k,$$
which is a contradiction. This completes the proof. 
\end{proof}

\section{\bf Possible units with supports of size $3$ in $\mathbb{F}[G]$}\label{S-unit}
It is known that a group algebra over any torsion-free group does not contain a unit element whose support is of size at most $2$ (see \cite[Theorem 4.2]{dyk}), but it is not  known a similar result for group algebra elements with the supports of size $3$. 

Throughout this section let $\gamma$  be an element in the group algebra of a torsion-free group $G$ over a field $\mathbb{F}$ such that $|supp(\gamma)|=3$ and  $\gamma \delta=1$ for some element $\delta$ of the group algebra. In this section, we show that $|supp(\delta)|$ must be at least $9$. Here, one may assume that $\delta$ has minimum possible support size among all  elements $\alpha$ with $\gamma \alpha=1$. Therefore by Remark \ref{Unit-r-G}, $1 \in supp(\gamma)$ and $G=\langle supp(\gamma) \rangle$. Let  $supp(\gamma)=\{h_1,h_2,h_3\}$, $supp(\delta)=\{g_1,g_2,\ldots,g_n\}$ and $n=|supp(\delta)|$. 

Suppose that $A=\{1,2,3\}\times \{1,2,\ldots,n\}$. Since $\gamma \delta =1$, there must be at least one $(i,j)\in A$ such that $h_ig_j=1$. By renumbering, we may assume that $(i,j)=(1,1)$. Replacing $\gamma$ by $h_1^{-1}\gamma$ and $\delta$ by $\delta g_1^{-1}$ we may assume that $h_1=g_1=1$. 

There is a partition $\pi$ of $A$ such that $(i, j)$ and $(i',j')$ belong to the same set of $\pi$ if and only if $h_ig_j=h_{i'}g_{j'}$ and because of the relation $\gamma \delta =1$, for all $E\in \pi$ we have 
\begin{equation}\label{e-u}
\displaystyle\sum_{(i,j)\in E}{\gamma_i\delta_j}=\left\{
\begin{array}{lr}
1 &(1,1)\in E\\
0 &(1,1)\notin E\\
\end{array} \right.
\end{equation}
Let $E_1$ be the set in $\pi$ which contains $(1,1)$. 

\subsection{The support of $\delta$ is of size at least $4$}

By Theorem \ref{kemperman}, $3n\geq|supp(\gamma) supp(\delta)|\geq |supp(\gamma)|+|supp(\delta)|-1$ because $G$ is torsion-free. Let $n=3$. Then with the above discussion, $9\geq |supp(\gamma) supp(\delta)|\geq 5$ and so, there are at least $4$ sets different from $E_1$ in $\pi$, namely $E_2,E_3,E_4,E_5$. Since $\gamma \delta =1$, each of such sets must have at least two elements such that $\sum_{(i,j)\in E_k}{\gamma_i\delta_j}=0$ for all $k\in \{2,3,4,5\}$, but since $9-5=4$, $|E_1|=1$ and $|E_k|=2$ for all $k\in \{2,3,4,5\}$. Therefore $E_1=\{(1,1)\}$ and for all $k\in \{2,3,4,5\}$, $E_k=\{(i,j),(i',j')\}$ where $h_ig_j=h_{i'}g_{j'}$ for some $(i,j),(i',j')\in A$ such that $i\not=i'$ and $j\not=j'$. Let $\gamma'=\sum_{a\in supp(\gamma)}{a}$ and $\delta'=\sum_{b\in supp(\delta)}{b}$. So, $\gamma', \delta' \in \mathbb{F}_2[G]$, $|supp(\gamma')|=3$ and $|supp(\delta')|=3$ and with the above discussion we have $\gamma'\delta'=1$, that is a contradiction with Theorem \ref{dyk-rem}. Therefore, $n\not=3$.

\subsection{The support of $\delta$ must be of size greater than or equal to $8$}\label{unitsub-2}
Abelian torsion-free groups satisfy the Conjecture \ref{conj-unit} (see \cite[Theorem 26.2]{pass2}). So, $G$ must be a nonabelian torsion-free group. Therefore  by Theorem \ref{hamidoune}, if $n\geq 4$, then $|supp(\gamma) supp(\delta)|\geq |supp(\gamma)|+|supp(\delta)|+1$. Also, it is easy to see that $|supp(\gamma)| |supp(\delta)|\geq|supp(\gamma) supp(\delta)|$. Hence, $3n\geq |supp(\gamma) supp(\delta)|\geq 4+n$.
\begin{enumerate}
\item
Let $n=4$. Then with the above discussion, $12\geq |supp(\gamma) supp(\delta)|\geq 8$ and so, there are at least $7$ sets different from $E_1$ in $\pi$, namely $E_2,E_3,\ldots,E_8$. Since $\gamma \delta =1$, each of such sets must have at least two elements such that $\sum_{(i,j)\in E_k}{\gamma_i\delta_j}=0$ for all $k\in \{2,3,\ldots,8\}$, but since $12-8=4$, $|E_k|\leq 1$ for some $k\in \{2,3,\ldots,8\}$, a contradiction. Therefore, $n\not=4$.
\item
Let $n=5$. Then with the discussion above $15\geq |supp(\gamma) supp(\delta)|\geq 9$ and so, there are at least $8$ sets different from $E_1$ in $\pi$, namely $E_2,E_3,\ldots,E_9$. Since $\gamma \delta =1$, each of such sets must have at least two elements such that $\sum_{(i,j)\in E_k}{\gamma_i\delta_j}=0$ for all $k\in \{2,3,\ldots,9\}$, but since $15-9=6$, $|E_k|\leq 1$ for some $k\in \{2,3,\ldots,9\}$, a contradiction. Therefore, $n\not=5$.
\item
Let $n=6$. Then with the discussion above $18\geq |supp(\gamma) supp(\delta)|\geq 10$ and so, there are at least $9$ sets different from $E_1$ in $\pi$, namely $E_2,E_3,\ldots,E_{10}$. Since $\gamma \delta =1$, each of such sets must have at least two elements such that $\sum_{(i,j)\in E_k}{\gamma_i\delta_j}=0$ for all $k\in \{2,3,\ldots,10\}$, but since $18-10=8$, $|E_k|\leq 1$ for some $k\in \{2,3,\ldots,10\}$, a contradiction. Therefore, $n\not=6$.
\item
Let $n=7$. Then with the discussion above $21\geq |supp(\gamma) supp(\delta)|\geq 11$ and so, there are at least $10$ sets different from $E_1$ in $\pi$, namely $E_2,E_3,\ldots,E_{11}$. Since $\gamma \delta =1$, each of such sets must have at least two elements such that $\sum_{(i,j)\in E_k}{\gamma_i\delta_j}=0$ for all $k\in \{2,3,\ldots,11\}$, but since $21-11=10$, $|E_1|=1$ and $|E_k|=2$ for all $k\in \{2,3,\ldots,11\}$. Therefore $E_1=\{(1,1)\}$ and for all $k\in \{2,3,\ldots,11\}$, $E_k=\{(i,j),(i',j')\}$ where $h_ig_j=h_{i'}g_{j'}$ for some $(i,j),(i',j')\in A$ such that $i\not=i'$ and $j\not=j'$. Let $\gamma'=\sum_{a\in supp(\gamma)}{a}$ and $\delta'=\sum_{b\in supp(\delta)}{b}$. So, $\gamma', \delta' \in \mathbb{F}_2[G]$, $|supp(\gamma')|=3$ and $|supp(\delta')|=7$ and with the above discussion we have $\gamma'\delta'=1$, that is a contradiction with Theorem \ref{dyk-rem}. Therefore, $n\not=7$.
\item
Let $n=8$. Then with the discussion above $24\geq |supp(\gamma) supp(\delta)|\geq 12$. Let $|supp(\gamma) supp(\delta)|> 12$. Then $|supp(\gamma) supp(\delta)|\geq 13$ and so, there are at least $12$ sets different from $E_1$ in $\pi$, namely $E_2,E_3,\ldots,E_{13}$. Since $\gamma \delta =1$, each of such sets must have at least two elements such that $\sum_{(i,j)\in E_k}{\gamma_i\delta_j}=0$ for all $k\in \{2,3,\ldots,13\}$, but since $24-13=11$, $|E_k|\leq 1$ for some $k\in \{2,3,\ldots,13\}$, a contradiction. So, $|supp(\gamma) supp(\delta)|=12$. Therefore, there are $11$ sets different from $E_1$ in $\pi$, namely $E_2,E_3,\ldots,E_{12}$. Since $\gamma \delta =1$, there are two cases for the number of elements in such sets. 
\begin{enumerate}
\item
$|E_1|=2$ and for all $k\in \{2,3,\ldots,12\}$, $E_k=\{(i,j),(i',j')\}$ where $h_ig_j=h_{i'}g_{j'}$ for some $(i,j),(i',j')\in A$ such that $i\not=i'$ and $j\not=j'$. Let $\gamma'=\sum_{a\in supp(\gamma)}{a}$ and $\delta'=\sum_{b\in supp(\delta)}{b}$. So, $\gamma', \delta' \in \mathbb{F}_2[G]$, $|supp(\gamma')|=3$ and $|supp(\delta')|=8$ and with the above discussion we have $\gamma'\delta'=0$, that is a contradiction (see Corollary \ref{maincor}). 
\item
$E_1=\{(1,1)\}$ and $|E_l|=3$ for exactly one $l\in \{2,3,\ldots,12\}$ and for all $k\in \{2,3,\ldots,12\}\setminus \{l\}$, $E_k=\{(i,j),(i',j')\}$ where $h_ig_j=h_{i'}g_{j'}$ for some $(i,j),(i',j')\in A$ such that $i\not=i'$ and $j\not=j'$.
\end{enumerate}
\end{enumerate}

\begin{thm}\label{main-unit1}
Let $\gamma$ and $\delta$ be elements of the group algebra of any torsion-free group over an arbitrary field. If $|supp(\gamma)|=3$ and $\gamma \delta =1$ then $|supp(\delta)|\geq 8$. 
\end{thm}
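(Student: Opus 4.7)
The plan is to show $|supp(\delta)| \geq 8$ by a case analysis eliminating $n = |supp(\delta)| \in \{3,4,5,6,7\}$. First, I would normalize: since $\gamma\delta = 1$, there exist $h \in supp(\gamma)$ and $g \in supp(\delta)$ with $hg = 1$, so after replacing $(\gamma,\delta)$ by $(h^{-1}\gamma,\, \delta g^{-1})$ (which is harmless by Lemma \ref{U-iso}) I may assume $1 \in supp(\gamma)\cap supp(\delta)$. Choosing $\delta$ of minimum possible support among all inverses of $\gamma$, Corollary \ref{Unit-cor} gives $G = \langle supp(\gamma)\rangle$, and any abelian $G$ is excluded by \cite[Theorem 26.2]{pass2}, so $G$ is a nonabelian torsion-free group generated by $\{1,h_2,h_3\}$.

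Next, I would count products. The key combinatorial fact is that on the multiset of $3n$ products $\{h_i g_j\}$, the group algebra relation $\gamma\delta = 1$ forces a partition $\pi$ of the index set $A = \{1,2,3\}\times\{1,\ldots,n\}$ whose blocks collect equal values of $h_ig_j$; the unique block containing $(1,1)$ satisfies $\sum \gamma_i\delta_j = 1$ while every other block satisfies $\sum \gamma_i\delta_j = 0$, so in particular every block outside the one containing $(1,1)$ has size at least $2$. This produces two competing bounds: the number of distinct products $|supp(\gamma)supp(\delta)|$ is at most $3n$ (one per pair), while Kemperman's theorem (Theorem \ref{kemperman}) gives $|supp(\gamma)supp(\delta)| \geq n+2$, and for $n\geq 4$ the stronger Hamidoune bound (Theorem \ref{hamidoune}), applicable because $G$ is nonabelian generated by $supp(\gamma)$ with $|supp(\gamma)\cup\{1\}| = 3 < 4$, effectively forces $|supp(\gamma)supp(\delta)| \geq n+4$.

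With these bounds I would run the case analysis: for each $n \in \{3,4,5,6,7\}$, the number $3n - |supp(\gamma)supp(\delta)|$ bounds from above the total ``excess'' available for blocks of $\pi$ to have size $\geq 2$. In $n = 4,5,6$ this excess is strictly smaller than what is needed to pair up all the non-distinguished blocks, yielding an immediate contradiction. In the borderline cases $n = 3$ and $n = 7$, the excess matches exactly, which forces $E_1 = \{(1,1)\}$, every other block to be a single $2$-element coincidence $h_ig_j = h_{i'}g_{j'}$, and all nonzero scalars $\gamma_i,\delta_j$ to cancel pairwise. Summing over $supp(\gamma)$ and $supp(\delta)$ with all coefficients taken as $1$ then produces $\gamma',\delta' \in \mathbb{F}_2[G]$ with $|supp(\gamma')|=3$, $\gamma'\delta' = 1$, and $|supp(\delta')| = n$, which is ruled out by Theorem \ref{dyk-rem}.

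The delicate step, and the natural main obstacle, is $n = 8$: Hamidoune only gives $|supp(\gamma)supp(\delta)|\geq 12$, and the excess $3\cdot 8 - 12 = 12$ is just enough to accommodate a partition into $2$-blocks, so eliminating $n=8$ by counting alone fails. I would therefore split into subcases according to whether $|supp(\gamma)supp(\delta)|$ is $\geq 13$ or exactly $12$: the former is excluded by the same counting bookkeeping, while for the latter I would distinguish the sub-subcase where $|E_1|=2$ (so $\gamma'\delta' = 0$ in $\mathbb{F}_2[G]$, contradicting Corollary \ref{maincor} since $|supp(\delta')| = 8 < 20$) from the sub-subcase where $E_1 = \{(1,1)\}$ and exactly one other block has size $3$. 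Since this last configuration is not excluded by the present counting, the conclusion of Theorem \ref{main-unit1} stops at $n\geq 8$; strengthening it to $n\geq 9$ (the paper's announced bound) would require genuinely new input beyond this counting framework.
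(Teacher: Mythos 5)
Your proposal follows essentially the same route as the paper: normalize so that $1\in supp(\gamma)\cap supp(\delta)$ and $G=\langle supp(\gamma)\rangle$ is nonabelian torsion-free, partition the index set $A$ by equal products $h_ig_j$, play the Kemperman/Hamidoune lower bounds on $|supp(\gamma)\,supp(\delta)|$ against the trivial upper bound $3n$ to kill $n=4,5,6$, and in the tight cases $n=3$ and $n=7$ pass to the all-coefficients-one elements over $\mathbb{F}_2$ to contradict the Dykema--Heister--Juschenko bound; your account of why $n=8$ survives this counting is also exactly the paper's, which defers its elimination to the unit-graph argument for $|supp(\delta)|\geq 9$. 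One correction: Theorem \ref{hamidoune} must be applied with $C=supp(\delta)$ (which contains $1$, generates $G$ by Corollary \ref{Unit-cor}, and has $|C|=n\geq 4$) and $B=supp(\gamma)$ with $|B|=3$; your parenthetical justification invoking $|supp(\gamma)\cup\{1\}|=3<4$ is not a correct reading of the hypothesis, since the size condition $|C|\geq 4$ is required of the generating set and would fail for $supp(\gamma)$.
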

\subsection{The support of $\delta$ must be of size greater than or equal to $9$}
In the following, we focus on the unit graph of $\gamma$ and $\delta$ over $\mathbb{F}$, $U(\gamma,\delta)$. By Proposition \ref{simple-unit}, $U(\gamma,\delta)$ is simple. Also by Theorems \ref{U-K3-K3} and \ref{U-K2,3},  $U(\gamma,\delta)$ contains no $C_3-C_3$ or $K_{2,3}$ as a subgraph.

By item $(5)$ of Subsection \ref{unitsub-2}, if $n=8$, then $|supp(\gamma) supp(\delta)|=12$ and there are exactly $11$ sets different from $E_1$ in $\pi$, namely $E_2,E_3,\ldots,E_{12}$. Also, $E_1=\{(1,1)\}$ and $|E_l|=3$ for exactly one $l\in \{2,3,\ldots,12\}$, and for all $k\in \{2,3,\ldots,12\}\setminus \{l\}$, $E_k=\{(i,j),(i',j')\}$ such that $i\not=i'$, $j\not=j'$ and $h_ig_j=h_{i'}g_{j'}$.

Let $E_l=\{(i_1,j_1),(i_2,j_2),(i_3,j_3)\}$. Therefore, $h_{i_1}g_{j_1}=h_{i_2}g_{j_2}=h_{i_3}g_{j_3}$ and so there is a triangle in $U(\gamma,\delta)$ with the vertex set $\{g_{j_1},g_{j_2},g_{j_3}\}$ and there is no other triangle in the latter graph. Let $(2,1),(3,1)\notin E_l$. Then by the way we have chosen $E_k$ for $k\in \{1,2,3,\ldots,12\}$, the degree of $g_{j_1}$, $g_{j_2}$ and $g_{j_3}$ are equal to $4$. So, there must be $6$ other vertices different from $g_{j_1}$, $g_{j_2}$ and $g_{j_3}$ in the vertex set of $U(\gamma,\delta)$ because there is no other triangle in the latter graph. This gives a contradiction because the size of the vertex set of $U(\gamma,\delta)$ is $n=8$. Hence, $E_l=\{(a,1),(i,j),(i',j')\}$ where $a\in\{2,3\}$ and $\{h_a,h_i,h_{i'}\}=supp(\gamma)$. Since $|E_1|=1$, $1=h_1g_1\not=h_mg_n$ for all $(m,n)\in A\setminus E_1$. So, ${\rm deg}(g_1)=3$ and by renumbering, we may assume that $U(\gamma,\delta)$ has the graph $H$ in Figure \ref{f-H} as a subgraph and there is no other vertex in $U(\gamma,\delta)$. In $H$, $g_5\sim g_2$ or $g_5\not\sim g_2$.

\begin{figure}[ht]
\centering
\psscalebox{1.0 1.0} % Change this value to rescale the drawing.
{
\begin{pspicture}(0,-2.305)(5.52,2.305)
\psdots[linecolor=black, dotsize=0.4](2.8,0.495)
\psdots[linecolor=black, dotsize=0.4](2.0,-0.705)
\psdots[linecolor=black, dotsize=0.4](3.6,-0.705)
\psdots[linecolor=black, dotsize=0.4](2.8,1.695)
\psdots[linecolor=black, dotsize=0.4](0.8,-0.705)
\psdots[linecolor=black, dotsize=0.4](1.6,-1.905)
\psdots[linecolor=black, dotsize=0.4](4.0,-1.905)
\psdots[linecolor=black, dotsize=0.4](4.8,-0.705)
\psline[linecolor=black, linewidth=0.04](2.8,0.495)(2.0,-0.705)(3.6,-0.705)(2.8,0.495)(2.8,1.695)
\psline[linecolor=black, linewidth=0.04](2.0,-0.705)(1.6,-1.905)
\psline[linecolor=black, linewidth=0.04](0.8,-0.705)(2.0,-0.705)
\psline[linecolor=black, linewidth=0.04](4.0,-1.905)(3.6,-0.705)(4.8,-0.705)
\rput[bl](3.04,0.495){$g_1$}
\rput[bl](2.8,1.92){$g_2$}
\rput[bl](3.6,-0.5){$g_3$}
\rput[bl](1.6,-0.5){$g_4$}
\rput[bl](0.2,-0.705){$g_5$}
\rput[bl](1.2,-2.305){$g_6$}
\rput[bl](4.1,-2.305){$g_7$}
\rput[bl](5.03,-0.705){$g_8$}
\end{pspicture}
}
\caption{The subgraph $H$ of $U(\gamma,\delta)$ for the case that $n=8$}\label{f-H}
\end{figure}
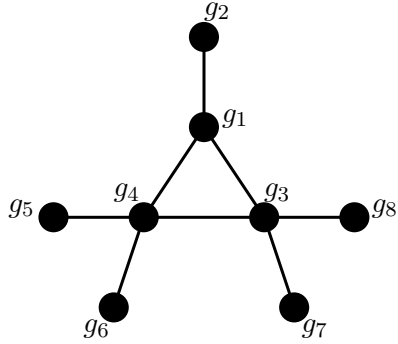

\begin{figure}[h]
\centering
\psscalebox{1.0 1.0} % Change this value to rescale the drawing.
{
\begin{pspicture}(0,-2.705)(5.52,2.705)
\psdots[linecolor=black, dotsize=0.4](2.8,0.895)
\psdots[linecolor=black, dotsize=0.4](2.0,-0.305)
\psdots[linecolor=black, dotsize=0.4](3.6,-0.305)
\psdots[linecolor=black, dotsize=0.4](2.8,2.095)
\psdots[linecolor=black, dotsize=0.4](0.8,-0.305)
\psdots[linecolor=black, dotsize=0.4](1.6,-1.505)
\psdots[linecolor=black, dotsize=0.4](4.0,-1.505)
\psdots[linecolor=black, dotsize=0.4](4.8,-0.305)
\psline[linecolor=black, linewidth=0.04](2.8,0.895)(2.0,-0.305)(3.6,-0.305)(2.8,0.895)(2.8,2.095)
\psline[linecolor=black, linewidth=0.04](2.0,-0.305)(1.6,-1.505)
\psline[linecolor=black, linewidth=0.04](0.8,-0.305)(2.0,-0.305)
\psline[linecolor=black, linewidth=0.04](4.0,-1.505)(3.6,-0.305)(4.8,-0.305)
\rput[bl](3.03,0.895){$g_1$}
\rput[bl](2.7,2.5){$g_2$}
\rput[bl](3.6,-0.095){$g_3$}
\rput[bl](1.5,-0.095){$g_4$}
\rput[bl](0.2,-0.305){$g_5$}
\rput[bl](1.2,-2){$g_6$}
\rput[bl](4.3,-1.905){$g_7$}
\rput[bl](5.03,-0.305){$g_8$}
\psbezier[linecolor=black, linewidth=0.04](4.0,-1.505)(4.7071066,-2.2121067)(4.316228,-0.0536833)(4.0,0.895)(3.6837723,1.8436832)(2.8,3.095)(2.8,2.095)
\psbezier[linecolor=black, linewidth=0.04](1.6,-1.505)(0.8928932,-2.2121067)(1.2837722,-0.0536833)(1.6,0.895)(1.9162278,1.8436832)(2.8,3.095)(2.8,2.095)
\psline[linecolor=black, linewidth=0.04](1.6,-1.505)(4.0,-1.505)
\rput[bl](2.6,-2.705){$\mathbf{H_1}$}
\end{pspicture}
} 
\hspace{2.5cc}
\psscalebox{1.0 1.0} % Change this value to rescale the drawing.
{
\begin{pspicture}(0,-2.705)(5.52,2.705)
\psdots[linecolor=black, dotsize=0.4](2.8,0.895)
\psdots[linecolor=black, dotsize=0.4](2.0,-0.305)
\psdots[linecolor=black, dotsize=0.4](3.6,-0.305)
\psdots[linecolor=black, dotsize=0.4](2.8,2.095)
\psdots[linecolor=black, dotsize=0.4](0.8,-0.305)
\psdots[linecolor=black, dotsize=0.4](1.6,-1.505)
\psdots[linecolor=black, dotsize=0.4](4.0,-1.505)
\psdots[linecolor=black, dotsize=0.4](4.8,-0.305)
\psline[linecolor=black, linewidth=0.04](2.8,0.895)(2.0,-0.305)(3.6,-0.305)(2.8,0.895)(2.8,2.095)
\psline[linecolor=black, linewidth=0.04](2.0,-0.305)(1.6,-1.505)
\psline[linecolor=black, linewidth=0.04](0.8,-0.305)(2.0,-0.305)
\psline[linecolor=black, linewidth=0.04](4.0,-1.505)(3.6,-0.305)(4.8,-0.305)
\rput[bl](3.03,0.895){$g_1$}
\rput[bl](2.7,2.5){$g_2$}
\rput[bl](3.6,-0.095){$g_3$}
\rput[bl](1.5,-0.095){$g_4$}
\rput[bl](0.2,-0.305){$g_5$}
\rput[bl](1.2,-2){$g_6$}
\rput[bl](4.15,-2){$g_7$}
\rput[bl](5.03,-0.305){$g_8$}
\psline[linecolor=black, linewidth=0.04](2.8,2.095)(0.8,-0.305)(4.0,-1.505)
\rput[bl](2.6,-2.705){$\mathbf{H_2}$}
\end{pspicture}
}
\caption{Two possible subgraphs of $U(\gamma,\delta)$ for the case that $n=8$}\label{f-H1-H2}
\end{figure}
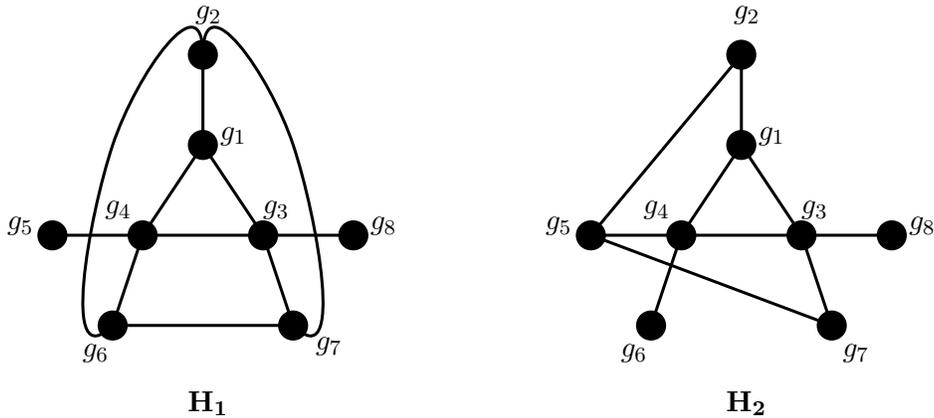

Let $g_5\not\sim g_2$. Since $(a,2),(a,5),(a,6),(a,7),(a,8) \notin E_l,E_1$ for all $a\in \{1,2,3\}$ and $|E_k|=2$ for all  $k\in \{2,3,\ldots,12\}\setminus \{l\}$, ${\rm deg}(g_2)={\rm deg}(g_5)={\rm deg}(g_6)={\rm deg}(g_7)={\rm deg}(g_8)=3$. Since $U(\gamma,\delta)$ is a simple graph which contains exactly one triangle, $g_6\not\sim g_i$ for all $i\in \{1,3,4,5,6\}$. So, without loss of generality we may assume that $g_6\sim g_7$ since ${\rm deg}(g_6)=3$.  Suppose, for a contradiction, that $g_6\sim g_8$. Then there is a subgraph isomorphic to $K_{2,3}$ in $U(\gamma,\delta)$ with the vertex set $\{g_3,g_4,g_6,g_7,g_8\}$, a contradiction. So, $g_6\sim g_2$. With a same discussion we have $g_7\sim g_2$ and $U(\gamma,\delta)$ has the graph $H_1$ in Figure \ref{f-H1-H2} as a subgraph. Since ${\rm deg}(g_2)={\rm deg}(g_5)={\rm deg}(g_6)={\rm deg}(g_7)={\rm deg}(g_8)=3$ and ${\rm deg}(g_1)={\rm deg}(g_3)={\rm deg}(g_4)=4$, we must have $g_5\sim g_8$ with a double edge, a contradiction because $U(\gamma,\delta)$ is simple. Therefore, $g_5\sim g_2$.

Since $U(\gamma,\delta)$ is a simple graph which contains exactly one triangle, $g_5\not\sim g_i$ for all $i\in \{1,2,3,4,5,6\}$. So, $g_5\sim g_7$ or $g_5\sim g_8$ because ${\rm deg}(g_5)=3$. As we can see in Figure \ref{f-H}, without loss of generality we may assume that $g_5\sim g_7$ and $U(\gamma,\delta)$ has the graph $H_2$ in Figure \ref{f-H1-H2} as a subgraph. Since $U(\gamma,\delta)$ is a simple graph which contains exactly one triangle, $g_6\not\sim g_i$ for all $i\in \{1,3,4,5,6\}$. So, $g_6\sim g_2$, $g_6\sim g_7$ or $g_6\sim g_8$ because ${\rm deg}(g_6)=3$. If $g_6\sim g_2$ or $g_6\sim g_7$ then there is a subgraph isomorphic to $K_{2,3}$ in $U(\gamma,\delta)$ as we can see in the graph $H_2$ of Figure \ref{f-H1-H2}, a contradiction. Therefore we must have $g_6\sim g_8$ with a double edge, a contradiction because $U(\gamma,\delta)$ is simple. 

Hence with the above discussion, $n\not=8$ and by Theorem \ref{main-unit1}, we have the following result.
\begin{thm}\label{main-unit2}
Let $\gamma$ and $\delta$ be elements of the group algebra of any torsion-free group over an arbitrary field. If $|supp(\gamma)|=3$ and $\gamma \delta =1$ then $|supp(\delta)|\geq 9$. 
\end{thm}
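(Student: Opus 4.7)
The plan is to build on Theorem \ref{main-unit1}, which already gives $|supp(\delta)|\geq 8$, and to exclude the case $n=|supp(\delta)|=8$ by a direct combinatorial analysis of the unit graph $U(\gamma,\delta)$. As always I will normalize so that $1\in supp(\gamma)\cap supp(\delta)$ and $G=\langle supp(\gamma)\rangle$, write $supp(\gamma)=\{1,h_2,h_3\}$ and $supp(\delta)=\{g_1=1,g_2,\dots,g_8\}$, and work with the partition $\pi$ of $A=\{1,2,3\}\times\{1,\dots,8\}$ into fibers of the map $(i,j)\mapsto h_i g_j$, together with equation \ref{e-u}.

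First I would revisit the computation from Subsection \ref{unitsub-2}, item (5): from Theorem \ref{hamidoune} and the fact that a further inequality $|supp(\gamma)\,supp(\delta)|\geq 13$ yields too few elements in some block $E_k$ to satisfy \ref{e-u}, one must have $|supp(\gamma)\,supp(\delta)|=12$. This forces $\pi$ to consist of $E_1$, exactly one block of size $3$, say $E_l$, and ten blocks of size $2$. Moreover $|E_1|=1$ (otherwise the size-$12$ product count would fail), so $E_1=\{(1,1)\}$ and $E_l$ produces the unique triangle of $U(\gamma,\delta)$, while every other size-$2$ block contributes a single edge. Here I would rule out $E_l$ having the form $\{(i_1,j_1),(i_2,j_2),(i_3,j_3)\}$ with $j_1,j_2,j_3$ all different from $1$: in that case the triangle vertices $g_{j_1},g_{j_2},g_{j_3}$ each carry degree $4$ (three incidences from $E_l$-edges plus one more), which together with the three edges at $g_1$ forces more than $8$ vertices.

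Hence $E_l$ meets the column $j=1$, and after renumbering $U(\gamma,\delta)$ contains the backbone $H$ of Figure \ref{f-H}: a triangle on $\{g_1,g_3,g_4\}$, a pendant-like neighbour $g_2$ of $g_1$, $g_5$ adjacent to $g_4$, $g_6$ adjacent to $g_4$, and $g_7,g_8$ adjacent to $g_3$; and by the partition $\pi$ every vertex outside $\{g_1,g_3,g_4\}$ has degree exactly $3$ while $g_1,g_3,g_4$ have degree exactly $4$. Now I would do a short case split on whether $g_2\sim g_5$. If $g_2\not\sim g_5$, then by Theorem \ref{U-K3-K3} none of $g_5,g_6,g_7,g_8$ can lie on a second triangle, and comparing degrees forces $g_2$ to absorb two of $g_5,g_6,g_7,g_8$ as neighbours; any such assignment either creates a $K_{2,3}$ on $\{g_3,g_4;g_6,g_7,g_8\}$ (contradicting Theorem \ref{U-K2,3}) or forces a double edge (contradicting Proposition \ref{simple-unit}). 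If instead $g_2\sim g_5$, the symmetric analysis on $\{g_6,g_7,g_8\}$ produces the same two obstructions.

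The main obstacle, then, is purely bookkeeping: making sure that in each branch of the case split every remaining edge is assigned in a way compatible with the degree sequence $(4,3,4,4,3,3,3,3)$, with the no-multi-edge condition from Proposition \ref{simple-unit}, with the absence of a second triangle from Theorem \ref{U-K3-K3}, and with the absence of a $K_{2,3}$ from Theorem \ref{U-K2,3}. Once each of the (few) branches is eliminated, combining with Theorem \ref{main-unit1} delivers $|supp(\delta)|\geq 9$, completing the proof.
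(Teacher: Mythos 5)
Your plan reproduces the paper's own proof of this theorem essentially step for step: starting from Theorem \ref{main-unit1}, you force $|supp(\gamma)\,supp(\delta)|=12$ in the case $n=8$, isolate the unique size-$3$ block $E_l$ of $\pi$, rule out the possibility that $E_l$ misses the column of $g_1$ by counting neighbours of the resulting degree-$4$ triangle, build the backbone $H$ of Figure \ref{f-H}, and eliminate both branches of the case split on $g_2\sim g_5$ using simplicity (Proposition \ref{simple-unit}), the two-triangles-with-a-common-edge obstruction (Theorem \ref{U-K3-K3}) and the $K_{2,3}$ obstruction (Theorem \ref{U-K2,3}) --- exactly as the paper does. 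One correction: the degree sequence is $(3,3,4,4,3,3,3,3)$, not $(4,3,4,4,3,3,3,3)$; since $E_1=\{(1,1)\}$ is a singleton, the pair $(1,1)$ contributes no edge, so $\deg(g_1)=3$ (your stated sequence has odd sum and so cannot be a degree sequence), but the rest of your bookkeeping already implicitly assigns $g_1$ only the three neighbours $g_2,g_3,g_4$, so nothing else in the argument changes.
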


\section*{\bf Acknowledgements}
The authors are grateful to the referee for his/her valuable suggestions and comments. The first author was supported in part by Grant No. 95050219 from School of Mathematics, Institute for Research in Fundamental Sciences (IPM). The first author was additionally financially supported by the Center of Excellence for Mathematics at the University of Isfahan.

%-----------------------------------------------------------------------------------------------------------------------------------------------------

%-----------------------------------------------------------------------------------------------------------------------------------------------------
\vspace{2.5cc}
\noindent Alireza Abdollahi \\
Department of Mathematics,\\
University of Isfahan,\\
Isfahan 81746-73441\\
Iran;\\
and \\
School of Mathematics,\\ Institute for Research in Fundamental Sciences (IPM), \\ P.O. Box 19395-5746, Tehran,\\ Iran \\
E-mail: a.abdollahi@math.ui.ac.ir
\\
\\
\noindent Zahra Taheri \\
Department of Mathematics,\\
University of Isfahan,\\
Isfahan 81746-73441\\
Iran\\
E-mail: zahra.taheri@sci.ui.ac.ir

\end{document}